\definecolor{myblue}{rgb}{.8, .8, 1}
\newlength\mytemplen
\newsavebox\mytempbox
\newcommand\mybluebox{%
    \@ifnextchar[
       {\@mybluebox}%
       {\@mybluebox[0pt]}}
\def\@mybluebox[#1]{%
    \@ifnextchar[
       {\@@mybluebox[#1]}%
       {\@@mybluebox[#1][0pt]}}
\def\@@mybluebox[#1][#2]#3{
    \sbox\mytempbox{#3}%
    \mytemplen\ht\mytempbox
    \advance\mytemplen #1\relax
    \ht\mytempbox\mytemplen
    \mytemplen\dp\mytempbox
    \advance\mytemplen #2\relax
    \dp\mytempbox\mytemplen
    \colorbox{myblue}{\hspace{1em}\usebox{\mytempbox}\hspace{1em}}}
\tikzset{paint/.style={ draw=#1!50!black, fill=#1!50 },
    decorate with/.style=
    {decorate,decoration={shape backgrounds,shape=#1,shape size=2mm}}}
\definecolor{skyblue}{rgb}{0.85,0.85,1}
\newcommand{\Mod}[1]{\ (\mathrm{mod}\ #1)}
\newtheorem{theorem}{Theorem}
\newtheorem{prop}[theorem]{Proposition}
\newtheorem{define}[theorem]{Definition}
\newtheorem{lemma}[theorem]{Lemma}
\newtheorem{rem}[theorem]{Remark}
\newcommand{\p}{\partial}				
\DeclareMathOperator{\sgn}{sgn}
\newcommand{\eqdef}{\vcentcolon =}
\numberwithin{equation}{section}
\numberwithin{theorem}{section}
\begin{document}
\title{Nodal Sets of Laplacian Eigenfunctions with an Eigenvalue of Multiplicity 2}
\author{Andrew Lyons}\email{ahlyons@email.unc.edu}\address{Dept. of Mathematics, UNC-CH, 418 Phillips Hall, Chapel Hill, NC 27599-3250, USA}

\maketitle
\begin{abstract}
We study the effects of a domain deformation to the nodal set of Laplacian eigenfunctions when the eigenvalue is degenerate. In particular, we study deformations of a rectangle that perturb one side and how they change the nodal sets corresponding to an eigenvalue of multiplicity $2$. We establish geometric properties, such as number of nodal domains, presence of crossings, and boundary intersections, of nodal sets for a large class of boundary deformations and study how these properties change along each eigenvalue branch for small perturbations. We show that internal crossings of the nodal set break under generic deformations and obtain estimates on the location and regularity of the nodal sets on the perturbed rectangle.

\medskip

\noindent\textsc{R\'esum\'e.}
Nous étudions les effets de la déformation d'un domaine sur l'ensemble nodal des fonctions propres du Laplacien lorsque la valeur propre est dégénérée. En particulier, nous étudions les déformations d'un rectangle qui perturbent un côté et la façon dont elles modifient les ensembles nodaux correspondants à une valeur propre de multiplicité deux. Pour une grande classe de déformations de la frontière, nous établissons des propriétés géométriques des ensembles nodaux, telles que le nombre de domaines nodaux, la présence de croisements et d'intersections avec la frontière, et nous étudions comment ces propriétés changent le long de chaque branche de valeur propre pour des petites perturbations. Nous montrons que les croisements internes de l'ensemble nodal se brisent  sous l'effet de déformations génériques et nous obtenons des estimations sur la localisation et la régularité des ensembles nodaux sur le rectangle perturbé.
\end{abstract}


\section{Introduction} 

\quad We study the behavior of the zero set of Laplacian eigenfunctions corresponding to an eigenvalue of multiplicity $2$. The eigenfunctions are defined on perturbations of a rectangle with Dirichlet boundary conditions. For a compact domain $\Omega\subset\mathbb{R}^d$, we consider an eigenfunction $u$ satisfying
\begin{equation}\label{eq:PDE}
    \begin{cases} (\Delta+\lambda)u=0, & \textrm{in } \Omega \\ \: u=0, & \textrm{on } \p \Omega \end{cases}
\end{equation}
with eigenvalue $\lambda$. The \textit{nodal set} of $u$ is then defined as the closure of the level set $\Gamma(u)=\{\textbf{x}\in \Omega^{\mathrm{o}}: u(\textbf{x})=0\}$ where $\Omega^o$ denotes the interior of $\Omega$. A nodal set partitions the domain into connected regions over which $u$ is nonzero; the connected components of $\Omega^{\mathrm{o}}\backslash\Gamma(u)$ are called \textit{nodal domains}. In a physical medium, the nodal set of an eigenfunction describes points that are stable under vibration. In the late $18^{\textrm{th}}$ century, Ernst Chladni cataloged a large number of nodal configurations by placing sand over a metal plate that vibrated at a fundamental frequency \cite{BH15, C87}. The sand would naturally gravitate to the points that do not vibrate, thus revealing the nodal set for a given frequency. Aside from this demonstration, Laplacian eigenfunctions model several physical phenomena, and understanding the nodal sets they produce is an active area of research \cite{Z17}. In the context of quantum mechanics, the square of a real solution to (\ref{eq:PDE}) satisfying $\left|\left|u\right|\right|_{L^2(\Omega)}=1$ is the probability density of a free quantum particle at energy $\lambda$, and the nodal set describes where such a particle is least likely to be found.  For low-energy eigenfunctions, nodal sets also play a role in spectral partitioning methods \cite{BCCM, HMW, BH10, AS09}, and much work has been done to understand their behavior \cite{CH08, GJ96, GJ98, GJ09}. In particular, this work furthers a recent effort to quantify how oscillations of low-energy eigenfunctions predict geometric properties of the underlying domain, as in \cite{BCM, BCM2, BGM}.

\quad In dimension $1$, solutions to (\ref{eq:PDE}) are known explicitly. In particular, the $j$-th eigenfunction, labeled according to its eigenvalue's position in the spectrum, equipartitions the interval into exactly $j$ nodal domains. This property is independent of the size of the interval but cannot be extended to higher dimensions. For $d\geq 2$, the number of nodal domains produced by an eigenfunction at energy $\lambda$ depends on the shape of the domain, so a natural question to ask is how domain deformations cause changes in a nodal set. A lot of the work to answer variations of this question was pioneered by Grieser and Jerison; one of their first results in this direction considered the first nodal line, corresponding to the lowest-energy eigenfunction with a nonempty nodal set, on bounded, convex domains in $\mathbb{R}^2$ \cite{GJ96}. On most domains, Laplacian eigenfunctions cannot be explicitly computed, so Grieser and Jerison relied on a partial Fourier series representation of the eigenfunction to provide estimates for the location and size of the nodal set with respect to the domain's eccentricity. In \cite{BCM}, Beck, Canzani, and Marzuola modified this technique to study the first nodal line on perturbations of a rectangle. They were able to provide estimates for both the location and regularity of the nodal line with respect to the size of the perturbation. Both sets of authors found that the number of nodal domains was invariant under their respective domain deformations, although this is not necessarily the case for higher-energy eigenvalues.

\quad In \cite{BGM}, Beck, Gupta, and Marzuola considered the lowest-energy eigenfunction on a rectangle to feature a crossing in its nodal set and studied the crossing under perturbations to the domain. Through the work of Uhlenbeck in \cite{U76}, it was known that interior crossings of nodal sets are unstable under small domain deformations, but Beck, Gupta, and Marzuola were able to quantify this instability and precisely describe the nodal set behavior. In particular, they began with an eigenfunction whose nodal set on the rectangle features two perpendicular lines that bisect each boundary component and cross in the center. Building upon methods in \cite{BCM, GJ09}, they detailed how the crossing vanishes, causing the number of nodal domains to decrease. However, their work required a non-resonant assumption that the eigenvalue of interest was simple and sufficiently far from the remainder of the spectrum. This assumption is either present or guaranteed in \cite{BCM, GJ96, GJ98, GJ09}, but to the author's knowledge, the case when an eigenvalue degeneracy is present has not yet been studied.

\quad To impose such a degeneracy, we work on a rectangle that enforces a multiplicity $2$ eigenvalue. Consistent with \cite{BGM}, we show that if the nodal set features a crossing in the rectangle, it must vanish for a large class of boundary perturbations, presented in Definition \ref{def:phi}. Otherwise, small domain deformations do not change geometric properties of the nodal set. In order to state our results more precisely, we describe the setup in detail. Throughout this work, we denote $$R(N)=[0,N]\times[0,1]$$ for $N>1$ and define $\Omega_\phi\left(\eta, N\right)$, pictured in Figure \ref{fig:domainDef}, as a perturbation of $R(N)$ when $\eta>0$. Precisely, 
\begin{equation}
    \Omega_\phi\left(\eta, N\right)=\left\{(x,y)\in\mathbb{R}^2: y\in[0,1], -\eta\phi(y)\leq x\leq N\right\} \nonumber
\end{equation}
where $\phi$ satisfies the requirements outlined in Definition \ref{def:phi} below.

\begin{figure}
    \centering
    \includegraphics[scale=0.5]{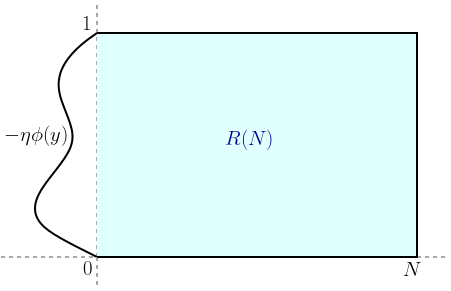}
    \caption{The Domain $\Omega_\phi(\eta, N)$.}
    \label{fig:domainDef}
    \vspace{12pt}
\end{figure}

\begin{define}\label{def:phi}
    A function $\phi$ is said to be admissible if $\phi\in C^5\left([0,1],[0,1]\right)$ has fixed endpoints $\phi(0)=\phi(1)=0$ and satisfies both $\left|\left|\phi^{(\ell)}\right|\right|_{L^\infty}\leq 1$ for $1\leq\ell\leq 5$ and
    \begin{equation}\label{eq:Ass2}
    \Lambda_\phi\eqdef\int_0^1\phi(y)\sin(2\pi y)\sin(\pi y)dy\neq 0.
    \end{equation}
    We denote the set of admissible functions as $\mathcal{A}$.
\end{define}

\begin{rem}
    The results presented in Section \ref{sec:DETAILS!} hold even if we relax the requirement $\left|\left|\phi^{(\ell)}\right|\right|_{L^\infty}\leq 1$ in Definition \ref{def:phi}, although the constants that appear in the estimates would then depend on the size of $\phi$ and its derivatives. For convenience, we bound the norm by $1$ so that the parameter $\eta$ fully encompasses the size of the domain perturbation.
\end{rem}

\quad Note that taking $\phi\in\mathcal{A}$ was also necessary for the analysis in \cite{BGM}, although the requirement $\Lambda_\phi\neq 0$ plays a more crucial role in several key aspects of this problem, as outlined in Remark \ref{rem:1}.  Under the conditions in Definition \ref{def:phi}, $R(N)\subset \Omega_\phi(\eta, N)$ for all $N$, all $\eta>0$. For simplicity, we abbreviate $R=R(N)$ and $\Omega=\Omega_\phi(\eta, N)$ when the context is clear. 
Taking $\eta\ll 1$ ensures that the perturbation $\Omega\backslash R$ is small. When $\eta=0$, the domain is simply the rectangle $R$, over which the eigenpairs of (\ref{eq:PDE}) can be written as $\left(\psi_{m,n},\lambda_{m,n}\right)$ where
\begin{equation}\label{eq:eigenpairs}
    \psi_{m,n}(x,y)\eqdef\frac{2}{\sqrt{N}}\sin\left(\frac{m\pi}{N}x\right)\sin\left(n\pi y\right) \quad \quad \textrm{and} \quad \quad \lambda_{m,n}\eqdef\pi^2\left(\frac{m^2}{N^2}+n^2\right)
\end{equation}
for $(m,n)\in\mathbb{Z}^2$. The eigenfunction prefactor is chosen to normalize $||\psi_{m,n}||_{L^2(R)}=1$. We are interested in the case when $\lambda_{2,2}$ is degenerate, which only occurs when the aspect ratio $N$ satisfies 
\begin{equation}
    k^2=3N^2+4 \nonumber
\end{equation} 
for some integer $k\geq 3$. By (\ref{eq:eigenpairs}), this enforces the multiplicity $\lambda_{2,2}=\lambda_{k,1}$. Under (\ref{eq:Ass2}), the degeneracy breaks for small $\eta>0$, and we find a bifurcation of simple eigenvalue branches stemming from $\lambda_{2,2}$. For the sake of notation, we let $\mu=\mu(\eta)$ denote the eigenvalue along the upper branch and $\gamma=\gamma(\eta)$ the eigenvalue along the lower branch, so that 
\begin{equation}
    \mu(0)=\gamma(0) \quad \textrm{and} \quad \mu(\eta)>\gamma(\eta) \nonumber
\end{equation}
for small positive $\eta$. If $v(x,y;\eta)$ and $w(x,y;\eta)$ are normalized in $L^2(\Omega)$ and satisfy
\begin{equation}\label{eq:v,w}
    \begin{cases} (\Delta+\mu)v=0, & \textrm{in }\Omega \\ \: v=0, & \textrm{on }\p\Omega\end{cases} \quad \quad \quad \begin{cases} (\Delta+\gamma)w=0, & \textrm{in }\Omega \\ \:w=0, & \textrm{on }\p\Omega \end{cases}
\end{equation}
then $v$ and $w$ depend on $\phi, N$ and are pointwise smooth in $\eta$ for $(x,y)\in R$ \cite{TK}. Further, because $\mu,\gamma$ are simple, the nodal sets of $v,w$ are unique. For fixed $\phi, N$, taking $\eta\to 0$ allows us to recover two unique eigenfunctions $v_0, w_0$ on $R$, one corresponding to each eigenvalue branch. Namely,
\begin{equation}\label{eq:onRect}
    v_0=\lim_{\eta\to 0}v \quad \textrm{and} \quad w_0=\lim_{\eta\to 0}w
\end{equation}
where $v,w$ solve (\ref{eq:v,w}) on $\Omega$. Note that the eigenfunctions in (\ref{eq:v,w}) are defined uniquely up to a sign change; however, because we are concerned with their nodal sets, all of our results hold irrespective of sign. While the nodal sets of $v_0,w_0$ can be parametrized explicitly for given $\phi, N$, there are a few common properties as described in the following theorem.

\begin{theorem}\label{prop:n=0}
Let $k\geq 3$ be an integer and $N>0$ be such that $k^2=3N^2+4$. Let $\phi \in\mathcal{A}$ as in Definition \ref{def:phi} and consider $v_0,w_0$ as presented in (\ref{eq:onRect}). There exists a constant $c_0\in\left(0,\frac{1}{2}\right)$, dependent only on $\Lambda_\phi$, such that the following holds:
\begin{enumerate}
    \item If $k\geq 4$ is even, then the nodal set of $v_0$ separates four nodal domains and features a crossing at $\left(\frac{N}{2},\overline{y}\right)$ where the height satisfies $$c_0\leq \overline{y}\leq 1-c_0.$$
    \item If $k\geq 5$ is odd, then the nodal set of $v_0$ separates three nodal domains and lies outside $$\left\{\left|x-\frac{N}{2}\right|< c_0\right\}.$$
    \item If $k\geq 8$, then the nodal set of $w_0$ consists of $(k-1)$ disjoint curves that separate $k$ nodal domains and lie outside $$\bigcup_{j=1, j\textrm{ odd}}^{2k-1}\left\{\left|x-\frac{jN}{2k}\right|<c_0\right\}.$$ Each curve intersects the top and bottom boundary.
    \item If $k=5,6,7$, then the nodal set of $w_0$ separates either $k$ or $(k-2)$ nodal domains.
\end{enumerate}
In each case, every intersection of the nodal set with the boundary $\p R(N)$ is orthogonal.
\end{theorem}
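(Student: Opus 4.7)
The plan is to identify $v_0$ and $w_0$ explicitly inside the two-dimensional eigenspace $\spn\{\psi_{22}, \psi_{k1}\}$ by means of first-order degenerate perturbation theory, and then read off the nodal geometry from the resulting trigonometric expressions.

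First I would apply Hadamard's variational formula to the one-sided deformation of the left edge $\{x=0\}$. At first order in $\eta$, the splitting of the double eigenvalue $\lambda_{22}=\lambda_{k1}$ is governed by the symmetric $2\times 2$ matrix
\begin{equation*}
M \;=\; \frac{4\pi^2}{N^3}\begin{pmatrix} 4\,I_2 & 2k\,\Lambda_\phi \\ 2k\,\Lambda_\phi & k^2\,I_1 \end{pmatrix}, \qquad I_n\eqdef\int_0^1 \phi(y)\sin^2(n\pi y)\,dy,
\end{equation*}
computed from the normal derivatives of $\psi_{22}$ and $\psi_{k1}$ at $x=0$. Because $\Lambda_\phi\neq 0$ by Definition~\ref{def:phi}, the off-diagonal entry is nonzero, so $M$ has two distinct eigenvalues and the unit eigenvectors $(\alpha_v,\beta_v)$ and $(\alpha_w,\beta_w)$ are uniquely determined up to sign. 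Smoothness of $(v,w)$ in $\eta$ together with the ordering $\mu>\gamma$ then fixes $v_0=\alpha_v\psi_{22}+\beta_v\psi_{k1}$ and $w_0=\alpha_w\psi_{22}+\beta_w\psi_{k1}$; the diagonalization of $M$ furnishes explicit bounds on the ratios $\beta_v/\alpha_v$ and $\alpha_w/\beta_w$ in terms of $\Lambda_\phi$, $k$, and $N$.

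With these explicit expressions in hand, each claim reduces to an elementary trigonometric analysis. For (i), the identity $\sin(k\pi/2)=0$ at even $k$ forces both basis functions to vanish on $\{x=N/2\}$, so $\{x=N/2\}\subset\Gamma(v_0)$; computing $\partial_x v_0(N/2,y)$ and factoring out $\sin(\pi y)$ reduces the search for additional crossings on this line to the single equation
\begin{equation*}
\cos(\pi\overline{y}) \;=\; \frac{k(-1)^{k/2}\,\beta_v}{4\alpha_v},
\end{equation*}
and the perturbative bound on $|\beta_v/\alpha_v|$ makes the right-hand side strictly less than $1$ in absolute value, giving a unique interior crossing $\overline{y}\in(c_0,1-c_0)$ with $c_0=c_0(\Lambda_\phi)$. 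The count of four nodal domains follows because away from $\{x=N/2\}$ the function $v_0$ is a small perturbation of $\psi_{22}$ and its nodal set is a smooth graph close to $\{y=1/2\}$. For (ii), with $k$ odd we have $\psi_{k1}(N/2,y)=\pm(2/\sqrt N)\sin(\pi y)$, which is nonzero in the interior, and a Taylor expansion of $v_0$ in $x$ about $N/2$, with linear term controlled by $\beta_v/\alpha_v$, shows $v_0$ has constant sign on the strip $\{|x-N/2|<c_0\}\cap\{0<y<1\}$; the remaining nodal set consists of two symmetric curves, yielding three domains.

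The analysis of (iii) is analogous with the roles of the two basis functions reversed: $w_0$ is dominated by $\psi_{k1}$ with a small $\psi_{22}$ correction, and for $k\geq 8$ the spacing $N/k$ between consecutive nodal lines of $\psi_{k1}$ is large enough relative to the perturbation that an implicit-function argument localized near each line $\{x=jN/k\}$ extracts $k-1$ disjoint smooth curves connecting the top and bottom edges, each staying outside bands around the midlines $\{x=(2\ell+1)N/(2k)\}$. Statement (iv) is then a finite case check: for $k\in\{5,6,7\}$ adjacent perturbed curves may merge depending on the sign of $\alpha_w\beta_w$, giving either $k$ or $k-2$ domains. Finally, orthogonality at $\partial R$ is handled uniformly: near a boundary point $(x^*,0)$ the Dirichlet condition gives $v_0(x,y)=y\,h(x)+O(y^2)$, so if the interior nodal set touches the boundary there then $h(x^*)=0$, and the nodal curve is locally the graph $x-x^*=O(y)$, tangent to the inward normal. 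The main technical obstacle is the fine nodal-count bookkeeping in (iii)--(iv): tracking exactly how many of the $k-1$ perturbed curves remain separate, and which meet both horizontal edges, requires quantitative comparisons between the off-diagonal size $k|\Lambda_\phi|/N^3$ and the intrinsic spacing $N/k$ of $\psi_{k1}$, together with explicit sign control for $w_0$ between consecutive lines. The threshold $k\geq 8$ for clean separation, and the two possible outcomes when $k\in\{5,6,7\}$, should emerge from precisely this comparison.
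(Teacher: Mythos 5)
Your overall strategy is the same as the paper's: a Hadamard variation argument producing the symmetric $2\times 2$ matrix (your $M$ is the paper's $\textbf{D}$ in the opposite basis ordering), followed by trigonometric analysis of the explicit combinations of $\psi_{22}$ and $\psi_{k1}$. However, there are two genuine gaps. First, everything in items (i)--(iii) hinges on the quantitative coefficient bound that you dismiss as "explicit bounds furnished by diagonalization": one needs, in the paper's notation, $|c_1|<\sqrt{16/(k^2+16)}$, i.e.\ $|\beta_v/\alpha_v|\lesssim 4/k$. This does not follow from merely diagonalizing $M$; it requires (a) identifying \emph{which} eigenvector belongs to the upper branch, which the paper gets from domain monotonicity of Dirichlet eigenvalues ($|\dot{\mu}|<|\dot{\gamma}|$), and (b) the lower bound $\left|\frac{a+\dot{\mu}}{b}\right|>\frac{k}{4}$, which the paper proves via a pointwise positivity argument for the integrand $4k\bigl(k^2\sin^2(\pi y)-4\sin^2(2\pi y)\bigr)-2k\,\sgn(b)(k^2-16)\sin(2\pi y)\sin(\pi y)$. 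Without this, $|\cos(\pi\overline{y})|<1$ in (i) is unproved, and the nodal domain counts in (ii) and (iii) are unsupported. Relatedly, your claim in (ii) that "away from $\{x=N/2\}$ the function $v_0$ is a small perturbation of $\psi_{22}$" is misleading: since $|c_1|\sim 1/k$ while $\sin(k\pi x/N)$ oscillates $k$ times, the $\psi_{k1}$ term is \emph{not} dominated near the zeros of $\sin(2\pi x/N)$, and the count of exactly three (rather than $5,7,\dots$) domains requires verifying that the interior extrema of $\sin(k\pi x/N)/\sin(2\pi x/N)$ stay below $2|c_2/c_1|$ (the paper's bound $M_k\leq k^2/4$ on the interval $I_k$). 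You flag this bookkeeping issue for (iii)--(iv) but treat it as settled for (ii), where the same issue arises.

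Second, your orthogonality argument is insufficient as stated. From $v_0(x,y)=y\,h(x)+O(y^2)$ with $h(x^*)=0$ you conclude the nodal curve satisfies $x-x^*=O(y)$; that only gives a bounded slope $dx/dy$, i.e.\ transversality, not orthogonality. Orthogonality needs $x-x^*=o(y)$, which for the explicit $v_0$ follows because the $y$-expansion of $\sin(\pi y)$ and $\sin(2\pi y)$ contains only odd powers, so the correction is $O(y^3)$ rather than a generic $O(y^2)$; the paper instead reads it off directly from $f'(0)=f'(N)=0$ and $(f^{-1})'(0)=(f^{-1})'(1)=0$ in Definition~\ref{def:f}. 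Finally, for (iv) the statement asserts the count is \emph{either} $k$ or $k-2$, so besides exhibiting both possibilities (the paper uses two explicit choices of $\phi$) you must also rule out smaller counts, e.g.\ three domains when $k=7$, which the paper does by the explicit estimate $M_7\approx 1.736$ against the bound on $|c_1|$; "merging depending on the sign of $\alpha_w\beta_w$" does not address this.
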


\begin{rem}\label{rem:1}
    The condition (\ref{eq:Ass2}) is necessary to guarantee that the eigenvalue degeneracy breaks ($\mu>\gamma$) for small $\eta$, as described in Remark \ref{rem:BananaSplit}. It also provides a lower bound for the distance between $\overline{y}$ and the boundary $\p R(N)$, as highlighted in Remark \ref{rem:heightBdd}, and plays a role in the nodal set of item $(i)$ in Theorem \ref{prop:n=0} when $\eta>0$, detailed in Remark \ref{rem:BananaDoubleSplit}. For these reasons, the condition $\Lambda_\phi\neq 0$ is necessary for all of the results presented in this work.
\end{rem}

\quad Figure \ref{fig:Examples} below illustrates possible nodal configurations satisfying items $(i)-(iii)$ of Theorem \ref{prop:n=0} respectively. These examples were constructed with the boundary function $\phi(y)=Zy(1-y)^2$ with $Z\in\mathbb{R}_+$ chosen to satisfy Definition \ref{def:phi}.

\begin{figure}[H]
    \centering
    \includegraphics[scale=0.36]{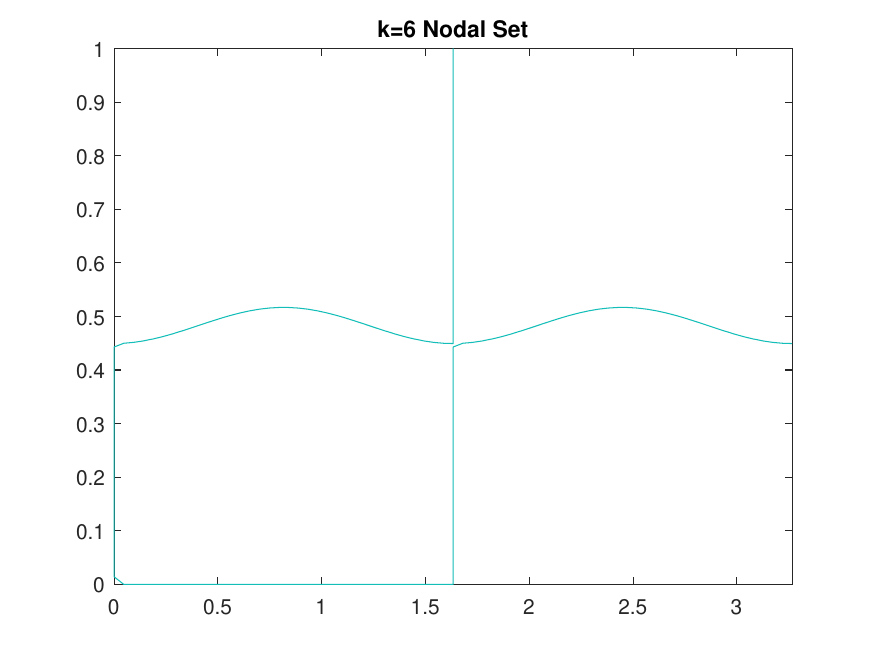}\hfill
    \includegraphics[scale=0.36]{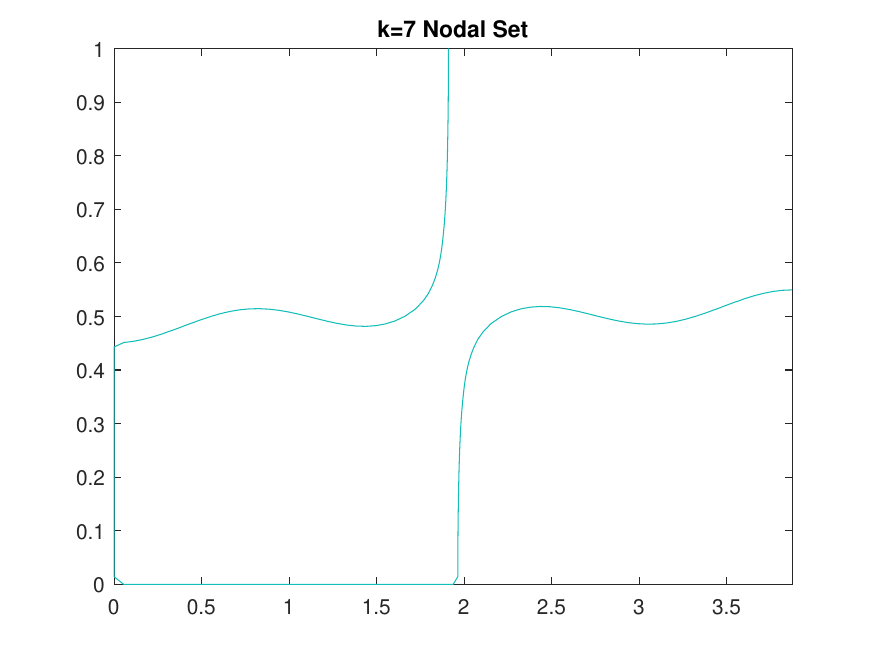}\hfill
    \includegraphics[scale=0.36]{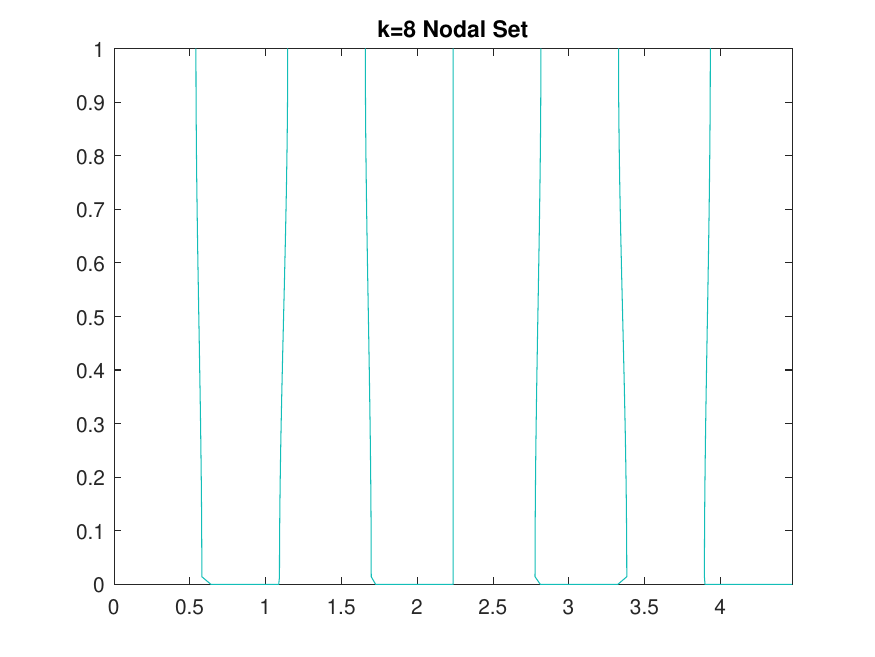}
    \caption{Nodal Sets for $v_0$ when $k=6,7$; Nodal Set for $w_0$ when $k=8$.}
    \label{fig:Examples}
\end{figure}

In the case $k=3$, item $(ii)$ still holds, although the nodal set possibly intersects a corner of $R$. If $k=3,4$, then the nodal set of $w_0$ separates $k$ nodal domains, with the potential of a nodal curve intersecting either a corner or a vertical boundary component. We focus on higher values of $k$ for the sake of uniformity.

\quad For comparison, we establish qualitative properties and a precise description of the nodal set on the perturbed domain $\Omega$. In particular, we are interested in the local nodal behavior near the intersection point in item $(i)$ of Theorem \ref{prop:n=0}. Under the non-resonant assumption in \cite{BGM}, this is where the most substantial change to the nodal set occurred; the potential vanishing of a nodal crossing is quite delicate and demands a careful analysis. For this reason, our initial work focuses on the upper branch with even $k\geq 4$. Before stating our results, we briefly describe our eigenfunction decomposition. Just as in \cite{GJ96}, over $R\subset\Omega$ we write $v$ as a partial Fourier series
\begin{equation}\label{eq:adiabatic}
    v(x,y)=v_1(x)\sin(\pi y)+v_2(x)\sin(2\pi y)+E(x,y), \quad \quad E(x,y)=\sum_{j\geq 3}v_j(x)\sin(j\pi y) 
\end{equation}
where 
\begin{equation}\label{eq:InnerProd}
    v_j(x)=2\int_0^1v(x,y)\sin(j\pi y)dy 
\end{equation}
for $j\geq 1$. Using the fact that the Fourier basis is orthogonal and $v$ satisfies an elliptic equation, we can alternatively write each $v_j(x)$ as the solution to a second-order ODE. This allows us to extend them over $[-\eta, N]$ and determine that the error $E(x,y)$ is in fact small in $\eta$. We find that the first two modes in (\ref{eq:adiabatic}) closely resemble eigenfunctions on $R$. In particular, we show that the restriction $v|_R$ is equal to $v_0$ up to an error bounded by a multiple of $\eta^{1-\epsilon}/N^{\frac{3}{2}}$ for any $\epsilon>0$. A similar analysis holds for the eigenfunction $w$ along the lower branch.

\quad Understanding the nodal behavior on $\Omega$ requires a more detailed characterization of $v_0^{-1}(0), w_0^{-1}(0)$ than provided in Theorem \ref{prop:n=0}. For this reason, we postpone the presentation of precise estimates until Section \ref{sec:DETAILS!}. For now, the following result serves as a companion to Theorem \ref{prop:n=0} and offers a summary of our findings.

\begin{theorem}\label{thm:n>0}
    Let $d$ denote the distance between curves and let $\epsilon>0$ be small. Under the same conditions as in Theorem \ref{prop:n=0}, there exist constants $C_0,k_0>0$, dependent only on $\Lambda_\phi$, such that the eigenfunctions in (\ref{eq:v,w}) have the following properties for positive $\eta$:
    \begin{enumerate}
        \item For each even $k\geq k_0$, there exists $\eta_0(k,\epsilon)>0$ such that for $0<\eta\leq\eta_0(k,\epsilon)$, the nodal set of $v(x,y;\eta)$ features two disjoint curves, $\zeta_1, \zeta_2$, that separate $\Omega_\phi(\eta, N)$ into three nodal domains and satisfy
        \begin{equation}
            C_0^{-1}\eta^{\frac{1}{2}}\leq d\left(\zeta_1,\zeta_2\right)\leq C_0\eta^{\frac{1}{2}}. \nonumber
        \end{equation}
        Each curve intersects one horizontal and one vertical boundary component.
        \item For each odd $k\geq 5$, there exists $\eta_0(k,\epsilon)>0$ such that for $0<\eta\leq\eta_0(k,\epsilon)$, the nodal set of $v(x,y;\eta)$ features two disjoint curves, $\zeta_3, \zeta_4$, that separate $\Omega_\phi(\eta, N)$ into three nodal domains and satisfy
        \begin{equation}
            C_0^{-1}\leq d\left(\zeta_3, \zeta_4\right)\leq C_0. \nonumber
        \end{equation}
        Each curve intersects one horizontal and one vertical boundary component.
        \item For each $k\geq 8$, there exists $\eta_0(k,\epsilon)>0$ such that for $0<\eta\leq \eta_0(k,\epsilon)$, the nodal set of $w(x,y;\eta)$ features $(k-1)$ disjoint curves $\zeta_i$ that separate $\Omega_\phi(\eta, N)$ into exactly $k$ nodal domains and satisfy
        \begin{equation}
            C_0^{-1}\leq d\left(\zeta_i, \zeta_j\right)\leq C_0 \nonumber
        \end{equation}
        for all $i\neq j$. Each curve intersects the top and bottom boundary.
    \end{enumerate}
    In each case, every intersection of the nodal set with the boundary $\p\Omega_\phi(\eta, N)$ is orthogonal.
\end{theorem}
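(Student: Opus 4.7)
The plan is to use the partial Fourier decomposition (\ref{eq:adiabatic}), the closeness estimate $\|v|_R - v_0\| \leq C\eta^{1-\epsilon}/N^{3/2}$ stated in the text, and the analogous bound for $w$ versus $w_0$, to transfer the qualitative structure of the unperturbed nodal sets from Theorem \ref{prop:n=0} to the perturbed setting. The proof splits along the three cases of the statement, with case (i) requiring a genuinely new local analysis near the unperturbed crossing.

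For cases (ii) and (iii) the nodal sets $v_0^{-1}(0)$ and $w_0^{-1}(0)$ are disjoint unions of smooth curves along which $|\nabla v_0|$ (resp.\ $|\nabla w_0|$) is bounded below by a constant depending on $\Lambda_\phi$ and $N$ in a fixed neighborhood. Combining this with $C^1$-closeness of $v$ to $v_0$ and $w$ to $w_0$, which follows from the Fourier/ODE representation and standard elliptic regularity, the implicit function theorem realizes each nodal component of $v^{-1}(0)\cap R$ (resp.\ $w^{-1}(0)\cap R$) as a small graph over the corresponding unperturbed component. The curves extend into the thin sliver $\Omega\setminus R$ of width $O(\eta)$ using the PDE and the $C^5$-regularity of $\phi$. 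This preserves both the number of nodal domains and the $O(1)$ separation $d(\zeta_i,\zeta_j)$, while orthogonality at $\p\Omega$ follows from the Dirichlet condition via a Hopf-type argument applied to smooth boundary points of the nodal set.

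The principal difficulty lies in case (i). Writing the restriction of $v$ to $R$ as
\begin{equation}
v(x,y;\eta)=v_1(x)\sin(\pi y)+v_2(x)\sin(2\pi y)+E(x,y),\nonumber
\end{equation}
with $E = O(\eta^{1-\epsilon})$, I would expand $v_j(x) = v_j^0(x) + \eta\,\widetilde{v}_j(x) + o(\eta)$, where $v_j^0$ are the modes producing $v_0$. At the unperturbed crossing $v_0$ admits a saddle-type Taylor expansion $v_0(x,y) \approx a(x - N/2)(y - \overline{y})$ for some $a \neq 0$ determined by $(v_j^0)'(N/2)$, while the $\eta$-correction contributes a nonzero constant term $c\,\eta$ at the crossing. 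The local model thus takes the form
\begin{equation}
a(x - N/2)(y - \overline{y}) - c\,\eta + \text{higher order} = 0,\nonumber
\end{equation}
whose zero set consists of two hyperbola branches separated by distance $\sim (|c|/|a|)^{1/2}\eta^{1/2}$, yielding the asserted scaling and the bifurcation from four to three nodal domains.

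The hard part is producing the nonzero constant $c$ with a definite sign. The splitting rate $\mu(\eta) - \gamma(\eta) \sim \eta|\Lambda_\phi|$ from Remark \ref{rem:BananaSplit} forces the upper-branch eigenfunction to mix the modes $\sin(\pi y)$ and $\sin(2\pi y)$ at first order in $\eta$, and the Kato perturbation formula for the now-simple eigenvalues expresses the corrections $\widetilde{v}_j(N/2)$ as explicit linear functionals of $\Lambda_\phi$. The threshold $k \geq k_0$ enters so that the higher-mode remainder $E$ and the $O(\eta^{1-\epsilon})$ errors are uniformly dominated by the leading linear term in the local saddle expansion, which requires $N$ (and hence $k$) sufficiently large in a manner depending on $\Lambda_\phi$. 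Once $c \neq 0$ is established, the two hyperbola branches match to global nodal curves via the case-(ii) implicit-function argument, each now connecting a horizontal boundary component to the perturbed left boundary $\{x = -\eta\phi(y)\}$, and the orthogonal boundary intersections follow as in the previous cases.
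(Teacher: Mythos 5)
Your overall architecture is the paper's: a partial Fourier decomposition, quantitative $C^1$-closeness of $v$ to $v_0$ (resp.\ $w$ to $w_0$) combined with gradient lower bounds along the unperturbed nodal curves and the implicit function theorem for cases (ii)--(iii) and away from the crossing in case (i), and a local saddle model $a\big(x-\tfrac{N}{2}\big)\big(y-\overline{y}\big)-c\eta$ near the crossing whose constant term opens the crossing into a hyperbola with vertex separation $\sim\eta^{1/2}$. This is exactly Sections \ref{sec:Prop1.1}--\ref{sec:Lower} of the paper, with your model polynomial matching (\ref{eq:Poly}). However, you defer precisely the step on which case (i) hinges: showing that the constant $c$ is bounded \emph{below} by a multiple of $\eta$ (not merely that it is a "linear functional of $\Lambda_\phi$" -- a particular linear functional can vanish even when $\Lambda_\phi\neq 0$). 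In the paper this is the combination of Proposition \ref{prop:VJ}, which identifies the first-order boundary data $v_1(0),v_2(0)$ via the Hadamard variation (\ref{eq:VarCoeff}), with the exact transport identity $v_n(N/2)=\tfrac12 v_n(0)\sec(\mu_nN/2)$ from (\ref{eq:v1})--(\ref{eq:v2}); these reduce $|v(\mathbf{z})|$ to the specific quantity $\big|2c_2v_1(0)-kc_1v_2(0)\big|$, whose lower bound $\geq c\eta/N^{3/2}$ is a nontrivial algebraic consequence of $\mathbf{c}$ being an eigenvector of the Hadamard matrix $\textbf{D}$ together with (\ref{eq:Ass2}). One must also beat the error $E(\mathbf{z})$, which is why the paper needs the exponential interior decay of $E$ and hence $k\geq k_0$. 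Without carrying out this computation the bifurcation from four to three nodal domains, and the $\eta^{1/2}$ scaling, are not established.

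A second, smaller gap: the $C^1$-closeness of $v|_R$ to the \emph{correct} member $v_0=c_1\psi_{k1}+c_2\psi_{22}$ of the two-dimensional eigenspace is not "standard elliptic regularity," and a direct Kato expansion around $\eta=0$ is obstructed by the degeneracy $\lambda_{22}=\lambda_{k1}$. The paper devotes Lemma \ref{lem:AD10} to a Lyapunov--Schmidt reduction controlling both the spectral gap and the eigenvalue-variation gap $|\dot{\mu}-\dot{\gamma}|$ (each only $\gtrsim 1/N^2$), and this is exactly where the $\eta^{1-\epsilon}$ loss and the $k$-dependence of $\eta_0$ originate; your proposal uses these facts but does not account for them. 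Two minor slips: the splitting rate is $\dot{\mu}-\dot{\gamma}\sim 1/N$ (dominated by the diagonal of $\textbf{D}$), with $\Lambda_\phi\neq 0$ entering through $c_1\neq 0$ rather than through the size of the gap; and in case (i) only one of the two resulting curves meets the perturbed left boundary -- the other meets $x=N$.
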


\quad Theorem \ref{thm:n>0} follows from the results in Section \ref{sec:DETAILS!}. In item $(i)$, we require $k$ be large so that the error in \ref{eq:adiabatic} is sufficiently small near $\left(\frac{N}{2},\overline{y}\right)$, the intersection point of $v_0^{-1}(0)$. Otherwise our results hold for all $k$ of interest. By comparing items $(i)$ in Theorems \ref{prop:n=0} and \ref{thm:n>0}, we conclude that the number of nodal domains decreases for small perturbations. In the remaining cases, the characterization of the nodal set does not change.

\begin{rem}
    In the case that $k$ is even, our characterization of the nodal set of $v$ in Theorem \ref{thm:n>0} is consistent with the results presented in \cite{BGM}. However, we require the size of $\eta_0$ to be limited by the integer $k$ satisfying $k^2=3N^2+4$. In Lemma \ref{lem:AD10}, this becomes necessary to counteract the smallness of the spectral gap on the unperturbed rectangle $[0,N]\times[0,1]$. In Proposition \ref{prop:W}, this is necessary to keep other eigenfunction modes from becoming dominant as the perturbation grows.
\end{rem}

\subsubsection{Spectral partitions.} Let $\llbracket u\rrbracket$ denote the number of nodal domains of a function $u$ satisfying (\ref{eq:PDE}). Courant's nodal domain theorem states that $\llbracket u\rrbracket$ is no more than the minimal index of the corresponding eigenvalue $\lambda$ \cite{BH15}. If an eigenfunction saturates this bound, i.e. $\llbracket u\rrbracket=\textrm{index}(\lambda)\eqdef \min\{j:\lambda=\lambda_j\}$, then we label it \textit{Courant sharp}. The collection of nodal domains of $u$ forms a bipartite equipartition of $\Omega$. However, this partition is minimal, as defined in \cite{BCCM22}, if and only if $u$ is Courant sharp, so there is particular interest in understanding when an eigenfunction meets this qualification.

\quad According to Theorems \ref{prop:n=0} and \ref{thm:n>0}, $\llbracket w\rrbracket$ and $\textrm{index}(\gamma)$ remain unchanged as $\eta$ becomes positive. In contrast, $\textrm{index}(\mu)$ increases under boundary perturbations satisfying (\ref{eq:Ass2}), and $\llbracket v\rrbracket$ is non-increasing. In particular, $$\llbracket w_0\rrbracket=k \quad \textrm{and} \quad \llbracket v_0 \rrbracket =\begin{cases} 3, & k \textrm{ odd} \\ 4, & k \textrm{ even} \end{cases}$$ while $\llbracket w\rrbracket=k$ and $\llbracket v\rrbracket=3$ for large enough $k$ and small enough $\eta$. Because $v$ corresponds to the upper-branch eigenvalue $\mu$, we have
$$\llbracket v\rrbracket\leq \llbracket w\rrbracket\leq \textrm{index}(\gamma)<\textrm{index}(\mu)$$ for $\eta>0$. Thus, the eigenfunction $v$ cannot possibly be Courant sharp whereas both $\llbracket w\rrbracket$ and $\textrm{index}(\gamma)$ are invariant under small domain deformations. In particular, $\textrm{index}(\mu)=\textrm{index}(\gamma)>k$ when $\eta=0$ because by (\ref{eq:eigenpairs}), each of the eigenvalues in $$\{\lambda_{1,2}\}\cup\left\{\lambda_{j,1}\right\}_{j=1}^{k-1}$$ is less than $\lambda_{k,1}$. Thus, $v_0,w_0$ are not Courant sharp and neither are $v,w$ for small $\eta>0$.

\subsection{Comprehensive description of the nodal set geometry.}\label{sec:DETAILS!}
In this section, we present results that precisely describe the nodal sets of the eigenfunctions in (\ref{eq:v,w}). We demonstrate tight enough control over both the structure and regularity of the nodal sets to establish the qualitative statements in Theorem \ref{thm:n>0}. In order to properly understand the effects of the domain perturbation, we first introduce some useful notation. We retain the labeling $v_0=\lim_{\eta\to 0}v$ and $w_0=\lim_{\eta\to 0}w$ where $v,w$ solve (\ref{eq:v,w}). Due to the eigenvalue degeneracy $\lambda_{2,2}=\lambda_{k,1}$ on the rectangle, (\ref{eq:eigenpairs}) implies that there is a coefficient pair $(c_1, c_2)$ normalized in $\ell^2$ such that
\begin{equation}
    v_0(x,y)=\left(c_1\psi_{k,1}+c_2\psi_{2,2}\right)(x,y), \quad \quad \textrm{and} \quad \quad w_0(x,y)=\left(-c_2\psi_{k,1}+c_1\psi_{2,2}\right)(x,y). \nonumber
\end{equation}
This motivates the following construction, which presents the nodal parametrizations of $v_0$ and $w_0$ in $R(N)$. Let   
\begin{equation}\label{eq:fv}
    f_v(x)\eqdef \frac{1}{\pi}\arccos\left(-\frac{c_1}{2c_2}\frac{\sin(k\pi x/N)}{\sin(2\pi x/N)}\right)
\end{equation}
so that $(x,f_v(x))$ describes the nodal set $v_0^{-1}(0)$ away from $x=\frac{N}{2}$. Similarly, let
\begin{equation}\label{eq:fw}
    f_w(x)\eqdef \frac{1}{\pi}\arccos\left(\frac{c_2}{2c_1}\frac{\sin(k\pi x/N)}{\sin(2\pi x/N)}\right)
\end{equation}
so that $(x,f_w(x))$ describes the nodal set $w_0^{-1}(0)$ away from $x=\frac{N}{2}$. 

\quad According to Remark \ref{rem:BananaSplit}, the coefficients $c_1,c_2$ are both nonzero under (\ref{eq:Ass2}) and hence the nodal behavior of $v_0$ and $w_0$ at $x=\frac{N}{2}$ depends on the parity of $k$. Theorem \ref{prop:n=0} states that the nodal set for $v_0$ features a crossing along $x=\frac{N}{2}$ when $k$ is even. By (\ref{eq:fv}), the precise location can be determined in terms of $\phi,N$. Retaining the notation from Theorem \ref{prop:n=0}, let $\overline{y}$ denote the height of the nodal set intersection for $v_0$ when $k\geq 4$ is even. Then $\overline{y}$ can be written
\begin{equation}\label{eq:ybar}
    \overline{y}=\frac{1}{\pi}\arccos\left(\cos\left(\frac{k\pi}{2}\right)\frac{kc_1}{4c_2}\right). 
\end{equation}
\quad Theorem \ref{prop:n=0} states that $\overline{y}$ is bounded away from the top and bottom boundaries. The same is true for $f_v(0)=\lim_{x\to 0^+}f_v(x)$. Under this notation, we first present a family of theorems with pointwise estimates on the nodal sets. As in Theorems \ref{prop:n=0} and \ref{thm:n>0}, our results split according to eigenvalue branch and, in the case of the upper branch, the parity of $k$.

\subsubsection{Structure of the nodal sets.} In this section, we present three theorems with estimates on the location of the nodal sets of $v,w$ solving (\ref{eq:v,w}) on $\Omega_\phi(\eta, N)$ for $\eta>0$. Our first result complements item $(i)$ of Theorem \ref{prop:n=0}, when $k$ is an even integer and the eigenfunction corresponds to the upper branch eigenvalue $\mu$.

\begin{theorem}\label{thm:VevenPt}
    Let $\epsilon\in\left(0,\frac{1}{4}\right)$ and let $v(x,y;\eta)$ solve (\ref{eq:v,w}) on $\Omega_\phi(\eta, N)$ with eigenvalue $\mu$. Under the same conditions as in Theorem \ref{prop:n=0}, there exist constants $C_0, k_0>0$, dependent only on $\Lambda_\phi$, such that if $k\geq k_0$ is even, then there exists $\eta_0(k,\epsilon)>0$ such that the nodal set of $v(x,y;\eta)$ has the following properties for $0<\eta\leq \eta_0(k,\epsilon)$:
    \begin{enumerate}
        \item In a disc of radius $2\eta^{\frac{1}{2}-\frac{2}{3}\epsilon}$ centered at $\left(\frac{N}{2},\overline{y}\right)$, the nodal set of $v$ resembles a hyperbola and satisfies 
        \begin{equation}
            C_0^{-1}\eta\leq \left|x-\frac{N}{2}\right|\left|y-\overline{y}\right|\leq C_0 \eta. \nonumber
        \end{equation}
        \item If $x\in\big[\frac{N}{2}-\frac{1}{10},\frac{N}{2}+\frac{1}{10}\big]$, then $v(x,y)\neq 0$ whenever $(x,y)$ satisfies
        \begin{equation}
            \left|x-\frac{N}{2}\right|\left|y-f_v(x)\right|\geq C_0\eta^{1-\epsilon}. \nonumber
        \end{equation}
        \item If $x\notin\big[\frac{N}{2}-\frac{1}{10}, \frac{N}{2}+\frac{1}{10}\big]$ and $x> 0$, then $v(x,y)\neq 0$ whenever $(x,y)$ satisfies
        \begin{equation}
            \left|y-f_v(x)\right|\geq \frac{C_0\eta^{1-\epsilon}}{N\left|\sin\left(\frac{2\pi}{N}x\right)\right|+\eta^{\frac{1}{3}(1-\epsilon)}}. \nonumber
        \end{equation}
        \item If $x\leq 0$, then $v(x,y)\neq 0$ whenever $y$ satisfies $$\left|y-f_v(0)\right|\geq C_0\eta^{\frac{2}{3}(1-\epsilon)}.$$
        \item There exists a tubular neighborhood about one of the curves $y=\overline{y}\pm \left(x-\frac{N}{2}\right)$ which is disjoint from the nodal set of $v$. The width $\omega$ of this neighborhood satisfies 
        \begin{equation}
            C_0^{-1}\eta^{\frac{1}{2}}\leq |\omega|\leq C_0\eta^{\frac{1}{2}}. \nonumber
        \end{equation}
    \end{enumerate}
\end{theorem}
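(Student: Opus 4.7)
The plan is to combine the partial Fourier decomposition in \eqref{eq:adiabatic} with the pointwise closeness estimate $\|v - v_0\|_{L^\infty(R)} = O(\eta^{1 - \epsilon}/N^{3/2})$ (established earlier in the excerpt by controlling the error $E$ and treating the modes $v_j$ as solutions of their ODE systems). Since $v_0 = c_1\psi_{k1} + c_2\psi_{22}$ is explicit, all five items reduce to a Taylor / implicit-function analysis of $\{v_0 = 0\}$ coupled with a careful lower bound on a certain leading-order perturbation term in $\eta$.

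The first step is to build a local model for $v_0$ near the crossing point $(N/2, \overline{y})$. For even $k$ we have $\sin(k\pi/2) = \sin(\pi) = 0$, so both summands of $v_0$ vanish identically on the line $\{x = N/2\}$; hence $\partial_y^\ell v_0(N/2, y) \equiv 0$ for every $\ell \geq 0$. Differentiating in $x$ and evaluating at $y = \overline{y}$ recovers the characterization in Definition \ref{def:yBar}, and a short computation produces
\[
v_0(x,y) = A\,(x - N/2)(y - \overline{y}) + O\!\left(|(x - N/2,\, y - \overline{y})|^3\right), \qquad A \eqdef \partial_{xy} v_0(N/2, \overline{y}),
\]
with $|A|$ of order $N^{-3/2}$ and bounded away from $0$ once $k \geq k_0$, using that $\overline{y}$ is uniformly separated from $0$ and $1$ by Theorem \ref{prop:n=0}. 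Items (i) and (v) then follow from a matching expansion $v(N/2, \overline{y}) = \delta \eta + o(\eta)$ in which $\delta \neq 0$ thanks to $\Lambda_\phi \neq 0$; combined with the local model, the equation $v = 0$ in a disc of radius $2\eta^{1/2 - 2\epsilon/3}$ becomes $A(x - N/2)(y - \overline{y}) = -\delta\eta + O(\eta^{3/2 - \epsilon})$, which is precisely the hyperbola in (i). Item (v) then drops out, because the right branch of this hyperbola stays outside a $\sqrt{|\delta \eta / A|}$-tubular neighborhood of whichever asymptote $y = \overline{y} \pm (x - N/2)$ has sign opposite to $\delta/A$.

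For items (ii)--(iv), I will apply the implicit function theorem to $v = 0$ in a neighborhood of the curve $\{y = f(x)\}$. Away from the crossing, $\partial_y v_0(x, f(x))$ is nonzero and a direct computation gives $|\partial_y v_0(x, f(x))| \sim N^{-3/2}\,|\sin(2\pi x/N)|$, so dividing the pointwise error $O(\eta^{1-\epsilon}/N^{3/2})$ by this quantity produces the denominator in (iii), while the additive $\eta^{(1-\epsilon)/3}$ term accounts for the transition into the hyperbolic regime where the local model takes over. Item (ii) is obtained in the same spirit on the range $|x - N/2| \leq 1/10$, where $|\partial_y v_0|$ degenerates linearly in $|x - N/2|$ and yields $|x - N/2||y - f(x)| \lesssim \eta^{1 - \epsilon}$. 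For item (iv) we work in the perturbation region $x \in [-\eta, 0]$: the Fourier modes $v_j$ are extended as solutions of their respective ODEs with matching values and derivatives at $x = 0$, and combining a Taylor expansion in $x$ (using the $C^5$ regularity of $\phi$) with the closeness estimate produces the stated $\eta^{2(1-\epsilon)/3}$ scaling.

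The main obstacle is establishing, in item (i), the matched lower bound $|x - N/2||y - \overline{y}| \geq C_0^{-1}\eta$ rather than merely $C_0^{-1}\eta^{1 - \epsilon}$. This forces us to identify the exact leading-order behavior of $v(N/2, \overline{y})$ as $\eta \to 0$ and to show that the leading coefficient $\delta$ is nonzero. I expect to reduce this to a spectral perturbation analysis of the splitting $\mu - \gamma$, which is $O(\eta)$ precisely because $\Lambda_\phi \neq 0$ (see Remarks \ref{rem:1} and \ref{rem:BananaSplit}), and then to track how that splitting propagates through the coupled system for $(v_1, v_2)$ down to the crossing point. The threshold $k \geq k_0$ enters at the very end: it is needed to force $\|E\|_{L^\infty}$ near $(N/2, \overline{y})$ to be small enough that the cubic remainder in the local model does not swamp the $O(\eta)$ perturbation.
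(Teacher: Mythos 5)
Your outline follows the same architecture as the paper's proof: a bilinear Taylor model of $v_0$ at $\left(\frac{N}{2},\overline{y}\right)$ plus a nonzero constant term of order $\eta$ for items (i) and (v), implicit-function comparisons with the explicit $v_0$ for items (ii)--(iii), and an ODE-extension/Taylor argument at the perturbed boundary for item (iv). However, the step you yourself flag as ``the main obstacle'' --- the lower bound $\left|v\left(\frac{N}{2},\overline{y}\right)\right|\geq c\eta/N^{\frac{3}{2}}$ --- is genuinely the crux of item (i), and the mechanism you propose for it is not the one that works. The paper does not extract this from the splitting $\mu-\gamma$; it writes $v(\textbf{z})-E(\textbf{z})$ explicitly in terms of the boundary traces $v_1(0),v_2(0)$ via the ODE formulas (\ref{eq:v1})--(\ref{eq:v2}), reducing the question to the nonvanishing, at first order in $\eta$, of the combination $2c_2v_1(0)-kc_1v_2(0)$; that first-order coefficient is an explicit integral against $\phi$ (Proposition \ref{prop:VJ}) which is nonzero precisely because $\Lambda_\phi\neq 0$. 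The remaining danger is $E(\textbf{z})$ itself, which is a priori of the \emph{same} order $\eta/N^{\frac{3}{2}}$ as the quantity being bounded below; it is only beaten because the high modes decay exponentially in $N$ in the middle strip $\left[\frac{N}{4},\frac{3N}{4}\right]$, and \emph{this} is where $k\geq k_0$ enters --- not, as you suggest, to control the cubic Taylor remainder, which is already $o(\eta)$ for every $\epsilon<\frac{1}{4}$ by the choice of radius. Without an argument of this type your coefficient $\delta$ could vanish and the lower bound in item (i) would fail.

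Two smaller points. First, item (v) does not simply ``drop out'' of the local hyperbola picture: the lines $y=\overline{y}\pm\left(x-\frac{N}{2}\right)$ are the axes of the hyperbola, not its asymptotes (the asymptotes are $x=\frac{N}{2}$ and $y=\overline{y}$), and the tubular neighborhood must avoid the nodal set over all of $\Omega$, not just inside $D_r(\textbf{z})$. Outside the disc this requires the uniform slope bound $1-|f'(x)|\geq c>0$ from Lemma \ref{prop:coeff} together with the derivative estimates of Theorem \ref{thm:VevenDer}, so that the nodal curves separate from the $\pm\frac{\pi}{4}$ line faster than the tube widens. Second, $\left|\p_yv_0(x,f(x))\right|$ is comparable to $N^{-\frac{1}{2}}\left|\sin\left(\frac{2\pi}{N}x\right)\right|$, not $N^{-\frac{3}{2}}\left|\sin\left(\frac{2\pi}{N}x\right)\right|$; with the order you state, dividing the error $O(\eta^{1-\epsilon}/N^{\frac{3}{2}})$ by it would produce $\eta^{1-\epsilon}/\left|\sin\left(\frac{2\pi}{N}x\right)\right|$ rather than the denominator $N\left|\sin\left(\frac{2\pi}{N}x\right)\right|$ appearing in item (iii).
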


\quad Outside the neighborhood of $\left(\frac{N}{2},\overline{y}\right)$, this result states that the nodal set for $\eta>0$ closely resembles that on the unperturbed rectangle, as described in (\ref{eq:fv}). However, the nodal set on the rectangle $R$ features a crossing and separates four nodal domains, whereas this description features three nodal domains and no internal crossings. This distinction is highlighted by item $(i)$ of Theorem \ref{thm:VevenPt}, where the nodal set of $v$ is hyperbolic in a disc about $\left(\frac{N}{2},\overline{y}\right)$; the size of this disc is justified in Remark \ref{rem:Radius}.

\quad The proof of Theorem \ref{thm:VevenPt} is spread over three sections. Item $(i)$ follows from Propositions \ref{prop:propRemainder} and \ref{prop:PolyStuff} and is the focus of Section \ref{sec:centerEven}. Item $(ii)$ follows from Propositions \ref{prop:horizontal} and \ref{prop:vertical}, while item $(iii)$ follows from results in Sections \ref{sec:awayEven} and \ref{sec:bdry}. Finally, items $(iv)$ and $(v)$ are proved in Section \ref{sec:bdry}. Fortunately, the analysis established in these sections sets the groundwork for the following two results.  In Section \ref{sec:UpperOdd}, we prove the following result, which serves as a companion to item $(ii)$ in Theorem \ref{prop:n=0}, when $k$ is odd.

\begin{theorem}\label{thm:VoddPt}
    Let $\epsilon\in\left(0,\frac{1}{4}\right)$ and let $v(x,y;\eta)$ solve (\ref{eq:v,w}) on $\Omega_\phi(\eta, N)$ with eigenvalue $\mu$. Under the same conditions as in Theorem \ref{prop:n=0}, there exists a constant $C_0>0$, dependent only on $\Lambda_\phi$, such that if $k\geq 5$ is odd, then there exists $\eta_0(k,\epsilon)>0$ such that the nodal set of $v(x,y;\eta)$ has the following properties for $0<\eta\leq \eta_0(k,\epsilon)$:
    \begin{enumerate}
        \item If $\left|y-\frac{1}{2}\right|<\frac{1}{2}\max\{f_v(0),1-f_v(0)\}$ and $x>0$, then $v(x,y)\neq 0$ whenever $(x,y)$ satisfies
        \begin{equation}
            \left|y-f_v(x)\right|\geq \frac{C_0\eta^{1-\epsilon}}{N\left|\sin\left(\frac{2\pi}{N}x\right)\right|+\eta^{\frac{1}{3}(1-\epsilon)}}. \nonumber
        \end{equation}
        \item If $\left|y-\frac{1}{2}\right|\geq\frac{1}{2}\max\{f_v(0),1-f_v(0)\}$ and $x>0$, then $v(x,y)\neq 0$ whenever $(x,y)$ satisfies
        \begin{equation}
            \left|x-f_v^{-1}(y)\right|\geq C_0\eta^{1-\epsilon}. \nonumber
        \end{equation}
        \item If $x\leq 0$, then $v(x,y)\neq 0$ whenever $y$ satisfies $$\left|y-f_v(0)\right|\geq C_0\eta^{\frac{2}{3}(1-\epsilon)}.$$ 
        \item There exists a tubular neighborhood about one of the curves $y=\frac{1}{2}\pm\left(x-\frac{N}{2}\right)$ which is disjoint from the nodal set of $v$. The width $\omega$ of this neighborhood satisfies
        \begin{equation}
            C_0^{-1}\leq |\omega|\leq C_0. \nonumber
        \end{equation}
    \end{enumerate}
\end{theorem}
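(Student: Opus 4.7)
The plan is to follow the framework developed for Theorem \ref{thm:VevenPt}, exploiting the fact that when $k$ is odd there is no internal crossing to analyze. By item (ii) of Theorem \ref{prop:n=0}, the nodal set $v_0^{-1}(0)$ is bounded away from the vertical line $x = N/2$ by $c_0$, so no hyperbolic unfolding near a crossing is needed --- only the implicit function theorem arguments from the even case need be adapted. I would start from the partial Fourier decomposition (\ref{eq:adiabatic}) together with the approximation $v|_R = v_0 + O(\eta^{1-\epsilon}/N^{3/2})$ discussed just before Theorem \ref{thm:n>0}; the estimates on $v_1, v_2$ and the error $E$ developed in the even case apply verbatim here, since they depend only on the Fourier modes and not on the parity of $k$.

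For item (i), factor $\sin(\pi y)$ out of the first two Fourier modes to write $v = \sin(\pi y)[v_1(x) + 2 v_2(x)\cos(\pi y)] + E(x,y)$. Inside the middle strip $|y - 1/2| < (1/2)(1 - \min\{f(0), 1-f(0)\})$, $\sin(\pi y)$ is bounded below by a constant depending only on $f(0)$ (and hence on $\Lambda_\phi$), so the nodal equation is equivalent to $v_1(x) + 2 v_2(x)\cos(\pi y) = -E(x,y)/\sin(\pi y)$. Its derivative in $y$ is $-2\pi v_2(x)\sin(\pi y)$, which to leading order behaves like $|\sin(2\pi x/N)|$ times a positive constant. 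An implicit function theorem argument, modeled on the proof of Proposition \ref{prop:horizontal} in the even case, produces a graph $y = F(x;\eta)$ whose distance to $f(x)$ is bounded by the right-hand quotient in item (i); the denominator $N|\sin(2\pi x/N)| + \eta^{(1-\epsilon)/3}$ reflects the degeneration of the implicit-function bound when $\sin(2\pi x/N)$ is small, at which point one falls back on the cruder pointwise estimate of $E$.

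For item (ii), outside the middle strip $\sin(\pi y)$ may be small and $f$ is typically not single-valued, so I would differentiate the leading equation $v_1(x) + 2 v_2(x)\cos(\pi y) = 0$ in $x$ instead. Since $v_1$ is dominated by $c_1 \psi_{k1}(\cdot, y)$ with derivative of size $k/N$ and $v_2$ by $c_2 \psi_{22}(\cdot, y)$ with derivative of size $1/N$, one can verify that the $x$-derivative stays bounded below in the relevant range of $y$, so the implicit function theorem produces a graph $x = G(y;\eta)$ close to $f^{-1}(y)$ --- the analog of Proposition \ref{prop:vertical}. Item (iii) follows from the boundary analysis already carried out in Section \ref{sec:bdry}: extend $v_1, v_2$ past $x = 0$ via their defining ODEs, Taylor expand about $x = 0$, and absorb the remainder against the $\eta^{1-\epsilon}$ bound on $E$ to obtain the $\eta^{2(1-\epsilon)/3}$ threshold on $|y - f(0)|$. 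Item (iv) is a direct consequence of the $c_0$-gap around $x = N/2$: on the diagonal $y = 1/2 \pm (x - N/2)$ lying entirely in the central band, the leading expression $c_1 \psi_{k1} + c_2 \psi_{22}$ is bounded below in absolute value by a constant depending only on $\Lambda_\phi$, so since $v - v_0 = O(\eta^{1-\epsilon})$ uniformly, $v$ retains a definite sign along the diagonal and, by continuity, in an $O(1)$ tubular neighborhood of it.

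The principal technical obstacle is verifying that the two implicit function constructions of items (i) and (ii) describe a single connected nodal curve with no spurious branches. I would handle this by checking that at the common boundary $|y - 1/2| = (1/2)(1 - \min\{f(0), 1-f(0)\})$ the two parameterizations agree up to the $\eta^{1-\epsilon}$ error, and that $|\nabla v|$ is bounded below in a neighborhood of this boundary, so that the nodal set is locally a smooth graph in both variables and the two descriptions glue into one curve. Keeping all these lower bounds uniform in $k$ --- while allowing $\eta_0(k,\epsilon)$ to degrade as $k$ grows, since the unperturbed spectral gap shrinks --- is what pins down the precise allowed range of $\eta$ in the statement.
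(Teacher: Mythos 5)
Your overall strategy is the one the paper uses: since item (ii) of Theorem \ref{prop:n=0} rules out a crossing for odd $k$, the paper's Section \ref{sec:UpperOdd} simply re-runs the implicit-function-theorem arguments of Propositions \ref{prop:horizontal} and \ref{prop:vertical} (as Propositions \ref{prop:Vodd1} and \ref{prop:Vodd2}) and the boundary rotation of Proposition \ref{prop:leftBdry} (as Proposition \ref{prop:leftBdryOdd}), with Proposition \ref{prop:V} supplying the mode estimates unchanged. Items (i)--(iii) of your outline track this faithfully, including the role of the $\eta^{\frac{1}{3}(1-\epsilon)}$ term as the crossover to the boundary estimate.

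The one step that would fail as written is your argument for item (iv). The diagonal $y=\frac{1}{2}\pm\left(x-\frac{N}{2}\right)$ spans $\left|x-\frac{N}{2}\right|\leq\frac{1}{2}$, so it does not lie entirely in the band $\left\{\left|x-\frac{N}{2}\right|<c_0\right\}$ (recall $c_0<\frac{1}{2}$), and it meets the top and bottom boundaries at its endpoints, where $v_0$ vanishes identically. Even at the center, $\left|v_0\left(\frac{N}{2},\frac{1}{2}\right)\right|=\frac{2}{\sqrt{N}}\left|c_1\right|\sim N^{-\frac{3}{2}}$ rather than a constant, since $\sin\left(\frac{2\pi}{N}x\right)$ vanishes at $x=\frac{N}{2}$ and $c_1\sim 1/N$ by Lemma \ref{prop:coeff}. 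So ``the leading expression is bounded below by a constant on the diagonal'' is false, and continuity alone cannot produce a neighborhood whose width is bounded \emph{below} by $C_0^{-1}$. The comparison with $\left|v-v_0\right|\leq C\eta^{1-\epsilon}/N^{\frac{3}{2}}$ does rescue the sign away from the horizontal boundaries because the error carries the same $N^{-\frac{3}{2}}$ scaling, but near $y=0,1$ you must weight both $v_0$ and the error by $|y|$ (as in region (b) of the proof of Proposition \ref{prop:vertical}), or argue as the paper does at the end of Section \ref{sec:bdry}: the slope bound $1-\left|f'(x)\right|\geq c>0$ together with the location estimates of items (i)--(iii) shows the nodal curves have slopes of absolute value strictly less than $1$ and remain at distance $\sim 1$ from the correctly chosen diagonal, which is what yields $C_0^{-1}\leq|\omega|$.
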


\quad Qualitatively, Theorem \ref{thm:VoddPt} states that the nodal set of $v$ does not change when $k$ is odd. Just as when $\eta=0$, this result describes a nodal set that features no internal crossings and separates three nodal domains. Finally, we prove the following theorem in Section \ref{sec:Lower}, which complements item $(iii)$ of Theorem \ref{thm:n>0}, when the eigenfunction corresponds to the lower branch eigenvalue $\gamma$.

\begin{theorem}\label{thm:WPt}
    Let $\epsilon>0$ and let $w(x,y;\eta)$ solve (\ref{eq:v,w}) on $\Omega_\phi(\eta, N)$ with eigenvalue $\gamma$. Under the same conditions as in Theorem \ref{prop:n=0}, there exists a constant $C_0>0$, dependent only on $\Lambda_\phi$, such that if $k\geq 8$, then there exists $\eta_0(k,\epsilon)>0$ such that the nodal set of $w(x,y;\eta)$ has the following properties for $0<\eta\leq \eta_0(k,\epsilon)$:
    \begin{enumerate}
        \item The nodal set of $w$ lies outside 
        \begin{equation}
            \bigcup_{j=1, j\textrm{ odd}}^{2k-1} \left\{\left|x-\frac{jN}{2k}\right|<C_0\right\}. \nonumber
        \end{equation}
        Further, $w(x,y)\neq 0$ when $x\leq \frac{N}{2k}$ or $x\geq N-\frac{N}{2k}$.
        \item If $y\in \big[\frac{1}{10}, \frac{9}{10}\big]$, then $w(x,y)\neq 0$ whenever $(x,y)$ satisfies
        \begin{equation}
            \left|y-f_w(x)\right|\geq C_0\eta^{1-\epsilon}. \nonumber
        \end{equation}
        If $k$ is even, then $w(x,y)\neq 0$ also requires that $x$ satisfy $\left|x-\frac{N}{2}\right|\geq C_0\eta^{1-\epsilon}$.
        \item Let $j\in\{1,\dots, 2k-3\}$ be odd. If $y\notin\left[\frac{1}{10},\frac{9}{10}\right]$ and $x\in\left(\frac{jN}{2k}, \frac{N}{k}+\frac{jN}{2k}\right)$, then $w(x,y)\neq 0$ whenever $(x,y)$ satisfies
        \begin{equation}
            \left|x-f_w^{-1}(y)\right|\geq C_0\eta^{1-\epsilon}. \nonumber
        \end{equation}
        If $k$ is even, then $w(x,y)\neq 0$ also requires that $x$ satisfy $\left|x-\frac{N}{2}\right|\geq C_0\eta^{1-\epsilon}$.
    \end{enumerate}
\end{theorem}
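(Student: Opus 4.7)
The strategy is to reduce the analysis on $\Omega_\phi(\eta,N)$ to a quantitative comparison with the explicit eigenfunction $w_0 = -c_2\psi_{k1} + c_1\psi_{22}$ on the rectangle $R$, via the partial Fourier decomposition of \eqref{eq:adiabatic}. The plan relies on the analog for $w$ of the results in Section \ref{sec:Lower}, producing the approximation $w = w_0 + O(\eta^{1-\epsilon}/\sqrt{N})$ in $C^0(R)$, with comparable control on derivatives either via elliptic regularity or direct ODE estimates on the Fourier modes $w_1, w_2$, together with a smooth extension of $w$ to the thin sliver $\Omega\setminus R$ by extending the ODE coefficients $w_1, w_2$ to $[-\eta, N]$.

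For item (i), by Theorem \ref{prop:n=0}(iii) the nodal set of $w_0$ already avoids $\bigcup_{j\textrm{ odd}}\{|x-jN/(2k)|<c_0\}$, so continuity and compactness yield $|w_0|\geq c(\Lambda_\phi)>0$ in slightly narrower strips. Choosing $\eta_0(k,\epsilon)$ so that the $C^0$ error above is smaller than this lower bound forces $w\neq 0$ throughout the desired strips. For the extremal regions $x\leq N/(2k)$ and $x\geq N-N/(2k)$, the same pointwise lower bound for $w_0$, combined with Dirichlet control on $x = -\eta\phi(y)$, handles the perturbation sliver.

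For item (ii), I would differentiate $w_0$ in $y$ along the nodal curve. Using Definition \ref{def:f}, trigonometric simplification gives $|\partial_y w_0(x, f_w(x))| \gtrsim |\sin(2\pi x/N)|/\sqrt{N}$ uniformly for $f_w(x)\in[1/10,9/10]$ and, when $k$ is even, for $x$ away from $N/2$. A Taylor expansion then yields $|w_0(x,y)|\gtrsim (|\sin(2\pi x/N)|/\sqrt{N})\,|y-f_w(x)|$ within a fixed radius, and combined with the $C^0$ approximation this gives $w\neq 0$ whenever $|y-f_w(x)|\geq C_0\eta^{1-\epsilon}$. The even-$k$ restriction $|x-N/2|\geq C_0\eta^{1-\epsilon}$ accounts for the joint vanishing of $\sin(k\pi x/N)$ and $\sin(2\pi x/N)$ at $x=N/2$, where the crossing in $w_0^{-1}(0)$ opens into a hyperbola by the same mechanism as in Theorem \ref{thm:VevenPt}(i), with the roles of $c_1,c_2$ interchanged. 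Item (iii) is the dual argument: for $y\notin[1/10,9/10]$, $|\partial_y w_0|$ at $y=f_w(x)$ may degenerate, so on each interval $x\in(jN/(2k),(j+2)N/(2k))$ with $j$ odd I would invert $f_w$ and instead lower-bound $|\partial_x w_0(f_w^{-1}(y),y)|$ uniformly, exploiting the fact that the relevant factor $\cos(k\pi f_w^{-1}(y)/N)$ remains nondegenerate in this regime by the identity behind Definition \ref{def:f}. The Taylor expansion in $x$ then yields $w\neq 0$ for $|x-f_w^{-1}(y)|\geq C_0\eta^{1-\epsilon}$, with the analogous even-$k$ caveat near $x=N/2$.

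I anticipate the main obstacle to be the control of $w$ on the sliver $\Omega\setminus R$: the partial Fourier expansion is not directly available there, so one must verify that the ODEs governing $w_1, w_2$ extend with uniform bounds to $[-\eta,N]$ and that the tail $E$ in \eqref{eq:adiabatic} remains negligible when transported to the perturbed domain. A secondary difficulty is matching the two estimates of items (ii) and (iii) across the transition $y\approx 1/10, 9/10$, which reduces to showing that the Jacobian of the implicit coordinate change $y=f_w(x)\leftrightarrow x=f_w^{-1}(y)$ is uniformly bounded above and below along the relevant branches of the nodal curve, so that the orthogonality of the boundary intersections and the union of the two local descriptions yields a consistent global picture.
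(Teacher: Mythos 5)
Your overall architecture matches the paper's: approximate $w$ by $w_0=-c_2\psi_{k1}+c_1\psi_{22}$ through the partial Fourier decomposition (the analogue of Proposition \ref{prop:V} is Proposition \ref{prop:W}), get item $(i)$ from a pointwise lower bound on $|w_0|$ in the excluded strips, and get items $(ii)$--$(iii)$ from implicit-function-theorem arguments using lower bounds on $\p_y w_0$ and $\p_x w_0$ near the nodal curve. However, there are two genuine gaps and one quantitative slip.

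First, your treatment of the even-$k$ midline is wrong. For even $k$ the nodal set of $w_0$ near $x=\frac{N}{2}$ is \emph{only} the vertical line $x=\frac{N}{2}$: the graph $y=f_w(x)$ does not exist there, since its defining arccosine would need argument $\mp\frac{kc_2}{4c_1}$, whose magnitude is $\sim kN\gg 1$ by Lemma \ref{prop:coeff}. There is no crossing, hence no hyperbola opening ``by the same mechanism as Theorem \ref{thm:VevenPt}$(i)$ with $c_1,c_2$ interchanged''; that mechanism requires a transversal intersection of two branches and a lower bound on $|w(\frac{N}{2},\overline{y})|$, neither of which is present here. The correct argument (Proposition \ref{prop:midW}) is that $|\p_x w_0|\gtrsim N^{-1/2}$ along $x=\frac{N}{2}$, so the nodal set there is a single nearly vertical graph $x=h_3(y)$ with $|h_3(y)-\frac{N}{2}|\leq C\eta^{1-\epsilon}$; this is where the extra condition $|x-\frac{N}{2}|\geq C_0\eta^{1-\epsilon}$ comes from. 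Second, you do not address the spectral obstruction specific to the lower branch: at $\eta=0$ one has $\gamma-4\pi^2=\frac{4\pi^2}{N^2}$, which is only $O(N^{-2})$ above zero and decreases with $\eta$, crossing zero when the $\gamma$-branch meets the branch from $\lambda_{12}$ of $[-\eta,N]\times[0,1]$ at $\eta_1=\sqrt{N^2+1}-N\sim\frac{1}{2N}$. Past that point the second Fourier mode becomes exponential rather than oscillatory and the whole approximation $w\approx w_0$ collapses. This is precisely why $\eta_0$ must shrink with $k$ for the lower branch (see the proof of Proposition \ref{prop:W} and the remark following Theorem \ref{thm:n>0}); your plan identifies the sliver $\Omega\setminus R$ as the main obstacle, but that part is routine, while this resonance is the real constraint. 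Finally, your bound $|\p_y w_0(x,f_w(x))|\gtrsim|\sin(2\pi x/N)|/\sqrt{N}$ is the bound for $v_0$, not $w_0$: the exact identity gives $|\p_y w_0(x,f_w(x))|=\frac{4\pi|c_1|}{\sqrt{N}}|\sin(2\pi x/N)|\sin^2(\pi y)$ with $|c_1|\sim 1/N$, so you lose a factor of $N$. This is recoverable by absorbing powers of $N$ into $\eta^{-\epsilon}$ after shrinking $\eta_0(k,\epsilon)$, but as written the comparison against the error term is not justified.
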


\quad Theorem \ref{thm:WPt} states that when $k$ is odd, the nodal set of $w$ behaves much like the graph $\left(x, f_w(x)\right)$ with $f_w(x)$ as in (\ref{eq:fw}). When $k$ is even, the nodal set of $w$ also includes a nearly-vertical curve near $x=\frac{N}{2}$. In either case, much like when $\eta=0$, the nodal set features disjoint curves that separate $k$ nodal domains and intersect only the top and bottom boundary components of $\Omega$. Theorems \ref{thm:VoddPt} and \ref{thm:WPt} demonstrate two cases when the nodal set characterization remains unchanged for small domain perturbations. In both results, the number of nodal domains, lack of any nodal crossings, and the number of curves are invariant for small $\eta$. 

\begin{rem}\label{rem:Expl}
    In Theorems \ref{thm:VevenPt} and \ref{thm:VoddPt}, we let $\epsilon\in\left(0,\frac{1}{4}\right)$ to fully capture the nodal set behavior of $v$. In Section \ref{sec:centerEven}, we study the nodal set of $v$ in a disc of radius $r$ centered at $\left(\frac{N}{2},\overline{y}\right)$ when $k\geq 4$ is even. In order to identify the main contributions from the eigenfunction expansion (\ref{eq:adiabatic}), it is necessary that the radius $r$ be comparable to $\eta^p$ for a power $p$ strictly between a third and a half. To describe the nodal set as a graph just outside of this disc, we then need the bounds in items $(i)$ and $(ii)$ of Proposition \ref{prop:V} to be $o(\eta^{2p})$ as $\eta\to 0$. For these reasons, we choose the radius $r=2\eta^{\frac{1}{2}-\frac{2}{3}\epsilon}$ and restrict $\epsilon\in\left(0,\frac{1}{4}\right)$. In Theorem \ref{thm:WPt}, we just require $\epsilon>0$ because the nodal curves do not intersect the left or right boundary components.
\end{rem}

\quad While Theorems \ref{thm:VevenPt} - \ref{thm:WPt} demonstrate tight control over the structure of the nodal sets in each case of Theorem \ref{thm:n>0}, we are able to say much more regarding local parametrizations of the nodal set. Our second family of theorems focuses on estimating the regularity of the nodal set.

\subsubsection{Regularity of the nodal sets.} In this section, we present three theorems with estimates on the first-order regularity of the nodal sets of $v,w$ solving (\ref{eq:v,w}) on $\Omega_\phi(\eta, N)$ for $\eta>0$. In particular, we determine that every intersection between the nodal set and the boundary is orthogonal. The first result builds upon Theorem \ref{thm:VevenPt}, when $k$ is even and the eigenfunction corresponds to the upper eigenvalue branch $\mu$.

\begin{theorem}\label{thm:VevenDer}
    Under the same conditions as in Theorem \ref{thm:VevenPt}, there exist constants $C_0, k_0>0$, dependent only on $\Lambda_\phi$, such that if $k\geq k_0$ is even, then there exists $\eta_0(k,\epsilon)>0$ such that the nodal set of $v(x,y;\eta)$ has the following properties for $0<\eta\leq\eta_0(k,\epsilon)$:
    \begin{enumerate}
        \item In a disc of radius $2\eta^{\frac{1}{2}-\frac{2}{3}\epsilon}$ centered at $\big(\frac{N}{2},\overline{y}\big)$, the nodal set can be parametrized as the graphs of two smooth functions, each with a bounded derivative.
        \item Outside a disc of radius $2\eta^{\frac{1}{2}-\frac{2}{3}\epsilon}$ centered at $\big(\frac{N}{2},\overline{y}\big)$, the nodal set can be parametrized by graphs $x=g(y)$ and $y=h(x)$, with
        \begin{equation}
            \left|g'(y)\right|\leq \frac{C_0\eta^{1-\epsilon}}{\left|y-\overline{y}\right|^2} \nonumber
        \end{equation}
        and
        \begin{equation}
            \left|h'(x)-f_v'(x)\right|\leq \begin{cases} \displaystyle \frac{C_0\eta^{1-\epsilon}}{N\left|\sin\left(\frac{2\pi}{N}x\right)\right|}, & 0.1<\left|x-\frac{N}{2}\right|<\frac{N}{2}-0.1 \\ \\ \displaystyle \frac{C_0\eta^{1-\epsilon}}{\left|x-\frac{N}{2}\right|^2}, & \left|x-\frac{N}{2}\right|\leq 0.1 \\ \\ \displaystyle \frac{C_0\eta^{1-\epsilon}}{\left|x-tN\right|^2+\eta^{\frac{2}{3}(1-\epsilon)}}, & \left|x-tN\right|\leq 0.1\end{cases} \nonumber
        \end{equation}
        for each $t\in\left\{0,1\right\}$.
        \item The nodal set intersects the boundary $\p\Omega_\phi(\eta, N)$ orthogonally at exactly $4$ points.
    \end{enumerate}
\end{theorem}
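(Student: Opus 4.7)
The plan is to combine the pointwise estimates from Theorem \ref{thm:VevenPt} with bounds on $\nabla v$ and apply the implicit function theorem regionwise, comparing throughout to $\nabla v_0 = c_1 \nabla \psi_{k1} + c_2 \nabla \psi_{22}$, which is explicit. I would first establish a derivative analogue of Proposition \ref{prop:V}: the bound $|\nabla(v - v_0)| = O(\eta^{1-\epsilon})$ uniformly on $R$, obtained in interior regions by applying interior Schauder estimates to $(\Delta + \mu)(v - v_0) = (\mu - \lambda_{22}) v_0$, and in the thin boundary strips by differentiating the mode expansion (\ref{eq:adiabatic}) termwise and re-running the ODE argument used to control the $v_j$.

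For item (i), Theorem \ref{thm:VevenPt}(i) confines the nodal set inside the disc to the hyperbolic band $C_0^{-1} \eta \leq |x - N/2||y - \overline{y}| \leq C_0 \eta$, and the proof of that item yields a local expansion $v(x,y) = a(x - N/2)(y - \overline{y}) + b\eta + r(x,y)$ with $|a|, |b|$ bounded from below by a constant depending only on $\Lambda_\phi$ and $r$ of strictly higher order on the disc. After absorbing $r$ via a smooth change of coordinates, the nodal set factors into two disjoint smooth arcs, and on each arc one of $\partial_x v$, $\partial_y v$ is bounded away from zero, so the implicit function theorem delivers smooth graph parametrizations with derivatives bounded uniformly in $\eta$.

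For item (ii), at any nodal point $(x, y)$ outside the disc I would parametrize $y = h(x)$ via $h'(x) = -\partial_x v(x, y)/\partial_y v(x, y)$ and write
\begin{equation*}
    h'(x) - f'(x) = \frac{\partial_y v_0\bigl(\partial_x v_0 - \partial_x v\bigr) + \partial_x v_0\bigl(\partial_y v - \partial_y v_0\bigr)}{\partial_y v \cdot \partial_y v_0},
\end{equation*}
with $\partial_{\cdot} v$ evaluated at $(x, y)$ and $\partial_{\cdot} v_0$ at $(x, f(x))$. The numerator is controlled by the derivative bound above, by Taylor-expansion of $\partial_{\cdot} v_0$ in $y$ to pass between $(x, f(x))$ and $(x, y)$ (using $|y - f(x)| = O(\eta^{1-\epsilon})$ from Theorem \ref{thm:VevenPt}(ii)--(iv)), and by the explicit upper bound on $|\partial_x v_0|$. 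The denominator is bounded below using the explicit form of $\partial_y v_0$ along the nodal set of $v_0$, which naturally produces the three piecewise scales $N|\sin(2\pi x/N)|$, $|x - N/2|$, and $|x - tN|^2 + \eta^{(2/3)(1-\epsilon)}$ appearing in the statement. The companion parametrization $x = g(y)$ is obtained by swapping the roles of $x$ and $y$, and the factor $|y - \overline{y}|^{-2}$ comes from the decay of $|\partial_x v_0|$ along the hyperbolic branch inside $|x - N/2| \le 0.1$.

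For item (iii), orthogonality follows from the standard two-level-curve argument: at any intersection of the nodal set with $\p\Omega_\phi(\eta, N)$, the function $v$ vanishes both on the boundary and along the nodal curve, so provided $\nabla v \neq 0$ at that point the two zero-level curves must be tangent, forcing the nodal curve to meet the boundary normally. Non-vanishing of $\nabla v$ at the candidate boundary points is verified by explicit computation of $\nabla v_0$ together with the $O(\eta^{1-\epsilon})$ closeness established above, and the count of exactly four intersections follows from Theorem \ref{thm:VevenPt}(v) (two disjoint branches) together with items (ii)--(iv) of that theorem (each branch is a single arc with two boundary endpoints). The main technical obstacle is the uniform derivative bound $|\nabla(v - v_0)| = O(\eta^{1-\epsilon})$ in the thin boundary strips, since interior Schauder does not reach there; this requires differentiating the mode expansion (\ref{eq:adiabatic}) and upgrading the ODE estimates from $v_j$ to $v_j'$, then carefully tracking how these modal derivatives recombine near $\p\Omega_\phi(\eta, N)$.
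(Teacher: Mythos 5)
Your strategy for item (i) and for the interior regime of item (ii) — implicit function theorem plus the quotient identity for $h'-f'$, with the numerator controlled by gradient closeness of $v$ to $v_0$ and the denominator by an explicit lower bound on $\partial_y v_0$ — is essentially the paper's argument (Sections \ref{sec:centerEven} and \ref{sec:awayEven}). But there are two genuine gaps. First, the boundary-layer regime $\left|x-tN\right|\lesssim \eta^{\frac{1}{3}(1-\epsilon)}$, which is exactly what the $\eta^{\frac{2}{3}(1-\epsilon)}$ regularization in the third case of item (ii) encodes, is not reachable by your argument: there $\partial_y v_0$ vanishes linearly in $\left|x-tN\right|$ (since $v_0\equiv 0$ on the vertical edges), so the naive quotient bound degenerates to $\eta^{1-\epsilon}/\left|x-tN\right|^2$, which is vacuous once $\left|x-tN\right|\lesssim \eta^{(1-\epsilon)/2}$, and the $O(\eta^{1-\epsilon})$ gradient error swamps the main term. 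The paper resolves this in Section \ref{sec:bdry} by rotating coordinates so the perturbed boundary is a graph with vanishing first derivative at the base point, re-centering the Fourier modes so $\tilde{v}_n(-\eta\tilde{\phi}(y_0))=0$, and then exploiting the boundary identity $\partial_{\tilde{y}}^2\tilde{v}=\eta\tilde{\phi}''\partial_{\tilde{x}}\tilde{v}$ together with third-order Taylor expansions to cancel the dangerous terms in $\partial_{\tilde{x}}\tilde{v}$ (Lemma \ref{lem:7} and Proposition \ref{prop:leftBdry}). Nothing in your outline substitutes for this cancellation, and without it both the third case of item (ii) and the orthogonality at the left/right boundary are out of reach.

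Second, your mechanism for item (iii) is based on a false premise. At a point $p$ where an interior nodal arc meets $\partial\Omega$, the zero set of $v$ near $p$ contains both the boundary arc and the interior nodal arc; if $\nabla v(p)\neq 0$ the implicit function theorem would force the zero set to be a single $C^1$ curve near $p$, namely the boundary itself, contradicting the presence of an interior arc. So $\nabla v(p)=0$ necessarily, and "tangency of the two level curves" cannot force a normal intersection — tangency is the opposite of orthogonality. The correct soft argument goes through the leading homogeneous harmonic part of $v$ at the critical point $p$ (equiangular rays, hence a single interior arc bisects the angle $\pi$); the paper instead proves orthogonality quantitatively, showing the slope of the local graph decays like $\left|y\right|$ near the horizontal boundaries (Proposition \ref{prop:bottomBdry}, using $E(x,0)=\partial_y^2E(x,0)=0$) and like $\left|\tilde{x}+\eta\tilde{\phi}(y_0)\right|$ near the rotated left boundary. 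Finally, the count of exactly four intersection points also requires excluding nodal components hiding in the four corner regions, which the paper does by a domain-monotonicity eigenvalue comparison at the end of Section \ref{sec:bdry}; citing Theorem \ref{thm:VevenPt} alone does not rule these out.
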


\quad In combination with Theorem \ref{thm:VevenPt}, this result states that the vertical component of the nodal set near $x=\frac{N}{2}$ is nearly flat and the horizontal component away from $x=\frac{N}{2}$ behaves like $f_v(x)$ in (\ref{eq:fv})  up to first order. The estimates in item $(ii)$ of Theorem \ref{thm:VevenDer} worsen for points close to $x=\frac{N}{2}$ or $x=0,1$. This is expected because $v_0=0$ along each of these vertical lines and our methods rely on estimates between the restriction $v|_R$ and the known expression for $v_0$.

\quad As in the case of Theorem \ref{thm:VevenPt}, the proof of this theorem is given over three sections. Item $(i)$ is proved in Section \ref{sec:centerEven}. Item $(ii)$ follows from results in Sections \ref{sec:awayEven} and \ref{sec:bdry}, and item $(iii)$ is proved in Section \ref{sec:bdry}. The following theorem, which accompanies Theorem \ref{thm:VoddPt}, is proved in Section \ref{sec:UpperOdd}.

\begin{theorem}\label{them:VoddDer}
    Under the same conditions as in Theorem \ref{thm:VoddPt}, there exists a constant $C_0>0$, dependent only on $\Lambda_\phi$, such that if $k\geq 5$ is odd, then there exists $\eta_0(k,\epsilon)>0$ such that the nodal set of $v(x,y;\eta)$ has the following properties for $0<\eta\leq\eta_0(k,\epsilon)$:
    \begin{enumerate}
        \item For $\left|y-\frac{1}{2}\right|<\frac{1}{2}\max\{f_v(0),1-f_v(0)\}$, the nodal set can be parametrized by graph $y=h(x)$, with
        \begin{equation}
            \left|h'(x)-f_v'(x)\right|\leq \begin{cases} \displaystyle \frac{C_0\eta^{1-\epsilon}}{N\left|\sin\left(\frac{2\pi}{N}x\right)\right|}, & \left|x-\frac{N}{2}\right|<\frac{N}{2}-0.1 \\ \\ \displaystyle \frac{C_0\eta^{1-\epsilon}}{\left|x-tN\right|^2+\eta^{\frac{2}{3}(1-\epsilon)}}, & \left|x-tN\right|\leq 0.1\end{cases} \nonumber
        \end{equation}
        for each $t\in\{0,1\}$.
        \item For $\left|y-\frac{1}{2}\right|\geq \frac{1}{2}\max\{f_v(0),1-f_v(0)\}$, the nodal set can be locally parametrized by graph $x=g(y)$, with
        \begin{equation}
            \left|g'(y)-\big(f_v^{-1}\big)'(y)\right|\leq C_0\eta^{1-\epsilon}. \nonumber
        \end{equation}
        \item The nodal set intersects the boundary $\p\Omega_\phi(\eta, N)$ orthogonally at exactly $4$ points.
    \end{enumerate}
\end{theorem}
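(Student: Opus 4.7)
The plan is to combine the structural information from Theorem \ref{thm:VoddPt} with the implicit function theorem and quantitative estimates on $v-v_0$ and its first derivatives, both coming from the Fourier decomposition (\ref{eq:adiabatic}). Since Theorem \ref{thm:VoddPt} already pins down where the nodal set lives (near the graph of $f$ or its inverse), the remaining task is to verify that on those pieces the nodal set is actually a smooth graph and to estimate how much the slope deviates from the explicit $v_0$-slope.

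For item (i), in the region $\left|y-\frac{1}{2}\right|<\frac{1}{2}(1-\min\{f(0),1-f(0)\})$ where the nodal curve is roughly horizontal, I would apply the implicit function theorem to $v(x,y)=0$ in the $y$-variable. The key input is a lower bound on $|\partial_y v|$ on the nodal set, which I would derive by comparing with $\partial_y v_0$ evaluated on $(x,f(x))$: the explicit formula $v_0 = c_1\psi_{k1}+c_2\psi_{22}$ yields $\partial_y v_0$ with size $\sim N\left|\sin\left(\frac{2\pi}{N}x\right)\right|$ away from the vertical boundaries, and this transfers to $\partial_y v$ up to an $\eta^{1-\epsilon}$ correction coming from the Fourier-mode control. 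Implicit differentiation then gives
\begin{equation*}
    h'(x)-f'(x) = -\frac{\partial_x v}{\partial_y v}\bigg|_{(x,h(x))} + \frac{\partial_x v_0}{\partial_y v_0}\bigg|_{(x,f(x))},
\end{equation*}
and expanding this as a single quotient lets me bound the numerator by $O(\eta^{1-\epsilon})$ (from the $C^1$-closeness of Fourier modes) and the denominator via the lower bound just described, which reproduces the first case of the stated inequality. Near $x=tN$ for $t\in\{0,1\}$ the size of $\partial_y v_0$ degenerates; the regularization $\eta^{2(1-\epsilon)/3}$ in the denominator arises because in a boundary layer of width $\sim\eta^{(1-\epsilon)/3}$ the eigenfunction $v$ itself is bounded by this scale, as in item (iii) of Theorem \ref{thm:VoddPt}, forcing the nodal curve to leave the $y$-graph regime.

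For item (ii), in the complementary region where the nodal curve becomes nearly vertical, I would switch roles and apply the implicit function theorem in the $x$-variable. Along $(f^{-1}(y),y)$ for $y$ close to $0$ or $1$ the function $f$ has a steep slope, so $\partial_x v_0$ is uniformly bounded away from zero, and the closeness $v\approx v_0$ in $C^1$ transfers this lower bound to $v$. The same quotient computation now compares $g'(y)$ with $(f^{-1})'(y)$ and gives the cleaner bound $C_0\eta^{1-\epsilon}$ since no denominator degeneration occurs in this region. For item (iii), the count of boundary intersections follows from items (i)-(ii) together with the topological fact that the two nodal curves from Theorem \ref{thm:VoddPt} each meet the boundary at exactly two points; further intersections are ruled out by sign-control of $v$ on $\partial\Omega_\phi(\eta,N)$ away from the identified points (on the deformed vertical segment, $v$ is controlled by its first two Fourier modes, while on the horizontal segments the analysis of $f$ pins down the crossings). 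Orthogonality at each boundary intersection follows from a standard reflection argument: extending $v$ by odd reflection across the relevant boundary component produces a solution of $(\Delta+\mu)\tilde v=0$ across that boundary, so its zero set is a smooth curve whose tangent at the intersection must be perpendicular to the boundary because the Dirichlet condition kills the tangential derivative of $\nabla v$.

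The main obstacle will be maintaining sharp enough derivative estimates on $v-v_0$ in a neighborhood of the vertical sides $x=0$ and $x=N$, where the unperturbed modes vanish and the Fourier expansion (\ref{eq:adiabatic}) must honestly account for the deformation $-\eta\phi(y)\leq x$ of the domain. The regularized denominator $\left|x-tN\right|^2+\eta^{\frac{2}{3}(1-\epsilon)}$ in item (i) is precisely the signature of this boundary-layer analysis, and I expect the detailed proof in Section \ref{sec:UpperOdd} to track the modes $v_1(x)$ and $v_2(x)$ via their second-order ODEs down to distance $\sim\eta^{(1-\epsilon)/3}$ from the boundary and to patch those bounds with the interior estimates used for items (i)-(ii).
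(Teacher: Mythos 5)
Your overall strategy (implicit function theorem plus $C^1$ comparison of $v$ with $v_0$ via Proposition \ref{prop:V}) is the same as the paper's, but two steps as you describe them would fail. First, in item (ii) you assert that near the horizontal boundaries ``$\partial_x v_0$ is uniformly bounded away from zero'' so that ``no denominator degeneration occurs.'' This is false: every $x$-derivative of $v_0=c_1\psi_{k1}+c_2\psi_{22}$ carries the factors $\sin(\pi y),\sin(2\pi y)$, so $\left|\partial_x v_0(f^{-1}(y),y)\right|\sim N^{-3/2}|y|$ degenerates linearly as $y\to 0$ (and symmetrically at $y=1$). With only an $O(\eta^{1-\epsilon})$ bound on the numerator, the quotient blows up like $\eta^{1-\epsilon}/|y|$ and does not give the uniform bound $C_0\eta^{1-\epsilon}$ claimed in item (ii), let alone orthogonality at the horizontal boundaries. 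The paper (Proposition \ref{prop:Vodd2}, following Propositions \ref{prop:vertical} and \ref{prop:bottomBdry}) compensates by extracting a matching \emph{quadratic} gain in the numerator: it rewrites $\partial_y v$ via the identity (\ref{eq:nextSection}), Taylor-expands the error using $E(x,0)=\partial_y^2E(x,0)=0$, and exploits the exact cancellation (\ref{eq:FinishIt}) coming from the definition of $f$, arriving at $\left|h_2'(y)-\left(f^{-1}\right)'(y)\right|\leq C\eta^{1-\epsilon}|y|$. That extra factor of $|y|$ is also what delivers orthogonality at $y=0,1$, since $(f^{-1})'$ vanishes there.

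Second, the boundary layer $x_0<2\eta^{\frac{1}{3}(1-\epsilon)}$ near the perturbed left side is the technical core of item (i)'s second case and of orthogonality there, and you only gesture at it. The paper (Proposition \ref{prop:leftBdryOdd}, identical to Proposition \ref{prop:leftBdry}) rotates coordinates by the isometry $F$ so that $\tilde\phi'(y_0)=0$, redefines the modes so that $\tilde v_n(-\eta\tilde\phi(y_0))=0$, and then runs a delicate third-order Taylor expansion of $\tilde v_1,\tilde v_2,\tilde E$ together with the boundary identity $\partial_{\tilde x}^2\tilde v(x_l,y_0)=-\eta\tilde\phi''(y_0)\partial_{\tilde x}\tilde v(x_l,y_0)$ to prove $|h_3'(\tilde x)|\leq C\eta^{\frac{2}{3}(1-\epsilon)}|\tilde x+\eta\tilde\phi(y_0)|$; this is where the regularization $\eta^{\frac{2}{3}(1-\epsilon)}$ and the left-boundary orthogonality actually come from. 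Your proposed odd-reflection argument for orthogonality is a legitimate alternative at the \emph{flat} boundary components (where the reflected function solves the same equation and the equiangular local structure at the critical zero forces a right angle), but it does not apply as stated at the curved left boundary $x=-\eta\phi(y)$ with only $C^5$ regularity, which is precisely where the paper needs the quantitative estimate. Finally, the count of exactly $4$ intersection points also requires ruling out nodal components hiding in the corners; the paper does this by a domain-monotonicity eigenvalue comparison on small corner rectangles, which your ``sign-control of $v$ on $\partial\Omega$'' (where $v\equiv 0$) does not supply.
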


\quad Alongside Theorem \ref{thm:VoddPt}, this result states that the local parametrization of $v^{-1}(0)$ is closely approximated by $f_v(x)$ in (\ref{eq:fv}) up to first order. As in Theorem \ref{thm:VevenDer}, the estimates worsen for points close to the left and right boundary, although we retain enough control to establish orthogonality at each of the points where the nodal set and boundary intersect. Finally, we prove the following result, which accompanies Theorem \ref{thm:WPt}, in Section \ref{sec:Lower}.

\begin{theorem}\label{thm:WDer}
    Under the same conditions as in Theorem \ref{thm:WPt}, there exists a constant $C_0>0$, dependent only on $\Lambda_\phi$, such that if $k\geq 8$, then there exists $\eta_0(k,\epsilon)>0$ such that the nodal set of $w(x,y;\eta)$ has the following properties for $0<\eta\leq\eta_0(k,\epsilon)$:
    \begin{enumerate}
        \item In each neighborhood $\left(\frac{jN}{2k}, \frac{N}{k}+\frac{jN}{2k}\right)$ for odd $j\in\{1,\dots, 2k-3\}$ satisfying $\frac{N}{2}\notin\left(\frac{jN}{2k},\frac{N}{k}+\frac{jN}{2k}\right)$, the nodal set can be locally parametrized as the graph of functions $x=g(y)$ or $y=h(x)$, with
        \begin{equation}
            \left|g'(y)-\left(f_w^{-1}\right)'(y)\right|+\left|h'(x)-f_w'(x)\right|\leq C_0\eta^{1-\epsilon}. \nonumber
        \end{equation}
        \item If $k$ is even, the nodal set in the neighborhood $\left(\frac{N}{2}-\frac{N}{2k}, \frac{N}{2}+\frac{N}{2k}\right)$ can be parametrized as a graph of the function $x=m(y)$ with $$\left|m'(y)\right|\leq C_0\eta^{1-\epsilon}.$$ 
    \item The nodal set intersects the boundary $\p\Omega_\phi(\eta, N)$ orthogonally at exactly $(2k-2)$ points.
    \end{enumerate}
\end{theorem}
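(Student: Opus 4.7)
The plan is to parallel the proofs of Theorems \ref{thm:VevenDer} and \ref{them:VoddDer}, replacing the upper-branch eigenfunction $v$ with the lower-branch $w$. As in (\ref{eq:adiabatic}), I would start from the partial Fourier decomposition
$w(x,y) = w_1(x)\sin(\pi y) + w_2(x)\sin(2\pi y) + E_w(x,y)$
developed in Section \ref{sec:Lower}. Differentiating once and twice the second-order ODEs satisfied by $w_1, w_2$, together with elliptic regularity applied to the tail $E_w$, upgrades the pointwise $C^0$ closeness $w \approx w_0 = -c_2\psi_{k1} + c_1\psi_{22}$ provided by Proposition \ref{prop:W} to $C^1$ and $C^2$ estimates of the form $\|w - w_0\|_{C^j(R')} \leq C\eta^{1-\epsilon}$ on compact subsets $R' \subset R$ staying away from the vertical sides. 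All three items then reduce to applications of the implicit function theorem driven by these estimates.

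For item (i), Theorem \ref{thm:WPt} confines the nodal set in each admissible interval $\left(\frac{jN}{2k}, \frac{N}{k} + \frac{jN}{2k}\right)$ to an $O(\eta^{1-\epsilon})$-tube about $y = f_w(x)$. A direct computation from Definition \ref{def:f} shows that on this tube either $|\partial_y w_0|$ or $|\partial_x w_0|$ is bounded below, and the $C^1$ estimate transfers the lower bound to $w$, so the implicit function theorem applies. Setting $h'(x) = -\partial_x w/\partial_y w$ and $f_w'(x) = -\partial_x w_0/\partial_y w_0$, the common-denominator manipulation
\begin{equation*}
h'(x) - f_w'(x) = \frac{(\partial_x w_0)(\partial_y w) - (\partial_x w)(\partial_y w_0)}{(\partial_y w)(\partial_y w_0)}
\end{equation*}
gives a numerator of size $O(\eta^{1-\epsilon})$ by the $C^1$ bound, while the denominator is bounded away from zero. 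The parallel estimate for $g'(y) - (f_w^{-1})'(y)$ in regions where $|\partial_x w_0|$ dominates is analogous.

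For item (ii), when $k$ is even one has $\psi_{k1}(N/2, y) = \psi_{22}(N/2, y) = 0$, so both $w_0$ and $\partial_y w_0$ vanish identically along $x = N/2$, whereas an explicit computation gives $|\partial_x w_0(N/2, y)| \geq c(k) > 0$ away from its isolated zeros in $y \in (0,1)$. By the $C^1$ estimate, $|\partial_x w|$ inherits a lower bound in a full neighborhood of $x = N/2$, so the nodal set there is the graph $x = m(y)$ with $m'(y) = -\partial_y w/\partial_x w$. Along $x = m(y)$,
\begin{equation*}
|\partial_y w| \leq \|\partial_y w - \partial_y w_0\|_\infty + |\partial_y w_0(m(y), y)|,
\end{equation*}
and both contributions are $O(\eta^{1-\epsilon})$ — the first by the $C^1$ estimate, the second by Lipschitz control of $\partial_y w_0$ in $x$ together with $|m(y) - N/2| = O(\eta^{1-\epsilon})$ from Theorem \ref{thm:WPt}. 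Dividing by the lower bound on $|\partial_x w|$ yields $|m'(y)| \leq C_0\eta^{1-\epsilon}$.

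For item (iii), at each boundary intersection $p = (x_0, y_\ast)$ with $y_\ast \in \{0, 1\}$, the Dirichlet condition kills the tangential derivative, and the accumulation of an interior nodal curve at $p$ forces the normal derivative to vanish as well (otherwise the implicit function theorem would identify $\{w = 0\}$ near $p$ with the boundary arc alone). Hence $\nabla w(p) = 0$. Odd reflection across the flat boundary combined with a Bers-type expansion reduces the local nodal structure to that of a homogeneous harmonic polynomial odd in the normal coordinate; the $C^2$ estimate is used to show that the leading degree is exactly two by verifying that $\partial_x\partial_y w(p) \neq 0$, which in turn reduces to the explicit nondegeneracy $\partial_x\partial_y w_0 \neq 0$ at the corresponding intersection points of $w_0$ coming from Definition \ref{def:f}. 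The unique odd-in-$t$ quadratic harmonic polynomial (up to scale) is $st$, whose interior nodal ray $\theta = \pi/2$ is orthogonal to the boundary. The count $2(k-1) = 2k - 2$ follows immediately from Theorem \ref{thm:WPt} item (iii), which produces $k - 1$ disjoint curves each meeting top and bottom exactly once. The main obstacle will be propagating enough $C^2$ control through the eigenfunction expansion to verify the mixed-partial nondegeneracy at every boundary intersection, particularly those closest to $x = 0, N$ where the pointwise estimates from Theorem \ref{thm:WPt} already begin to degrade, requiring careful use of the explicit form of $w_0$ near the vertical sides.
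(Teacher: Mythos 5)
Your treatment of items (i) and (ii) follows the same route as the paper: Propositions \ref{prop:verticalW} and \ref{prop:midW} are proved exactly by transferring the lower bounds on $|\p_y w_0|$ or $|\p_x w_0|$ to $w$ via Proposition \ref{prop:W}, applying the implicit function theorem, and estimating the quotient $\p_x w/\p_y w$ against $f_w'$ (mirroring Propositions \ref{prop:horizontal}, \ref{prop:vertical}, \ref{prop:Vodd2}, and \ref{prop:bottomBdry}). For item (iii), however, you take a genuinely different path. The paper never invokes $\nabla w(p)=0$ or a Bers-type expansion; instead it proves the \emph{weighted} derivative estimates $\left|h_2'(y)-\left(f_w^{-1}\right)'(y)\right|\leq C_0\eta^{1-\epsilon}|y-t|$ for $t\in\{0,1\}$ (obtained by Taylor-expanding $E_w$ in $y$ using $E_w(x,0)=\p_y^2E_w(x,0)=0$, as in Proposition \ref{prop:bottomBdry}), and since $\left(f_w^{-1}\right)'$ vanishes at $y=0,1$ the tangent of the graph $x=g(y)$ becomes vertical at the boundary, giving orthogonality directly and simultaneously pinning down the $2k-2$ intersection points. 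Your reflection-plus-local-structure argument is classical and would also work, but it costs more: you must verify the nondegeneracy $\p_{xy}^2 w(p)\neq 0$, which requires $C^2$ control of the error term up to the flat boundary, whereas Proposition \ref{prop:W} as stated only controls $|E_w|+|\nabla E_w|$; you would need to extend the boundary elliptic estimates one order (as the paper implicitly does for $v$ near $y=0,1$). The explicit computation does go through — at a boundary intersection one has $\sin(k\pi x_0/N)=O(1/N)$, hence $|\cos(k\pi x_0/N)|\geq \tfrac12$ and $|\p_{xy}^2w_0(p)|\gtrsim N^{-1/2}$, dominating the $O(\eta^{1-\epsilon}N^{-3/2})$ perturbation — and your worry about degradation near $x=0,N$ is moot since Theorem \ref{thm:WPt}(i) keeps the nodal set a distance $\sim 1$ from the vertical sides. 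In short: same mechanism for (i)–(ii), a valid but heavier alternative for (iii) whose missing ingredient (second-derivative error bounds at the horizontal boundary) is obtainable but not supplied by the propositions you cite.
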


\quad  In combination with Theorem \ref{thm:WPt}, this result states that each nodal curve closely resembles one of the $(k-1)$ curves in item $(iii)$ of Theorem \ref{prop:n=0}. In particular, it states that when $k$ is even, the nodal set near $x=\frac{N}{2}$ is nearly flat. The estimates in items $(i)$ and $(ii)$ of Theorem \ref{thm:WDer} are not impacted by how close points are to the boundary because the nodal set of $w$ is bounded away from the left and right boundaries of $\Omega$. Theorems \ref{thm:VevenPt} - \ref{thm:WDer} collectively establish the nodal descriptions featured in Theorem \ref{thm:n>0}. 

\subsection{Outline.} The structure of the paper is as follows. In Section \ref{sec:Hadamard}, we determine the linear combinations of $\psi_{k,1},\psi_{2,2}$ that define $v_0$ and $w_0$, following methods presented in \cite{PG, DH}. In Section \ref{sec:n=0}, we describe the nodal sets on the rectangle and prove Theorem \ref{prop:n=0}. In particular, we highlight how the number of nodal domains, interior crossings, and boundary intersections depend on the integer $k$. Then in Section \ref{sec:Prop1.1}, we study how the eigenfunctions (\ref{eq:v,w}) on $\Omega$ compare to $v_0,w_0$ on $R$.

\quad In the case that $k$ is even, we split our analysis of $v^{-1}(0)$ over three sections by region. In Section \ref{sec:centerEven}, we study this nodal set in a neighborhood of the intersection point from item $(i)$ of Theorem \ref{prop:n=0}. It is here we determine that the size of the opening is comparable to $\eta^{\frac{1}{2}}$ for $k$ large enough. In Section \ref{sec:awayEven}, we study the nodal set of $v$ outside of this neighborhood but away from the boundary $\p\Omega$. In Section \ref{sec:bdry}, we study the nodal set near the left and right boundaries, making use of methods presented in \cite{BCM,BGM}. Finally, in Sections \ref{sec:UpperOdd} and \ref{sec:Lower}, we establish properties of $v^{-1}(0)$ for odd $k$ and of $w^{-1}(0)$ respectively, largely following the analysis of previous sections.

\textbf{Acknowledgements.} The author is grateful to Jeremy Marzuola, Yaiza Canzani, and Thomas Beck for helpful conversations regarding the featured problem. They are also thankful for the anonymous reviewers who provided valuable comments that helped strengthen this paper. The author received support from NSF grants DMS-2045494
and DMS-1900519 as well as from NSF RTG DMS-2135998.

\section{Hadamard Variation Methods}\label{sec:Hadamard}

\quad In this section, we detail a method for computing expressions for $v_0, w_0$ as presented in (\ref{eq:onRect}). By construction, both $v_0$ and $w_0$ are eigenfunctions on the rectangle $R(N)=[0,N]\times[0,1]$ with the same eigenvalue. By (\ref{eq:eigenpairs}), the degeneracy $\lambda_{2,2}=\lambda_{k,1}$ means that any linear combination of
\begin{equation}\label{eq:basis}
    \psi_{k,1}(x,y)=\frac{2}{\sqrt{N}}\sin\left(\frac{k\pi}{N}x\right)\sin\big(\pi y\big) \quad \textrm{and} \quad \psi_{2,2}(x,y)=\frac{2}{\sqrt{N}}\sin\left(\frac{2\pi}{N}x\right)\sin\big(2\pi y\big)
\end{equation}
belongs to the kernel of $(\Delta+\lambda_{2,2})$ and vanishes on the boundary $\p R(N)$. In this way, we can write
\begin{equation}
    v_0(x,y)=\left(c_1\psi_{k,1}+c_2\psi_{2,2}\right)(x,y) \quad \: \textrm{and} \quad \: w_0(x,y)=\left(-c_2\psi_{k,1}+c_1\psi_{2,2}\right)(x,y) \nonumber
\end{equation}
for some coefficient pair $\textbf{c}=(c_1,c_2)$ with normalization $||\textbf{c}||_{\ell^2}=1$. The coefficients depend on the boundary perturbation $\phi$ and integer $k$ satisfying $k^2=3N^2+4$. Likewise, the eigenvalue branches $\mu,\gamma$ depend on $\phi,k$. By \cite{TK}, these eigenvalues and their corresponding eigenfunctions are smooth in $\eta$. Letting $\dot{u}=\left(\p_\eta u\right)\big|_{\eta=0}$ be the derivative in $\eta$ evaluated at $\eta=0$,  we make use of a Hadamard variation argument, presented in \cite{PG,DH} on smooth domains, to characterize the coefficient dependence.

\begin{prop}\label{prop:Hadamard}
Under the same conditions as in Theorem \ref{prop:n=0}, there exists a symmetric $2\times 2$ matrix $\textbf{D}$ with spectrum $\sigma(\textbf{D})=\{-\dot{\mu}, -\dot{\gamma}\}$ and corresponding normalized eigenvectors $\textbf{c}=(c_1, c_2)^T$, $\textbf{c}^\perp=(-c_2, c_1)^T$. More precisely, 
\begin{equation}
    \textbf{D}=\frac{4\pi^2}{N^3}\begin{pmatrix} k^2\int_0^1 \phi(y)\sin^2(\pi y)dy & 2k\int_0^1 \phi(y)\sin(2\pi y)\sin(\pi y)dy \\ \\ 2k\int_0^1 \phi(y)\sin(2\pi y)\sin(\pi y)dy & 4\int_0^1 \phi(y)\sin^2(2\pi y)dy\end{pmatrix}. \nonumber
\end{equation}
\end{prop}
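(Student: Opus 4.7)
The plan is to apply analytic perturbation theory \cite{TK} at the degenerate eigenvalue $\lambda_{22}$ of the Dirichlet Laplacian on $\Omega_\phi(0, N) = R$. The unperturbed eigenspace is $E = \mathrm{span}(\psi_{k1}, \psi_{22})$, and standard degenerate perturbation theory guarantees that the two analytic branches $\mu(\eta)$ and $\gamma(\eta)$ bifurcating from $\lambda_{22}$ satisfy $\sigma(\textbf{D}) = \{-\dot\mu, -\dot\gamma\}$ for a symmetric $2\times 2$ matrix $\textbf{D}$ represented in the ordered basis $\{\psi_{k1}, \psi_{22}\}$ of $E$. The leading-order coefficient vectors of $v_0, w_0$ in this basis are the corresponding unit eigenvectors, and symmetry of $\textbf{D}$ (inherited from self-adjointness) forces orthogonality; writing the upper-branch eigenvector as $\textbf{c} = (c_1, c_2)^T$ then determines the lower-branch eigenvector as $\textbf{c}^\perp = (-c_2, c_1)^T$.

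To identify the entries of $\textbf{D}$, reduce to a fixed-domain problem by choosing a smooth family of diffeomorphisms $\Phi_\eta: R \to \Omega_\phi(\eta, N)$ with $\Phi_0 = \mathrm{id}$ and $\Phi_\eta(0, y) = (-\eta\phi(y), y)$; such a family exists because $\phi \in C^5$ vanishes at the corners of $R$. The pulled-back operators $L_\eta$ then form an analytic family of self-adjoint elliptic operators on the fixed space $L^2(R)$ with Dirichlet data, and the reduced matrix in degenerate first-order perturbation theory is $D_{ij} = -\langle \dot L_0 \psi_i, \psi_j\rangle_{L^2(R)}$. A chain-rule computation for $\dot L_0$ followed by integration by parts, exploiting the Dirichlet conditions $\psi_i|_{\partial R} = 0$, collapses the bulk contribution and leaves only a surface integral on the moving edge $\{x = 0\}$, where the outward normal velocity induced by $\Phi_\eta$ at $\eta = 0$ is $\phi(y)$. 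This produces the Hadamard-type identity
\begin{equation}
D_{ij} = \int_0^1 \phi(y)\, \partial_x \psi_i(0, y)\, \partial_x \psi_j(0, y)\, dy. \nonumber
\end{equation}

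The explicit entries now follow from (\ref{eq:basis}) by direct computation: since $\partial_x\psi_{k1}(0,y) = \frac{2k\pi}{N^{3/2}}\sin(\pi y)$ and $\partial_x \psi_{22}(0, y) = \frac{4\pi}{N^{3/2}}\sin(2\pi y)$, each product pulls the common prefactor $4\pi^2/N^3$ outside the integral and leaves one of $k^2\sin^2(\pi y)$, $2k\sin(\pi y)\sin(2\pi y)$, or $4\sin^2(2\pi y)$ paired with $\phi(y)$, reproducing the matrix in the statement. The principal obstacle is the rigorous justification of the boundary-localized formula for $D_{ij}$: one must verify that $\Phi_\eta$ can be chosen so that $L_\eta$ is genuinely analytic in a sufficient operator topology (the conditions $\phi \in C^5$ and $\phi(0) = \phi(1) = 0$ are essential so that no boundary regularity is lost at the corners), and then carefully isolate the surface contribution from $\dot L_0$ via Green's identity. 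An alternative avoiding the pullback is to differentiate the Rayleigh bilinear form $\int_{\Omega_\eta} \nabla u \cdot \nabla v - \lambda \int_{\Omega_\eta} uv$ directly on the moving domain against suitable extensions of elements of $E$, and use the eigenvalue equation on $R$ together with the Dirichlet condition to collapse it to the same boundary integral.
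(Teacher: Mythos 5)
Your route is essentially the paper's: both arguments are first-order degenerate perturbation theory reduced to the two-dimensional eigenspace $\spn\{\psi_{k1},\psi_{22}\}$, with the entries of $\textbf{D}$ given by the Hadamard boundary integral $\int_0^1\phi(y)\,\p_x\psi_i(0,y)\,\p_x\psi_j(0,y)\,dy$ localized on the perturbed edge; the only difference is bookkeeping — you pull the operator back to the fixed rectangle and differentiate $L_\eta$, whereas the paper differentiates the eigenpair on the moving domain, writes the system (\ref{eq:Vdot}) for $\dot v$ with boundary data $\dot v=-\dot h\cdot\nabla v_0$, and extracts the same boundary integral by Green's identity. One sign inconsistency to fix: with the convention $(L_\eta+\mu)u=0$ (so $\mu$ is an eigenvalue of $-L_\eta$), the reduced matrix $-\langle\dot L_0\psi_i,\psi_j\rangle$ has spectrum $\{\dot\mu,\dot\gamma\}$, not $\{-\dot\mu,-\dot\gamma\}$; equivalently, it is $+\langle\dot L_0\psi_i,\psi_j\rangle$ that equals the (positive-definite) boundary integral you then compute, consistent with domain monotonicity forcing $\dot\mu,\dot\gamma<0$. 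Your final matrix and the eigenvector identification $\textbf{c},\textbf{c}^\perp$ are correct, and the "principal obstacle" you defer (justifying the boundary-localized formula) is exactly the step the paper carries out explicitly via (\ref{eq:eigenvar}) and Green's identity.
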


\begin{rem}\label{rem:BananaSplit}
    Notice that the off-diagonal terms in $\textbf{D}$ are multiples of $\Lambda_\phi$. Thus, (\ref{eq:Ass2}) implies that $\dot{\mu}\neq \dot{\gamma}$, causing the degeneracy to break for small $\eta$. Further, $\sgn(\dot{\mu})=\sgn(\dot{\gamma})=-1$ as the trace of $\textbf{D}$ is positive by Definition \ref{def:phi} and the determinant of $\textbf{D}$ is positive by Cauchy-Schwarz.
\end{rem}

\begin{proof}[Proof of Proposition \ref{prop:Hadamard}]
    Given a boundary function $\phi$, we can deform $R$ into $\Omega$ by the diffeomorphism
\begin{equation}\label{eq:diffeo}
    h(x,y;\eta)=\left(\frac{N+\eta\phi(y)}{N}x-\eta\phi(y),y\right) 
\end{equation}
satisfying $h(x,y;0)=I_R$, the identity on the rectangle. In the case where the domain is $C^2$, the eigenfunction variation satisfies $\dot{v}+\dot{h}\cdot\nabla v_0=0$ on the boundary, as in \cite{PG, DH}. However, by tracking each boundary component individually, we show that this expression remains valid on $\p R$. Along the left boundary, $v(-\eta\phi(y),y;\eta)=0$ for all $y\in(0,1)$; this also holds throughout the deformation. By evaluating the $\eta$-derivative at $\eta=0$,
\begin{equation}\label{eq:eigenvar}
    \dot{v}(0,y)=\phi(y)\p_xv_0(0,y).
\end{equation}
In a similar manner, $\dot{v}=0$ along each of the remaining boundary components. By Definition \ref{def:phi}, this means that $\dot{v}$ is continuous along $\p R$. Since $v$ satisfies (\ref{eq:v,w}), the variation $\dot{v}(x,y)$ solves 
\begin{equation}\label{eq:Vdot}
    \begin{cases} 
        (\Delta+\mu)\dot{v}+\dot{\mu}v_0=0, & \textrm{in }R \\
        \: \dot{v}+\dot{h}\cdot \nabla v_0=0, & \textrm{on }\p R. 
    \end{cases}
\end{equation}
Here $\mu(0)=\lambda_{2,2}=\lambda_{k,1}$, but we retain this labeling to emphasize that $\dot{\mu}$ is not necessarily equal to $\dot{\gamma}$. By integrating (\ref{eq:Vdot}) against $\psi_{k,1}$, $\psi_{2,2}$ and using Green's identity,
\begin{align}
    \dot{\mu}c_1+c_1\int_{\p R}\left(\dot{h}\cdot\nabla \psi_{k,1}\right)\frac{\p}{\p\nu}\psi_{k,1}+c_2\int_{\p R}\left(\dot{h}\cdot\nabla\psi_{2,2}\right)\frac{\p}{\p\nu}\psi_{k,1}=0 \nonumber \\
    \dot{\mu}c_2+c_1\int_{\p R}\left(\dot{h}\cdot\nabla \psi_{k,1}\right)\frac{\p}{\p\nu}\psi_{2,2}+c_2\int_{\p R}\left(\dot{h}\cdot\nabla\psi_{2,2}\right)\frac{\p}{\p\nu}\psi_{2,2}=0 \nonumber
\end{align}
where $\frac{\p}{\p\nu}$ is the normal derivative along the boundary $\p R$. By (\ref{eq:diffeo}), this can be written as the system $\left(\dot{\mu}I+\textbf{D}\right)\textbf{c}=0$ where $\textbf{c}=(c_1, c_2)$ and $\textbf{D}$ is the desired matrix. This argument also holds for the eigenvalue branch $\gamma$. 
\end{proof}

\quad Proposition \ref{prop:Hadamard} implies that the eigenvalue variation scales nicely with respect to the boundary, whereas the coefficient pair $\textbf{c}$ is independent of scaling. Namely, $\dot{\mu}(s\phi)=s\dot{\mu}(\phi)$ and $\textbf{c}(s\phi)=\textbf{c}(\phi)$ for $s\in \mathbb{R}_+$, $\phi\in\mathcal{A}$. It also suggests that these coefficients are closely related to variational quantities, such as $\dot{\mu},\dot{\gamma}$. This relationship is highlighted further in the following statement.

\begin{lemma}\label{prop:2nd}
    Let the conditions in Theorem \ref{prop:n=0} hold. The coefficient pair $\textbf{c}=(c_1,c_2)$ can be constructed as
    \begin{equation}
        c_1=-\frac{k\pi}{N^{\frac{3}{2}}}\left(\frac{\dot{v}_1(0)}{\dot{\mu}}\right) \quad \textrm{and} \quad c_2=-\frac{2\pi}{N^{\frac{3}{2}}}\left(\frac{\dot{v}_2(0)}{\dot{\mu}}\right) \nonumber
    \end{equation}
    where $\dot{v}_n(0)$, $n=1,2$, is the first variation of (\ref{eq:InnerProd}) evaluated at $x=0$. 
\end{lemma}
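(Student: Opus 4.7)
The plan is to compare the first variation of the Fourier coefficients $v_n(0;\eta)$ in $\eta$ with the eigenvector equation $(\dot\mu I + \textbf{D})\textbf{c}=0$ supplied by Proposition \ref{prop:Hadamard}, and observe that they are proportional.

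First, I would note that at $\eta=0$ the eigenfunction $v_0 = c_1\psi_{k1}+c_2\psi_{22}$ satisfies $v_0(0,y)=0$ since $\psi_{k1}$ and $\psi_{22}$ both contain the factor $\sin(k\pi x/N)$ or $\sin(2\pi x/N)$ which vanishes at $x=0$. Consequently $v_n(0;0)=0$, and since $v$ is analytic in $\eta$ by \cite{TK} I may differentiate $v_n(0;\eta)=2\int_0^1 v(0,y;\eta)\sin(n\pi y)\,dy$ under the integral to obtain
\begin{equation}
\dot v_n(0)=2\int_0^1 \dot v(0,y)\sin(n\pi y)\,dy. \nonumber
\end{equation}

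Next I would substitute the Hadamard boundary relation (\ref{eq:eigenvar}), namely $\dot v(0,y)=\phi(y)\,\partial_x v_0(0,y)$, and compute $\partial_x v_0(0,y)$ directly from (\ref{eq:basis}):
\begin{equation}
\partial_x v_0(0,y)=\frac{2k\pi}{N^{3/2}}c_1\sin(\pi y)+\frac{4\pi}{N^{3/2}}c_2\sin(2\pi y). \nonumber
\end{equation}
For $n=1$, this yields an expression of the form $\dot v_1(0)=\alpha\,c_1\int_0^1\phi\sin^2(\pi y)\,dy+\beta\,c_2\int_0^1\phi\sin(2\pi y)\sin(\pi y)\,dy$ with explicit constants $\alpha,\beta$ in $N$ and $k$, and analogously for $n=2$.

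The key observation is that, up to a scalar factor of $N^{3/2}/(k\pi)$ in the first case and $N^{3/2}/(2\pi)$ in the second, these expressions coincide exactly with the rows $(\textbf{D}\textbf{c})_1$ and $(\textbf{D}\textbf{c})_2$, where $\textbf{D}$ is the matrix from Proposition \ref{prop:Hadamard}. Therefore
\begin{equation}
\dot v_1(0)=-\frac{N^{3/2}}{k\pi}\,\dot\mu\, c_1,\qquad \dot v_2(0)=-\frac{N^{3/2}}{2\pi}\,\dot\mu\, c_2, \nonumber
\end{equation}
by the eigenvector relation $(\dot\mu I+\textbf{D})\textbf{c}=0$. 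Solving for $c_1,c_2$ gives the claimed formulas, with the division by $\dot\mu$ justified by Remark \ref{rem:BananaSplit}, which uses (\ref{eq:Ass2}) to guarantee $\dot\mu\neq 0$.

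The proof is essentially a direct computation; the only substantive ingredients beyond unwinding definitions are the boundary variation identity (\ref{eq:eigenvar}), the explicit form of $v_0$, and the algebraic observation that the matrix entries of $\textbf{D}$ are precisely the integrals that appear when expanding $\dot v_n(0)$. The mild subtlety is confirming that $v_n(0;\eta)$ is well defined and differentiable at $\eta=0$ even though the domain shifts past $x=0$ for $\eta>0$; this is handled by either working with the direct integral formula (since the strip $\{x=0\}\cap\Omega_\phi(\eta,N)$ lies in the interior for $y$ with $\phi(y)>0$) or by appealing to the ODE extension of $v_n$ to $[-\eta,N]$ mentioned following (\ref{eq:adiabatic}). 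No genuine obstacle is expected.
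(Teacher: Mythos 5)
Your argument is correct, but it takes a genuinely different route from the paper's. The paper proves the lemma by expanding $\dot{v}$ in the partial Fourier series $\sum_j \dot{v}_j(x)\sin(j\pi y)$ and observing from (\ref{eq:Vdot}) that each $\dot{v}_n$, $n=1,2$, solves an inhomogeneous ODE on $[0,N]$ whose forcing is $-\dot{\mu}$ times the corresponding Fourier coefficient of $v_0$; solving that ODE explicitly (the homogeneous Dirichlet piece does not affect the value at $x=0$) and imposing the condition at $x=N$ yields $\dot{v}_n(0)=-\dot{\mu}c_nN^{3/2}/(n_k\pi)$ directly, without reference to $\textbf{D}$. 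You instead evaluate $\dot{v}_n(0)$ from the left boundary identity (\ref{eq:eigenvar}), obtaining the explicit integrals of (\ref{eq:VarCoeff}), and then recognize these as $\frac{N^{3/2}}{k\pi}(\textbf{D}\textbf{c})_1$ and $\frac{N^{3/2}}{2\pi}(\textbf{D}\textbf{c})_2$, closing with the eigenvector relation $(\dot{\mu}I+\textbf{D})\textbf{c}=0$; this is essentially the consistency computation the paper records after the lemma, promoted to a proof. Your matching of the integrals with the rows of $\textbf{D}$ checks out, and both routes are legitimate: the paper's is self-contained given (\ref{eq:Vdot}), while yours is more algebraic, leans on Proposition \ref{prop:Hadamard}, and makes transparent why the scalars $k\pi/N^{3/2}$ and $2\pi/N^{3/2}$ appear. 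One minor point: the nonvanishing of $\dot{\mu}$ needed to divide comes from the positivity of the trace and determinant of $\textbf{D}$ (as in Remark \ref{rem:BananaSplit}), not from (\ref{eq:Ass2}) itself, which only guarantees $\dot{\mu}\neq\dot{\gamma}$.
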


\begin{proof}[Proof of Lemma \ref{prop:2nd}] To establish the variational construction of $\textbf{c}$, we begin with the system in (\ref{eq:Vdot}). If we write the solution $\dot{v}$ as the partial Fourier series
\begin{equation}
    \dot{v}(x,y)=\sum_{j\geq 1} \dot{v}_j(x)\sin(j\pi y), \quad \quad \dot{v}_j(x)=2\int_0^1 \dot{v}(x,y)\sin(j\pi y) dy \nonumber
\end{equation}
then each Fourier coefficient $\dot{v}_j(x)$ solves an inhomogeneous differential equation on $[0,N]$. For our purposes, we focus on the first two modes.
\begin{align}
    \dot{v}_1''(x)+\frac{k^2\pi^2}{N^2}\dot{v}_1(x)=-2\dot{\mu}\int_0^1 (c_1\psi_{k,1}+c_2\psi_{2,2})\sin(\pi y)dy, \nonumber \\ \dot{v}_2''(x)+\frac{4\pi^2}{N^2}\dot{v}_2(x)=-2\dot{\mu}\int_0^1 (c_1\psi_{k,1}+c_2\psi_{2,2})\sin(2\pi y)dy. \nonumber
\end{align}
From this, we can explicitly solve for $\dot{v}_n(x)$ up to a multiple of the fundamental solution. However, the fundamental solution has Dirichlet boundary conditions (at $x=0,N$) and therefore does not contribute in determining $\dot{v}_n(0)$. From this solution,
\begin{equation}
    \dot{v}_1(0)=-\frac{\dot{\mu}c_1N^{\frac{3}{2}}}{k\pi} \quad \textrm{and} \quad \dot{v}_2(0)=-\frac{\dot{\mu}c_2N^{\frac{3}{2}}}{2\pi} \nonumber
\end{equation}
which can both be rearranged to solve for $c_1,c_2$. 
\end{proof}

\quad From the proof of Proposition \ref{prop:Hadamard}, we recover an expression for the variation of each Fourier coefficient $v_j(0)$ in terms of known quantities. By (\ref{eq:InnerProd}) and (\ref{eq:eigenvar}),
\begin{equation}\label{eq:VarCoeff}
    \dot{v}_j(0)=\frac{4\pi}{N^{\frac{3}{2}}}\int_0^1 \phi(y)\left(2c_2\sin(2\pi y)+kc_1\sin(\pi y)\right)\sin(j\pi y)dy.
\end{equation}
This construction is consistent with both Proposition \ref{prop:Hadamard} and Lemma \ref{prop:2nd} and provides some intuition as to why Proposition \ref{prop:VJ} holds.

\section{Proof of Theorem \ref{prop:n=0}}\label{sec:n=0}

\quad This section is dedicated to proving Theorem \ref{prop:n=0}, which describes the nodal sets of two eigenfunctions $v_0, w_0$ on the rectangle $R(N)$. As presented in Theorem \ref{prop:n=0}, we can write
\begin{equation}
    v_0(x,y)=\left(c_1\psi_{k,1}+c_2\psi_{2,2}\right)(x,y) \quad \: \textrm{and} \quad \: w_0(x,y)=\left(-c_2\psi_{k,1}+c_1\psi_{2,2}\right)(x,y) \nonumber
\end{equation}
where the coefficient pair $\textbf{c}=(c_1, c_2)$ is constructed in Proposition \ref{prop:Hadamard} and the basis eigenfunctions $\psi_{k,1},\psi_{2,2}$ are defined in (\ref{eq:basis}). When convenient, we hereafter use the notation $A\sim B$ when the quantity $|A|/|B|$ is bounded both above and below by constants independent of $\eta$, $N$ and only dependent on $\phi$ through the value $\Lambda_\phi$. By construction, the integer $k$ satisfying $k^2=3N^2+4$ determines the aspect ratio, $k\sim N$. However, for the sake of uniformity with prior work, all of our estimates will be in terms of $N$. Before we proceed to the proof of Theorem \ref{prop:n=0}, the following estimate is needed.

\begin{lemma}\label{prop:coeff}
Under the same conditions as in Theorem {\ref{prop:n=0}}, there exists a constant $c>0$, dependent only on $\Lambda_\phi$, such that the coefficient pair $(c_1, c_2)$ satisfies
\begin{equation}
    \frac{c}{N}<|c_1|<\sqrt{\frac{16}{k^2+16}}-\frac{c}{N} \quad \:\: \textrm{and} \quad \:\: \sqrt{\frac{k^2}{k^2+16}}+\frac{c}{N}<|c_2|<1-\frac{c}{N}. \nonumber
\end{equation}
for all $k\geq 4$. In particular, because $k\sim N$, $c_1\sim 1/N$ and $c_2\sim 1$.
\end{lemma}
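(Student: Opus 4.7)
The plan is to view $(c_1,c_2)$ as the normalized eigenvector of the $2\times 2$ symmetric matrix $\mathbf{D}$ from Proposition \ref{prop:Hadamard} corresponding to the smaller eigenvalue $-\dot\mu$ (note $\dot\mu>\dot\gamma$ by Remark \ref{rem:BananaSplit}). Writing $\mathbf{D}=\begin{pmatrix}a&b\\b&d\end{pmatrix}$ with $a=\tfrac{4\pi^2k^2}{N^3}A$, $b=\tfrac{8\pi^2k}{N^3}\Lambda_\phi$, $d=\tfrac{16\pi^2}{N^3}D$, where $A=\int_0^1\phi\sin^2(\pi y)\,dy$ and $D=\int_0^1\phi\sin^2(2\pi y)\,dy$, a direct diagonalization of the matrix gives
\[
|c_1|^2=\frac{1}{2}\left(1-\frac{a-d}{\sqrt{(a-d)^2+4b^2}}\right),\qquad |c_2|^2=1-|c_1|^2.
\]
Since $\Lambda_\phi\neq 0$ forces $b\neq 0$, and the pointwise inequality $D<4A$ (from $\cos^2(\pi y)<1$ on $\{\phi>0\}$) forces $a-d>0$ for $k\geq 4$, we have $0<|c_1|<1/\sqrt{2}$. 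Quantifying both inequalities with the correct rate in $N$ is the content of the lemma.

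For the lower bound $|c_1|>c/N$, the triangle inequality $\sqrt{(a-d)^2+4b^2}\leq(a-d)+2|b|$ yields $|c_1|\geq |b|/[(a-d)+2|b|]$. Substituting the explicit entries, using the universal bounds $A,D\leq 1/2$ and $k\sim N$, one finds $|c_1|\geq c|\Lambda_\phi|/N$ with constant depending only on $|\Lambda_\phi|$.

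For the upper bound $|c_1|<\sqrt{16/(k^2+16)}-c/N$, the key algebraic observation is that $|c_1|^2<16/(k^2+16)$ is equivalent to $4k(a-d)>(k^2-16)|b|$, which after substitution reduces to $k^2(2A-|\Lambda_\phi|)+16|\Lambda_\phi|>8D$. The bare version follows from $D<4A$, which gives $8D-16|\Lambda_\phi|<16(2A-|\Lambda_\phi|)\leq k^2(2A-|\Lambda_\phi|)$ for $k\geq 4$. For the margin, I would sharpen via Cauchy--Schwarz in two ways: first $\int_0^1\phi\sin^4(\pi y)\,dy\geq A^2/\int_0^1\phi\,dy\geq A^2$ strengthens $D<4A$ to $D\leq 4A-4A^2$; second $\Lambda_\phi^2\leq AD\leq A/2$ gives $A\geq 2|\Lambda_\phi|^2$. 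Together these produce the uniform excess
\[
k^2(2A-|\Lambda_\phi|)-(8D-16|\Lambda_\phi|)\geq (k^2-16)(2A-|\Lambda_\phi|)+32A^2\geq 128|\Lambda_\phi|^4
\]
for every $k\geq 4$. Feeding this excess back through the diagonalization identity, using $\sqrt{16/(k^2+16)}-|c_1|\geq (16/(k^2+16)-|c_1|^2)/(2\sqrt{16/(k^2+16)})$ together with $k\sim N$, produces the claimed margin $c/N$ with $c=c(|\Lambda_\phi|)>0$. The bounds on $|c_2|$ follow from $|c_1|^2+|c_2|^2=1$.

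The main obstacle is that the excess $128|\Lambda_\phi|^4$ is only a \emph{uniform lower bound} across $k$; converting it into a margin of the precise order $1/N$ for all $k\geq 4$ is most transparent in the large-$k$ asymptotic (where the approximation $|c_1|\approx 2|\Lambda_\phi|/(kA)$ makes the margin $(2A-|\Lambda_\phi|)/(2A)\cdot 4/k\sim 1/N$ manifest), but at the borderline $k=4$, where $\sqrt{16/(k^2+16)}=1/\sqrt{2}$ and a priori $|c_1|$ can be close to $1/\sqrt{2}$, one must instead work directly from $1/\sqrt{2}-|c_1|=(\tfrac{1}{2}-|c_1|^2)/(1/\sqrt{2}+|c_1|)$ together with the excess estimate above and the crude bounds $4A-D\leq 2$, $|\Lambda_\phi|\leq 1/2$, taking care not to shed the positive powers of $|\Lambda_\phi|$ that make $c$ depend only on $\Lambda_\phi$.
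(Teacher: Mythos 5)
Your approach is the same as the paper's: both identify $\textbf{c}$ with the normalized eigenvector of $\textbf{D}$ for the eigenvalue $-\dot{\mu}$ of smaller magnitude, and both reduce the upper bound on $|c_1|$ to the scalar inequality $4k(a-d)>(k^2-16)|b|$ (the paper proves it by checking that the integrand of $4k(a-c)-(k^2-16)|b|$ is pointwise positive; you prove the integrated version via $D<4A$ and $|\Lambda_\phi|<2A$ -- both are valid). Your diagonalization identity for $|c_1|^2$, the lower bound $|c_1|\geq |b|/((a-d)+2|b|)\geq c|\Lambda_\phi|/N$, and the two Cauchy--Schwarz refinements $D\leq 4A-4A^2$ and $A\geq 2\Lambda_\phi^2$ are all correct; the latter two make explicit what the paper only asserts (that the integral ``can be bounded below by a multiple of $k^2b$'').

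The gap is in the final quantitative step, and you half-acknowledge it. A $k$-\emph{independent} excess $k^2(2A-|\Lambda_\phi|)+16|\Lambda_\phi|-8D\geq 128\Lambda_\phi^4$ translates into $\frac{a-d}{|b|}\geq\frac{k^2-16}{4k}+O(|\Lambda_\phi|^3/k)$, hence a relative improvement of $|c_1|$ over $4/\sqrt{k^2+16}$ of order $1/k^2$ and a margin of order $1/N^3$, not the claimed $c/N$; to get $c/N$ the excess must grow like $k^2|b|$, which is exactly what the paper claims and what your ``uniform lower bound'' discards. The fix is already contained in your displayed inequality: keep the term $(k^2-16)(2A-|\Lambda_\phi|)$. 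Since $1-\sgn(\Lambda_\phi)\cos(\pi y)\geq\tfrac12\sin^2(\pi y)$, one has $2A-|\Lambda_\phi|\geq\int_0^1\phi\sin^4(\pi y)\,dy\geq A^2\geq 4\Lambda_\phi^4$, so the excess is at least $4(k^2-16)\Lambda_\phi^4+32A^2\geq 4k^2\Lambda_\phi^4$ for all $k\geq 4$. This gives $\frac{a-d}{|b|}\geq\frac{k^2-16}{4k}+|\Lambda_\phi|^3k$, which after feeding through the monotone map $u\mapsto\tfrac12(\sqrt{u^2+4}+u)$ yields $|c_1|\leq\sqrt{16/(k^2+16)}\,(1-c(\Lambda_\phi))$, i.e.\ the stated margin, uniformly down to $k=4$. (One side remark that is the paper's issue rather than yours: deducing the $|c_2|$ bounds purely from $\|\textbf{c}\|_{\ell^2}=1$ only perturbs $|c_2|=\sqrt{1-|c_1|^2}$ at order $1/N^2$, so the $\pm c/N$ margins on $|c_2|$ do not literally follow; the content actually used later, $c_2\sim 1$ and $|c_2|>\sqrt{k^2/(k^2+16)}$, does.)
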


\begin{rem}\label{rem:3}
    Lemma \ref{prop:coeff} implies that $|c_1|<\frac{4}{k}$ for all $k\geq 4$. By the normalization $||\textbf{c}||_{\ell^2}=1$, this inequality also holds for $k=3$. Because $k^2=3N^2+4$, this extends the estimate $c_1\sim 1/N$ to all $k\geq 3$.
\end{rem} 

\begin{proof}[Proof of Lemma \ref{prop:coeff}.] We prove the bounds for $c_1$ as the normalization $||\textbf{c}||_{\ell^2}=1$ then provides the corresponding bounds for $c_2$. Let $\textbf{D}$ be the symmetric matrix defined in Proposition \ref{prop:Hadamard}.  For simplicity of notation, label the elements of this matrix as $a,b,c$, i.e.
\begin{equation}\label{eq:Dc12}
    \textbf{D}=\begin{pmatrix} a & b \\ b & c\end{pmatrix} \quad \quad \textrm{so that} \quad \quad c_1^2=\frac{b^2}{b^2+(a+\dot{\mu})^2}=\frac{1}{1+\big(\frac{a+\dot{\mu}}{b}\big)^2}.
\end{equation}
We quickly remark that by Definition \ref{def:phi}, the entries in $\textbf{D}$ satisfy $a\sim 1/N$, $b\sim 1/N^2$, and $c\sim 1/N^3$; this immediately provides the lower bound for $|c_1|$. To determine the upper bound, we focus first on $\dot{\mu}$. Because we denote $\mu$ as the upper branch eigenvalue, Remark \ref{rem:BananaSplit} implies that $|\dot{\mu}|<|\dot{\gamma}|$. Therefore, because $\{-\dot{\mu}, -\dot{\gamma}\}$ are eigenvalues of $\textbf{D}$, we can write
\begin{equation}
    \dot{\mu}=-\frac{1}{2}\left((a+c)-\sqrt{(a-c)^2+4b^2}\right). \nonumber
\end{equation}
If we show that $\big|\frac{a+\dot{\mu}}{b}\big|-\frac{k}{4}$, is bounded below by a constant, then (\ref{eq:Dc12}) provides the desired result. To determine the lower bound of $\left|\frac{a+\dot{\mu}}{b}\right|$, we write
\begin{align}\label{eq:c2Step3}
    \left|\frac{a+\dot{\mu}}{b}\right|=\frac{1}{2}\left|\frac{a-c}{|b|}+ \sqrt{\left(\frac{a-c}{b}\right)^2+4}\right|=\frac{1}{2}\left(\sqrt{\left(\frac{a-c}{b}\right)^2+4}+\frac{a-c}{|b|}\right)
\end{align}
which is monotonically increasing with respect to the argument $\left(\frac{a-c}{|b|}\right)$. Fortunately, we can bound the difference
\begin{align}\label{eq:c2Step2}
    \frac{a-c}{|b|}-\frac{k^2-16}{4k}
\end{align}
below by a multiple of $\frac{1}{k}$ because $4k(a-c)-(k^2-16)|b|$ is equal to
\begin{align}\label{eq:integral}
    \frac{4\pi^2}{N^3}\int_0^1 \phi(y)\left(4k(k^2\sin^2(\pi y)-4\sin^2(2\pi y))-2k\sgn(b)(k^2-16)\sin(2\pi y)\sin(\pi y)\right)dy.
\end{align}
Because $\phi\in\mathcal{A}$ and $k\geq 4$ is even, the integrand in (\ref{eq:integral}) is bounded below by a multiple of $k^5b\sim N^3$ for all $y\in(0,1)$ and hence (\ref{eq:integral}) is bounded below by a constant independent of $k$. This implies that (\ref{eq:c2Step3}) is bounded below by $\frac{k}{4}$ plus a constant and establishes the upper bound for $c_1^2$.
\end{proof}

\quad We now show that these conditions are enough to establish the geometric properties presented in Theorem \ref{prop:n=0} of the nodal sets of $v_0$ and $w_0$ over the rectangle $R$. We break the proof into two parts.

\begin{proof}[Proof of Theorem \ref{prop:n=0} Items (i) and (ii).] 
The nodal set for $v_0(x,y)$ can be written as
\begin{equation}\label{eq:NodalDescription}
    \left\{(x,y)\in R: c_1\sin\left(\frac{k\pi}{N}x\right)\sin(\pi y)+c_2\sin\left(\frac{2\pi}{N}x\right)\sin(2\pi y)=0\right\}.
\end{equation}
For all $k\geq 3$, we can describe this set by the graph $(x, f_v(x))$ from (\ref{eq:fv}) away from $x=\frac{N}{2}$.

\quad If $k$ is even, then the nodal set of $v$ includes the line $x=\frac{N}{2}$, so the intersection point occurs at $\left(\frac{N}{2}, \overline{y}\right)$ where $\overline{y}=\lim_{x\to \frac{N}{2}}f_v(x)$ is presented in (\ref{eq:ybar}). The bounds in Lemma \ref{prop:coeff} imply that $\overline{y}$ is bounded away from the boundary $\p R$ by a distance $\sim 1$. By comparing (\ref{eq:fv}) and (\ref{eq:ybar}), $$\left|f_v(x)-\frac{1}{2}\right|\leq \left|\overline{y}-\frac{1}{2}\right|$$ for all even $k\geq4$. This implies that the nodal set extending from the midline only intersects the left and right boundaries. Thus, the nodal set for $v_0$  has exactly $4$ nodal domains with an interior crossing along $x=\frac{N}{2}$.

\quad Suppose that $k\geq 3$ is instead odd. By (\ref{eq:NodalDescription}) it is clear that the nodal set cannot intersect the line $x=\frac{N}{2}$. To extend this property to a neighborhood about $x=\frac{N}{2}$, we use
the function in (\ref{eq:fv}) which still describes the nodal set for odd $k$. Let $x_0$ minimize $\left|x-\frac{N}{2}\right|$ over all $x$ satisfying $v_0(x,f_v(x))=0$. By horizontal symmetry, there are two such points and $(x_0,f_v(x_0))$ must lie on either the top or bottom boundary of $R$. This means that $x_0$ must satisfy
\begin{equation}
    \left|\frac{\sin(k\pi x_0/N)}{\sin(2\pi x_0/N)}\right|=2\left|\frac{c_2}{c_1}\right|. \nonumber
\end{equation}
By Lemma \ref{prop:coeff}, the right side is bounded by a multiple of $k$. Because $k$ is odd, this implies $x_0$ is bounded away from $\frac{N}{2}$ by a constant independent of $k$. In the remaining region, we now show that there are exactly two nodal curves. When $k=3$, (\ref{eq:Ass2}) immediately implies that this is true, so we hereafter consider odd $k\geq 5$. 

\quad For $|c_2|=1$, the nodal set of $v_0$ matches that of $\psi_{2,2}$, separating exactly four nodal domains, and for $|c_2|$ slightly less than $1$, there are three nodal domains. As $|c_2|$ continues to decrease, the number of nodal domains increases by steps of two (due to symmetry) up to $k$. Our analysis focuses on $v_0^{-1}(0)\cap \{(x,y)\in R: x\in I_k\}$ for $I_k=\left(\frac{N}{2}+\frac{N}{2k}, \frac{N}{2}+\frac{3N}{2k}\right)$. It is this region in which a local extremum of $f_v(x)$ forms as $|c_2|$ decreases. In order to retain three nodal domains, it is necessary that the height of this extremum remains in $(0,1)$. Observe that
\begin{equation}
    f_v'(x)=0 \quad \textrm{if and only if} \quad \frac{d}{dx}\left(\frac{\sin(k\pi x/N)}{\sin(2\pi x/N)}\right)=0. \nonumber
\end{equation}
Let $x_1$ be the point in $I_k$ such that
\begin{equation}\label{eq:maxK}
    M_k\eqdef\max_{x\in I_k}\left(\frac{\sin^2(k\pi x/N)}{\sin^2(2\pi x/N)}\right)=\frac{\sin^2(k\pi x_1/N)}{\sin^2(2\pi x_1/N)}
\end{equation}
meaning $f_v(x_1)$ is the height of interest. The value $f_v(x_1)$ belongs to $(0,1)$ if and only if
\begin{align}
    \frac{4c_2^2}{c_1^2}>\frac{\sin^2(k\pi x_1/N)}{\sin^2(2\pi x_1/N)} \quad \quad \textrm{or equivalently} \quad \quad 1>|c_2|>\sqrt{\frac{M_k}{4+M_k}}.\nonumber
\end{align}
By the bounds on $|c_2|$ in Lemma \ref{prop:coeff}, it suffices to show $M_k\leq \frac{k^2}{4}$. From (\ref{eq:maxK}), this is equivalent to
\begin{equation}
    \left|\frac{\sin(k\pi x_1/N)}{\sin(2\pi x_1/N)}\right|\leq \frac{k}{2}  \nonumber
\end{equation}
which holds in fact for all $x\in I_k$, $k\geq 5$. Thus, the nodal set of $v_0$ for odd $k\geq 3$ features two curves that separate three nodal domains. For all $k\geq 4$, orthogonality at the boundary follows from (\ref{eq:fv}); namely, we find $f_v'(x)=0$ when $x=0,N$ and $\left(f_v^{-1}\right)'(y)=0$ when $y=0,1$. However, it is possible that the nodal set of $v_0$ intersects a corner when $k=3$. If this is the case, then the equation for $f_v(x)$ implies that it does so at an angle of $\pm\frac{\pi}{4}$.
\end{proof}

\begin{rem} \label{rem:heightBdd}
    By (\ref{eq:ybar}), we find that the height of the intersection $\overline{y}$ is bounded away from the top and bottom boundaries of $R$ by a constant dependent on $\Lambda_\phi$. This is crucial when we study the nodal set of $v$ near $\left(\frac{N}{2},\overline{y}\right)$ in Section \ref{sec:centerEven} and near the corners of $\Omega$ at the end of Section \ref{sec:bdry}.
\end{rem}

\begin{proof}[Proof of Theorem \ref{prop:n=0} Items (iii) and (iv).] First consider the case $k\geq 8$. If $|c_2|= 1$, $w_0^{-1}(0)$ separates $k$ nodal domains. As $|c_2|$ decreases from $1$, interior extremum form in the parametrization of the nodal set that have the potential to decrease the number of nodal domains. In a similar argument to the previous proof, denote the interval of interest $I_k$ below.
\begin{equation}
    I_k=\begin{cases} 
    \left(\frac{3N}{4}-\frac{N}{k}, \frac{3N}{4}\right), & 0\equiv k\Mod{4}
    \\
    \left(\frac{3N}{4}-\frac{3N}{4k}, \frac{3N}{4}+\frac{N}{4k}\right), & 1\equiv k\Mod{4}
    \\
    \left(\frac{3N}{4}-\frac{N}{2k}, \frac{3N}{4}+\frac{N}{2k}\right), & 2\equiv k\Mod{4}
    \\ 
    \left(\frac{3N}{4}-\frac{N}{4k}, \frac{3N}{4}+\frac{3N}{4k}\right), & 3\equiv k\Mod{4}
    \end{cases} \nonumber
\end{equation}
This interval features the first local extremum formed in $R$ as $|c_2|$ decreases from $1$. If the nodal set is to retain exactly $k$ nodal domains, it is necessary that this extremum remains outside of $(0,1)$. By the substitution $(c_1, c_2)\to(-c_2, c_1)$ in (\ref{eq:fv}), we have a local description $f_w(x)$, as in (\ref{eq:fw}), satisfying $w_0\left(x, f_w(x)\right)=0$. Keeping the extremum outside of $(0,1)$ requires the coefficients satisfy
\begin{equation}\label{eq:Wcond}
    \frac{4c_1^2}{c_2^2}\leq\frac{\sin^2(k\pi x_1/N)}{\sin^2(2\pi x_1/N)} \quad \textrm{or equivalently} \quad 0\leq |c_1|\leq \sqrt{\frac{M_k}{4+M_k}}
\end{equation}
where $x_1\in I_k$ is the location of the extremum, i.e. $f_w'(x_1)=0$, and $M_k$ is defined as in (\ref{eq:maxK}). By the bounds on $|c_1|$ in Lemma \ref{prop:coeff}, it suffices to show that $\frac{64}{k^2}\leq M_k$. However, because $k\geq 8$, it is simpler to show that $M_k\geq 1$. Consider $x=\textrm{mid}(I_k)$ as the midpoint of $I_k$. This point belongs to the interval $I_k$ and offers an explicit lower bound for $M_k$.
\begin{equation}
    M_k\geq\left(\frac{\sin(k\pi x/N)}{\sin(2\pi x/N)}\right)^2\bigg|_{x=\textrm{mid}(I_k)}=\begin{cases}
    \sec^2\left(\frac{\pi}{k}\right), & 0\equiv k\Mod{4}
    \\
    \sec^2\left(\frac{\pi}{2k}\right), & 1\equiv k\Mod{4}
    \\
    1, & 2\equiv k\Mod{4}
    \\
    \sec^2\left(\frac{\pi}{2k}\right), & 3\equiv k\Mod{4}
    \end{cases} \nonumber
\end{equation}
all of which are no less than $1$ for $k\geq 8$. This establishes the number of nodal domains for $k\geq 8$. Notice that the cases $k=3,4$ are true because they maintain exactly $3$, respectively $4$, nodal domains by (\ref{eq:Ass2}). To complete item $(iii)$, let $x_j=\frac{jN}{2k}$ for $j\in\{1,\dots, 2k-1\}$ odd. Observe that if $w_0(x_j, y)=0$, then
\begin{equation}
    c_2\sin\left(\frac{j\pi}{2}\right)=2c_1\sin\left(\frac{j\pi}{k}\right)\cos(\pi y) \quad \quad \textrm{or equivalently} \quad \quad \frac{1}{2}\bigg|\frac{c_2}{c_1}\bigg|\leq \big|\cos(\pi y)\big|. \nonumber
\end{equation}
By Lemma \ref{prop:coeff}, this means that $y$ must satisfy $\frac{k}{8}\leq |\cos(\pi y)|$, which is not possible when $k\geq 8$. Further, if we fix $j$ and let $x_0$ minimize $\big|x-x_j\big|$ over all $x$ satisfying $w_0(x,f_w(x))=0$, then in a similar argument as in the proof of item $(ii)$,
\begin{equation}
    \bigg|\frac{\sin(k\pi x_0/N)}{\sin(2\pi x_0/N)}\bigg|=2\bigg|\frac{c_1}{c_2}\bigg|. \nonumber
\end{equation}
The right side is bounded above by $\frac{8}{k}$ by Lemma \ref{prop:coeff}, allowing us to bound $x_0$ away from $x_j$ for all $k\geq 8$.

\quad The nodal characterization in item $(iii)$ of Theorem \ref{prop:n=0} does not necessarily hold for $k\in \{5,6,7\}$. To show that the number of nodal domains cannot be extended to these renegade values of $k$, consider the boundary functions $\phi_1(y)=Z_1y^{24}(1-y)$ and $\phi_2(y)=Z_2y^8(1-y)$ with $Z_1, Z_2\in\mathbb{R}_+$ chosen to satisfy Definition \ref{def:phi}. Recall that scaling does not impact the coefficients from Section \ref{sec:Hadamard}. The nodal sets of $w_0$ prior to a domain deformation with the left boundary parametrized by $\phi_1,\phi_2$ are plotted below in Figures \ref{fig:1568} and \ref{fig:1567} for $k=5,6,7$.
\begin{figure}[H]
    \centering
    \includegraphics[scale=0.36]{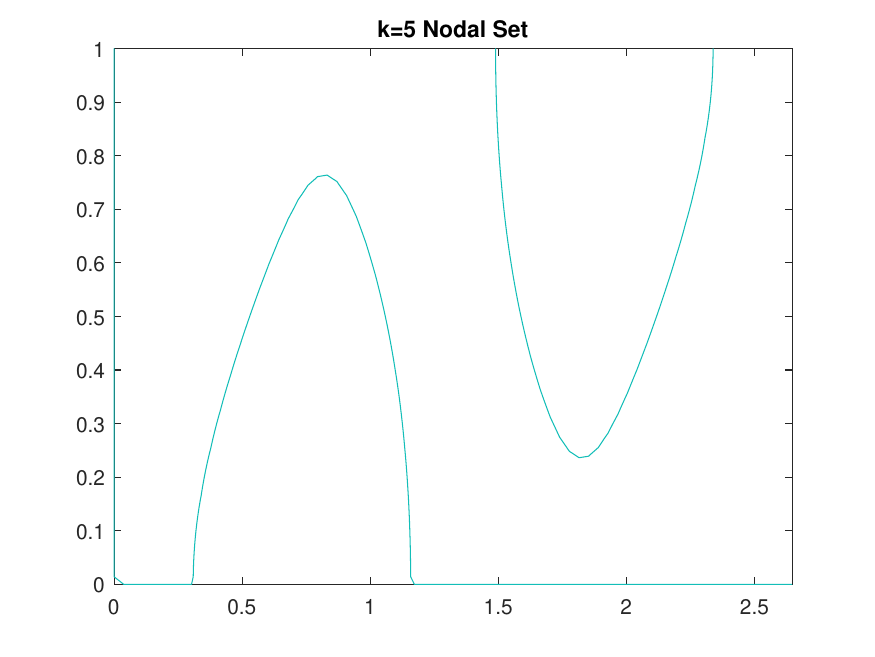}\hfill
    \includegraphics[scale=0.36]{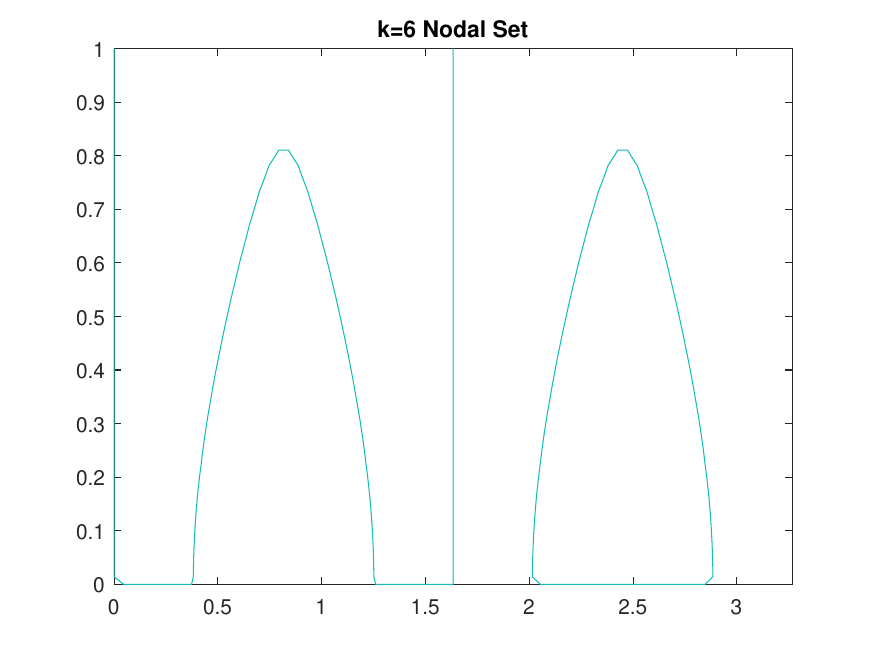}\hfill
    \includegraphics[scale=0.36]{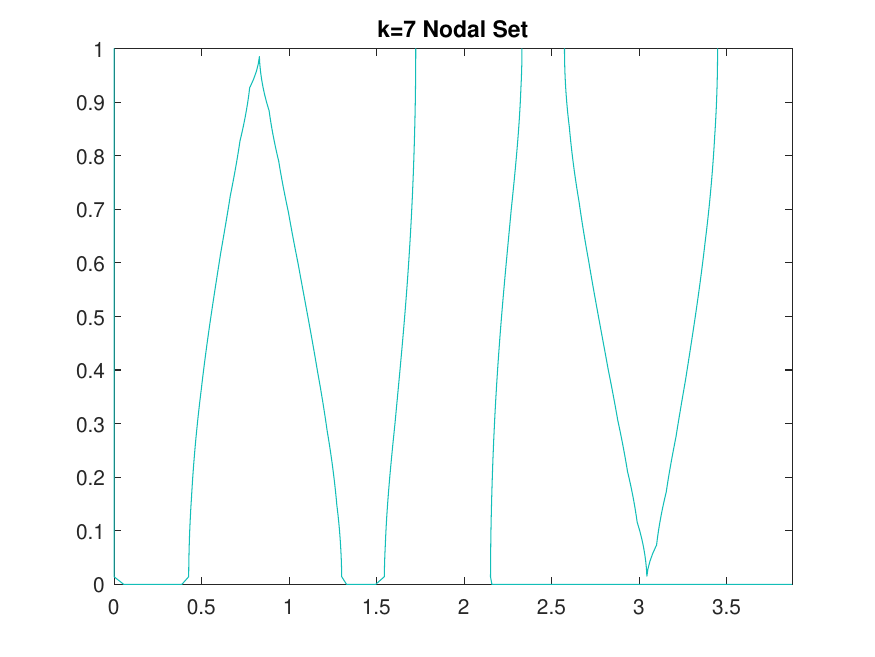}
    \caption{Nodal Sets for $w_0$ with Boundary Function $\phi_1(y)$, $k=5,6,7$.}
    \label{fig:1568}
\end{figure}
\begin{figure}[H]
    \centering
    \includegraphics[scale=0.36]{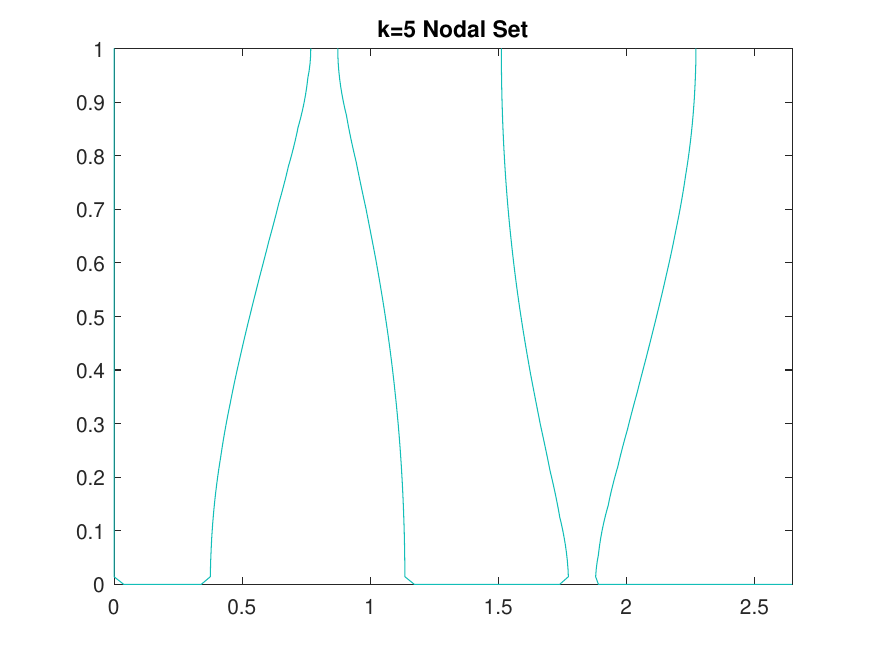} \hfill
    \includegraphics[scale=0.36]{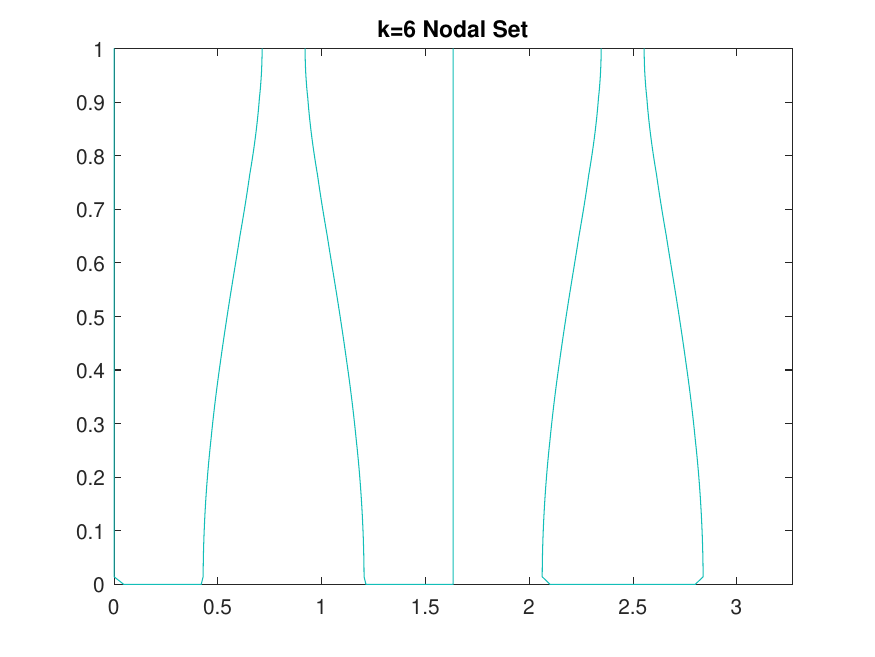} \hfill
    \includegraphics[scale=0.36]{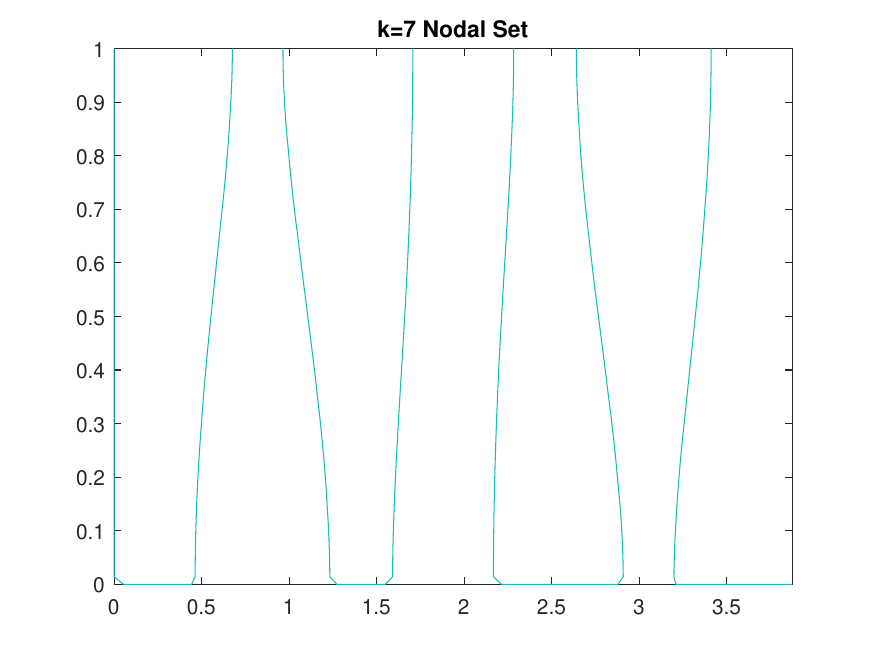}
    \caption{Nodal Sets for $w_0$ with Boundary Function $\phi_2(y)$, $k=5,6,7$.}
    \label{fig:1567}
\end{figure}
\quad As demonstrated, it is possible for the eigenfunction $w_0$ to have either $k$ or $(k-2)$ nodal domains depending on the boundary function $\phi$. Due to symmetry, the nodal sets when $k=5$ separate either three or five nodal domains. Likewise the nodal sets when $k=6$ separate either four or six nodal domains. However, it is not possible for the nodal set of $w_0$ to have three nodal domains when $k=7$. If this were to occur, the extremum formed in $I_7=\left(\frac{4\sqrt{15}}{7}, \frac{5\sqrt{15}}{7}\right)$ would be contained in $(0,1)$. This occurs if and only if
\begin{equation}
    |c_1|>\sqrt{\frac{M_7}{4+M_7}} \quad \textrm{where} \quad M_7=\max_{x\in I_7}\left(\frac{\sin(7\pi x/N)}{\sin(2\pi x/N)}\right)^2\approx 1.736. \nonumber
\end{equation}
But from Lemma \ref{prop:coeff},
\begin{equation}
    |c_1|<\sqrt{\frac{16}{16+7^2}}\approx 0.496 \quad \textrm{and} \quad \sqrt{\frac{M_7}{4+M_7}}\approx 0.55. \nonumber
\end{equation}
We conclude that for $k=5,6,7$, there may be either $k$ or $(k-2)$ nodal domains. In a similar argument to the previous proof, orthogonality at the boundary follows from the expression $f_w(x)$ in (\ref{eq:fw}). 
\end{proof}

 \section{Estimates on Eigenfunction Behavior}\label{sec:Prop1.1}

\quad In this section, we obtain estimates for the behavior of the eigenfunctions (\ref{eq:v,w}) on $\Omega_\phi(\eta, N)$, making use of the eigenfunction decomposition (\ref{eq:adiabatic}). For brevity, we focus on $v$, the eigenfunction corresponding to the upper eigenvalue $\mu$ after a perturbation to the left boundary of size $\eta\ll 1$, solving
\begin{equation}\label{eq:P1v}
    \begin{cases} (\Delta+\mu)v=0, & \textrm{in }\Omega \\ \: v=0, & \textrm{on }\p\Omega. \end{cases}
\end{equation}
The methods presented in this section hold similarly for the eigenfunction $w$ along the lower eigenvalue branch. For convenience, we hereafter let $c, C$ denote positive constants that may depend on $\Lambda_\phi$ but are otherwise independent of $\eta, N$ and $\phi\in\mathcal{A}$. Such constants are left arbitrary and may change line by line.

\quad We decompose $v$ into the partial Fourier series
\begin{equation}\label{eq:decomposition}
    v(x,y)=v_1(x)\sin(\pi y)+v_2(x)\sin(2\pi y)+E(x,y), \quad \quad E(x,y)=\sum_{j\geq 3} v_j(x)\sin(j\pi y)
\end{equation}
and show that the first two modes closely resemble the eigenfunctions $\psi_{k,1},\psi_{2,2}$ defined in (\ref{eq:basis}) while the terms in the error $E(x,y)$ are small, as made precise in Proposition \ref{prop:V}. In order to better understand the expansion in (\ref{eq:decomposition}), we use (\ref{eq:P1v}) to rewrite each Fourier coefficient 
\begin{equation}\label{eq:P1FCeq}
    v_j(x)=2\int_0^1 v(x,y)\sin(j\pi y)dy
\end{equation}
as a solution to the differential equation
\begin{equation}\label{eq:ODE}
    v_j''+\left(\mu-\pi^2j^2\right)v_j=0 
\end{equation}
with boundary condition $v_j(N)=0$. To simplify notation, we define
\begin{equation}\label{eq:modifEig}
    \mu_j^2=\begin{cases} \mu-\pi^2j^2, & j=1,2 \\ \pi^2j^2-\mu, & j\geq 3.\end{cases}
\end{equation}
Because $[0,N]\times[0,1]\subset \Omega\subset[-\eta,N]\times[0,1]$, domain monotonicity for Dirichlet eigenvalues \cite{H06} guarantees that each of the values in (\ref{eq:modifEig}) is positive. Further, it provides control over the first two modified eigenvalues,
\begin{equation}\label{eq:close}
    \left|\mu_1N-k\pi\right|+\left|\mu_2N-2\pi\right|\leq C\eta /N.
\end{equation}
By construction, $k\sim N$ and therefore $\mu_1\sim 1$ and $\mu_2\sim 1/N$. From (\ref{eq:ODE}), we obtain expressions for the Fourier coefficients in terms of their left boundary condition and the modified eigenvalues $\mu_j$. By (\ref{eq:modifEig}), the first two modes are oscillatory:
\begin{align}
    v_1(x)=v_1(0)\cos(\mu_1x)+A_1\sin(\mu_1x), \quad \textrm{with} \quad A_1\eqdef\frac{-v_1(0)\cos(\mu_1N)}{\sin(\mu_1N)} \label{eq:v1}
    \\
     v_2(x)= v_2(0)\cos(\mu_2x)+A_2\sin(\mu_2x), \quad \textrm{with} \quad A_2\eqdef\frac{-v_2(0)\cos(\mu_2N)}{\sin(\mu_2N)} \label{eq:v2}
\end{align}
whereas the higher modes are exponential:
\begin{equation}
      v_j(x)=\frac{v_j(0)}{\sinh(\mu_jN)}\sinh(\mu_j(N-x)), \quad \textrm{for} \quad j\geq 3. \label{eq:v_k}
\end{equation}
This difference in behavior motivates our definition of the error $E(x,y)$. For convenience, we will often use the index $n=1,2$ to exclusively describe the lower modes. Away from the left and right boundaries, the higher Fourier coefficients experience exponential decay in $N$, allowing us to bound
\begin{equation}\label{eq:coeffbound}
    |v_j(x)|\leq C|v_j(0)|e^{-cj\min\{x, N-x\}} \quad \textrm{for} \quad j\geq 3
\end{equation}
where the constant $c$ is positive and independent of $j$. To improve this estimate in $\eta$, we reference (\ref{eq:P1v}). Elliptic estimates imply that the gradient is bounded, $|\nabla v(x,y)|\leq C$, and a line integral from the left boundary of $\Omega$ implies $|v(0,y)|\leq C\eta$. By (\ref{eq:P1FCeq}), this bound extends to $|v_j(0)|$ for all $j\geq 1$. 

\quad As $\eta\to 0$, both $v_n(0)$ and $\sin(\mu_n N)$ vanish for $n=1,2$. Because both $\mu=\mu(\eta)$ and $v(x,y)=v(x,y;\eta)$ vary smoothly in $\eta$, (\ref{eq:P1FCeq}) and (\ref{eq:modifEig}) imply that $v_n(x)=v_n(x;\eta)$ and $\mu_n=\mu_n(\eta)$ also vary smoothly with respect to the perturbation size. By the definitions in (\ref{eq:v1}), (\ref{eq:v2}) alongside Lemma \ref{prop:2nd}, we can determine the limits
\begin{equation}\label{eq:Alimit}
    \lim_{\eta\to 0} A_1=-\frac{\dot{v}_1(0)}{\dot{\mu}_2N}=\frac{2}{\sqrt{N}}c_1, \quad \quad \quad \lim_{\eta\to 0}A_2=-\frac{\dot{v}_2(0)}{\dot{\mu}_1N}=\frac{2}{\sqrt{N}}c_2
\end{equation}
where $c_1$ and $c_2$ are the coefficients such that $v_0(x,y)=\left(c_1\psi_{k,1}+c_2\psi_{2,2}\right)(x,y)$, featured in Section \ref{sec:Hadamard}. With respect to (\ref{eq:decomposition}), it is apparent that the main contribution towards $v$ comes from the terms with $A_1,A_2$, since these are the only coefficients that do not vanish for small $\eta$. 

\quad The bounds in (\ref{eq:close}) imply that for small $\eta$, $|\cos(\mu_nN)|$ is bounded below by some positive constant and $|\sin(\mu_nN)|$ is bounded above by $C\eta/N$. Therefore,
\begin{equation}\label{eq:hurray!}
    |v_n(0)|=\left|\frac{A_n\sin(\mu_nN)}{\cos(\mu_nN)}\right|\leq C\eta|A_n|/N
\end{equation}
for both $n=1,2$. Because $v$ is bounded, the coefficients $A_1, A_2$ are bounded independently of relevant parameters. However, we need more precise estimates to improve (\ref{eq:hurray!}) and prove Proposition \ref{prop:V}. To accomplish this, we first use a Lyapunov-Schmidt argument to quantitatively compare each $A_n$ to its limit in (\ref{eq:Alimit}). We then show that all of the higher coefficients $v_j(0)$ are bounded by a multiple of $\eta/N^{\frac{3}{2}}$, giving us control over the error. 

\begin{lemma}\label{lem:AD10}
    Let $(c_1, c_2)$ be as constructed in Proposition \ref{prop:Hadamard}. For any $\epsilon>0$, there exists $\eta_0(k,\epsilon)>0$ such that 
    \begin{equation}
        \left|A_1-\frac{2}{\sqrt{N}}c_1\right|+\left|A_2-\frac{2}{\sqrt{N}}c_2\right|\leq C\eta^{1-\epsilon}/N^{\frac{3}{2}} \nonumber
    \end{equation}
    for all $0<\eta\leq \eta_0(k,\epsilon)$. 
\end{lemma}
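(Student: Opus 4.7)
The plan is to run a quantitative Lyapunov--Schmidt argument that mirrors the Hadamard calculation of Proposition~\ref{prop:Hadamard}, but at finite $\eta$ rather than infinitesimally. The key input is the Dirichlet condition $v(-\eta\phi(y),y)=0$ on the curved left boundary of $\Omega$. Substituting the expansion (\ref{eq:decomposition}) and Taylor expanding each Fourier coefficient $v_j$ around $x=0$ gives
\[
0 \;=\; \sum_{j\geq 1}\bigl(v_j(0)-\eta\phi(y)\,v_j'(0)+O(\eta^2)\bigr)\sin(j\pi y),\qquad y\in[0,1].
\]
I would then multiply this identity by $\sin(m\pi y)$ for $m=1,2$ and integrate over $[0,1]$. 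The zeroth-order Taylor term contributes $\tfrac12 v_m(0)$ by orthogonality, while the first-order term produces the weighted sums $-\eta\sum_{j\geq 1} v_j'(0)\int_0^1\phi(y)\sin(j\pi y)\sin(m\pi y)\,dy$. Substituting $v_n'(0)=A_n\mu_n$ for $n=1,2$ from (\ref{eq:v1})--(\ref{eq:v2}) and $v_j'(0)=-v_j(0)\mu_j\coth(\mu_j N)$ for $j\geq 3$ from (\ref{eq:v_k}) yields two exact equations coupling $A_1$, $A_2$, and the higher-mode data $\{v_j(0)\}_{j\geq 3}$.

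Next I would rewrite the left-hand side using $v_n(0)=-A_n\tan(\mu_n N)$ from (\ref{eq:v1})--(\ref{eq:v2}). Since $\mu=\lambda_{22}+\dot\mu\eta+O(\eta^2)$ and $\mu_n^2=\mu-n^2\pi^2$ for $n=1,2$, a direct computation with $\mu_1(0)=k\pi/N$, $\mu_2(0)=2\pi/N$ together with the periodicity of $\tan$ gives $v_n(0)=-A_n\dot\mu_n N\eta+O(\eta^2)$, where $\dot\mu_n=\dot\mu/(2\mu_n(0))$. Dividing through by $\eta$ and rescaling by $2\mu_m(0)/N$, the two projection equations take the matrix form
\[
(\dot\mu\, I+\mathbf{D})\begin{pmatrix}A_1\\ A_2\end{pmatrix}\;=\;\mathbf{r}_\eta,
\]
where $\mathbf{D}$ is exactly the matrix from Proposition~\ref{prop:Hadamard} and $\mathbf{r}_\eta$ collects (a) higher-order Taylor terms and (b) the higher-mode contribution $(4\mu_m(0)/N)\sum_{j\geq 3}v_j(0)\mu_j\coth(\mu_j N)\int_0^1\phi\sin(j\pi y)\sin(m\pi y)\,dy$ coming from the error $E(x,y)$ in (\ref{eq:decomposition}). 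To bound (b) I would use $|v_j(0)|\leq C\eta$ (from $\|\nabla v\|_{L^\infty}\leq C$ and integration along horizontal segments from $\partial\Omega_{\mathrm{left}}$), the estimate $\mu_j\coth(\mu_j N)\lesssim j$, and the $C^5$-regularity of $\phi$ together with $\phi(0)=\phi(1)=0$ to extract polynomial $j^{-p}$ decay in the Fourier integrals via repeated integration by parts, yielding $|\mathbf{r}_\eta|\leq C\eta\cdot\mathrm{poly}(N)$.

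Finally, by Proposition~\ref{prop:Hadamard} the matrix $\dot\mu\,I+\mathbf{D}$ has one-dimensional kernel spanned by $\mathbf{c}=(c_1,c_2)^T$, and its other eigenvalue is $\dot\mu-\dot\gamma$, which is nonzero by Remark~\ref{rem:BananaSplit} and the hypothesis $\Lambda_\phi\neq 0$. Decomposing $(A_1,A_2)=\alpha\mathbf{c}+\beta\mathbf{c}^\perp$ and projecting the matrix equation onto $\mathbf{c}^\perp$ yields $|\beta|\leq |\mathbf{r}_\eta\cdot\mathbf{c}^\perp|/|\dot\mu-\dot\gamma|$, while $\alpha$ is pinned down by the $L^2(\Omega)$-normalization of $v$: since $v\to v_0=c_1\psi_{k1}+c_2\psi_{22}$ in $L^2(R)$ with $\|v_0\|_{L^2(R)}=1$ as $\eta\to 0$, we obtain $\alpha=2/\sqrt{N}+O(\eta)$. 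Combined with the scalings $c_1\sim 1/N$, $c_2\sim 1$ from Lemma~\ref{prop:coeff}, this produces the stated componentwise bound. The main obstacle is the near-degeneracy itself: $|\dot\mu-\dot\gamma|$ carries inverse powers of $N$, so inverting $\dot\mu\,I+\mathbf{D}$ on $\mathbf{c}^\perp$ costs a $\mathrm{poly}(N)$ factor. Absorbing this loss is precisely what forces the power $\eta^{1-\epsilon}$ rather than $\eta$---taking $\eta_0(k,\epsilon)$ small enough that $\eta^\epsilon$ dominates the $\mathrm{poly}(N)$ factor accounts for the dependence of the threshold on both $k$ and $\epsilon$.
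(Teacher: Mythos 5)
Your argument reaches the same reduced $2\times 2$ problem as the paper, but by a genuinely different route. The paper pulls the eigenvalue problem back to the fixed rectangle via the diffeomorphism $h$ of (\ref{eq:diffeo}), runs Lyapunov--Schmidt on $L=h^*\Delta_\Omega h^{*-1}$, and extracts its matrix $\mathbf{M}$ from $L^2(R)$-pairings against $\psi_{k1},\psi_{22}$; you instead project the exact Dirichlet condition on the curved boundary onto the first two sine modes and use the explicit ODE representations (\ref{eq:v1})--(\ref{eq:v_k}) to turn it into a linear system for $(A_1,A_2)$. Your leading-order matrix is indeed $\dot{\mu}I+\mathbf{D}$ (the rescaling by $4\mu_m(0)/N$ reproduces the entries of Proposition \ref{prop:Hadamard} exactly), and both roads end the same way: invert off the kernel at a cost $|\dot{\mu}-\dot{\gamma}|^{-1}\lesssim N^2$, pin the kernel component by the $L^2$ normalization, and trade $\eta\to\eta^{1-\epsilon}$ to absorb the $\mathrm{poly}(N)$ loss, which is where the $k$- and $\epsilon$-dependence of $\eta_0$ comes from in both arguments. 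Your version is more concrete and makes the boundary-pairing origin of $\mathbf{D}$ transparent; the paper's version never has to manipulate the infinite tail of the boundary series. One implicit step you should surface: linearizing $\tan(\mu_nN)$ requires $\mu=\lambda_{22}+\dot{\mu}\eta+O(\eta^2\,\mathrm{poly}(N))$, which in your framework comes from the solvability of your system in the $\mathbf{c}$-direction (the analogue of the paper's $\det\mathbf{M}=0$).

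The one place your write-up would not close as stated is the tail $j\geq 3$. First, the convergence of the first-order term $\eta\sum_{j\geq3}|v_j(0)|\,\mu_j\coth(\mu_jN)\,|\widehat{(\phi\sin(m\pi\cdot))}_j|$ is delicate: two integrations by parts give only $j^{-2}$ decay of the sine coefficients of $\phi\sin(m\pi y)$, which against $\mu_j\coth(\mu_jN)\sim j$ and $|v_j(0)|\leq C\eta$ leaves a divergent $\sum j^{-1}$; you need the third integration by parts (valid because $\phi(0)=\phi(1)=0$ kills the boundary terms) to get $j^{-3}$ and a convergent sum. Second, and more seriously, the quadratic Taylor remainder for the tail cannot be summed termwise: $|v_j''(\xi_j)|\leq \mu_j^2|v_j(0)|e^{\mu_j\eta}\lesssim \eta j^2e^{Cj\eta}$, so $\sum_{j\geq3}\eta^2|v_j''(\xi_j)|$ diverges. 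The fix is to bound the second-order remainder of the full function collectively rather than mode by mode --- e.g.\ use boundary regularity to control $\left|v(-\eta\phi(y),y)-v(0,y)+\eta\phi(y)\partial_xv(0,y)\right|\leq C\eta^2$ and subtract the (harmless) contributions of $v_1,v_2$, so that the entire tail remainder enters $\mathbf{r}_\eta$ as a single $O(\eta^2)$ term; this is the role played by the comparison-function control of $E$ in the paper's Lemma \ref{lem:AllJ}. With those two repairs the argument goes through.
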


\begin{proof}[Proof of Lemma \ref{lem:AD10}.] Following a method presented in \cite{DH}, define $$L= h^*(\Delta_\Omega)h^{*-1}:H^2(R)\cap H_0^1(R)\to L^2(R)$$ where $h:R\to\Omega$ is described in (\ref{eq:diffeo}) and $h^*$ is the pullback by $h$. Throughout this proof, we use $||\cdot||$ to denote the operator norm unless otherwise stated. Explicitly, $L$ is the second-order differential operator 
\begin{equation}\label{eq:L}
    L=\p_y^2+\frac{N^2+\eta^2\phi'^2(N-x)^2}{(N+\eta\phi)^2}\p_x^2+\frac{2\eta\phi'(N-x)}{N+\eta\phi}\p_{xy}^2+\frac{\eta(N-x)}{(N+\eta\phi)^2}\left((N+\eta\phi)\phi''-2\eta\phi'^2\right)\p_x.
\end{equation}
From this expression, the operator $(L-\Delta)$ has norm $||(L-\Delta)||\leq C\eta$ and is almost self-adjoint in the sense that
\begin{equation}\label{eq:adjointalmost}
    \left|\int_R g_1(L-\Delta)g_2-\int_R g_2(L-\Delta)g_1\right|\leq C\eta ||g_1||_{L^2(R)}||g_2||_{H^1(R)}
\end{equation}
for any $g_j$ in the domain of $L$. Define the vector-valued function
\begin{equation}\label{eq:Fbig}
    \textbf{F}\left(u,\mu,\eta,\phi\right)=\left(F_1\left(u,\mu,\eta,\phi\right), F_2\left(u,\mu,\eta,\phi\right)\right)=\left( (L+\mu)u, \int_R u^2\det J_h-1\right)
\end{equation}
where $J_h$ is the Jacobian of $h$. If $\textbf{F}\left(u,\mu,\eta,\phi\right)=(0,0)$, then $v=h^{*-1}u$ satisfies (\ref{eq:P1v}). If $\phi_0\in\mathcal{A}$, we seek to understand the zero set of $\textbf{F}$ in a neighborhood of $\textbf{a}=\left(v_0, \lambda_{2,2}, 0, \phi_0\right)$. However, the Implicit Function theorem fails because $\ker (\Delta+\lambda_{2,2})=\textrm{span}\{\psi_{k,1},\psi_{2,2}\}$ and $D_{(u,\mu)}\textbf{F}(\textbf{a})$ is not invertible. To instead reduce this problem to a finite dimension, we employ Lyapunov-Schmidt. 

\quad Let $P$ denote the projection onto the kernel of $\left(\Delta+\lambda_{2,2}\right)$ and decompose $u=z+\zeta$ where $z=Pu$ and $\zeta=(I-P)u$. Applying this projection to the first component of (\ref{eq:Fbig}), we have
\begin{equation}\label{eq:pieces}
   \begin{cases} PF_1\big(z+\zeta, \mu, \eta, \phi\big)=0 \\ (I-P)F_1\big(z+\zeta, \mu, \eta, \phi\big)=0 \end{cases}
\end{equation}
The second equation in (\ref{eq:pieces}) can alternatively be written $(\Delta+\mu)\zeta=-(I-P)(L-\Delta)\big(z+\zeta\big)$. Because we are restricted away from the kernel of $(\Delta+\lambda_{2,2})$, the operator $(\Delta+\mu)$ is invertible for small $\eta$ and we can solve for $\zeta$ in terms of the projection $z$. For the sake of notation, denote
\begin{equation}
    T(\mu,h)=-(\Delta+\mu)^{-1}(I-P)(L-\Delta):H^2(R)\cap H_0^1(R)\to H^2(R)\cap H_0^1(R)  \nonumber
\end{equation}
so that $\zeta=(I-T)^{-1}Tz$. In particular, notice that $T$ is linear and depends on the map $h$ rather than its components $\eta$ and $\phi$ separately. For a given function $g$,
\begin{equation}
    T(\mu,h)g=\sum_{\mu_j\neq\lambda_{k,1},\lambda_{2,2}}\frac{1}{\mu_j-\mu}\psi_j\int_R\psi_j(L-\Delta)g \nonumber
\end{equation}
where $\psi_j$ satisfies $(\Delta+\mu_j)\psi_j=0$ with Dirichlet boundary conditions and $L^2$-normalization on the rectangle. For small enough $\eta$, the spectral gap is bounded below by a multiple of $1/N^2$, so we have the estimate $||T||\leq C\eta N^2$. Because $z=Pu$, we write $z=b_1\psi_{k,1}+b_2\psi_{2,2}$ so that
\begin{equation}
     u=\left(I+(I-T)^{-1}T\right)z=(I-T)^{-1}\left(b_1\psi_{k,1}+b_2\psi_{2,2}\right) \nonumber
\end{equation}
or $u=(I+S)z$ for $S=\sum_{j\geq 1} T^j$. The first equation in (\ref{eq:pieces}) then implies that $\textbf{M}\textbf{b}=\textbf{0}$ where
\begin{equation}
    \textbf{M}=\begin{pmatrix} \displaystyle\int_R \psi_{k,1}(L+\mu)(I+S)\psi_{k,1} & \displaystyle\int_R \psi_{k,1}(L+\mu)(I+S)\psi_{2,2} \\ \\ \displaystyle\int_R \psi_{2,2}(L+\mu)(I+S)\psi_{k,1} & \displaystyle\int_R \psi_{2,2}(L+\mu)(I+S)\psi_{2,2} \end{pmatrix} \nonumber
\end{equation}
and $\textbf{b}=(b_1, b_2)$. For $u$ to be nonzero, it is necessary that $\det \textbf{M}=0$. While $\lim_{\eta\to 0}\textbf{M}=\textbf{0}$, note that $\lim_{\eta\to 0}\eta^{-1}\textbf{M}=(\textbf{D}+\dot{\mu}I)$ where $\textbf{D}$ is the matrix featured in Proposition \ref{prop:Hadamard}. This is expected as $\lim_{\eta\to 0}\textbf{b}$ describes the coefficients in Section \ref{sec:Hadamard}. To get estimates on how $\textbf{M}$ compares to its limit, decompose
\begin{equation}
    \textbf{M}=\big(\mu-\lambda_{2,2}\big)I+\textbf{M}_1+\textbf{M}_2 \nonumber
\end{equation}
where
\begin{equation}
    \textbf{M}_1=\begin{pmatrix} \displaystyle\int_R\psi_{k,1}\big(L-\Delta)\psi_{k,1} & \displaystyle\int_R\psi_{k,1}\big(L-\Delta)\psi_{2,2} \\ \\ \displaystyle\int_R\psi_{2,2}\big(L-\Delta)\psi_{k,1} & \displaystyle\int_R\psi_{2,2}\big(L-\Delta)\psi_{2,2}\end{pmatrix} \nonumber
\end{equation}
and
\begin{equation}
    \textbf{M}_2=\begin{pmatrix} \displaystyle\int_R\psi_{k,1}\big(L-\Delta)S\psi_{k,1} & \displaystyle\int_R\psi_{k,1}\big(L-\Delta)S\psi_{2,2} \\ \\ \displaystyle\int_R\psi_{2,2}\big(L-\Delta)S\psi_{k,1} & \displaystyle\int_R\psi_{2,2}\big(L-\Delta)S\psi_{2,2}\end{pmatrix}. \nonumber
\end{equation}
By (\ref{eq:L}) and (\ref{eq:adjointalmost}), we can bound $$||\textbf{M}_1-\eta\textbf{D}||\leq C\eta^2 \quad \textrm{and} \quad ||\textbf{M}_2||\leq C\eta^2N^2.$$ 
\quad By Proposition \ref{prop:Hadamard} and (\ref{eq:Ass2}), the difference in eigenvalue variations satisfies $\left|\dot{\mu}-\dot{\gamma}\right|\geq c/N^2$ for some positive constant $c$. Setting the determinant of $\eta^{-1}\textbf{M}$ equal to zero then implies that $\mu=\lambda_{2,2}+\dot{\mu}\eta$ up to an error of size $C\eta^2N^2$. This allows us to bound
\begin{equation} \label{eq:MandD}
    \left|\left|\eta^{-1}\textbf{M}-\big(\textbf{D}+\dot{\mu}I\big)\right|\right|\leq C\eta N^2.
\end{equation}
To estimate $\textbf{b}$, we rewrite it in a new basis $\textbf{b}=B_1\textbf{c}+B_2\textbf{c}^\perp$ where $\textbf{c}=(c_1,c_2)$. By Proposition \ref{prop:Hadamard}, $$\big(\textbf{D}+\dot{\mu}I\big)\textbf{b}=B_2(\dot{\mu}-\dot{\gamma})\textbf{c}^\perp \quad \textrm{and therefore} \quad |B_2||\dot{\mu}-\dot{\gamma}|= \left|\left|(\textbf{D}+\dot{\mu}I)\textbf{b}\right|\right|_{\ell^2}.$$ Alongside (\ref{eq:MandD}), this implies $\left|B_2\right|\leq C\eta N^4$. The normalization $\int_R u^2\det J_h=1$ implies that $||\textbf{b}||_{\ell^2}^2=1$ up to an error of size $C\eta/N$. In combination with the bound on $|B_2|$, this means that $|B_1|$ is close enough to $1$ so that 
\begin{equation}
    ||\textbf{b}-\textbf{c}||_{\ell^2}\leq C\eta N^4. \nonumber
\end{equation}
\quad Finally, by the construction of $h$, if $v=h^{*-1}u$, $||v-u||_{C^0(R)}\leq C\eta$. Meanwhile, by (\ref{eq:coeffbound}) and (\ref{eq:hurray!}), $A_1=\frac{2}{\sqrt{N}}\int_R v\psi_{k,1}$ and $A_2=\frac{2}{\sqrt{N}}\int_R v\psi_{2,2}$ up to error of size $C\eta$, meaning
\begin{equation}
    \left|A_n-\frac{2}{\sqrt{N}}c_n\right|\leq \left|\left|v-u\right|\right|_{C^0(R)}+\frac{2}{\sqrt{N}}\left|b_n-c_n\right|\leq C\eta N^{\frac{7}{2}} \nonumber
\end{equation}
for $n=1,2$. The power of $N$ in this estimate is positive because both the spectral gap on the rectangle and the difference in eigenvalue variations are bounded below by a multiple of $1/N^2$. To compensate, we trade a slight loss $\eta\to \eta^{1-\epsilon}$ for the desired denominator in Lemma \ref{lem:AD10}. The estimate thus holds for $\eta>0$ up to a threshold $\eta_0$ that depends both on $k\sim N$ and the choice of $\epsilon>0$. 
\end{proof}

\begin{rem}\label{rem:eps}
    Lemma \ref{lem:AD10} serves as a limiting factor to the estimates in Proposition \ref{prop:V}. Namely, it requires we sacrifice slightly in $\eta$ in order to gain the denominator $N^{\frac{3}{2}}$, as any worse power in $N$ would limit the error estimate. This requires we limit the size of $\eta_0$ by both the integer $k\sim N$ and the loss $\epsilon$. The limitations on $\epsilon$ are discussed in Remark \ref{rem:Expl}. 
\end{rem} 

\quad When $k\geq 3$ and $\eta$ is sufficiently small, Lemma \ref{prop:coeff}, Lemma \ref{lem:AD10}, and Remark \ref{rem:3} imply that $$A_1\sim 1/N^{\frac{3}{2}} \quad \textrm{and} \quad A_2\sim 1/N^{\frac{1}{2}}.$$ This improves the bound in (\ref{eq:hurray!}) so that $|v_n(0)|\leq C\eta/N^{\frac{3}{2}}$ for both $n=1,2$. It provides slightly better decay in $N$ for $n=1$, but as we shall see through the following lemma, this does not gain us anything.

\begin{lemma}\label{lem:AllJ}
For any $\epsilon>0$, there exists $\eta_0(k,\epsilon)>0$ such that the following holds for $0<\eta\leq \eta_0(k,\epsilon)$:
\begin{equation}
    \sum_{j\geq 3} v_j(0)^2\leq C\eta^2/N^3. \nonumber
\end{equation}
\end{lemma}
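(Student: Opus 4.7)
The plan is to reduce the sum $\sum_{j\geq 3}v_j(0)^2$ to the squared $L^2$-norm of $v(0,\cdot)$ via Parseval, and then to convert that bound into an estimate for $\partial_x v$ on the thin strip $S_\eta\eqdef\Omega\cap\{-\eta\leq x\leq 0\}$ by exploiting the vanishing of $v$ on the curve $x=-\eta\phi(y)$. Since $v_j(0)=2\int_0^1 v(0,y)\sin(j\pi y)\,dy$ and $\{\sqrt{2}\sin(j\pi y)\}_{j\geq 1}$ is an orthonormal basis of $L^2([0,1])$, Parseval gives
\begin{equation*}
\sum_{j\geq 3}v_j(0)^2 \;\leq\; \sum_{j\geq 1}v_j(0)^2 \;=\; 2\,\|v(0,\cdot)\|_{L^2([0,1])}^2,
\end{equation*}
so it suffices to prove $\|v(0,\cdot)\|_{L^2}^2\leq C\eta^2/N^3$.

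For the latter, I would use $v(-\eta\phi(y),y)=0$ together with Cauchy--Schwarz and $\|\phi\|_\infty\leq 1$ to write
\begin{equation*}
v(0,y)\;=\;\int_{-\eta\phi(y)}^0 \partial_x v(x,y)\,dx,\qquad \|v(0,\cdot)\|_{L^2}^2 \;\leq\; \eta\,\|\partial_x v\|_{L^2(S_\eta)}^2.
\end{equation*}
Since $|S_\eta|\leq \eta$, a pointwise estimate $\|\partial_x v\|_{L^\infty(S_\eta)}\leq C/N^{3/2}$ would yield $\|\partial_x v\|_{L^2(S_\eta)}^2\leq C\eta/N^3$ and close the argument.

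To obtain that pointwise estimate, I would work on $R$ with the decomposition (\ref{eq:decomposition}). For the two low modes, differentiating (\ref{eq:v1})--(\ref{eq:v2}) gives $|v_n'(x)|\leq \mu_n(|v_n(0)|+|A_n|)$; combining the bound $|v_n(0)|\leq C\eta/N^{3/2}$ from (\ref{eq:hurray!}), the estimates $|A_1|\sim 1/N^{3/2}$ and $|A_2|\sim 1/\sqrt{N}$ coming from Lemma \ref{lem:AD10}, and $\mu_1\sim 1$, $\mu_2\sim 1/N$ from (\ref{eq:close}), I obtain $|v_n'(x)|\leq C/N^{3/2}$ uniformly in $x\in[-\eta,1]$. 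For the higher modes, the formula (\ref{eq:v_k}) encodes the exponential decay of $v_j(x)$ away from the left boundary; upgrading the Lyapunov--Schmidt setup from the proof of Lemma \ref{lem:AD10} to an $H^1$- or $H^2$-closeness statement transfers the smallness of $\partial_x v_0$, whose size satisfies
\begin{equation*}
|\partial_x v_0|\;\leq\;|c_1|\cdot \frac{2k\pi}{N^{3/2}}+|c_2|\cdot\frac{4\pi}{N^{3/2}} \;\leq\; C/N^{3/2}
\end{equation*}
by Lemma \ref{prop:coeff}, over to $\partial_x v$ on a neighborhood of the left boundary. Standard boundary elliptic regularity then propagates the bound across the strip $S_\eta$.

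The main obstacle is precisely this transfer of smallness. A naive gradient bound for the $L^2$-normalized eigenfunction $v$ yields only $|\partial_x v|\leq C/\sqrt{N}$, which is short by a factor of $1/N$ and would produce $\sum v_j(0)^2\leq C\eta^2/N$ rather than $C\eta^2/N^3$. The missing factor must be extracted from the algebraic cancellation that already makes $\partial_x v_0$ small: the coefficient $|c_1|\sim 1/N$ tempers the large $x$-frequency of $\psi_{k1}$, while $\psi_{22}$ is intrinsically slow in $x$ so that $|\partial_x\psi_{22}|\leq 4\pi/N^{3/2}$. Propagating this cancellation from $v_0$ to the true eigenfunction $v$ across the perturbed strip, without creating a circular reliance on the very bound being proved, is the delicate step and is exactly what the higher-regularity version of the Lyapunov--Schmidt expansion underlying Lemma \ref{lem:AD10} is designed to deliver.
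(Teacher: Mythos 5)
Your reduction via Parseval and the fundamental theorem of calculus is fine as bookkeeping: $\sum_{j\geq 1}v_j(0)^2=2\|v(0,\cdot)\|_{L^2}^2\leq 2\eta\|\p_xv\|_{L^2(S_\eta)}^2$, and the low modes $v_1,v_2$ do contribute only $C\eta^2/N^3$ because $|v_n'|\leq C/N^{3/2}$ follows from Lemma \ref{lem:AD10} and (\ref{eq:hurray!}). But the step you yourself flag as "the delicate step" is a genuine gap, and it is not one that a "higher-regularity Lyapunov--Schmidt expansion" can be waved at. The obstruction sits entirely in the error term: on the strip $S_\eta$ one has $\p_xE(x,y)=\sum_{j\geq 3}v_j'(x)\sin(j\pi y)$ with $|v_j'(0)|\approx \mu_j|v_j(0)|\sim j|v_j(0)|$ by (\ref{eq:v_k}), so
\begin{equation}
\|\p_xE\|_{L^2(S_\eta)}^2\;\approx\;\eta\sum_{j\geq 3}j^2v_j(0)^2, \nonumber
\end{equation}
and closing your argument requires $\sum_j j^2v_j(0)^2\leq C\eta/N^3$. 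That is a \emph{strictly stronger}, weighted version of the very sum the lemma controls; it does not follow from the lemma's conclusion, let alone precede it. The alternative route --- bounding $\p_xE$ via elliptic regularity from an $L^\infty$ bound $|E|\leq C\eta/N^{3/2}$ --- is circular, since that bound is item $(iii)$ of Proposition \ref{prop:V}, which is itself deduced from Lemma \ref{lem:AllJ}. So your proposal reduces the lemma to a statement at least as hard as the lemma, and supplies no mechanism to break the circle.

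The paper's mechanism is worth noting because it is exactly what your outline is missing: it names the unknown $B^2=\sum_{j\geq 3}v_j(0)^2$, proves the a priori bound $|E|\leq C(\eta/N^{3/2}+B)$ by a comparison-function/maximum-principle argument, represents each $v_j(0)$ by Green's identity over the thin cap $\Omega_0=\Omega\cap\{x\leq 0\}$ so that the low-mode contributions $J_1,J_2$ (where Lemma \ref{lem:AD10} supplies the $1/N^{3/2}$ cancellation, plus a $j^{-1}$ gain from integration by parts) are separated from the error contribution $K$ (bounded in terms of $B$ itself), derives two complementary $j$-dependent bounds (\ref{eq:small j bound}) and (\ref{eq:large j bound}), splits the sum at $j\sim\eta^{-1/2}$, and bootstraps in $B$. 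If you want to salvage your strip-integral framework, you would still have to introduce some such self-referential quantity and close a bootstrap; as written, the proof does not go through.
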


\begin{proof}[Proof of Lemma \ref{lem:AllJ}.] We mimic the argument in \cite{BGM} to estimate the coefficients $v_j(0)$ to first order via a bootstrapping argument. For simplicity of notation, denote
\begin{equation}
    B^2=\sum_{j\geq 3}v_j(0)^2. \nonumber
\end{equation}
We first show that $|E(x,y)|\leq C(\eta/N^{\frac{3}{2}}+B)$ for all $(x,y)\in\Omega$, small enough $\eta$. From (\ref{eq:v_k}), we find $(\Delta+\mu)E=0$. Along the top, bottom, and right boundary components of $\Omega$, $E(x,0)=E(x,1)=E(N,y)=0$, and along the left boundary,
\begin{align}
    \left|E(-\eta\phi(y), y)\right|=\left|v_1(-\eta\phi(y))\sin(\pi y)+v_2(-\eta\phi(y))\sin(2\pi y)\right|\leq \left|v_1(-\eta\phi(y))\right|+\left|v_2(-\eta\phi(y))\right| \nonumber
    \\
    \leq\left|v_1(0)\right|+\left|A_1\right|\left|\sin(\mu_1\eta\phi(y))\right|+\left|v_2(0)\right|+\left|A_2\right|\left|\sin(\mu_2\eta\phi(y))\right|. \nonumber
\end{align}
Because $\phi\in\mathcal{A}$ and $\mu_1\sim 1$, $\mu_2\sim 1/N$, we have $\left|E\right|\leq C\eta/N^{\frac{3}{2}}$ along the boundary $\p\Omega$. Meanwhile, in the region $\Omega\cap\left\{0.1\leq x\leq N-0.1\right\}$, we can use (\ref{eq:coeffbound}) to write $|E|$ as a geometric series bounded by a multiple of $B$. In particular, this means that there exists a constant $\tilde{C}$ such that $|E(x,y)|\leq \tilde{C}(\eta/N^{\frac{3}{2}}+B)$ along the boundaries of $$\Omega_{L}\eqdef\Omega\cap\left\{x\leq 0.1\right\} \quad \textrm{and} \quad\Omega_{R}\eqdef\Omega\cap \left\{x\geq N-0.1\right\}.$$ To determine a bound over the interiors of $\Omega_{L}$ and $\Omega_{R}$, we follow \cite{GJ98}. With $\tilde{C}$ as above, define a comparison function
\begin{equation}
    S(x,y)= 8\tilde{C}\left(\eta/N^{\frac{3}{2}}+B\right)\cos\left(\frac{\pi}{4}\left(y-\frac{1}{2}\right)\right)\cos\left(4\pi  x\right) \nonumber
\end{equation}
so that $S>0$ on $\Omega_{L}$ for sufficiently small $\eta$ and $\Delta S=-\pi^2\left(16+\frac{1}{16}\right) S$. This means that $(\Delta+\mu)S<0$ in $\Omega_{L}$ and $S>\tilde{C}(\eta/N^{\frac{3}{2}}+B)\geq |E|$ on $\p\Omega_{L}$. By the generalized maximum principle,
\begin{equation}
    |E(x,y)|\leq S(x,y)\leq 8\tilde{C}\left(\eta/N^{\frac{3}{2}}+B\right), \quad \: \: (x,y)\in\Omega_{L}. \nonumber
\end{equation}
Repeating this argument for $S(x-N+0.1,y)$ over $\Omega_{R}$ allows us to conclude that $|E(x,y)|\leq C(\eta/N^{\frac{3}{2}}+B)$ over all of $\Omega$. With this established, we turn to bounding the coefficients $v_j(0)$. By definition,
\begin{equation}\label{eq:vJ}
    v_j(0)=2\int_0^1 v(0,y)\sin(j\pi y)dy= 2\int_{\p \Omega_0}v(x,y)\frac{\p}{\p \nu}\left(x\sin(j\pi y)\right)d\sigma
\end{equation}
where $\Omega_0\eqdef\Omega\cap\{x\leq 0\}$, $d\sigma$ is the measure on $\p\Omega_0$, and $\nu$ is the outward-pointing unit normal along the boundary. The last equality in (\ref{eq:vJ}) holds because $v$ vanishes along $\p\Omega$ and $\frac{\p}{\p\nu}=\p_x$ along the line $x=0$. By Green's Formula,
\begin{equation}
    2\int_{\p\Omega_0} v(x,y)\frac{\p}{\p \nu}\left(x\sin(j\pi y)\right)d\sigma=2\int_{\p\Omega_0} \frac{\p v}{\p\nu} x\sin(j\pi y)d\sigma-2(\mu-j^2\pi^2)\int_{\Omega_0} v(x,y)x\sin(j\pi y)dA. \nonumber
\end{equation}
Because the area of $\Omega_0$ is bounded by $\eta$, we can bound the second integral,
\begin{equation}\label{eq:GreenBound}
    \left|(\mu-j^2\pi^2)\int_{\Omega_0}v(x,y)x\sin(j\pi y)dA\right|\leq C\eta^2 j^2\left(\eta/N^{\frac{3}{2}}+B\right).
\end{equation}
Meanwhile, we break the first integral into pieces
\begin{equation}
    2\int_{\p\Omega_0} \frac{\p v}{\p\nu} x\sin(j\pi y)d\sigma=J_1+J_2+K \nonumber
\end{equation}
where
\begin{align}
    J_n\eqdef2\int_{\p\Omega_0} \frac{\p}{\p \nu}\left(v_n(x)\sin(n\pi y)\right)x\sin(j\pi y)d\sigma \nonumber
    \quad \textrm{and} \quad 
    K\eqdef2\int_{\p\Omega_0} \frac{\p E}{\p\nu}x\sin(j\pi y)d\sigma. \nonumber
\end{align}
Elliptic estimates, along with the bound on the error, allow us to bound $|K|\leq C\eta(\eta/N^{\frac{3}{2}}+B)$. In order to bound $J_n$, observe that we can express the normal vector along $x=-\eta\phi(y)$ as
\begin{equation}
    \nu=\left(1+\eta^2\phi'(y)^2\right)^{-\frac{1}{2}}\langle -1, -\eta\phi'(y)\rangle. \nonumber
\end{equation}
The integral $J_n$ vanishes along the line $x=0$, so it suffices to consider the left boundary curve $(-\eta\phi(y),y)$. For both $n=1,2$, the integral $J_n$ is equal to
\begin{align}\label{eq:415}
    2\int_0^1 v_n'(-\eta\phi(y))\sin(n\pi y)\eta\phi(y)\sin(j\pi y) dy+2\int_0^1 \eta\phi'(y)v_n(-\eta\phi(y))n\pi\cos(n\pi y)\eta\phi(y)\sin(j\pi y)dy.
\end{align}
To control the second term in (\ref{eq:415}), we consider the Fourier coefficient bound
\begin{equation}
    |v_n'(x)|\leq|\mu_n|\left(|A_n|+|v_n(0)|\right) \quad \textrm{which implies} \quad |v_n'(x)|\leq C/N^{\frac{3}{2}}. \nonumber
\end{equation}
The expressions in (\ref{eq:v1}) and (\ref{eq:v2}) also imply $\left|v_n(-\eta\phi(y))\right|\leq C\eta/N^{\frac{3}{2}}$ and therefore
\begin{equation}\label{eq:JJJJ}
    J_n=2\eta \int_0^1 v_n'(-\eta\phi(y))\sin(n\pi y)\phi(y)\sin(j\pi y)dy
\end{equation}
up to an error of size $C\eta^3/N^{\frac{3}{2}}$. Using integration by parts on (\ref{eq:JJJJ}), we can pull out a $j^{-1}$ factor and write 
\begin{equation}
    |J_1+J_2|\leq  C\eta(\eta^2+j^{-1})/N^{\frac{3}{2}}. \nonumber
\end{equation}
In combination with the bound on $|K|$ and (\ref{eq:GreenBound}), we have
\begin{equation}\label{eq:small j bound}
    |v_j(0)|\leq C\eta \left(\eta/N^{\frac{3}{2}}+B\right)+C\eta^2j^2\left(\eta/N^{\frac{3}{2}}+B\right)+C\eta j^{-1}/N^{\frac{3}{2}}. 
\end{equation}
This bound is helpful for small values of $j$, but we need a second bound to control the higher modes. If we use integration by parts on (\ref{eq:P1FCeq}), then alternatively
\begin{align}
    v_j(0)=-\frac{2}{j}\int_0^1 \left(v_1(0)\cos(\pi y)+2v_2(0)\cos(2\pi y)\right)\cos(j\pi y)dy-\frac{2}{j\pi}\int_0^1 \p_yE(0,y)\cos(j\pi y)dy \nonumber
\end{align}
and the gradient bound on $E$ allows us to write
\begin{equation}\label{eq:large j bound}
    |v_j(0)|\leq Cj^{-1}\left(\eta/N^{\frac{3}{2}}+B\right).
\end{equation}
Finally, to bound $B^2$, we apply (\ref{eq:small j bound}) to its truncated sum and (\ref{eq:large j bound}) to the remainder. Namely,
\begin{align}
    B^2= \sum_{ j\leq \eta^{-\frac{1}{2}}}|v_j(0)|^2+\sum_{j>\eta^{-\frac{1}{2}}}|v_j(0)|^2 \nonumber
    \\
    \leq C\eta^{-\frac{1}{2}}\left(\eta^{\frac{3}{2}}/N^{\frac{3}{2}}+\eta B\right)^2+C\left(\eta/N^{\frac{3}{2}}+B\right)^2\sum_{j>\eta^{-\frac{1}{2}}}j^{-2}. \nonumber
\end{align}
The final sum in $j$ is bounded by a multiple of $\eta^{\frac{1}{2}}$. Without loss of generality, suppose that $B\geq c\eta/N^{\frac{3}{2}}$ for some positive constant $c$ (otherwise the conclusion is trivial). Then,
\begin{equation}
    B^2\leq C\eta^{-1/2}\left(\eta^{\frac{3}{2}}/N^{\frac{3}{2}}+\eta B\right)^2+C\eta^{\frac{1}{2}} B^2\leq C\eta^{\frac{1}{2}}(\eta^2/N^{3}+B^2) \implies B^2\leq C\eta^2/N^3 \nonumber
\end{equation}
after bootstrapping the final term in the inequality and choosing $\eta$ sufficiently small. This implies that $|v_j(0)|\leq C\eta/N^{\frac{3}{2}}$ for all $j\geq 3$. 
\end{proof}

\quad We can use the results in Lemmas \ref{lem:AD10} and \ref{lem:AllJ} to estimate the behavior of the Fourier modes in (\ref{eq:P1v}). We find that the first two modes behave much like $\psi_{k,1}$ and $\psi_{2,2}$ respectively and the error is small.

\begin{prop}\label{prop:V}
Let $\epsilon>0$ and $\textbf{c}=(c_1,c_2)$ be as constructed in Proposition \ref{prop:Hadamard}. Under the same conditions as in Theorem \ref{prop:n=0}, there exists a constant $C_0>0$, dependent only on $\Lambda_\phi$, such that for each $k\geq 3$, there exists $\eta_0(k,\epsilon)>0$ such that the following holds for all $0<\eta\leq \eta_0(k,\epsilon)$:
\begin{enumerate}
    \item The first Fourier mode $v_1(x)$ satisfies the estimates
    \begin{equation}
        \left|v_1^{(\ell)}(x)-\frac{2c_1}{\sqrt{N}}\frac{d^\ell}{dx^\ell}\left(\sin\left(\frac{k\pi}{N}x\right)\right)\right|\leq C_0\eta^{1-\epsilon}/N^{\frac{3}{2}} \quad \textrm{for} \quad 0\leq \ell\leq 3. \nonumber
    \end{equation}
    \item The second Fourier mode $v_2(x)$ satisfies the estimates
    \begin{equation}
        \left|v_2^{(\ell)}(x)-\frac{2c_2}{\sqrt{N}}\frac{d^\ell}{dx^\ell}\left(\sin\left(\frac{2\pi}{N}x\right)\right)\right|\leq C_0\eta^{1-\epsilon}/N^{\ell+\frac{3}{2}} \quad \textrm{for} \quad 0\leq \ell\leq 3. \nonumber
    \end{equation}
    \item The error $E(x,y)$ satisfies the estimates
    \begin{equation}
        \left|E(x,y)\right|+\left|\nabla E(x,y)\right|\leq C_0\eta/N^{\frac{3}{2}} \quad \textrm{for} \quad (x,y)\in\Omega. \nonumber
    \end{equation}
    Further, for $\omega\in(0,1)$, there exists a constant $C(\omega)$ such that
    \begin{equation}
        \sup_{x\in\left[\frac{N}{4}, \frac{3N}{4}\right]}\left|\nabla^\ell E(x,y)\right|\leq C_0\eta e^{-cN}, \quad \sup_{y\in[\omega, 1-\omega]}\left|\nabla^\ell E(x,y)\right|\leq C(\omega)\eta/N^{\frac{3}{2}} \quad \textrm{for} \quad 0\leq \ell \leq 3. \nonumber
    \end{equation}
\end{enumerate}
\end{prop}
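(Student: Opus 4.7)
The plan is to combine the explicit representations \eqref{eq:v1}, \eqref{eq:v2}, \eqref{eq:v_k} for the Fourier coefficients with the coefficient estimates of Lemmas \ref{lem:AD10} and \ref{lem:AllJ}, handling items (i)--(ii) by a direct term-by-term comparison of $v_n(x)$ with $(2c_n/\sqrt{N})\sin(k_n\pi x/N)$ (where $k_1 = k$ and $k_2 = 2$), and handling item (iii) by combining the pointwise $L^\infty$ bound on $E$ that emerged in the proof of Lemma \ref{lem:AllJ} with interior and boundary elliptic regularity.

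For items (i) and (ii), I would decompose $v_n(x) - (2c_n/\sqrt{N})\sin(k_n\pi x/N)$ as the sum of the three pieces $v_n(0)\cos(\mu_n x)$, $(A_n - 2c_n/\sqrt{N})\sin(\mu_n x)$, and $(2c_n/\sqrt{N})(\sin(\mu_n x) - \sin(k_n\pi x/N))$. The first piece is controlled by \eqref{eq:hurray!} combined with $|A_1|\sim 1/N^{3/2}$ and $|A_2|\sim 1/N^{1/2}$, which follow from Lemma \ref{lem:AD10} and Lemma \ref{prop:coeff}; the second piece is controlled directly by Lemma \ref{lem:AD10}; and the third by the mean-value estimate $|\sin(\mu_n x) - \sin(k_n\pi x/N)| \leq x|\mu_n - k_n\pi/N| \leq C\eta/N$ from \eqref{eq:close}, multiplied by $|c_1/\sqrt{N}| \sim 1/N^{3/2}$ or $|c_2/\sqrt{N}| \sim 1/\sqrt{N}$. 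For derivatives $1 \leq \ell \leq 3$, differentiation pulls down factors of $\mu_n$ (or $k_n\pi/N$): since $\mu_1 \sim 1$ the bound in item (i) is preserved, and since $\mu_2 \sim 1/N$ each $x$-derivative contributes an extra $1/N$, matching the $1/N^{\ell+3/2}$ scaling in item (ii); the mismatch $\mu_n^\ell - (k_n\pi/N)^\ell$ contributes only lower-order $O(\eta/N)$ corrections, absorbed into the stated bounds.

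For item (iii), I would proceed in four steps. First, the $L^\infty$ bound $|E| \leq C\eta/N^{3/2}$ is exactly the intermediate estimate derived in the proof of Lemma \ref{lem:AllJ} once $B = \bigl(\sum_{j\geq 3} v_j(0)^2\bigr)^{1/2}$ has been bounded by $C\eta/N^{3/2}$. Second, the gradient bound $|\nabla E| \leq C\eta/N^{3/2}$ throughout $\Omega$ follows from boundary Schauder regularity applied to $(\Delta + \mu)E = 0$: the boundary data $E|_{\partial\Omega} = -v_1\sin(\pi y) - v_2\sin(2\pi y)$ on the left boundary $x = -\eta\phi(y)$ has $C^{1,\alpha}$ norm of size $C\eta/N^{3/2}$ by the estimates in items (i)--(ii) and $\phi \in \mathcal{A}$, while $E$ vanishes on the remaining boundary components; since $\p\Omega$ is uniformly smooth for $\phi \in \mathcal{A}$ and $\mu$ is uniformly bounded, the elliptic constants are independent of $\eta, N$. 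Third, for the interior exponential estimate on $x \in [N/4, 3N/4]$, \eqref{eq:coeffbound} gives $|v_j(x)| \leq C|v_j(0)|e^{-cjN/4}$; $y$- and $x$-derivatives pull down at most $Cj$ factors, and the resulting geometric series against $e^{-cjN/4}$ converges with a prefactor absorbing any polynomial loss in $N$, leaving $C\eta e^{-cN}$. Fourth, for the bound on the strip $\{y \in [\omega, 1-\omega]\}$, interior Schauder estimates applied to $(\Delta + \mu)E = 0$ on $\{y \in [\omega/2, 1-\omega/2]\} \cap \Omega$, with $\omega$-dependent distance to $\{y = 0, 1\}$, produce $C^\ell$ control of $E$ on the desired strip bounded by $C(\omega)\|E\|_{L^\infty(\Omega)} \leq C(\omega)\eta/N^{3/2}$.

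The hard part is the gradient and higher-order estimates for $E$: a direct term-by-term Fourier approach fails because $\sum j v_j(0)^2$ is not controlled by the sharp bound $B^2 \leq C\eta^2/N^3$, and near the left boundary $x = -\eta\phi(y)$ the exponential decay in \eqref{eq:coeffbound} is unavailable. The workaround of invoking boundary and interior elliptic regularity instead is clean, but the delicate point is to verify that the Schauder constants are uniform in $\eta$ and $N$; this reduces to observing that local straightenings of $\p\Omega_\phi(\eta, N)$ have derivatives controlled by Definition \ref{def:phi}, so the $C^{1,\alpha}$ character of the boundary and the $C^{1,\alpha}$ norm of the boundary data for $E$ are uniformly bounded for $\phi \in \mathcal{A}$ and small $\eta$.
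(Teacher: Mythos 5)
Your proposal is correct and follows essentially the same route as the paper: items (i)--(ii) by comparing the explicit oscillatory representations \eqref{eq:v1}--\eqref{eq:v2} term by term against their limits using \eqref{eq:close}, \eqref{eq:hurray!}, and Lemma \ref{lem:AD10}; item (iii) by combining the $L^\infty$ bound on $E$ from Lemma \ref{lem:AllJ} with elliptic (Schauder) regularity up to the boundary, the exponential decay of \eqref{eq:v_k} in the middle region, and away-from-corner boundary regularity for the strip $y\in[\omega,1-\omega]$. Your write-up is simply a more detailed version of the paper's argument, including the correct observation that the uniformity of the elliptic constants in $\eta$ and $N$ rests on Definition \ref{def:phi}.
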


\quad This proposition allows us to compare the restriction $v|_R$ to the expression $v_0$ determined in Section \ref{sec:Hadamard}. The goal of Sections \ref{sec:centerEven} - \ref{sec:bdry} is to extend these function estimates to estimates on the nodal set. In item $(iii)$, we momentarily leave $\omega$ arbitrary; in Section \ref{sec:bdry} we choose a value $\omega\sim 1$ for a particular application.  

\begin{proof}[Proof of Proposition \ref{prop:V}.]
To establish items $(i)$ and $(ii)$, consider the expressions in (\ref{eq:v1}), (\ref{eq:v2}) for $v_1(x)$, $v_2(x)$. The estimates in (\ref{eq:close}), (\ref{eq:hurray!}) and Lemma \ref{lem:AD10} provide the pointwise bound. The same argument holds for the derivatives, although for $v_2(x)$, each iteration of chain rule pulls out another $N$ in the denominator. 

\quad Alongside Lemma \ref{lem:AllJ}, equation (\ref{eq:coeffbound})  implies the bound $||E||_{L^\infty(\Omega)}+||E||_{L^2(\Omega)}\leq C\eta/N^{\frac{3}{2}}$. Elliptic estimates, as in \cite{GV}, allow us to extend this bound to $||E||_{W^{1,\infty}(\Omega)}$. Further, for $x\in\left[\frac{N}{4}, \frac{3N}{4}\right]$, we can use (\ref{eq:v_k}) and Lemma \ref{lem:AllJ} to get exponential decay in $N$, which will be important for the analysis near the intersection point of $v_0^{-1}(0)$. For $y\in[\omega, 1-\omega]$ with $\omega>0$, we are away from the corners of $\Omega$ and can use the boundary regularity as in \cite{BGM} to establish item $(iii)$ in Proposition \ref{prop:V}.
\end{proof}

\quad With Proposition \ref{prop:V} established, we can use details in the proof for Lemma \ref{lem:AllJ} to determine a more precise characterization of the Fourier coefficients $v_j(0)$.

\begin{prop} \label{prop:VJ} Under the same conditions as in Proposition \ref{prop:V}, the following holds for all $j\geq 1$:
\begin{equation}
    \left|v_j(0)-\frac{4\pi\eta}{N^{\frac{3}{2}}}\int_0^1\phi(y)\left(kc_1\sin(\pi y)+2c_2\sin(2\pi y)\right)\sin(j\pi y)dy\right|\leq C_0\left(\eta^3j^2/N^{\frac{3}{2}}+\eta^{2-\epsilon}/N^{\frac{3}{2}}\right). \nonumber
\end{equation}
\end{prop}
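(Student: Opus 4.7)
The target formula on the right-hand side of Proposition \ref{prop:VJ} is precisely $\eta\dot{v}_j(0)$ as given by equation (\ref{eq:VarCoeff}). Since $v_j(0;\eta)$ vanishes at $\eta=0$ and depends smoothly on $\eta$, the proposition is morally a quantitative first-order Taylor expansion in $\eta$; the plan is to bootstrap the Green's-formula argument from the proof of Lemma \ref{lem:AllJ}, using the now-established bound $\sum_{j\geq 3} v_j(0)^2\leq C\eta^2/N^3$ to shrink the auxiliary terms by an extra factor of $\eta$ and to isolate the leading contribution explicitly.

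I would start from the identity obtained in the proof of Lemma \ref{lem:AllJ},
\[
v_j(0)=J_1+J_2+K-2(\mu-j^2\pi^2)\int_{\Omega_0} v(x,y)\,x\sin(j\pi y)\,dA,
\]
and first dispense with the two error pieces. Substituting the improved bound $B\leq C\eta/N^{3/2}$ into the estimate $|K|\leq C\eta(\eta/N^{3/2}+B)$ yields $|K|\leq C\eta^2/N^{3/2}$, and (\ref{eq:GreenBound}) analogously gives a bound of size $C\eta^3j^2/N^{3/2}$ for the volume term. Both are already absorbed into the claimed error budget.

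The substantive step is the asymptotic evaluation of $J_1+J_2$. By (\ref{eq:JJJJ}),
\[
J_n=2\eta\int_0^1 v_n'(-\eta\phi(y))\sin(n\pi y)\phi(y)\sin(j\pi y)\,dy+O(\eta^3/N^{3/2}),
\]
so I would apply items $(i)$ and $(ii)$ of Proposition \ref{prop:V} to replace $v_n'$ by the derivative of its leading sine approximation. At the evaluation point $x=-\eta\phi(y)$, the ensuing cosine factors satisfy $\cos(n\pi\eta\phi(y)/N)=1+O(\eta^2)$ (using $k/N\sim 1$ and $\phi\in[0,1]$ from Definition \ref{def:phi}), so $v_1'(-\eta\phi(y))$ reduces to $\tfrac{2c_1 k\pi}{N^{3/2}}+O(\eta^{1-\epsilon}/N^{3/2})$ and $v_2'(-\eta\phi(y))$ to $\tfrac{4\pi c_2}{N^{3/2}}+O(\eta^{1-\epsilon}/N^{5/2})$. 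Multiplying through by the $2\eta$ prefactor, summing over $n=1,2$, and recognizing (\ref{eq:VarCoeff}) in the leading term produces
\[
J_1+J_2=\frac{4\pi\eta}{N^{3/2}}\int_0^1\phi(y)\bigl(kc_1\sin(\pi y)+2c_2\sin(2\pi y)\bigr)\sin(j\pi y)\,dy+O(\eta^{2-\epsilon}/N^{3/2}),
\]
which combined with the earlier bounds on $K$ and the volume term completes the estimate.

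The principal obstacle is tracking the propagation of the $\epsilon$-loss inherited from Lemma \ref{lem:AD10}: the $\eta$ prefactor inside $J_n$ multiplied by the $\eta^{1-\epsilon}/N^{3/2}$ error in the derivative approximation of $v_n'$ is what forces the $\eta^{2-\epsilon}/N^{3/2}$ tolerance in the final statement, and this cannot be improved without sharpening Lemma \ref{lem:AD10}. The separate $\eta^3 j^2/N^{3/2}$ contribution is intrinsic to the Green's-formula volume term and becomes the dominant error only for very large $j$.
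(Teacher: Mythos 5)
Your proposal is correct and follows essentially the same route as the paper: it starts from the Green's-formula decomposition $v_j(0)=J_1+J_2+K+(\text{volume term})$ in the proof of Lemma \ref{lem:AllJ}, absorbs $K$ and the volume term into the error using the established bound on $B$, and evaluates $J_1+J_2$ via (\ref{eq:JJJJ}) together with the derivative approximations of Proposition \ref{prop:V} (the paper records these as (\ref{eq:derivativesApprox})). The identification of the leading term with $\eta\dot{v}_j(0)$ from (\ref{eq:VarCoeff}) is also the observation the paper makes at the end of its proof.
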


\quad This proposition gives us good control over the Fourier modes when $j$ is small, which will play a role in determining the nodal behavior of $v$ near the intersection point featured in item (i) of Theorem \ref{prop:n=0}. 

\begin{proof}[Proof of Proposition \ref{prop:VJ}.] Items $(i)-(ii)$ of Proposition \ref{prop:V} imply that for all $y\in[0,1]$,
\begin{equation}\label{eq:derivativesApprox}
    \left|v_1'(-\eta\phi(y))-\frac{2k\pi}{N^{\frac{3}{2}}}c_1\right|+\left|v_2'(-\eta \phi(y))-\frac{4\pi}{N^{\frac{3}{2}}}c_2\right|\leq C\eta^{1-\epsilon}/N^{\frac{3}{2}}.
\end{equation}
Further, if we combine (\ref{eq:JJJJ}) with (\ref{eq:derivativesApprox}), then 
\begin{equation}
    J_1=\frac{4k\pi\eta }{N^{\frac{3}{2}}}c_1\int_0^1 \phi(y)\sin(\pi y)\sin(j\pi y) dy \quad \textrm{and} \quad J_2=\frac{8\pi\eta}{N^{\frac{3}{2}}}c_2\int_0^1\phi(y)\sin(2\pi y)\sin(j\pi y)dy \nonumber
\end{equation}
up to an error of size $C\eta^{2-\epsilon}/N^{\frac{3}{2}}$. Combining all of this together, we get $v_j(0)=J_1+J_2$ up to an error of size $C(\eta^3j^2/N^{\frac{3}{2}}+\eta^{2-\epsilon}/N^{\frac{3}{2}})$. This establishes Proposition \ref{prop:VJ}; observe that by (\ref{eq:VarCoeff}), the expression $J_1+J_2$ is the linear term in the Taylor series for $v_j(0)$ as a function of $\eta$. 
\end{proof}

\section{Description of the Nodal Set Near the Center (k even)}\label{sec:centerEven}

\quad In this section, we focus on the nodal set of $v$, the eigenfunction corresponding to the upper branch $\mu$, when $k\geq 4$ is even and prove the first item of both Theorem \ref{thm:VevenPt} and Theorem \ref{thm:VevenDer}. As stated in Theorem \ref{prop:n=0}, the nodal set of $v_0$ on the rectangle $R(N)$ features a crossing at $\textbf{z}\eqdef\left(\frac{N}{2}, \overline{y}\right)$ with $\overline{y}$ as in (\ref{eq:ybar}). Our analysis takes place in the open disk $D_{r}(\textbf{z})$ with $r=2\eta^{\frac{1}{2}-\frac{2}{3}\epsilon}$ for $\epsilon\in\left(0,\frac{1}{4}\right)$; Remark \ref{rem:Radius} discusses the purpose of this construction. 

\quad In \cite{C76}, Cheng showed that, under a coordinate transformation, Laplacian eigenfunctions can be described locally by harmonic, homogeneous polynomials. However, we use a different approach, relying on Proposition \ref{prop:V} to show that it suffices to study the polynomial
\begin{equation}\label{eq:Poly}
    P(x,y)\eqdef\p_{xy}^2\left(c_1\psi_{k,1}+c_2\psi_{2,2}\right)_\textbf{z}\left(x-\frac{N}{2}\right)\left( y-\overline{y}\right)+\p_x\left(v-E\right)_\textbf{z}\left(x-\frac{N}{2}\right)+\p_y\left(v-E\right)_\textbf{z}\left(y-\overline{y}\right)+v(\textbf{z})
\end{equation}
which closely resembles the eigenfunction $v$ in $D_r(\textbf{z})$. We then establish properties of the zero set of $P(x,y)$.

\begin{rem}\label{rem:Radius}
    The radius $r=2\eta^{\frac{1}{2}-\frac{2}{3}\epsilon}$ with $\epsilon\in\left(0,\frac{1}{4}\right)$ is chosen so that $\eta^{\frac{1}{2}}\ll r$ and $r \ll \eta^{\frac{1}{3}}$ as $\eta\to 0$. The lower bound is necessary so that $D_r(\textbf{z})$ is large enough to capture the hyperbolic nature of $v^{-1}(0)$. The upper bound is necessary to ensure that the remainder in Proposition \ref{prop:propRemainder} is small enough. As long as $r\sim \eta^{\frac{1}{2}-\delta}$ for $\delta\in\left(\frac{\epsilon}{2},\frac{1}{6}\right)$, our analysis holds; for ease of notation, we set $\delta=\frac{2}{3}\epsilon$.
\end{rem}

\quad The polynomial in (\ref{eq:Poly}) features the largest terms from the Taylor expansion of $v$ near $\textbf{z}$, but to ensure that it approximates the local behavior of $v$, we first need the following bound on the remainder. 

\begin{prop} \label{prop:propRemainder}
Let $\epsilon\in\left(0,\frac{1}{4}\right)$. Under the same conditions as in Theorem \ref{prop:n=0}, there exists a constant $C_0>0$, dependent only on $\Lambda_\phi$, such that for each $k\geq 4$ even, there exists $\eta_0(k,\epsilon)>0$ such that the remainder $R(x,y)=v(x,y)-P(x,y)$ satisfies
\begin{equation}
    \sup_{(x,y)\in D_r(\textbf{z})}\left(|R(x,y)|+\eta^{\frac{1}{2}-\frac{2}{3}\epsilon}|\nabla R(x,y)|\right)\leq C_0\eta^{\frac{3}{2}-2\epsilon}/N^{\frac{1}{2}}. \nonumber
\end{equation}
for $0<\eta\leq \eta_0(k,\epsilon)$.
\end{prop}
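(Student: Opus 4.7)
The plan is to decompose $v$ as $v = u + E$ where $u(x,y) = v_1(x)\sin(\pi y) + v_2(x)\sin(2\pi y)$ and to compare $v$ to $P$ via a second-order Taylor expansion at $\textbf{z} = (N/2,\overline{y})$. By Proposition \ref{prop:V} item $(iii)$, $E$ and its derivatives up to order three are exponentially small in $N$ on the central slab containing $D_r(\textbf{z})$, so the contribution of $E$ to $R$, beyond the constant $E(\textbf{z})$ already baked into $v(\textbf{z})\subset P$, is negligible. The main task is then to control $u - (P - v(\textbf{z}))$ on $D_r(\textbf{z})$, which is exactly the difference between $u$ and the pair of linear terms $\partial_x u(\textbf{z})(x-N/2) + \partial_y u(\textbf{z})(y-\overline{y})$ plus the bilinear term $\partial_{xy}^2 v_0(\textbf{z})(x-N/2)(y-\overline{y})$.

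First I would write the second-order Taylor expansion of $u$ at $\textbf{z}$ with integral remainder and collect what is not captured by $P$: (a) the pure quadratic terms $\tfrac{1}{2}\partial_{xx}u(\textbf{z})(x-\tfrac{N}{2})^2$ and $\tfrac{1}{2}\partial_{yy}u(\textbf{z})(y-\overline{y})^2$; (b) the bilinear discrepancy $[\partial_{xy}^2 u(\textbf{z}) - \partial_{xy}^2 v_0(\textbf{z})](x-\tfrac{N}{2})(y-\overline{y})$; and (c) the cubic Taylor remainder, controlled by $\|u\|_{C^3(D_r(\textbf{z}))}$. For (b), Proposition \ref{prop:V} items $(i)$, $(ii)$ with $\ell=2$ give $|\partial_{xy}^2 u(\textbf{z}) - \partial_{xy}^2 v_0(\textbf{z})| \leq C\eta^{1-\epsilon}/N^{3/2}$, so that after multiplication by $r^2 \sim \eta^{1-4\epsilon/3}$ this piece contributes at most $C\eta^{2-7\epsilon/3}/N^{3/2}$, which is absorbed in the target.

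For (a) the crucial cancellation is used. Since $k$ is even, $\sin(k\pi/2)=0$ and $\sin(\pi)=0$, so the leading expressions $(2c_n/\sqrt{N})\sin(k_n\pi x/N)$ with $k_1=k$, $k_2=2$ and their second $x$-derivatives $-(2c_n/\sqrt{N})(k_n\pi/N)^2\sin(k_n\pi x/N)$ all vanish at $x=N/2$. Proposition \ref{prop:V} then forces $|v_n(N/2)|+|v_n''(N/2)|\leq C\eta^{1-\epsilon}/N^{3/2}$ for $n=1,2$, which immediately yields $|\partial_{xx}u(\textbf{z})|+|\partial_{yy}u(\textbf{z})|\leq C\eta^{1-\epsilon}/N^{3/2}$, and again $O(\eta^{2-7\epsilon/3}/N^{3/2})$ after multiplication by $r^2$. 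For (c), the pointwise derivative bounds in Proposition \ref{prop:V} (noting $c_1\sim 1/N$, $c_2\sim 1$, $k\sim N$) yield $\|u\|_{C^3(D_r(\textbf{z}))} \leq C/\sqrt{N}$, so the Taylor remainder is bounded by $Cr^3/\sqrt{N} = C\eta^{3/2-2\epsilon}/\sqrt{N}$, which is exactly the stated pointwise bound. The gradient estimate follows by differentiating each piece of this decomposition once: the cubic remainder contributes $Cr^2/\sqrt{N} = C\eta^{1-4\epsilon/3}/\sqrt{N}$, and pieces (a), (b) each lose one factor of $r$ when differentiated, giving the same order; dividing by $\eta^{1/2 - 2\epsilon/3}$ reproduces the weight stated in the proposition.

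The main obstacle is the cancellation in (a): without the vanishing of $\partial_{xx}u(\textbf{z})$ and $\partial_{yy}u(\textbf{z})$, these quadratic terms would be of order $r^2/\sqrt{N}\sim \eta^{1-4\epsilon/3}/\sqrt{N}$, far larger than the target $\eta^{3/2-2\epsilon}/\sqrt{N}$. The proof thus depends essentially on coupling the even parity of $k$ with the precise approximation rate from Proposition \ref{prop:V}, and the restriction $\epsilon\in(0,1/4)$ arises here: it makes $r^3\|u\|_{C^3}$ the dominant remainder while keeping $r\ll \eta^{1/3}$, so the discarded cubic contribution of $u$ remains smaller than the target as noted in Remark \ref{rem:Radius}.
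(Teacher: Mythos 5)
Your proof is correct and follows essentially the same route as the paper: a second-order Taylor expansion at $\textbf{z}$ in which the pure quadratic terms are killed (up to $O(\eta^{1-\epsilon}/N^{3/2})$) by the even parity of $k$, the bilinear and linear coefficients are matched to $P$ via Proposition \ref{prop:V}, and the cubic remainder $\sim r^3/\sqrt{N}$ supplies the stated bound $\eta^{\frac{3}{2}-2\epsilon}/N^{\frac{1}{2}}$. The only difference is organizational — you expand the two-mode part $u=v_1\sin(\pi y)+v_2\sin(2\pi y)$ and treat $E$ separately via the exponential bound on the central slab, whereas the paper writes $v=v_0+G$ and expands $v_0$ exactly and $G$ to first order — but the estimates invoked and the orders obtained are identical.
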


\quad Proposition \ref{prop:propRemainder} tells us that the polynomial in (\ref{eq:Poly}) is indeed a good local approximation for $v(x,y)$ near $\textbf{z}$ and worth studying further.  In order to better understand $v^{-1}(0)$ in $D_r(\textbf{z})$, we first study properties of the zero set of (\ref{eq:Poly}). Then we show that for small enough perturbations, the same properties hold for the nodal set of $v$.

\begin{proof}[Proof of Propositions \ref{prop:propRemainder}.] If we write 
\begin{equation}
    v(x,y)=\left(c_1\psi_{k,1}+c_2\psi_{2,2}\right)(x,y)+G(x,y) \nonumber
\end{equation}
for a difference function $G$, then we can use Proposition \ref{prop:V} to bound $$\sup_{(x,y)\in D_r(\textbf{z})}\left|\nabla^\ell G(x,y)\right|\leq C\eta^{1-\epsilon}/N^{\frac{3}{2}}$$ for all $\ell\leq 3$. By expanding the unperturbed eigenfunction $v_0$ near $\textbf{z}$, we have 
\begin{equation}
    \left(c_1\psi_{k,1}+c_2\psi_{2,2}\right)(x,y)= \p_{xy}^2\left(c_1\psi_{k,1}+c_2\psi_{2,2}\right)\big|_\textbf{z}\left(x-\frac{N}{2}\right)\big(y-\overline{y}\big) \nonumber
\end{equation}
up to an error of size $C\eta^{\frac{3}{2}-2\epsilon}/N^{\frac{1}{2}}$. This provides the first term in the model polynomial (\ref{eq:Poly}), but in order to understand how the nodal crossing turns into a hyperbola, we need to include some lower-order terms from $G(x,y)$. Because $v_0(\textbf{z})=\nabla v_0(\textbf{z})=0$,
\begin{equation}
    G(x,y)=v(\textbf{z})+\nabla v(\textbf{z})\cdot \left((x,y)-\textbf{z}\right) \nonumber
\end{equation}
up to an error of size $C\eta^{2-\frac{7}{3}\epsilon}/N^{\frac{3}{2}}$.
If we define $P(x,y)$ as in (\ref{eq:Poly}), rolling the error terms of $v$ into the remainder and taking $\epsilon$ small enough, then we have the pointwise bound in Proposition \ref{prop:propRemainder}. The gradient bound follows similarly from Proposition \ref{prop:V} and the choice of the radius $r\sim\eta^{\frac{1}{2}-\frac{2}{3}\epsilon}$. 
\end{proof}

\quad Because $v$ locally resembles (\ref{eq:Poly}), it suffices to establish properties of the model polynomial in $D_r(\textbf{z})$. We find that the set $P(x,y)=0$ is a hyperbola centered close to $\textbf{z}$ with the two branches separated by a distance $\sim\eta^{\frac{1}{2}}$.

\begin{prop} \label{prop:PolyStuff} Under the same conditions as in Proposition \ref{prop:propRemainder}, there exist constants $C_0,k_0>0$, dependent only on $\Lambda_\phi$, such that for each even $k\geq k_0$, there exists $\eta_0(k,\epsilon)$ so that for $0<\eta\leq \eta_0(k,\epsilon)$, the zero set of the model polynomial $P(x,y)=v(x,y)-R(x,y)$ is a hyperbola satisfying the following properties:
\begin{enumerate}
    \item The center is located at $(x_c,y_c)$ where $\left|x_c-\frac{N}{2}\right|\leq C_0\eta$ and $\left|y_c-\overline{y}\right|\leq C_0\eta^{1-\epsilon}$. The hyperbola can alternatively be written
    \begin{equation}
        (x-x_c)(y-y_c)=D \nonumber
    \end{equation}
    where $D$ satisfies $C_0^{-1}\eta\leq |D|\leq C_0\eta$.
    
    \item The vertices of the hyperbola are separated by a distance $\omega$ satisfying $C_0^{-1}\eta^{\frac{1}{2}}\leq|\omega|\leq C_0\eta^{\frac{1}{2}}$. 
    
    \item The principal axis makes an angle of $\theta=\pm \frac{\pi}{4}$ with the positive $x$-axis. The precise orientation satisfies $\sgn\theta=\sgn v(\textbf{z})$.
\end{enumerate}
\end{prop}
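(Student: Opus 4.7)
The proof reduces to reading off the four coefficients of the bilinear polynomial
\[
P(x,y)=\alpha\,uw+\beta\,u+\gamma\,w+\delta,\qquad u=x-\tfrac{N}{2},\quad w=y-\overline{y},
\]
where $\alpha=\partial_{xy}^2(c_1\psi_{k1}+c_2\psi_{22})|_{\textbf{z}}$, $\beta=\partial_x(v-E)|_{\textbf{z}}$, $\gamma=\partial_y(v-E)|_{\textbf{z}}$ and $\delta=v(\textbf{z})$. Completing the rectangle gives $P=\alpha(u+\gamma/\alpha)(w+\beta/\alpha)+\delta-\beta\gamma/\alpha$, so the zero set of $P$ is the rectangular hyperbola
\[
(x-x_c)(y-y_c)=D,\qquad x_c=\tfrac{N}{2}-\tfrac{\gamma}{\alpha},\quad y_c=\overline{y}-\tfrac{\beta}{\alpha},\quad D=\tfrac{\beta\gamma}{\alpha^2}-\tfrac{\delta}{\alpha}.
\]
Because this canonical form automatically has asymptotes parallel to the coordinate axes, its principal axis lies at angle $\pm\pi/4$ and its vertex separation equals $2\sqrt{2|D|}$, so items (ii) and (iii) follow from the correct scales and signs of $D$.

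The coefficient $\alpha$ is computed directly from (\ref{eq:basis}). Using Definition \ref{def:yBar} to substitute $\cos(\pi\overline{y})=\cos(k\pi/2)\,kc_1/(4c_2)$ together with the double-angle identity and $\cos^2(k\pi/2)=1$ for even $k$ yields
\[
\alpha=\frac{\pi^2\bigl(16c_2^2-k^2c_1^2\bigr)}{2c_2N^{3/2}}.
\]
Theorem \ref{prop:n=0}(i) gives $|\cos(\pi\overline{y})|\le\cos(\pi c_0)<1$, whence Lemma \ref{prop:coeff} forces $16c_2^2-k^2c_1^2\sim 1$ and $|\alpha|\sim N^{-3/2}$. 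For $\beta=v_1'(N/2)\sin(\pi\overline{y})+v_2'(N/2)\sin(2\pi\overline{y})$, inserting the leading-order approximations of Proposition \ref{prop:V}(i)--(ii) produces exactly $\partial_x v_0(\textbf{z})$, which vanishes at the crossing; only the error terms survive, so $|\beta|\le C\eta^{1-\epsilon}/N^{3/2}$. For $\gamma$ and $\delta$, the expressions (\ref{eq:v1})--(\ref{eq:v2}) combined with (\ref{eq:close}) reduce $v_n(N/2)$ to $\pm v_n(0)+O(\eta/N^{5/2})$ for $n=1,2$, and Proposition \ref{prop:V}(iii) gives $|E(\textbf{z})|\le C\eta e^{-cN}$ since $N/2\in[N/4,3N/4]$. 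Applying Proposition \ref{prop:VJ} then yields $|\gamma|\le C\eta/N^{3/2}$ (no $\eta^{-\epsilon}$ loss, since the linear-in-$\eta$ term from Proposition \ref{prop:VJ} already dominates) and the explicit leading-order expression
\[
\delta=\frac{4\pi\eta\cos(k\pi/2)\sin(\pi\overline{y})}{N^{3/2}}\!\left[kc_1(I_1-I_2)+\frac{\Lambda_\phi(4c_2^2-k^2c_1^2)}{2c_2}\right]+O\!\left(\tfrac{\eta^{2-\epsilon}}{N^{3/2}}\right),
\]
with $I_n=\int_0^1\phi(y)\sin^2(n\pi y)\,dy$. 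Combining, $|x_c-N/2|=|\gamma/\alpha|\le C\eta$, $|y_c-\overline{y}|=|\beta/\alpha|\le C\eta^{1-\epsilon}$, and $D=-\delta/\alpha+O(\eta^{2-2\epsilon})$.

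The decisive step, and the main obstacle, is the \emph{lower} bound $|\delta|\gtrsim\eta/N^{3/2}$ with a constant depending only on $\Lambda_\phi$. The two terms in the bracket above are individually of unit size, so a priori cancellation cannot be excluded. To rule it out, one uses the eigenvector relation $\textbf{D}\textbf{c}=-\dot\mu\,\textbf{c}$ from Proposition \ref{prop:Hadamard} to eliminate (say) $I_1$ in favor of $\Lambda_\phi$, $\dot\mu$, and $c_1,c_2$; resubstituting into the bracket produces a $\Lambda_\phi^2$-dominated contribution of definite sign together with remainder terms that are $O(1/k)$. This explains the restriction $k\ge k_0$: for small $k$ the $O(1/k)$ corrections may overwhelm the leading term, whereas for $k$ large the $\Lambda_\phi^2$-contribution dominates and produces the required lower bound. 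Once $|\delta|\sim\eta/N^{3/2}$ is in hand, the vertex separation $2\sqrt{2|D|}\sim\eta^{1/2}$ of item (ii) and the orientation claim of item (iii) are direct consequences of the canonical form; the sign relation reduces to $\sgn D=-\sgn(\delta/\alpha)$, which equals $\sgn v(\textbf{z})$ under the sign convention that fixes $c_2$ (equivalently, that fixes $\sgn\alpha$).
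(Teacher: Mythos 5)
Your proposal follows essentially the same route as the paper: complete the rectangle on the bilinear model polynomial, bound $\alpha\sim N^{-3/2}$, $|\beta|\lesssim\eta^{1-\epsilon}N^{-3/2}$, $|\gamma|,|\delta|\lesssim\eta N^{-3/2}$, and reduce everything to the lower bound $|v(\textbf{z})|\gtrsim\eta N^{-3/2}$, obtained from Proposition \ref{prop:VJ} together with the exponential smallness of $E(\textbf{z})$ on $\left[\frac{N}{4},\frac{3N}{4}\right]$ (which is where $k\geq k_0$ enters in the paper). Your identification of the possible cancellation inside the bracket $kc_1(I_1-I_2)+\tfrac{\Lambda_\phi(4c_2^2-k^2c_1^2)}{2c_2}$ is a genuine point that the paper elides, and your proposed fix via the eigenvector relation is the right idea; however, as stated it is imprecise. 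Eliminating only $I_1$ from the first row of $\textbf{D}\textbf{c}=-\dot{\mu}\textbf{c}$ still leaves two competing $O(1)$ terms; one must use \emph{both} rows (equivalently, substitute the resulting expression for $I_1-I_2$), after which the bracket collapses exactly to $\frac{\dot{\mu}N^3(k^2-4)c_1}{16\pi^2 k}$ — a single product rather than a ``$\Lambda_\phi^2$-dominated term plus $O(1/k)$ remainders.'' Its non-vanishing then rests on $|c_1|\gtrsim 1/N$ (Lemma \ref{prop:coeff}, which is where $\Lambda_\phi\neq 0$ is used) and on $|\dot{\mu}|\gtrsim N^{-3}$, and the scales combine to give exactly the claimed $c\eta/N^{3/2}$. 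With that correction your argument closes, and the remaining items (center location, vertex separation $\sqrt{8|D|}\sim\eta^{1/2}$, orientation $\sgn D=\sgn v(\textbf{z})$ under the sign normalization of $c_2$) match the paper's.
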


\begin{proof}[Proof of Propositions \ref{prop:PolyStuff}.] For simplicity of notation, denote
\begin{align}
    \alpha=\p_{xy}^2 \left(c_1\psi_{k,1}+c_2\psi_{2,2}\right)_\textbf{z}, \quad \quad \beta=\p_x\left(v-E\right)_\textbf{z}, \quad \quad \upsilon=\p_y\left(v-E\right)_\textbf{z}, \quad \quad \rho=v(\textbf{z}) \nonumber
\end{align}
so that
\begin{equation}
    P(x,y)=\alpha\left(x-\frac{N}{2}\right)\left(y-\overline{y}\right)+\beta \left(x-\frac{N}{2}\right)+\upsilon\big(y-\overline{y}\big)+\rho \nonumber
\end{equation}
by (\ref{eq:Poly}). This expression describes a hyperbola so long as both $|\alpha|$ and $|\beta\upsilon-\rho\alpha|$ are positive. By evaluating at the point of intersection, $\nabla(c_2\psi_{k,1}+c_2\psi_{2,2})|_\textbf{z}=\textbf{0}$. From Proposition \ref{prop:V}, the following bounds are then immediate:
\begin{equation}
    \big|\beta\big|\leq C\eta^{1-\epsilon}/N^{\frac{3}{2}}, \quad |\upsilon|\leq C\eta/N^{\frac{3}{2}}, \quad |\rho|\leq C\eta/N^{\frac{3}{2}}. \nonumber
\end{equation}
Meanwhile, the quantity $|\alpha|$ is comparable to $1/N^{\frac{3}{2}}$ by (\ref{eq:ybar}) and Lemma \ref{prop:coeff}. The final estimate needed is a lower bound on $\rho$. To establish this, consider the related quantity $|\rho-E(\textbf{z})|$, which we can simplify using (\ref{eq:v1}), (\ref{eq:v2}), and (\ref{eq:ybar}). 
\begin{equation}
    \left|\rho-E(\textbf{z})\right|=\frac{1}{2}\sin(\pi\overline{y})\left| \cos\left(\frac{k\pi}{2}\right)v_1(0)\sec\left(\frac{\mu_1N}{2}\right)+ \frac{kc_1}{2c_2}v_2(0)\sec\left(\frac{\mu_2N}{2}\right)\right|. \nonumber
\end{equation}
By (\ref{eq:close}), we can bound $|\cos(\mu_nN/2)|$ below by a constant for $n=1,2$, small enough $\eta$. In combination with (\ref{eq:hurray!}), this implies
\begin{equation}\label{eq:needsAremark}
    |\rho-E(\textbf{z})|=\frac{1}{2}\sin(\pi\overline{y})\left| v_1(0)-\frac{kc_1}{2c_2}v_2(0)\right|
\end{equation}
up to an error of size $C\eta^2/N^{\frac{5}{2}}$. Proposition \ref{prop:VJ} gives us an estimate for each $v_n(0)$ and, in combination with (\ref{eq:Ass2}), we can conclude that $$\big|2c_2v_1(0)-kc_1v_2(0)\big|\geq c\eta/N^{\frac{3}{2}}.$$ Therefore, because $c_2\sim 1$, the difference $|\rho-E(\textbf{z})|$ is bounded below by a multiple of $\eta/N^{\frac{3}{2}}$ for small enough $\eta$. Further, because $D_r(\textbf{z})\subset \Omega\cap\{x\in\left[\frac{N}{4}, \frac{3N}{4}\right]\}$, we can use the exponential error bound from item $(iii)$ of Proposition \ref{prop:V} to write
\begin{equation}
    |\rho|\geq c\eta/N^{\frac{3}{2}}-C\eta e^{-cN} \quad \textrm{which implies} \quad |\rho|\geq c\eta/N^{\frac{3}{2}} \nonumber
\end{equation}
and therefore $\rho\sim \eta/N^{\frac{3}{2}}$ for sufficiently large $N$ (i.e. sufficiently large $k$). With the necessary bounds in place, it is now apparent that $P(x,y)=0$ indeed describes a hyperbola. The center occurs at $(x_c, y_c)=\textbf{z}-\frac{1}{\alpha}\big(\upsilon, \beta\big)$ or
\begin{equation}
    x_c-\frac{N}{2}=-\frac{\upsilon}{\alpha}, \quad \textrm{and} \quad y_c-\overline{y}=-\frac{\beta}{\alpha} \nonumber
\end{equation}
giving us the bounds in item ($i$) of Proposition \ref{prop:PolyStuff}. We can rewrite the level set $P(x,y)=0$ as 
\begin{equation}
    (x-x_c)(y-y_c)=D, \quad \textrm{with} \quad D\eqdef \frac{\beta\upsilon-\rho\alpha}{\alpha^2} \nonumber
\end{equation}
which is a hyperbola where the principal axis makes an angle of $\theta=\pm \frac{\pi}{4}$ with the positive $x$-axis. Namely, the sign of this angle matches the sign of $D$. With this knowledge, we can rotate by $\theta$ to obtain the isometry
\begin{equation}
    \{\textbf{x}:P(\textbf{x})=0\}\cong\{(x, y):x^2-y^2=2D\}
\end{equation}
which allows us to write the distance separating the vertices of our hyperbola in terms of the coefficients, $\sqrt{8|D|}$. To bound this quantity, we compare it to the value $\rho= v(\textbf{z})$. 
\begin{equation}
    \left| |D|-\left|\frac{\rho}{\alpha}\right|\right|\leq\left|D+\frac{\rho}{\alpha}\right|=\left|\frac{\beta\upsilon}{\alpha^2}\right|\leq C\eta^{2-\epsilon}. \nonumber
\end{equation}
Fortunately, $\rho/\alpha\sim \eta$, which implies that $D\sim \eta$. Lastly, note that by construction of $\alpha$, $\sgn(D)=\sgn(\rho)$ and therefore the orientation of the hyperbola is determined by the sign of $v(\textbf{z})$. 
\end{proof}

\begin{rem}\label{rem:BananaDoubleSplit}
    The key to bounding (\ref{eq:needsAremark}) lies in Proposition \ref{prop:VJ} and the condition $\Lambda_\phi\neq 0$. The size of the nodal splitting described in item $(ii)$ of Proposition \ref{prop:PolyStuff} thus depends explicitly on the value of $\Lambda_\phi$.
\end{rem}

\quad Now that Propositions \ref{prop:propRemainder} and \ref{prop:PolyStuff} are proved, we aim to translate the properties of the zero set of $P(x,y)$ to the nodal set of $v(x,y)$. Fix $k$ in the range $k\geq k_0$. Via an isometry, there exist coordinates $(x', y')$ such that
\begin{equation}
    v(x', y')= 2D-(x')^2+(y')^2+R'(x', y') \nonumber
\end{equation}
with $R'$ satisfying the estimate in Proposition \ref{prop:propRemainder} in $(x',y')$ coordinates. Without loss of generality, suppose that $D>0$. We hereafter work in these coordinates, so we drop the primes. To better understand the nodal set of $v$, we write the zero set of the model polynomial locally as a graph $x(y)$ and study the perturbation $w(y)$ to the full nodal set. For $R'=0$, the right branch of the resulting level set can be written as the graph
\begin{equation} \label{eq:x(y)}
    x(y)=\sqrt{2D+y^2} \quad \textrm{with} \quad x(y)\geq C_1\eta^{\frac{1}{2}}
\end{equation}
where $C_1$ is some positive constant. If we denote $\tilde{x}(y)=x(y)+w(y)$ as the corresponding branch of the nodal set of $v$, then by (\ref{eq:x(y)}), the set $v(\tilde{x}, y)=0$ can be alternatively written
\begin{equation}\label{eq:W}
    (2D-x^2+y^2)-w^2-2xw+R'(x+w, y)=0 \quad \textrm{or equivalently} \quad w^2+2xw=R'(x+w, y).
\end{equation}
Our goal is to obtain bounds on $|w(y)|$. First we show that for each $y$ satisfying $|(x(y), y)|\leq \eta^{\frac{1}{2}-\frac{2}{3}\epsilon}$, it holds that $|(\tilde{x}(y), y)|\leq 2\eta^{\frac{1}{2}-\frac{2}{3}\epsilon}$. To accomplish this, fix $y_0$ such that $|(x(y_0), y_0)|\leq \eta^{\frac{1}{2}-\frac{2}{3}\epsilon}$ and denote
\begin{equation}\label{eq:f(y,w)}
    g_{y_0}(w)=w^2+2xw-R'(x+w, y_0).
\end{equation}
Then, because $x(y)>0$,
\begin{equation}
    g_{y_0}\big(\eta^{\frac{1}{2}-\frac{2}{3}\epsilon}\big)=\eta^{1-\frac{4}{3}\epsilon}+2\eta^{\frac{1}{2}-\frac{2}{3}\epsilon}x-R'\big(x+\eta^{\frac{1}{2}-\frac{2}{3}\epsilon}, y_0\big) \geq \eta^{1-\frac{4}{3}\epsilon}-\sup_{(x,y)\in D_r(\textbf{0})} \left|R'(x,y)\right|\geq \eta^{1-\frac{4}{3}\epsilon}-C\eta^{\frac{3}{2}-2\epsilon} \nonumber
\end{equation}
which is positive for sufficiently small $\eta$. Here we have used the fact that $|x(y_0)|<\eta^{\frac{1}{2}-\frac{2}{3}\epsilon}$ so $$|x(y_0)+\eta^{\frac{1}{2}-\frac{2}{3}\epsilon}|<2\eta^{\frac{1}{2}-\frac{2}{3}\epsilon}=r$$ and Proposition \ref{prop:propRemainder} controls the remainder term. Next, 
\begin{equation}
    g_{y_0}\left(-\frac{1}{2}C_1\eta^{\frac{1}{2}}\right)=\frac{1}{4}C_1^2\eta-C_1x\eta^{\frac{1}{2}}-R'\left(x-\frac{1}{2}C_1\eta^{\frac{1}{2}}, y_0\right)\leq -\frac{3}{4}C_1^2\eta +\sup_{(x,y)\in D_r(\textbf{0})}\left|R'(x,y)\right| \nonumber
\end{equation}
because $x(y_0)>C_1\eta^{\frac{1}{2}}$. By the bounds on $|R|$, this quantity is negative for small enough $\eta$. By the intermediate value theorem, there exists some $w\in \big(-\frac{1}{2}C_1\eta^{\frac{1}{2}}, \eta^{\frac{1}{2}-\frac{2}{3}\epsilon}\big)$ such that $g_{y_0}(w)=0$, i.e. for fixed $y_0$, there exists a value $w(y_0)$ satisfying (\ref{eq:W}). Therefore,
\begin{equation}
    \tilde{x}(y)=x(y)+w(y)> C_1\eta^{\frac{1}{2}}-\frac{1}{2}C_1\eta^{\frac{1}{2}}=\frac{1}{2}C_1\eta^{\frac{1}{2}} \nonumber
\end{equation}
and $|(\tilde{x}(y),y)|< 2\eta^{\frac{1}{2}-\frac{2}{3}\epsilon}$. Plugging the expression (\ref{eq:x(y)}) for $x(y)$ into the equation $v=0$ yields
\begin{equation}
    0=(x^2-y^2)-\tilde{x}^2+y^2+R'(\tilde{x},y) \quad \textrm{or equivalently} \quad \tilde{x}^2-x^2=R'(\tilde{x}, y). \nonumber
\end{equation}
Then,
\begin{equation}
    w(\tilde{x}+x)=(\tilde{x}-x)(\tilde{x}+x)=R'(\tilde{x},y)\quad \textrm{which implies} \quad |w|=\bigg|\frac{R'(\tilde{x}, y)}{\tilde{x}+x}\bigg|\leq C\eta^{1-2\epsilon}. \nonumber
\end{equation}
Because $\epsilon<\frac{1}{4}$, this bound is sufficient in showing that the perturbation $w(y)$ is too small to close the space between $x(y)$ and the other hyperbola branch. Alongside item $(i)$ of Proposition \ref{prop:PolyStuff}, this proves item $(i)$ of Theorem \ref{thm:VevenPt}.

\quad Next we study the regularity of $\tilde{x}$. As constructed, $\tilde{x}'(y)=x'(y)+w'(y)$, so we first bound the derivative of the model polynomial's level set and then the perturbation. From (\ref{eq:x(y)}), we can compute explicitly
\begin{equation}
    x'(y)=\frac{y}{\sqrt{2D+y^2}} \implies |x'(y)|\leq 1.
\end{equation}
To bound $|w'(y)|$, we employ the Implicit Function Theorem. If we redefine the function from (\ref{eq:f(y,w)}) in terms of $(y,w)$ as follows:
\begin{equation}
    g(y,w)=w^2+2w\sqrt{2D+y^2}-R'\left(w+\sqrt{2D+y^2}, y\right) \nonumber
\end{equation}
then so long as $\p_wg\neq 0$, the local graph $w(y)$ has a derivative given by
\begin{equation}\label{eq:coolname}
    |w'(y)|=\left|\frac{\p_yg(y, w(y))}{\p_wg(y, w(y))}\right|. 
\end{equation}
Let $(y_0, w_0)$ be a point in the set $g^{-1}(0)$. Then,
\begin{align}
    \p_wg(y_0,w_0)=2\left(w_0+\sqrt{2D+y_0^2}\right)-\p_xR'\left(\tilde{x}(y_0), y_0\right) \nonumber
    \\
    \left|\p_wg(y_0, w_0)\right|\geq 2\left|2D+y_0^2\right|^{\frac{1}{2}}-2|w_0|-\left|\p_xR'(\tilde{x}(y_0), y_0)\right|\geq 2\sqrt{2D}-2|w_0|-\left|\p_xR'(\tilde{x}(y_0),y_0)\right|. \nonumber
\end{align}
Using the fact that $D\sim\eta$, $|w|\leq C\eta^{1-2\epsilon}$, and $|\nabla R'|\leq C\eta^{1-\frac{4}{3}\epsilon}$ for $\epsilon<\frac{1}{4}$, we can bound $|\p_wg(y_0, w_0)|\geq c\eta^{\frac{1}{2}}$ for small enough $\eta$. This implies that (\ref{eq:coolname}) holds and by bounding
\begin{align}
    \p_yg(y_0,w_0)=\frac{y_0}{\sqrt{2D+y_0^2}}\left(2w_0-\p_xR'(\tilde{x}(y_0), y_0)\right)-\p_yR'(\tilde{x}(y_0), y_0) \nonumber
    \\
    \left|\p_y g(y_0, w_0)\right|\leq C\left(|w_0|+\left|\nabla R'(\tilde{x}(y_0), y_0)\right|\right)\leq C\eta^{1-2\epsilon}, \nonumber
\end{align}
we conclude that $|\tilde{x}'(y)|$ is bounded in $D_r(\textbf{z})$. This proves item $(i)$ of Theorem \ref{thm:VevenDer}.

\section{Description of the Nodal Set Away from the Center (k even)}\label{sec:awayEven}

\quad In this section, we study the nodal set of $v$ away from the center and away from the left and right boundaries, which will require a more delicate analysis in Section \ref{sec:bdry}. For our purposes, let $\widetilde{\Omega}=\widetilde{\Omega}_\phi(\eta, N)$ denote the region in $\Omega_\phi(\eta, N)$ outside a square with side length $r=2\eta^{\frac{1}{2}-\frac{2}{3}\epsilon}$ centered at $\textbf{z}=\big(\frac{N}{2},\overline{y}\big)$ and with $\eta^{\frac{1}{3}(1-\epsilon)}<x<N-\eta^{\frac{1}{3}(1-\epsilon)}$. By the previous section, the nodal set of $v$ exits the center square along four curves, one out of each side of this square. We aim to describe how these curves continue as they approach the boundaries. 
\begin{figure}[H]
    \centering
    \includegraphics[scale=0.5]{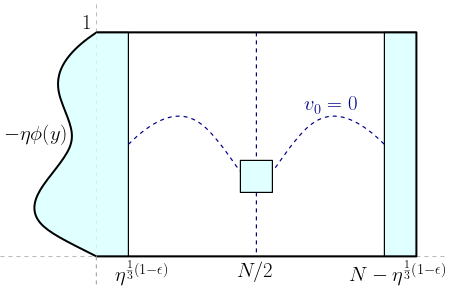}
    \caption{The Subdomain $\widetilde{\Omega}$.}
    \label{fig:tildeOmega}
\end{figure}

\begin{rem}
    The domain $\widetilde{\Omega}$ is designed both to complement the analysis in Section \ref{sec:centerEven} and to optimize the estimates in Theorem \ref{thm:VevenPt}. In particular, we expect our estimates to suffer a loss in $\eta$ in regions close to the left and right boundaries of $\Omega$. Taking $\eta^{\frac{1}{3}(1-\epsilon)}<x<N-\eta^{\frac{1}{3}(1-\epsilon)}$ in $\widetilde{\Omega}$ minimizes this loss. See Remark \ref{rem:lineSplit} for details.
\end{rem}

\quad As in (\ref{eq:onRect}), we let $v_0=\lim_{\eta\to 0}v$ so that by (\ref{eq:fv}), the level set of the eigenfunction on the unperturbed rectangle $R$ can be written $$v_0^{-1}(0)=\left\{ y=f_v(x)\right\} \cup \left\{x=\frac{N}{2}\right\},$$ as pictured in Figure \ref{fig:tildeOmega}. By Proposition \ref{prop:V},
\begin{equation}
    \left|v(x,y)-v_0(x,y)\right|\leq C\eta^{1-\epsilon}/ N^{\frac{3}{2}} \nonumber
\end{equation}
so if $(x_0, y_0)\in v^{-1}(0)\cap R$, then $\big| v_0(x_0, y_0)\big|\leq C\eta^{1-\epsilon}/N^{\frac{3}{2}}$. Since we have an explicit expression for $v_0$ from Section \ref{sec:Hadamard}, we can estimate the location of such points $(x_0, y_0)$. We decompose $\widetilde{\Omega}$ into different regions and perform our analysis locally. For even $k\geq 4$, the nodal set for $v$ looks very much like that of $v_0$ in $\widetilde{\Omega}$, as made precise in the following statements.

\begin{prop}\label{prop:horizontal}
Under the same conditions as in Theorem \ref{prop:n=0}, there exists a constant $C_0>0$, dependent only on $\Lambda_\phi$, such that for all $k\geq 4$ even, there exists $\eta_0(k,\epsilon)>0$ such that for $0<\eta\leq \eta_0(k,\epsilon)$, the nodal set of $v$ has the following property: If $(x_0, y_0)\in v^{-1}(0)$ is a point in $\widetilde{\Omega}_\phi(\eta, N)$ with $\left|y_0-\frac{1}{2}\right|<\frac{1}{2}\max\{\overline{y}, 1-\overline{y}\}$ and $\left|x_0-\frac{N}{2}\right|\geq 2\eta^{\frac{1}{2}-\frac{2}{3}\epsilon}$, then there exists an open neighborhood $U_1$ containing $x_0$ and a function $h_1(x)$ such that $v^{-1}(0)\cap U_1=\{(x, h_1(x))\}$. Further,
\begin{enumerate}
    \item  If $0.1<\left|x_0-\frac{N}{2}\right|<\frac{N}{2}-0.1$, then for all $x\in U_1$,
\begin{equation}
    \left|h_1(x)-f_v(x)\right|+\left|h_1'(x)-f_v'(x)\right|\leq \frac{C_0\eta^{1-\epsilon}}{N\left|\sin\left(\frac{2\pi}{N}x\right)\right|}. \nonumber
\end{equation}
\item If $2\eta^{\frac{1}{2}-\frac{2}{3}\epsilon}\leq \left|x_0-\frac{N}{2}\right|\leq 0.1$, then for all $x\in U_1$,
\begin{equation}
    \left|h_1(x)-f_v(x)\right|\leq \frac{C_0\eta^{1-\epsilon}}{\left|x-\frac{N}{2}\right|}, \quad \quad \left|h_1'(x)-f_v'(x)\right|\leq \frac{C_0\eta^{1-\epsilon}}{\left|x-\frac{N}{2}\right|^2}. \nonumber
\end{equation}
\item If $\eta^{\frac{1}{3}(1-\epsilon)}\leq \left|x_0-tN\right|\leq 0.1$ with $t\in\{0,1\}$, then for all $x\in U_1$,
\begin{equation}
    \left|h_1(x)-f_v(x)\right|\leq \frac{C_0\eta^{1-\epsilon}}{\left|x-tN\right|}, \quad \quad \left|h_1'(x)-f_v'(x)\right|\leq \frac{C_0\eta^{1-\epsilon}}{\left|x-tN\right|^2}. \nonumber
\end{equation}
\end{enumerate}
\end{prop}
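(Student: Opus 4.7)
The plan is to apply the Implicit Function Theorem to the equation $v(x,y)=0$ at the point $(x_0,y_0)$, using that by Proposition \ref{prop:V} the eigenfunction $v$ is $C^1$-close to $v_0$, and that the nodal set of $v_0$ is the explicit graph $\{y=f(x)\}$ away from $x=\frac{N}{2}$ (Definition \ref{def:f}). The whole argument is driven by a pointwise lower bound on $|\partial_y v_0|$ restricted to its own nodal graph, which in turn controls both the existence of $h_1$ and the size of the perturbation $h_1-f$.

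First I would compute $\partial_y v_0$ on its own nodal set. Using $\cos(\pi f(x))=-\frac{c_1\sin(k\pi x/N)}{2c_2\sin(2\pi x/N)}$ from Definition \ref{def:f} together with the double-angle identity $\cos(2\pi y)=2\cos^2(\pi y)-1$, direct simplification gives
\begin{equation}
    \partial_y v_0(x,f(x)) = -\frac{4\pi c_2}{\sqrt{N}}\sin\!\left(\frac{2\pi x}{N}\right)\sin^2(\pi f(x)). \nonumber
\end{equation}
By Lemma \ref{prop:coeff}, $|c_2|\sim 1$. The hypothesis $\bigl|y_0-\frac{1}{2}\bigr|<\frac{1}{2}\bigl(1-\min\{\overline{y},1-\overline{y}\}\bigr)$ keeps $y_0$, and after a bootstrap using the estimate below also $f(x)$ on $U_1$, bounded away from $0$ and $1$ by a margin determined by $c_0$ in Theorem \ref{prop:n=0}. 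Hence $|\partial_y v_0(x,f(x))|\geq c|\sin(2\pi x/N)|/\sqrt{N}$. Combining with the $C^1$-closeness $|v-v_0|+|\nabla v-\nabla v_0|\leq C\eta^{1-\epsilon}/N^{3/2}$ from Proposition \ref{prop:V}, the derivative $\partial_y v$ inherits the same lower bound on a neighborhood of the graph whenever $|\sin(2\pi x/N)|$ exceeds the perturbation size, so the Implicit Function Theorem yields the desired graph $h_1$ on some $U_1$.

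For the pointwise estimate I would apply the mean value theorem in $y$: since $v_0(x_0,f(x_0))=v(x_0,y_0)=0$,
\begin{equation}
    |y_0-f(x_0)|\cdot \inf|\partial_y v_0| \leq |v_0(x_0,y_0)-v(x_0,y_0)| \leq C\eta^{1-\epsilon}/N^{3/2}, \nonumber
\end{equation}
giving $|y_0-f(x_0)|\leq C\eta^{1-\epsilon}/(N|\sin(2\pi x_0/N)|)$. The three regimes then follow from the elementary expansions $|\sin(2\pi x/N)|\sim (2\pi/N)\bigl|x-\frac{N}{2}\bigr|$ as $x\to\frac{N}{2}$ and $|\sin(2\pi x/N)|\sim (2\pi/N)|x-tN|$ as $x\to tN$ for $t\in\{0,1\}$, which convert the denominator into $|x-\frac{N}{2}|$ or $|x-tN|$. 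For the derivative estimate, implicit differentiation of $v(x,h_1(x))=0$ and $v_0(x,f(x))=0$ gives
\begin{equation}
    h_1'(x)-f'(x) = \frac{(\partial_y v-\partial_y v_0)\partial_x v_0-(\partial_x v-\partial_x v_0)\partial_y v_0}{\partial_y v\cdot \partial_y v_0}. \nonumber
\end{equation}
Using $|\partial_x v_0|\leq C/N^{3/2}$ from the explicit formula together with the bounds above, the second term in the numerator dominates in case $(i)$ and produces $C\eta^{1-\epsilon}/(N|\sin(2\pi x/N)|)$; in cases $(ii)$ and $(iii)$ the first term dominates once $|\sin|$ is substituted by its linear expansion, yielding the squared-denominator behavior $C\eta^{1-\epsilon}/|x-\frac{N}{2}|^2$ or $C\eta^{1-\epsilon}/|x-tN|^2$.

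The main obstacle will be the transitional range where $|\sin(2\pi x/N)|$ becomes comparable to the perturbation error $\eta^{1-\epsilon}/N^{3/2}$: there one must check that the lower bound on $|\partial_y v|$ is still strong enough for the Implicit Function Theorem, and that $|h_1-f|$ stays smaller than the distance from $(x_0,f(x_0))$ to the vertical branch studied in Section \ref{sec:centerEven} or to the boundary, so that no two local nodal pieces can merge. The cutoff $|x_0-\frac{N}{2}|\geq 2\eta^{\frac{1}{2}-\frac{2}{3}\epsilon}$ in the hypothesis and the $\eta^{\frac{1}{3}(1-\epsilon)}$ buffer built into $\widetilde{\Omega}$ are precisely what make these competing smallness conditions consistent.
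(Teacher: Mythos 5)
Your overall architecture matches the paper's proof: use Proposition \ref{prop:V} for $C^1$-closeness of $v$ to $v_0$, lower-bound the $y$-derivative of $v_0$ along its nodal graph by a multiple of $N^{-1/2}\left|\sin\left(\frac{2\pi}{N}x\right)\right|$, invoke the implicit function theorem, and convert the three regimes via $\left|\sin\left(\frac{2\pi}{N}x\right)\right|\sim N^{-1}\left|x-tN\right|$. Your explicit formula $\p_yv_0(x,f(x))=-\frac{4\pi c_2}{\sqrt{N}}\sin\left(\frac{2\pi}{N}x\right)\sin^2(\pi f(x))$ is correct and is exactly the content of the paper's bound (\ref{eq:dyg}), and your accounting of which term dominates the numerator of the implicit-differentiation quotient in each regime reproduces the stated rates.

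However, one step as written would fail. You derive the pointwise bound from $|y_0-f(x_0)|\cdot\inf|\p_yv_0|\leq|v_0(x_0,y_0)|$, where the infimum runs over the vertical segment joining $(x_0,f(x_0))$ to $(x_0,y_0)$. Your lower bound on $|\p_yv_0|$ is established only \emph{at} $y=f(x_0)$; along the segment one has $\p_yv_0(x_0,\xi)=\frac{4\pi c_2}{\sqrt{N}}\sin\left(\frac{2\pi}{N}x_0\right)\left(\cos(2\pi\xi)-\cos(\pi f(x_0))\cos(\pi\xi)\right)$, and the bracket vanishes at interior points (for instance, if $f(x_0)=\frac{1}{2}$ it vanishes at $\xi=\frac{1}{4},\frac{3}{4}$, which can lie between $y_0$ and $f(x_0)$ under the stated hypothesis on $y_0$). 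So the infimum can be zero and the inequality becomes vacuous. The repair is the paper's factorization in (\ref{eq:key}): write $v_0(x,y)=\frac{4c_2}{\sqrt{N}}\sin(\pi y)\sin\left(\frac{2\pi}{N}x\right)\left(\cos(\pi y)-\cos(\pi f(x))\right)$ and apply the mean value theorem to $\cos$ alone, whose derivative $\pi\sin(\pi\xi)$ is bounded below because both $y_0$ and $f(x_0)$ are bounded away from $0$ and $1$ in this region. A second, smaller omission: your identity for $h_1'-f'$ evaluates all four partials at a single point, but $h_1'$ is computed at $(x,h_1(x))$ and $f'$ at $(x,f(x))$; the discrepancy contributes terms controlled by $\p_y^2v_0$ times $|h_1-f|$, which the paper includes and which do not alter the final rates.
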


\quad This result states that, away from the midline $x=\frac{N}{2}$, the nodal set of $v$ closely resembles the graph $\left(x,f_v(x)\right)$ for $f_v(x)$ as in (\ref{eq:fv}). The estimates worsen in regions closer to the line $x=\frac{N}{2}$ or the left or right boundary of $\Omega$; Remark \ref{rem:lineSplit} discusses our efforts to manage this loss.

\begin{proof}[Proof of Proposition \ref{prop:horizontal}.] Our first goal is to show that the nodal set for $v$ can be written locally as a graph of $x$ in $\widetilde{\Omega}$ with $\left|y-\frac{1}{2}\right|<\frac{1}{2}\max\{\overline{y}, 1-\overline{y}\}$ and $\left|x-\frac{N}{2}\right|\geq 2\eta^{\frac{1}{2}-\frac{2}{3}\epsilon}$. Throughout the proof, fix $t\in\left\{0,\frac{1}{2},1\right\}$. The analysis differs slightly between when $\left|x-tN\right|\leq 0.1$, and $0.1<\left|x-\frac{N}{2}\right|<\frac{N}{2}-0.1$; we will highlight where differences occur. By Proposition \ref{prop:V},
\begin{equation}\label{eq:key}
    \big|v(x,y)\big|\geq \big|v_0(x,y)\big|-C\eta^{1-\epsilon}N^{-\frac{3}{2}}\geq cN^{-\frac{1}{2}}\big|\sin\left(2\pi x/N\right)\big|\bigg|\frac{c_1}{2c_2}\frac{\sin(k\pi x/N)}{\sin(2\pi x/N)}+\cos(\pi y)\bigg|-C\eta^{1-\epsilon}N^{-\frac{3}{2}}. 
\end{equation}
If $(x_0,y_0)\in v^{-1}(0)$, (\ref{eq:fv}) implies
\begin{equation}\label{eq:cylinder}
    \big|y_0-f_v(x_0)\big|\leq C\eta^{1-\epsilon}N^{-1}\left|\sin(2\pi x_0/N)\right|^{-1}.
\end{equation}
With this bound in place, we next want to bound $\big|\p_y v(x_0,y_0)\big|$ below. To do this, we compare it to $\p_yv_0(x_0,f_v(x_0))$, which satisfies
\begin{align}\label{eq:dyg}
    \left|\p_yv_0(x, f_v(x))\right|\geq cN^{-\frac{1}{2}}\left|\sin\left(2\pi x/N\right)\right|.
\end{align}
Any improvement on this lower bound depends on how close $x$ is to $tN$. Observe that if $(x,y)\in\widetilde{\Omega}$ such that $\left|y-\frac{1}{2}\right|< \frac{1}{2}\max\{\overline{y}, 1-\overline{y}\}$, then there is a constant $c>0$ such that 
\begin{equation}
    \left|\sin\left(2\pi x/N\right)\right|\geq
    \begin{cases}
        cN^{-1}, & 0.1<\left|x-\frac{N}{2}\right|<\frac{N}{2}-0.1 \\
        cN^{-1}\left|x-tN\right|, & \left|x-tN\right|\leq 0.1.
    \end{cases} \label{eq:largenbhd}
\end{equation}
With these bounds in place, we turn to $\p_yv$. Again by Proposition \ref{prop:V},
\begin{equation}\label{eq:dyv}
    \left| \p_y v(x,y)\right|\geq \left|\p_y v_0(x,y)\right|-C\eta^{1-\epsilon}N^{-\frac{3}{2}}
\end{equation}
and because the derivative $\p_y^2v_0$ is bounded,
\begin{align}
    \left| \p_y v_0(x, y)-\p_yv_0(x, f_v(x))\right|\leq \left(CN^{-\frac{3}{2}}+CN^{-\frac{1}{2}}\left|\sin\big(2\pi x/N\big)\right|\right)\left|y-f_v(x)\right|.  \nonumber
\end{align}
For $(x_0,y_0)\in v^{-1}(0)$, this implies
\begin{align}
    \left| \p_y v_0(x_0, y_0)-\p_yv_0(x_0, f_v(x_0))\right|\leq C\eta^{1-\epsilon}N^{-\frac{5}{2}}\left|\sin\left(2\pi x_0/N\right)\right|^{-1}+C\eta^{1-\epsilon}N^{-\frac{3}{2}}. \nonumber
\end{align}
By (\ref{eq:dyg}) and (\ref{eq:largenbhd}), 
\begin{equation}\label{eq:smallnbhd}
    \left|\p_yv_0(x_0,y_0)\right|\geq 
    \begin{cases}
    cN^{-\frac{1}{2}}\left|\sin\left(2\pi x_0/N\right)\right|-C\eta^{1-\epsilon}N^{-\frac{3}{2}}, & 0.1<\left|x_0-\frac{N}{2}\right|<\frac{N}{2}-0.1 \\ 
    cN^{-\frac{3}{2}}\left|x_0-tN\right|-C\eta^{1-\epsilon}N^{-\frac{3}{2}}\left|x_0-tN\right|^{-1}-C\eta^{1-\epsilon}N^{-\frac{3}{2}}, &  \left|x_0-tN\right|\leq 0.1.
    \end{cases}
\end{equation}
Here the discussion in Remark \ref{rem:Radius} becomes partially relevant. Because $\eta^{1-\epsilon}\ll \eta^{1-\frac{4}{3}\epsilon}$ and $\eta^{1-\epsilon}\ll \eta^{\frac{2}{3}(1-\epsilon)}$, $\left|\p_y v_0(x_0,y_0)\right|$ is positive over the regions of each item in Proposition \ref{prop:horizontal}. Alongside (\ref{eq:dyv}), this means that $|\p_y v(x_0, y_0)|>0$ for small enough $\eta$. The implicit function theorem guarantees the existence of a neighborhood over which the level set can be written as a graph $(x, h_1(x))$. Combining (\ref{eq:cylinder}) with (\ref{eq:largenbhd}) gives the function bounds in Proposition \ref{prop:horizontal}. To obtain bounds on the derivatives, we write
\begin{equation}\label{eq:hImp}
    \left|h_1'(x)-f_v'(x)\right|=\left|\frac{\p_x v(x, h_1(x))}{\p_y v(x, h_1(x))}+f_v'(x)\right|=\left|\frac{\p_xv(x, h_1(x))+f_v'(x)\p_yv(x,h_1(x))}{\p_y v(x,h_1(x))}\right| 
\end{equation}
and use (\ref{eq:cylinder}) and (\ref{eq:smallnbhd}) estimate the denominator. To bound the numerator, we use the mean value theorem to compare the closely related quantity $\p_x \left(v_0(x, f_v(x))\right)$, which is identically zero over $[0,N]$. By Proposition \ref{prop:V}, (\ref{eq:cylinder}), and the bound on $|f_v'(x)|$, 
\begin{equation}
    \left|\p_xv(x_0, y_0)+f_v'(x_0)\p_yv(x_0, y_0)\right|\leq C\eta^{1-\epsilon}N^{-\frac{3}{2}}+C\eta^{1-\epsilon}N^{-\frac{5}{2}}\left|\sin(2\pi x_0/N)\right|^{-1}. \nonumber
\end{equation}
Combining these bounds for the numerator and denominator of (\ref{eq:hImp}) along with (\ref{eq:largenbhd}) and (\ref{eq:smallnbhd}), we have the desired estimates for the derivative differences.
\end{proof}

\quad Next we study the nodal set both near the top and bottom boundary components of $\Omega$ and near the midline $x=\frac{N}{2}$. The behavior near the boundary components is delicate; we present estimates on the structure of $v^{-1}(0)$ in Proposition \ref{prop:vertical} but defer the presentation of an improved regularity estimate until Proposition \ref{prop:bottomBdry}. In combination, Propositions \ref{prop:horizontal} and \ref{prop:vertical} completely describe the nodal set of $v$ in $\widetilde{\Omega}$. 

\begin{prop} \label{prop:vertical}
Under the same conditions as in Theorem \ref{prop:n=0}, there exists a constant $C_0>0$, dependent only on $\Lambda_\phi$, such that for all $k\geq 4$ even, there exists $\eta_0(k,\epsilon)>0$ such that for all $0<\eta\leq \eta_0(k,\epsilon)$, the nodal set of $v$ has the following property: If $(x_0, y_0)\in v^{-1}(0)$ is a point in $\widetilde{\Omega}_\phi(\eta, N)$ with $\left|x_0-\frac{N}{2}\right|\leq 2\eta^{\frac{1}{2}-\frac{2}{3}\epsilon}$ or $\left|y_0-\frac{1}{2}\right|\geq \frac{1}{2}\max\{\overline{y}, 1-\overline{y}\}$, then there exists an open neighborhood $U_2$ containing $y_0$ and a function $h_2(y)$ such that $v^{-1}(0)\cap U_2=\{(h_2(y),y)\}$. For all $y\in U_2$, 
\begin{equation}
    \left|h_2(y)-\frac{N}{2}\right|\leq \frac{C_0\eta^{1-\epsilon}}{|y-\overline{y}|}, \quad \quad \left|h_2'(y)\right|\leq\frac{C_0\eta^{1-\epsilon}}{|y-\overline{y}|^2}. \nonumber
\end{equation}
\end{prop}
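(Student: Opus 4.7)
The plan is to mirror the proof of Proposition \ref{prop:horizontal} with the roles of $x$ and $y$ interchanged, exploiting the fact that for even $k$ the entire vertical line $\{x=N/2\}$ lies in the nodal set of $v_0$. Writing $u=x-N/2$ and using $\sin(k\pi x/N)=\cos(k\pi/2)\sin(k\pi u/N)$ together with $\sin(2\pi x/N)=-\sin(2\pi u/N)$, I factor
\begin{equation}
v_0(x,y)=\frac{2}{\sqrt{N}}\sin(\pi y)\bigl[c_1\cos(k\pi/2)\sin(k\pi u/N)-2c_2\sin(2\pi u/N)\cos(\pi y)\bigr]. \nonumber
\end{equation}
Taylor expanding in $u$ and applying the identity $\frac{kc_1\cos(k\pi/2)}{2}=2c_2\cos(\pi\overline{y})$ from Definition \ref{def:yBar} yields
\begin{equation}
v_0(x,y)=\frac{8\pi c_2\,u}{N^{3/2}}\sin(\pi y)\bigl(\cos(\pi\overline{y})-\cos(\pi y)\bigr)+O\bigl(u^3/N^{1/2}\bigr), \nonumber
\end{equation}
together with $\partial_x v_0(N/2,y)=\frac{8\pi c_2}{N^{3/2}}\sin(\pi y)(\cos(\pi\overline{y})-\cos(\pi y))$. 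Since Theorem \ref{prop:n=0} guarantees $\overline{y}\in[c_0,1-c_0]$, the product-to-sum identity gives $|\cos(\pi\overline{y})-\cos(\pi y)|\geq c|y-\overline{y}|$ uniformly on $[0,1]$ for some constant $c>0$.

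Combining Proposition \ref{prop:V} with this factorization, any $(x_0,y_0)\in v^{-1}(0)$ satisfies $|x_0-N/2|\,|\sin(\pi y_0)|\,|y_0-\overline{y}|\leq C\eta^{1-\epsilon}$. The hypotheses of the proposition combined with membership in $\widetilde{\Omega}$ force $y_0$ to stay a positive constant distance from $\{0,1\}$, so $|\sin(\pi y_0)|$ is bounded below by a constant and hence $|x_0-N/2|\leq C\eta^{1-\epsilon}/|y_0-\overline{y}|$. The required lower bound on $y_0(1-y_0)$ comes from the alternative factorization $v_0=\frac{2}{\sqrt{N}}\sin(\pi y)[c_1\sin(k\pi x/N)+2c_2\sin(2\pi x/N)\cos(\pi y)]$: Theorem \ref{prop:n=0}(i) ensures the horizontal branch $y=f(x)$ stays away from $\{y=0,1\}$, so for $y$ near $\{0,1\}$ the bracket is uniformly bounded below in absolute value, whence $|v_0|\geq c|\sin(\pi y)|$ in such a region and Proposition \ref{prop:V} then precludes nodal points too close to the horizontal boundaries. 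The analogous computation applied to $\partial_x v_0(N/2,y_0)$, combined with a mean value estimate to pass from $(N/2,y_0)$ to $(x_0,y_0)$ and the gradient bound of Proposition \ref{prop:V}, gives $|\partial_x v(x_0,y_0)|\geq c|y_0-\overline{y}|/N^{3/2}$ for small enough $\eta$. The implicit function theorem then produces a neighborhood $U_2$ of $y_0$ and the local graph $x=h_2(y)$ satisfying the first estimate of the proposition.

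For the derivative estimate, differentiate $v(h_2(y),y)=0$ to obtain $h_2'(y)=-\partial_y v(h_2(y),y)/\partial_x v(h_2(y),y)$. The key observation is that $v_0(N/2,y)\equiv 0$ forces $\partial_y v_0(N/2,y)\equiv 0$; by the mean value theorem and Proposition \ref{prop:V},
\begin{equation}
|\partial_y v(h_2(y),y)|\leq \frac{C\bigl(|h_2(y)-N/2|+\eta^{1-\epsilon}\bigr)}{N^{3/2}}\leq \frac{C\eta^{1-\epsilon}}{N^{3/2}\,|y-\overline{y}|}. \nonumber
\end{equation}
Dividing by the lower bound $|\partial_x v|\geq c|y-\overline{y}|/N^{3/2}$ delivers $|h_2'(y)|\leq C_0\eta^{1-\epsilon}/|y-\overline{y}|^2$. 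I expect the main obstacle to be the uniform control $|\sin(\pi y_0)|\geq c$ across both Case A (near $x=N/2$) and Case B (near the top and bottom of $\Omega$), which relies on ruling out nodal points of $v$ whose $y$-coordinate is arbitrarily close to $\{0,1\}$; once the alternative factorization argument above establishes this, the remainder of the proof is a direct transcription of the Proposition \ref{prop:horizontal} argument with $x$ and $y$ swapped.
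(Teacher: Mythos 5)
Your argument is essentially the paper's argument in the first of the two regimes the proposition covers, namely $\left|y_0-\frac{1}{2}\right|<\frac{1}{2}\left(1-\min\{\overline{y},1-\overline{y}\}\right)$ and $\left|x_0-\frac{N}{2}\right|\leq 2\eta^{\frac12-\frac23\epsilon}$, where $|\sin(\pi y_0)|$ really is bounded below and the chain (i) localize $x_0$ via the factorization, (ii) bound $|\p_x v|$ below by $cN^{-\frac32}|y_0-\overline{y}|$, (iii) bound $|\p_y v|$ above using $\p_y v_0\left(\frac{N}{2},\cdot\right)\equiv 0$, goes through as you describe. The gap is in the second regime, $\left|y_0-\frac{1}{2}\right|\geq\frac{1}{2}\left(1-\min\{\overline{y},1-\overline{y}\}\right)$, i.e.\ near the top and bottom boundaries. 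Your claim that nodal points with $y_0$ close to $\{0,1\}$ can be "precluded" is false: for even $k$ the vertical branch of the nodal set terminates orthogonally on the horizontal boundary components (this is item $(iii)$ of Theorem \ref{thm:VevenDer}), so there are nodal points with $y_0$ arbitrarily close to $0$ and $1$. Your supporting argument also fails on its own terms: the bracket $c_1\sin(k\pi x/N)+2c_2\sin(2\pi x/N)\cos(\pi y)$ vanishes identically along $x=\frac{N}{2}$ when $k$ is even, so it is not uniformly bounded below near the vertical nodal curve, and the bound $|v_0|\geq c|\sin(\pi y)|$ does not hold there.

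Once nodal points near $y\in\{0,1\}$ are admitted, every estimate in your chain degenerates and must be rebalanced by compensating factors of $|y_0|$. First, $|\p_x v_0\left(\frac{N}{2},y\right)|$ itself carries a factor $\sim|\sin(\pi y)|$, so the correct lower bound is $|\p_x v(x_0,y_0)|\geq cN^{-\frac32}|y_0|$, not $cN^{-\frac32}|y_0-\overline{y}|$; your stated lower bound is simply wrong as $y_0\to 0$. Second, the naive upper bound $|\p_y v(x_0,y_0)|\leq C\eta^{1-\epsilon}N^{-\frac32}$ obtained from Proposition \ref{prop:V} is no longer enough: dividing it by $cN^{-\frac32}|y_0|$ gives $|h_2'(y)|\lesssim\eta^{1-\epsilon}/|y_0|$, which blows up instead of yielding the claimed $C_0\eta^{1-\epsilon}|y-\overline{y}|^{-2}$ (which is $O(\eta^{1-\epsilon})$ near $y=0$ since $\overline{y}\sim 1$ away from the boundary). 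The paper repairs this by exploiting $E(x,0)=E(x,1)=0$: a line integral gives the weighted error bound $|E(x,y)|\leq C\eta N^{-\frac32}|y|$ (which is also needed to localize $x_0$ near $\frac{N}{2}$ in this regime), and then $\p_y v$ is rewritten through the identity (\ref{eq:nextSection}) in terms of $E$, $\p_y E$, and $v_2$, with Taylor expansions of $E$ and $\p_y E$ about $y=0$ producing the crucial extra factor, $|\p_y v(x_0,y_0)|\leq C\eta^{1-\epsilon}N^{-\frac32}|y_0|$. Without this boundary-weighted analysis the derivative estimate cannot be closed near the horizontal boundaries, so your proposal as written does not prove the proposition in full.
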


\quad Proposition \ref{prop:vertical} states that away from $\left(\frac{N}{2},\overline{y}\right)$, the nodal set of $v$ in this region behaves like the vertical line $x=\frac{N}{2}$. However, the estimates worsen for points close to the disc $D_r(\textbf{z})$ studied in Section \ref{sec:centerEven}, where the nodal set resembles a hyperbola. 

\begin{proof}[Proof of Proposition \ref{prop:vertical}.] For this proof, we break $\widetilde{\Omega}$ into two subregions of interest:
\begin{enumerate}[label=(\alph*)]
    \item $\big|y-\frac{1}{2}\big|\leq \frac{1}{2}\max\{\overline{y}, 1-\overline{y}\}$ and $\big|x-\frac{N}{2}\big|\leq 2\eta^{\frac{1}{2}-\frac{2}{3}\epsilon}$
    \item $\big|y-\frac{1}{2}\big|> \frac{1}{2}\max\{\overline{y}, 1-\overline{y}\}$
\end{enumerate}
In region ($a$), we follow a similar argument as in the proof of the previous proposition. Equation (\ref{eq:key}) still holds, but this time we are close to $x=\frac{N}{2}$. Thus, for $(x_0,y_0)\in v^{-1}(0)$,
\begin{equation}\label{eq:QD1}
    \left|x_0-\frac{N}{2}\right|\leq C\eta^{1-\epsilon}\left|y_0-f(x_0)\right|^{-1} \leq C\eta^{1-\epsilon}\left|y_0-\overline{y}\right|^{-1}
\end{equation}
where the last inequality holds because $|y_0-f_v(x_0)|\geq c|y_0-\overline{y}|$ for small enough $\eta$. By Proposition \ref{prop:V} and the mean value theorem,
\begin{equation}
    \left|\p_xv(x_0, y_0)\right|\geq \left|\p_xv_0(x_0, y_0)\right|-C\eta^{1-\epsilon}N^{-\frac{3}{2}}\geq \left|\p_xv_0\left(\frac{N}{2}, y_0\right)\right|- CN^{-\frac{3}{2}}\left|x_0-\frac{N}{2}\right|-C\eta^{1-\epsilon}N^{-\frac{3}{2}}. \nonumber
\end{equation}
The expression for $v_0$ gives
\begin{equation}
    \left|\p_xv_0\left(\frac{N}{2}, y_0\right)\right|\geq cN^{-\frac{3}{2}}|y_0-\overline{y}| \nonumber
\end{equation}
and by (\ref{eq:QD1}), this lower bound translates to $\big|\p_xv(x_0,y_0)\big|$. The implicit function theorem allows us to write $v^{-1}(0)$ locally in this region as a graph $\big(h_2(y),y\big)$. A pointwise estimate follows from (\ref{eq:QD1}), but in order to estimate the derivative, we need an upper bound on $\p_yv\big(h_2(y),y\big)$. Because $\p_yv_0\left(\frac{N}{2}, y\right)=0$,
\begin{equation}
    \left|\p_yv(x_0, y_0)\right|\leq \left|\p_yv_0(x_0, y_0)\right|+C\eta^{1-\epsilon}N^{-\frac{3}{2}}\leq CN^{-\frac{3}{2}}\left|x_0-\frac{N}{2}\right|+C\eta^{1-\epsilon}N^{-\frac{3}{2}}\leq C\eta^{1-\epsilon}N^{-\frac{3}{2}}\left|y_0-\overline{y}\right|^{-1}. \nonumber
\end{equation}
By the implicit function theorem,
\begin{equation}
    \left|h_2'(y)\right|=\left|\frac{\p_yv(h_2(y), y)}{\p_x v(h_2(y),y)}\right|\leq C\eta^{1-\epsilon}\left|y-\overline{y}\right|^{-2} \nonumber
\end{equation}
establishing the estimate for this region. 

\quad In region ($b$), we use a line integral from the bottom or top boundaries to control the error. We focus on the component with $0< y<\frac{1}{2}\min\{\overline{y}, 1-\overline{y}\}$. Recall $E(x,0)=E(x,1)=0$ so by item $(iii)$ in Proposition \ref{prop:V},
\begin{equation}
    \left| E(x,y)-E(x,0)\right|\leq \int_0^y \left|\nabla E\right|dt\leq C\eta N^{-\frac{3}{2}}|y|. \nonumber
\end{equation}
We apply (\ref{eq:key}) as a lower bound for $|v|$, but this time $|\sin(\pi y)|$ is bounded below by $|y|$ rather than a constant
\begin{align}\label{eq:LowBdforV}
    \left|v(x,y)\right|\geq cN^{-\frac{1}{2}}|y|\left|\sin\left(2\pi x/N\right)\right|\left|\cos(\pi y)-\cos(\pi f(x))\right|-C\eta^{1-\epsilon}N^{-\frac{3}{2}}|y|.
\end{align}
Because $\left|f_v(x)-\frac{1}{2}\right|\leq \left|\overline{y}-\frac{1}{2}\right|$ and we are considering $y<\frac{1}{2}\min\{\overline{y},1-\overline{y}\}$, this implies that  $(x_0,y_0)\in v^{-1}(0)$ only if $\left|x_0-\frac{N}{2}\right|$ is bounded by a multiple of $\eta^{1-\epsilon}$. With this knowledge, we can bound the partial $\p_xv$ from below
\begin{align}
    \left|\p_xv(x_0,y_0)\right|\geq \left|\p_xv_0(x_0, y_0)\right|-C\eta^{1-\epsilon}N^{-\frac{3}{2}}|y_0|\geq \left|\p_xv_0\left(\frac{N}{2},y_0\right)\right|-CN^{-\frac{3}{2}}|y_0|\left|x_0-\frac{N}{2}\right|-C\eta^{1-\epsilon}N^{-\frac{3}{2}}|y_0| \nonumber
    \\
    \geq cN^{-\frac{3}{2}}|y_0|-C\eta^{1-\epsilon}N^{-\frac{3}{2}}|y_0| \nonumber
\end{align}
So $|\p_xv(x_0, y_0)|\geq cN^{-\frac{3}{2}}|y_0|$ for small enough $\eta$. The implicit function theorem then guarantees the existence of $h_2(y)$ as in Proposition \ref{prop:vertical}, but in order to bound the derivative $h_2'(y)$, we need to likewise bound $|\p_yv(x_0, y_0)|$ in terms of $|y_0|$. To do this, we write $\p_yv$ in terms of the error $E$. If $(x_0, y_0)\in v^{-1}(0)$, then 
\begin{equation}
    E(x_0, y_0)=-v_1(x_0)\sin(\pi y_0)-v_2(x_0)\sin(2\pi y_0) \nonumber
\end{equation}
and therefore
\begin{align}\label{eq:nextSection}
    \p_yv(x_0, y_0)=-\pi\frac{\cos(\pi y_0)}{\sin(\pi y_0)}E(x_0, y_0)+\p_yE(x_0, y_0)+\pi v_2(x_0)\bigg(2\cos(2\pi y_0)-\frac{\sin(2\pi y_0)}{\sin(\pi y_0)}\cos(\pi y_0)\bigg). 
\end{align}
Because $x_0$ is close to $\frac{N}{2}$, we can use item $(ii)$ of Proposition \ref{prop:V} to bound $|v_2(x_0)|\leq C\eta^{1-\epsilon}N^{-\frac{3}{2}}$. The accompanying term in parenthesis is bounded by a multiple of $|y_0|$, so we restrict our focus to the first two terms. Because $E(x_0,0)=0$, we can Taylor expand the error terms in the following manner.
\begin{align}
    E(x_0, y_0)=y_0\p_y E(x_0, 0)+\frac{1}{2}y_0^2\p_y^2E(x_0, m_1), \nonumber
    \\
    \p_yE(x_0, y_0)=\p_y E(x_0, 0)+y_0\p_y^2 E(x_0, m_2) \nonumber
\end{align}
for some $m_1, m_2\in(0, y_0)$. By Proposition \ref{prop:V} item $(iii)$, we can then write
\begin{equation}
    \left|\p_yv(x_0, y_0)-\left(1-\pi\frac{\cos(\pi y_0)}{\sin(\pi y_0)}y_0\right)\p_y E(x_0, 0)\right|\leq C\eta^{1-\epsilon}N^{-\frac{3}{2}}|y_0|. \nonumber
\end{equation}
Fortunately, $\left|1-\pi\frac{\cos(\pi y_0)}{\sin(\pi y_0)}y_0\right|\leq C|y_0|$, so the bound $|\p_yE|\leq C\eta N^{-\frac{3}{2}}$ allows us to write $|\p_yv(x_0, y_0)|\leq C\eta^{1-\epsilon}N^{-\frac{3}{2}}|y_0|$. Thus,
\begin{equation}
    \big|h_2'(y)\big|=\bigg|\frac{\p_yv(h_2(y),y)}{\p_xv(h_2(y),y)}\bigg|\leq C\eta^{1-\epsilon}\leq C\eta^{1-\epsilon}\left|y-\overline{y}\right|^{-2}. \nonumber
\end{equation}
The same argument holds for the case when $1-y\leq \frac{1}{2}\min\{\overline{y},1-\overline{y}\}$. 
\end{proof}

\quad Proposition \ref{prop:horizontal} immediately implies items $(ii)$ and $(iii)$ of Theorem \ref{thm:VevenPt} away from the left and right boundaries of $\Omega$. The regularity statements of Propositions \ref{prop:horizontal} and \ref{prop:vertical} establish item $(ii)$ of Theorem \ref{thm:VevenDer} for $\left|x-tN\right|\geq \eta^{\frac{1}{3}(1-\epsilon)}$ with $t\in\left\{0,1\right\}$.

\section{Description of the Nodal Set Near the Boundary (k even)}\label{sec:bdry}

\quad In this section we describe the nodal set of $v$ near the boundary $\p\Omega_\phi(\eta, N)$ when $k\geq 4$ is even. Because the domain deformation affects only the left boundary component, we focus primarily on the nodal structure in the region 
\begin{equation}\label{eq:newRegion}
    \Omega_\phi(\eta, N)\cap\left\{x<2\eta^{\frac{1}{3}(1-\epsilon)}\right\} \quad \textrm{with} \quad \epsilon\in\left(0,\frac{1}{4}\right)
\end{equation}
before proving derivative estimates on the nodal set at each of the four points where it intersects the boundary. According to \cite{Helffer2009}, each boundary intersection occurs orthogonally; we justify this quantitatively. The region (\ref{eq:newRegion}) is chosen to provide some overlap with $\widetilde{\Omega}_\phi(\eta, N)$ as defined in Section \ref{sec:awayEven}. To study the behavior near the left and right boundaries, we employ an isometry $F$ featured in \cite{BCM, BGM}. For $(x_0,y_0)\in v^{-1}(0)$ with $x_0<2\eta^{\frac{1}{3}(1-\epsilon)}$, let $(x_1, y_1)\in\p\Omega$ be such that
\begin{equation}
    \inf_{(x,y)\in\p\Omega}\left|\left|(x_0,y_0)-(x,y)\right|\right|_{\ell^2}=\left|\left|(x_0,y_0)-(x_1, y_1)\right|\right|_{\ell^2}. \nonumber
\end{equation}
Then $F^{-1}$ is a rotation about $(x_0,y_0)$ such that $ F^{-1}(x_1,y_1)=(\:\cdot\:, y_0)$. We write $(\tilde{x}, \tilde{y})=F^{-1}(x,y)$ as the rotated coordinates and define $\tilde{v}(\tilde{x}, \tilde{y})=v\circ F(\tilde{x}, \tilde{y})$. For $x_0>N-2\eta^{\frac{1}{3}(1-\epsilon)}$, the rotation is trivial; it is for this reason that we focus on the left boundary. Let $\tilde{x}=-\eta\tilde{\phi}(\tilde{y})$ describe the rotated boundary so that the isometry $F$ guarantees $\tilde{\phi}'(y_0)=0$. In these coordinates, we provide the following eigenfunction decomposition for $\tilde{v}$.

\begin{lemma}\label{lem:7} 
Under the same conditions as in Theorem \ref{thm:VevenPt}, there exists a constant $C_0>0$, dependent only on $\Lambda_\phi$, such that for each $k\geq 3$, there exists $\eta_0(k,\epsilon)>0$ such that for $0<\eta\leq \eta_0(k,\epsilon)$, we can write $\tilde{v}(\tilde{x},\tilde{y})$ as 
\begin{equation}
    \tilde{v}(\tilde{x}, \tilde{y})=\tilde{v}_1(\tilde{x})\sin(\pi\tilde{y})+\tilde{v}_2(\tilde{x})\sin(2\pi\tilde{y})+\tilde{E}(\tilde{x}, \tilde{y}) \nonumber
\end{equation}
over all $(\tilde{x}, \tilde{y})\in\Omega$ satisfying $\tilde{x}<2\eta^{\frac{1}{3}(1-\epsilon)}$ and $\left|\tilde{y}-\frac{1}{2}\right|< \frac{1}{2}\max\{\overline{y}, 1-\overline{y}\}$. In particular, $\tilde{v}_n(-\eta\tilde{\phi}(y_0))=0$ with
\begin{align}
    \left|\tilde{v}_1^\ell(\tilde{x})-\frac{2c_1}{\sqrt{N}}\frac{d^\ell}{d\tilde{x}^\ell}\left(\sin\left(\frac{k\pi}{N}\tilde{x}\right)\right)\right|\leq C_0\eta^{1-\epsilon}/N^{\frac{3}{2}}, \nonumber
    \\
    \left|\tilde{v}_2^\ell(\tilde{x})-\frac{2c_2}{\sqrt{N}}\frac{d^\ell}{d\tilde{x}^\ell}\left(\sin\left(\frac{2\pi}{N}\tilde{x}\right)\right)\right|\leq C_0\eta^{1-\epsilon}/N^{\frac{3}{2}}, \nonumber
\end{align}
and
\begin{equation}
    \left|\nabla^\ell\tilde{E}(\tilde{x},\tilde{y})\right|\leq C_0\eta/N^{\frac{3}{2}} \nonumber
\end{equation}
for $0\leq\ell\leq 3$.
\end{lemma}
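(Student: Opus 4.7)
The strategy is to replay the Fourier analysis of Section \ref{sec:Prop1.1} in the rotated coordinates $(\tilde x, \tilde y)$. The key quantitative fact is that the rotation angle of $F$ is equal to the boundary slope $\arctan(\eta\phi'(y_1))$ at the nearest boundary point $(x_1,y_1)$, so $|\theta|\leq C\eta$ by Definition \ref{def:phi}. Since the Laplacian is rotation-invariant, $\tilde v = v\circ F$ still solves $(\Delta+\mu)\tilde v = 0$ with Dirichlet data on $F^{-1}(\partial\Omega)$, and the rotated domain differs from $\Omega$ only through $O(\eta)$ corrections. This smallness is what allows all the estimates of Section \ref{sec:Prop1.1} to be carried over to the tilted frame.

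First I would introduce the rotated Fourier coefficients
\begin{equation*}
    \tilde v_j(\tilde x) = 2\int_0^1 \tilde v(\tilde x, \tilde y)\sin(j\pi\tilde y)\,d\tilde y,
\end{equation*}
for $\tilde x$ such that the slab $\{\tilde x\}\times[0,1]$ lies in $F^{-1}(\Omega)$. Because $\tilde v$ satisfies the Helmholtz equation, each $\tilde v_j$ solves the ODE (\ref{eq:ODE}) with modified eigenvalues $\mu_j$ from (\ref{eq:modifEig}), and admits explicit representations analogous to (\ref{eq:v1})--(\ref{eq:v_k}). The claim $\tilde v_n(-\eta\tilde\phi(y_0))=0$ plays the role of the right-boundary condition $v_n(N)=0$ in Section \ref{sec:Prop1.1}: the rotation is chosen precisely so that $\tilde\phi'(y_0)=0$, making $\tilde x = -\eta\tilde\phi(y_0)$ the vertical tangent line to $F^{-1}(\partial\Omega)$ at $\tilde y = y_0$. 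I would verify this boundary condition by using the ODE-extension formulas together with the Dirichlet condition $\tilde v(-\eta\tilde\phi(\tilde y),\tilde y)=0$, noting that the error from replacing the exact rotated boundary by its tangent line is $O(\eta^2)$ and hence absorbable into the error bounds.

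Next I would transfer the estimates of Proposition \ref{prop:V}. The restriction $|\tilde y-\tfrac12|<\tfrac12(1-\min\{\overline y,1-\overline y\})$, which is a nontrivial window by Remark \ref{rem:heightBdd}, keeps us bounded away from the rotated top and bottom corners where the distortion is largest. The Lyapunov--Schmidt argument of Lemma \ref{lem:AD10} goes through because the rotated operator $\tilde L = F^*(\Delta_\Omega)F^{*-1}$ differs from $\Delta$ by $O(\eta)$ in operator norm (absorbing both the boundary perturbation and the rotation), so the analogue of (\ref{eq:MandD}) still yields $\|\tilde{\mathbf M}/\eta - (\tilde{\mathbf D}+\dot\mu I)\|\leq C\eta N^2$. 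The bootstrapping argument of Lemma \ref{lem:AllJ} for the error $\tilde E$ proceeds line by line: exponential decay (\ref{eq:coeffbound}) is intrinsic to the ODE, and the tilted Green's identity calculation for the higher coefficients differs from the original only by the rotation of the normal vector, which is again an $O(\eta)$ perturbation.

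The main obstacle is the careful accounting of two overlapping $O(\eta)$ corrections: the original boundary perturbation encoded by $\phi$ and the rotation of angle $\sim\eta$ introduced by $F$. One must verify that their combined effect does not degrade the sharp estimates in items $(i)$--$(iii)$, in particular that the effective matrix $\tilde{\mathbf D}$ retains the positive-determinant structure guaranteed by (\ref{eq:Ass2}); this amounts to showing that $\Lambda_{\tilde\phi}$ inherits the magnitude and sign of $\Lambda_\phi$ up to an $O(\eta)$ error. A related subtlety is ensuring that the ODE-based left boundary condition $\tilde v_n(-\eta\tilde\phi(y_0))=0$ is consistent with the right boundary condition $\tilde v_n(N')=0$ inherited from the far boundary, so that the extended Fourier modes remain close to $\sin(k\pi\tilde x/N)$ and $\sin(2\pi\tilde x/N)$ with the $C_0\eta^{1-\epsilon}/N^{3/2}$ error stated in the lemma.
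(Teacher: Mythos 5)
Your proposal takes a genuinely different route from the paper, and as written it has a gap that I don't think can be patched without essentially abandoning the route. You propose to define $\tilde{v}_j(\tilde{x})=2\int_0^1\tilde{v}(\tilde{x},\tilde{y})\sin(j\pi\tilde{y})\,d\tilde{y}$ and rerun the whole ODE/Lyapunov--Schmidt/bootstrap machinery of Section \ref{sec:Prop1.1} in the tilted frame. But that machinery rests on the exact Dirichlet conditions on the \emph{horizontal} sides: the identity $v_j''+(\mu-\pi^2j^2)v_j=0$ in (\ref{eq:ODE}) comes from integrating by parts twice in $y$ with no boundary terms because $v(x,0)=v(x,1)=0$. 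After rotating by $F^{-1}$, the top and bottom boundaries of $F^{-1}(\Omega)$ are tilted lines, not $\tilde{y}=0$ and $\tilde{y}=1$; the slab $\{\tilde{x}\}\times[0,1]$ is not contained in $F^{-1}(\Omega)$ and $\tilde{v}$ does not vanish at $\tilde{y}=0,1$. So your assertion that ``each $\tilde v_j$ solves the ODE (\ref{eq:ODE})'' is false, and the downstream claims (exponential decay of higher modes, the Green's identity computation, the right-boundary condition $\tilde{v}_n(N')=0$) all acquire uncontrolled boundary terms along the full length-$N$ horizontal sides. Relatedly, the exact vanishing $\tilde{v}_n(-\eta\tilde{\phi}(y_0))=0$ asserted in the lemma cannot hold for Fourier-integral-defined coefficients; you correctly sense this and propose to absorb an $O(\eta^2)$ discrepancy, but the lemma states the identity exactly.

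The paper's proof sidesteps all of this and is much lighter. It keeps the \emph{original} Fourier coefficients $v_1,v_2$ (already defined on $[-\eta,N]$ by the ODE extension) and simply sets $\tilde{v}_n(\tilde{x})\eqdef v_n(\tilde{x})-v_n(-\eta\tilde{\phi}(y_0))$, so that $\tilde{v}_n(-\eta\tilde{\phi}(y_0))=0$ holds by construction and $\tilde{v}_n^{(\ell)}=v_n^{(\ell)}$ for $\ell\geq 1$; the stated estimates on $\tilde{v}_n^{(\ell)}$ are then immediate from Proposition \ref{prop:V} together with $|v_n(-\eta\tilde{\phi}(y_0))|\leq C\eta/N^{3/2}$. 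The error $\tilde{E}$ is \emph{defined} as the remainder $\tilde{v}-\tilde{v}_1\sin(\pi\tilde{y})-\tilde{v}_2\sin(2\pi\tilde{y})$, so the decomposition is tautological; the only work is to bound $\tilde{E}(\tilde{x},\tilde{y})-E(x,y)$, which the paper does by expanding the difference, using the rotation bound $|\tilde{x}-x|+|\tilde{y}-y|\leq C\eta$ (your one correct key observation) together with the mean value theorem and $|v_n|\leq C/N^{3/2}$, and then invoking the known bounds on $E$ away from the corners (this is where the restriction $|\tilde{y}-\tfrac12|<\tfrac12(1-\min\{\overline{y},1-\overline{y}\})$ and item $(iii)$ of Proposition \ref{prop:V} enter). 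If you want to salvage your approach, you would need to quantify the failure of the separated boundary conditions in the tilted frame over the whole domain, which is considerably harder than the paper's two-line reduction to Proposition \ref{prop:V}.
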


\quad This result serves as a companion to Proposition \ref{prop:V} in the rotated coordinates $\left(\tilde{x},\tilde{y}\right)$, allowing us to compare the function $\tilde{v}$ to the expression for $v_0$ established in Section \ref{sec:Hadamard}. In particular, the estimates in Lemma \ref{lem:7} provide enough control to describe the nodal behavior of $v$ away from the top and bottom components of $\p\Omega$. At the end of this section, we prove that this is sufficient in describing the nodal set in (\ref{eq:newRegion}) as $v$ is nonzero near the corners of $\Omega$.

\begin{proof}[Proof of Lemma \ref{lem:7}.] Fix $(x_0,y_0)\in v^{-1}(0)$ with $x_0<2\eta^{\frac{1}{3}(1-\epsilon)}$ and let $\tilde{x}=-\eta\tilde{\phi}(\tilde{y})$ describe the rotated boundary. We first observe that by Definition \ref{def:phi}, the angle of rotation $\theta$ is bounded by a multiple of $\eta$. If we write
\begin{equation}
    \begin{pmatrix} \tilde{x}-x_0 \\ \tilde{y}-y_0 \end{pmatrix}=\begin{pmatrix} \cos\theta & -\sin\theta \\ \sin\theta & \cos\theta \end{pmatrix} \begin{pmatrix} x-x_0 \\ y-y_0\end{pmatrix} \nonumber
\end{equation}
then we can bound 
\begin{equation}\label{eq:7}
    |\tilde{x}-x|+|\tilde{y}-y|\leq C\eta.
\end{equation}
In the rotated coordinates $(\tilde{x}, \tilde{y})$, we define
\begin{equation}
    \tilde{v}_1(\tilde{x})= v_1(\tilde{x})-v_1(-\eta\tilde{\phi}(y_0)), \quad \quad \tilde{v}_2(\tilde{x})= v_2(\tilde{x})-v_2(-\eta\tilde{\phi}(y_0)) \nonumber
\end{equation}
so that $\tilde{v}_n(-\eta\tilde{\phi}(y_0))=0$ and $\tilde{v}_n^{(\ell)}(\tilde{x})=v_n^{(\ell)}(\tilde{x})$ for $n=1,2$. Proposition \ref{prop:V} provides the estimates in Lemma \ref{lem:7}. To determine the error bound, we observe that because $\tilde{v}=v\circ F$, 
\begin{align}
    \tilde{E}(\tilde{x}, \tilde{y})-E(x,y)=v_1(x)\sin(\pi y)+v_2(x)\sin(2\pi y)-\tilde{v}_1(\tilde{x})\sin(\pi\tilde{y})-\tilde{v}_2(\tilde{x})\sin(2\pi\tilde{y}) \nonumber
    \\
    =v_1(x)\big(\sin(\pi y)-\sin(\pi\tilde{y})\big)+v_2(x)\big(\sin(2\pi y)-\sin(2\pi\tilde{y})\big)+\big(v_1(x)-\tilde{v}_1(\tilde{x})\big)\sin(\pi\tilde{y})+\big(v_2(x)-\tilde{v}_2(\tilde{x})\big)\sin(2\pi\tilde{y}). \nonumber
\end{align}
Using (\ref{eq:7}) and the bound $|v_n(x)|\leq C/N^{3/2}$, the first two terms are bounded by $C\eta/N^{\frac{3}{2}}$. To bound the final two terms, 
\begin{equation}
    \big|v_n(x)-\tilde{v}_n(\tilde{x})\big|\leq \big|v_n(x)-v_n(\tilde{x})\big|+\big|v_n(-\eta\tilde{\phi}(y_0))\big|\leq C\eta/N^{\frac{3}{2}} \nonumber
\end{equation}
using the mean value theorem on the first piece and Proposition \ref{prop:V} on the second. This implies that
\begin{equation}
    \big|\tilde{E}(\tilde{x}, \tilde{y})-E(x,y)\big|\leq C\eta/N^{\frac{3}{2}} \nonumber
\end{equation}
and the bounds on $E(x,y)$ give us the final result. Differentiating these expressions and repeating the same argument provides the corresponding bounds on $\big|\nabla^\ell(\tilde{E}-E)\big|$. Because $\overline{y}$ is $\sim 1$ from the boundaries, we are away from the corners of $\Omega$ and can use item $(iii)$ of Proposition \ref{prop:V} to then get control over $|\nabla^\ell \tilde{E}|$. 
\end{proof}

\quad We expect the nodal set in the rotated coordinates to behave much like $\tilde{y}=\lim_{x\to 0^+}f_v(x)$ where $f_v(x)$ parametrizes the nodal set of $v_0$, featured in (\ref{eq:fv}). By (\ref{eq:ybar}), $\left|\overline{y}-\frac{1}{2}\right|=\left|f_v(0)-\frac{1}{2}\right|$ and therefore Lemma \ref{prop:coeff} implies that $f_v(0)=\lim_{x\to 0^+}f_v(x)$ is bounded away from the top and bottom boundaries. Under this condition, a precise description of the nodal set near the left boundary is provided through the following proposition.

\begin{prop}\label{prop:leftBdry}
    Fix $(x_0,y_0)\in v^{-1}(0)$ with $x_0<2\eta^{\frac{1}{3}(1-\epsilon)}$ and apply the transform $F^{-1}$ as described. Under the same conditions as in Theorem \ref{thm:VevenPt}, there exists a constant $C_0>0$, dependent only on $\Lambda_\phi$, such that for each $k\geq 4$ even, there exists $\eta_0(k,\epsilon)>0$ such that for all $0<\eta\leq\eta_0(k,\epsilon)$, the nodal set of $v$ can be written locally as a graph $\tilde{y}=h_3(\tilde{x})$ with
    \begin{equation}
        \left|h_3(\tilde{x})-f_v(0)\right|\leq C_0\eta^{\frac{2}{3}(1-\epsilon)}, \quad \quad \left|h_3'(\tilde{x})\right|\leq C_0\eta^{\frac{2}{3}(1-\epsilon)}\left|\tilde{x}+\eta\tilde{\phi}(y_0)\right| \nonumber
    \end{equation}
    where $f_v(0)=\frac{1}{\pi}\arccos\left(-\frac{kc_1}{4c_2}\right)$.
\end{prop} 

\quad Because $\tilde{\phi}'(y_0)=0$, the regularity estimate in Proposition \ref{prop:leftBdry} implies that the nodal set intersects the left boundary orthogonally. The right boundary can be handled in an identical argument, with the rotated coordinates $(\tilde{x},\tilde{y})=(x,y)$. Because $f_v'(0)=0$ per (\ref{eq:fv}), Proposition \ref{prop:leftBdry} also implies that 
\begin{equation}\label{eq:turntables}
    \left|h_3(\tilde{x})-f_v(\tilde{x})\right|\leq C\eta^{\frac{2}{3}(1-\epsilon)}+\left|f_v(\tilde{x})-f_v(0)\right|\leq C\eta^{\frac{2}{3}(1-\epsilon)}
\end{equation}
for $|\tilde{x}|\leq C\eta^{\frac{1}{3}(1-\epsilon)}$. By (\ref{eq:7}), this holds true in the non-rotated coordinates also and provides a description of the nodal curve approaching the left boundary. A similar analysis holds for bounding $\left|h_3'(x)-f_v'(x)\right|$ near the boundary.

\begin{rem}\label{rem:lineSplit}
    The results in Proposition \ref{prop:leftBdry} are dependent on the bound $x_0<2\eta^{\frac{1}{3}(1-\epsilon)}$, in the sense that increasing this bound worsens the estimates. The results in Section \ref{sec:awayEven} are likewise dependent on the construction of $\widetilde{\Omega}$. The choice to split our analysis across a line $x\sim \eta^{\frac{1}{3}(1-\epsilon)}$ minimizes the bound in item $(iii)$ of Theorem \ref{thm:VevenPt} by enforcing an equal maximal loss in $\eta$ over both sides of this line.
\end{rem}

\begin{proof}[Proof of Proposition \ref{prop:leftBdry}.] Fix $(x_0,y_0)$ in the nodal set of $v$ with $x_0<2\eta^{\frac{1}{3}(1-\epsilon)}$. Because $F$ is a rotation about this point and $\tilde{v}=v\circ F$, this point also belongs to the nodal set of $\tilde{v}$. For simplicity of notation, let $x_l=-\eta\tilde{\phi}(y_0)$ and $V(\tilde{x},\tilde{y})=\tilde{v}_1(\tilde{x})\sin(\pi\tilde{y})+\tilde{v}_2(\tilde{x})\sin(2\pi\tilde{y})$. By construction of the transformation, $\tilde{\phi}'(y_0)=0$ and $\tilde{v}(-\eta\tilde{\phi}(\tilde{y}),\tilde{y})=0$. If we evaluate this expression and its $\tilde{y}$-derivative at $y_0$, we have
\begin{equation}\label{eq:EEs}
    \tilde{E}(x_l,y_0)=\p_{\tilde{y}}\tilde{E}(x_l,y_0)=0.
\end{equation}
Lemma \ref{lem:7} then implies that $|\tilde{E}(x_0,y_0)|\leq C\eta N^{-\frac{3}{2}}|x_0-x_l|$. Further, because $V(x_l,\:\cdot\:)=0$, 
\begin{equation}
    \left|V(x_0,y_0)\right|=\left|\p_{\tilde{x}}V(m,y_0)\right|\left|x_0-x_l\right| \nonumber
\end{equation}
for some $|m|\leq C\eta^{\frac{1}{3}(1-\epsilon)}$. By Lemma \ref{lem:7},
\begin{equation}
    \left|\p_{\tilde{x}}V(m,y_0)\right|\geq cN^{-\frac{3}{2}}\left|\frac{kc_1}{4c_2}\frac{\cos(k\pi m/N)}{\cos(2\pi m/N)}+\cos(\pi y_0)\right|-C\eta^{1-\epsilon}N^{-\frac{3}{2}}\geq cN^{-\frac{3}{2}}\big|y_0-f_v(0)\big|-C\eta^{\frac{2}{3}(1-\epsilon)}N^{-\frac{3}{2}} \nonumber
\end{equation}
because $\left|\frac{\cos(k\pi m/N)}{\cos(2\pi m/N)}-1\right|\leq Cm^2$. This provides the lower bound
\begin{equation}
    0=\left|\tilde{v}(x_0,y_0)\right|\geq cN^{-\frac{3}{2}}\left|y_0-f_v(0)\right|\left|x_0-x_l\right|-C\eta^{\frac{2}{3}(1-\epsilon)}N^{-\frac{3}{2}}\left|x_0-x_l\right|, \nonumber
\end{equation}
which implies that $|y_0-f_v(0)|\leq C\eta^{\frac{2}{3}(1-\epsilon)}$. With control over $y_0$, we use the fact that $\p_{\tilde{y}}V(x_l,\:\cdot\:)=0$ to write
\begin{equation}
    \left|\p_{\tilde{y}}V(x_0,y_0)\right|=\left|\p_{\tilde{x}\tilde{y}}^2V(m,y_0)\right|\left|x_0-x_l\right| \nonumber
\end{equation}
for some $|m|\leq C\eta^{\frac{1}{3}(1-\epsilon)}$. Lemma \ref{lem:7} then implies
\begin{equation}\label{eq:LowerBd}
    \left|\p_{\tilde{y}}V(x_0,y_0)\right|\geq cN^{-\frac{3}{2}}\left|x_0-x_l\right|
\end{equation}
given $y_0$ close to $f_v(0)$ and $\eta$ small. By (\ref{eq:EEs}) and Lemma \ref{lem:7}, $\left|\p_{\tilde{y}}\tilde{E}(x_0,y_0)\right|\leq C\eta N^{-\frac{3}{2}}\left|x_0-x_l\right|$ and so the lower bound in (\ref{eq:LowerBd}) translates to $|\p_{\tilde{y}}\tilde{v}(x_0,y_0)|$. The implicit function theorem guarantees the existence of a neighborhood containing $x_0$ such that the nodal set of $\tilde{v}$ can be written as the graph $\tilde{y}=h_3(\tilde{x})$. To get the desired estimate on $h_3'(x_0)$, it suffices to prove
\begin{equation}\label{eq:Final}
    \left|\p_{\tilde{x}}\tilde{v}(x_0,y_0)\right|\leq C\eta^{\frac{2}{3}(1-\epsilon)}N^{-\frac{3}{2}}\left|x_0-x_l\right|^2.
\end{equation}
To accomplish this, we write
\begin{equation}\label{eq:3Terms}
    \p_{\tilde{x}}\tilde{v}(x_0,y_0)=\tilde{v}_1(x_0)\sin(\pi y_0)\left(\frac{\tilde{v}_1'(x_0)}{\tilde{v}_1(x_0)}-\frac{\tilde{v}_2'(x_0)}{\tilde{v}_2(x_0)}\right)+\p_{\tilde{x}}\tilde{E}(x_0,y_0)-\frac{\tilde{v}_2'(x_0)}{\tilde{v}_2(x_0)}\tilde{E}(x_0,y_0) 
\end{equation}
using the fact that
\begin{equation}
    \tilde{v}(x_0,y_0)=\tilde{v}_1(x_0)\sin(\pi y_0)+\tilde{v}_2(x_0)\sin(2\pi y_0)+\tilde{E}(x_0,y_0)=0. \nonumber
\end{equation}
For $n=1,2$, we can Taylor expand 
\begin{align}
    \tilde{v}_n(x_0)=\left(x_0-x_l\right)\tilde{v}_n'(x_l)+\frac{1}{2}\left(x_0-x_l\right)^2\tilde{v}_n''(x_l)+\frac{1}{6}\left(x_0-x_l\right)^3\tilde{v}_n'''(m_1^n) \nonumber
    \\
    \tilde{v}_n'(x_0)=\tilde{v}_n'(x_l)+\left(x_0-x_l\right)\tilde{v}_n''(x_l)+\frac{1}{2}\left(x_0-x_l\right)^2\tilde{v}_n'''(m_2^n) \nonumber
\end{align}
for some $|m_1^n|,|m_2^n|\leq C\eta^{\frac{1}{3}(1-\epsilon)}$. Combining these expressions,
\begin{equation}\label{eq:Wterms}
    \frac{\tilde{v}_n'(x_0)}{\tilde{v}_n(x_0)}=\frac{1}{x_0-x_l}+\frac{1}{2}\left(x_0-x_l\right)\frac{\tilde{v}_n''(x_l)}{\tilde{v}_n(x_0)}+\frac{1}{6}\left(x_0-x_l\right)^2\frac{\left(3\tilde{v}_n'''(m_2^n)-\tilde{v}_n'''(m_1^n)\right)}{\tilde{v}_n(x_0)}.
\end{equation}
By Lemma \ref{lem:7}, $|\tilde{v}_n(x_0)|\sim N^{-\frac{3}{2}}|x_0-x_l|$ and the last term above is bounded by a multiple of $\eta^{\frac{2}{3}(1-\epsilon)}|x_0-x_l|$. Thus, the first term in (\ref{eq:3Terms}) can be written
\begin{equation}\label{eq:Final3}
    \tilde{v}_1(x_0)\sin(\pi y_0)\left(\frac{\tilde{v}_1'(x_0)}{\tilde{v}_1(x_0)}-\frac{\tilde{v}_2'(x_0)}{\tilde{v}_2(x_0)}\right)=\frac{1}{2}\tilde{v}_1(x_0)\sin(\pi y_0)\left(x_0-x_l\right)\left(\frac{\tilde{v}_1''(x_l)}{\tilde{v}_1(x_0)}-\frac{\tilde{v}_2''(x_l)}{\tilde{v}_2(x_0)}\right)
\end{equation}
up to acceptable error per (\ref{eq:Final}). To bound the remaining terms, we expand the error
\begin{align}
    \tilde{E}(x_0,y_0)=\left(x_0-x_l\right)\p_{\tilde{x}}\tilde{E}(x_l,y_0)+\frac{1}{2}\left(x_0-x_l\right)^2\p_{\tilde{x}}^2\tilde{E}(x_l,y_0) +\frac{1}{6}\left(x_0-x_l\right)^3\p_{\tilde{x}}^3\tilde{E}(m_3,y_0) \nonumber
    \\
    \p_{\tilde{x}}\tilde{E}(x_0,y_0)=\p_{\tilde{x}}\tilde{E}(x_l,y_0)+\left(x_0-x_l\right)\p_{\tilde{x}}\tilde{E}(x_l,y_0)+\frac{1}{2}\left(x_0-x_l\right)^2\p_{\tilde{x}}^2\tilde{E}(m_4,y_0) \nonumber
\end{align}
for some $|m_3|,|m_4|\leq C\eta^{\frac{1}{3}(1-\epsilon)}$. By the error bounds in Lemma \ref{lem:7} and the expression for $n=2$ in (\ref{eq:Wterms}), the second and third terms in (\ref{eq:3Terms}) can be written as
\begin{equation}\label{eq:Final2}
    \p_{\tilde{x}}\tilde{E}(x_0,y_0)-\frac{\tilde{v}_2'(x_0)}{\tilde{v}_2(x_0)}\tilde{E}(x_0,y_0)=\frac{1}{2}(x_0-x_l)\p_{\tilde{x}}^2\tilde{E}(x_l,y_0)-\frac{1}{2}(x_0-x_l)^2\frac{\tilde{v}_2''(x_l)}{\tilde{v}_2(x_0)}\p_{\tilde{x}}\tilde{E}(x_l,y_0)
\end{equation}
up to acceptable error per (\ref{eq:Final}). Thus, if we set aside the well-behaved terms, (\ref{eq:3Terms}) reduces to the sum of (\ref{eq:Final3}) and (\ref{eq:Final2}). To tackle this sum, recall that $\tilde{v}(-\eta\tilde{\phi}(\tilde{y}),\tilde{y})=0$. If we differentiate twice in $\tilde{y}$ and evaluate at $y_0$, we get
\begin{equation}
    \p_{\tilde{y}}^2\tilde{v}(x_l,y_0)=\eta\tilde{\phi}''(y_0)\p_{\tilde{x}}\tilde{v}(x_l,y_0). \nonumber
\end{equation}
Because $\left(\Delta_{(\tilde{x},\tilde{y})}+\mu\right)\tilde{v}=0$ and $\tilde{v}(x_l,y_0)=0$, this means that
\begin{equation}
    \p_{\tilde{x}}^2\tilde{v}(x_l,y_0)=\tilde{v}_1''(x_l)\sin(\pi y_0)+\tilde{v}_2''(x_l)\sin(2\pi y_0)+\p_{\tilde{x}}^2\tilde{E}(x_l,y_0)=-\eta\tilde{\phi}''(y_0)\p_{\tilde{x}}\tilde{v}(x_l,y_0). \nonumber
\end{equation}
By rearranging this expression for $\p_{\tilde{x}}^2\tilde{E}$ and using the fact that $\tilde{v}(x_0,y_0)=0$,
\begin{equation}
    \p_{\tilde{x}}^2\tilde{E}(x_l,y_0)=-\eta\tilde{\phi}''(y_0)\p_{\tilde{x}}\tilde{v}(x_l,y_0)-\tilde{v}_1(x_0)\sin(\pi y_0)\left(\frac{\tilde{v}_1''(x_l)}{\tilde{v}_1(x_0)}-\frac{\tilde{v}_2''(x_l)}{\tilde{v}_2(x_0)}\right)+\frac{\tilde{v}_2''(x_l)}{\tilde{v}_2(x_0)}\tilde{E}(x_0,y_0). \nonumber
\end{equation}
If we substitute this expression into (\ref{eq:Final2}) and sum with (\ref{eq:Final3}), then we are left with
\begin{equation}
    \frac{1}{2}(x_0-x_l)\left(-\eta\tilde{\phi}''(y_0)\p_{\tilde{x}}\tilde{v}(x_0,y_0)\right)+\frac{1}{2}(x_0-x_l)\frac{\tilde{v}_2''(x_l)}{\tilde{v}_2(x_0)}\left(\tilde{E}(x_0,y_0)-(x_0-x_l)\p_{\tilde{x}}\tilde{E}(x_l,y_0)\right). \nonumber
\end{equation}
The first term can be bootstrapped in the inequality, and the last term is bounded by $C\eta^{\frac{2}{3}(1-\epsilon)}N^{-\frac{3}{2}}|x_0-x_l|^2$ by the Taylor expansion for $\tilde{E}$ and Lemma \ref{lem:7}. Thus, (\ref{eq:Final}) holds and $|h_3'(x_0)|\leq C\eta^{\frac{2}{3}(1-\epsilon)}|x_0-x_l|$. By taking the neighborhood of $x_0$ to be sufficiently small, we extend the bounds to nearby points $(\tilde{x},\tilde{y})\in \tilde{v}^{-1}(0)$.
\end{proof}

\quad Finally, we improve the regularity estimate in Proposition \ref{prop:vertical} to show that the nodal set of $v$ intersects the top and bottom boundaries orthogonally. For the sake of brevity, consider the following result near the lower boundary component.

\begin{prop}\label{prop:bottomBdry}
     Under the same conditions as in Theorem \ref{thm:VevenPt}, there exists a constant $C_0>0$, dependent only on $\Lambda_\phi$, such that for each $k\geq 4$ even, there exists $\eta_0(k,\epsilon)>0$ such that for all $0<\eta\leq\eta_0(k,\epsilon)$, the nodal set of $v$ with $y<\frac{1}{2}\min\{\overline{y},1-\overline{y}\}$ can be written as a graph $x=h_2(y)$ with
    \begin{equation}
        \left|h_2'(y)\right|\leq C_0\eta^{1-\epsilon}|y|. \nonumber
    \end{equation}
\end{prop}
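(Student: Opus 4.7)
The plan is to revisit the proof of Proposition \ref{prop:vertical} in its region $(b)$ and sharpen only the upper bound on $\left|\p_yv(x_0,y_0)\right|$ by one extra factor of $|y_0|$. The existence of the graph $x=h_2(y)$ and the lower bound $\left|\p_xv(x_0,y_0)\right|\geq cN^{-3/2}|y_0|$ are already in place from that proof; what remains is to improve the numerator in $h_2'(y)=-\p_yv/\p_xv$ from the current $O(\eta^{1-\epsilon}N^{-3/2}|y_0|)$ to $O(\eta^{1-\epsilon}N^{-3/2}y_0^2)$.

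The key observation is that for the error $E(x,y)=\sum_{j\geq 3}v_j(x)\sin(j\pi y)$ in (\ref{eq:decomposition}), every even-order $y$-derivative vanishes identically along $\{y=0\}$. This is immediate from the Fourier-sine representation, or equivalently from differentiating $E(x,0)\equiv 0$ in $x$ and using $(\Delta+\mu)E=0$ to get $\p_y^2E(x,0)=-\p_x^2E(x,0)-\mu E(x,0)=0$. Consequently, Taylor expanding $\p_y^2E$ at $y=0$ and invoking item $(iii)$ of Proposition \ref{prop:V} (the $\omega\sim 1$ case, since $\overline{y}$ is $\sim 1$ from the boundary), for any $|m|\leq |y_0|$ one has the sharpened estimate $\left|\p_y^2E(x_0,m)\right|\leq C\eta N^{-3/2}|y_0|$, which is one power of $|y_0|$ better than the uniform bound used in Proposition \ref{prop:vertical}.

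Substituting this into the identity
\[
\p_yv(x_0,y_0)=-\pi\tfrac{\cos(\pi y_0)}{\sin(\pi y_0)}E(x_0,y_0)+\p_yE(x_0,y_0)+\pi v_2(x_0)\bigl(2\cos(2\pi y_0)-\tfrac{\sin(2\pi y_0)}{\sin(\pi y_0)}\cos(\pi y_0)\bigr)
\]
from (\ref{eq:nextSection}) and repeating the Taylor expansions of $E$ and $\p_yE$ around $y=0$ used in Proposition \ref{prop:vertical}, each of the three contributions now picks up the extra factor of $|y_0|$. The prefactor $\bigl(1-\pi y_0\cot(\pi y_0)\bigr)$ multiplying $\p_yE(x_0,0)$ is $O(y_0^2)$ by the Taylor series of $\cot$; the two remainder terms $\tfrac{1}{2}y_0^2\cot(\pi y_0)\p_y^2E(x_0,m_1)$ and $y_0\p_y^2E(x_0,m_2)$ are now each $O(\eta N^{-3/2}y_0^2)$ by the improved $\p_y^2E$ bound together with $y_0^2\cot(\pi y_0)=O(|y_0|)$; and the trigonometric identity $2\cos(2\pi y)-\tfrac{\sin(2\pi y)}{\sin(\pi y)}\cos(\pi y)=-2\sin^2(\pi y)$, combined with $|v_2(x_0)|\leq C\eta^{1-\epsilon}N^{-3/2}$ (which follows from $|x_0-N/2|\leq C\eta^{1-\epsilon}$ established in Proposition \ref{prop:vertical}(b) and item $(ii)$ of Proposition \ref{prop:V}), gives $O(\eta^{1-\epsilon}N^{-3/2}y_0^2)$ for the third term.

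Summing, $\left|\p_yv(x_0,y_0)\right|\leq C\eta^{1-\epsilon}N^{-3/2}y_0^2$, and dividing by the lower bound on $\p_xv$ yields
\[
|h_2'(y)|=\left|\frac{\p_yv(h_2(y),y)}{\p_xv(h_2(y),y)}\right|\leq C_0\eta^{1-\epsilon}|y|,
\]
which is the stated estimate; the symmetric argument treats the upper boundary. The only obstacle of substance is locating the extra $|y_0|$: once the vanishing $\p_y^2E(x,0)=0$ is noticed, the rest is bookkeeping of Taylor remainders already appearing in the proof of Proposition \ref{prop:vertical}, with no new analytic input required.
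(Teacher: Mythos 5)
Your proposal is correct and follows essentially the same route as the paper: both hinge on the vanishing $E(x_0,0)=\p_y^2E(x_0,0)=0$ to push the Taylor expansions of $E$ and $\p_yE$ one order further, the estimate $\left|1-\pi y_0\cot(\pi y_0)\right|\leq C y_0^2$, and the bound $|v_2(x_0)|\leq C\eta^{1-\epsilon}N^{-\frac{3}{2}}$ for the third term of (\ref{eq:nextSection}). One small correction: the third-derivative bound on $E$ at points $m$ near $y=0$ cannot come from the $y\in[\omega,1-\omega]$ clause of Proposition \ref{prop:V} item $(iii)$ as you state, but it does follow from the exponential clause there, since region $(b)$ of Proposition \ref{prop:vertical} places $x_0$ within $C\eta^{1-\epsilon}$ of $\frac{N}{2}$ and hence in $\left[\frac{N}{4},\frac{3N}{4}\right]$.
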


\quad This establishes the orthogonality at the bottom boundary, and an analogous statement holds for the top boundary, allowing us to conclude item $(iii)$ of Theorem \ref{thm:VevenDer}.

\begin{proof}[Proof of Proposition \ref{prop:bottomBdry}.] From Proposition \ref{prop:vertical}, we have already established that the nodal set near the bottom boundary can be described by some graph $x=h_2(y)$ satisfying
\begin{equation}
    \left|h_2'(y_0)\right|\leq CN^{\frac{3}{2}}\left|\p_yv(x_0,y_0)\right||y_0|^{-1} \nonumber
\end{equation}
for $(x_0,y_0)\in v^{-1}(0)$. To improve this estimate, we return to the expression for $\p_yv(x_0,y_0)$ in (\ref{eq:nextSection}). The last term in that expression can in fact be bounded above by a multiple of $\eta^{1-\epsilon}N^{-\frac{3}{2}}|y_0|^2$, meaning that it suffices to show that
\begin{equation}\label{eq:Ess}
    \left|\pi\frac{\cos(\pi y_0)}{\sin(\pi y_0)}E(x_0, y_0)-\p_yE(x_0, y_0)\right|\leq C\eta^{1-\epsilon}N^{-\frac{3}{2}}|y_0|^2. 
\end{equation}
To accomplish this, we expand the error by one additional term. Because $E(x_0,0)=\p_y^2E(x_0,0)=0$,
\begin{align}
    E(x_0, y_0)=y_0\p_y E(x_0, 0)+\frac{1}{6}y_0^3\p_y^3E(x_0,m_1), \nonumber
    \\
    \p_yE(x_0, y_0)=\p_y E(x_0, 0)+\frac{1}{2}y_0^2\p_y^3 E(x_0, m_2) \nonumber
\end{align}
for some $m_1,m_2\in(0,y_0)$. For $y_0$ small, $\left|1-\pi\frac{\cos(\pi y_0)}{\sin(\pi y_0)}y_0\right|\leq C|y_0|^2$. Alongside the error estimates in Proposition \ref{prop:V}, this provides the bound in (\ref{eq:Ess}). By repeating this argument for $y_0$ close to $1$, we can show that near the top boundary, the nodal set is described by a graph with a derivative that vanishes as $y\to1$. 
\end{proof}

\quad There still remain the four corners of $\Omega$. By Propositions \ref{prop:vertical} and \ref{prop:leftBdry}, we know that no nodal lines cross the boundary of these regions. Thus, if the nodal set intersects a corner, it must entirely contain a nodal domain. Over such a domain, the ground-state eigenfunction would have eigenvalue $\mu\leq \lambda_{2,2}<8\pi^2$. Consider for example the lower-left corner with $x<2\eta^{\frac{1}{3}(1-\epsilon)}$ and $0\leq y\leq \frac{1}{2}\min\{\overline{y}, 1-\overline{y}\}$. This is a proper subset of $\left[-2\eta^{\frac{1}{3}(1-\epsilon)}, 2\eta^{\frac{1}{3}(1-\epsilon)}\right]\times\left[0, \frac{1}{2}\min\{\overline{y}, 1-\overline{y}\}\right]$ which has a ground-state eigenvalue $$\lambda_1\eqdef\pi^2\left(\frac{1}{16\eta^{\frac{2}{3}(1-\epsilon)}}+\frac{4}{\min\{\overline{y}, 1-\overline{y}\}^2}\right).$$ Domain monotonicity of Dirichlet eigenvalues tells us that $\lambda_1<\mu$. However, for small enough $\eta$, we find that $\mu\leq \lambda_1$, meaning no nodal domain is contained in this region. The same argument holds for the other components, and therefore $|v|>0$ near the four corners of $\Omega$. For this reason, our analysis of the nodal set $v^{-1}(0)$ with even $k$ is complete. To end this section, we briefly explain how these results translate to the estimates in Theorems \ref{thm:VevenPt} and \ref{thm:VevenDer}.

\quad By (\ref{eq:7}) and (\ref{eq:turntables}), Proposition \ref{prop:leftBdry} completes the proof of item $(iii)$ and establishes item $(iv)$ of Theorem \ref{thm:VevenPt}. The regularity statement in Proposition \ref{prop:leftBdry} completes the proof of item $(ii)$ in Theorem \ref{thm:VevenDer}, and orthogonality follows from Propositions \ref{prop:leftBdry} and \ref{prop:bottomBdry}. All that remains is to prove item $(v)$ of Theorem \ref{thm:VevenPt}. By Lemma \ref{prop:coeff} and (\ref{eq:fv}), there exists a small constant $c$ such that $$1-\left|f_v'(x)\right|\geq c>0$$ for all $x\in[0,N]$. By Theorem \ref{thm:VevenDer}, this implies that for small enough $\eta$, $\left|h'(x)\right|< 1$ whenever $v^{-1}(0)$ can be parametrized $y=h(x)$. Similarly, $\left|g'(y)\right|<1$ whenever $v^{-1}(0)$ can be parametrized $x=g(y)$. In combination with item $(i)$ of Theorem \ref{thm:VevenPt}, these derivative bounds prove item $(iv)$ of Theorem \ref{thm:VevenPt}. This concludes our comprehensive description of $v^{-1}(0)$ when $k$ is even and $\eta>0$.

\section{The Eigenfunction along the Upper Branch (k odd)}\label{sec:UpperOdd}

\quad In this section, we study the nodal set of the eigenfunction $v$ when $k\geq 5$ is an odd integer. When $\eta=0$, Theorem \ref{prop:n=0} tells us that the nodal set features two curves that lie outside a neighborhood of $x=\frac{N}{2}$ and separate the rectangle $R(N)$ into three nodal domains. We maintain (\ref{eq:fv}) as a description of the nodal set; recall that the quantity $f_v(0)=\lim_{x\to 0^+}f_v(x)$ is bounded away from the top and bottom boundaries by Lemma \ref{prop:coeff}. Proposition \ref{prop:V} still holds, meaning
\begin{equation}\label{eq:Voddeq}
    \left|v(x,y)-v_0(x,y)\right|\leq C\eta^{1-\epsilon}/N^{\frac{3}{2}}
\end{equation}
for $0<\eta\leq\eta_0(k,\epsilon)$. As in Section \ref{sec:awayEven}, our analysis is performed locally. We first establish a description of the nodal set away from the boundary $\p\Omega_\phi(\eta, N)$.

\begin{prop}\label{prop:Vodd1}
    Under the same conditions as in Theorem \ref{thm:VoddPt}, there exists a constant $C_0>0$, dependent only on $\Lambda_\phi$, such that for each $k\geq 5$ odd, there exists $\eta_0(k,\epsilon)>0$ such that for all $0<\eta\leq\eta_0(k,\epsilon)$, the nodal set of $v$ has the following property: If $(x_0, y_0)\in v^{-1}(0)$ is a point in $\Omega_\phi(\eta, N)$ with $\left|y_0-\frac{1}{2}\right|<\frac{1}{2}\max\{f(0), 1-f(0)\}$ and $\eta^{\frac{1}{3}(1-\epsilon)}<x_0<N-\eta^{\frac{1}{3}(1-\epsilon)}$, then there exists an open neighborhood $U_1$ containing $x_0$ and a function $h_1(x)$ such that $v^{-1}(0)\cap U_1=\{(x, h_1(x))\}$.
\begin{enumerate}
    \item If $0.1<x_0<N-0.1$, then for all $x\in U_1$,
\begin{equation}
    \left|h_1(x)-f_v(x)\right|+\left|h_1'(x)-f_v'(x)\right|\leq \frac{C_0\eta^{1-\epsilon}}{N\left|\sin\left(\frac{2\pi}{N}x\right)\right|}. \nonumber
\end{equation}
\item If instead $\eta^{\frac{1}{3}(1-\epsilon)}<\left|x_0-tN\right|\leq 0.1$, then for all $x\in U_1$,
\begin{equation}
    \left|h_1(x)-f_v(x)\right|\leq \frac{C_0\eta^{1-\epsilon}}{\left|x-tN\right|}, \quad \quad \left|h_1'(x)-f_v'(x)\right|\leq \frac{C_0\eta^{1-\epsilon}}{\left|x-tN\right|^2}. \nonumber
\end{equation}
Here $t\in\{0, 1\}$. 
\end{enumerate}
\end{prop}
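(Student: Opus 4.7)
The plan is to follow the template of Proposition \ref{prop:horizontal} with one pleasant simplification: for odd $k$, Theorem \ref{prop:n=0}(ii) places $v_0^{-1}(0)$ outside $\{|x-N/2|<c_0\}$, and Proposition \ref{prop:V} transfers this separation to $v$ for small $\eta$, so there is no vertical-branch contribution to track. First I would rewrite
\[
v_0(x,y) = \frac{4c_2}{\sqrt{N}}\,\sin\!\bigl(\tfrac{2\pi}{N}x\bigr)\sin(\pi y)\bigl(\cos(\pi y)-\cos(\pi f(x))\bigr),
\]
which by Lemma \ref{prop:coeff} yields $|v_0(x,y)|\geq cN^{-1/2}|\sin(2\pi x/N)|\,|y-f(x)|$ on the range of $y$ in question. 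Combining with Proposition \ref{prop:V} gives the pointwise bound $|y_0-f(x_0)|\leq C\eta^{1-\epsilon}\bigl(N|\sin(2\pi x_0/N)|\bigr)^{-1}$ for any $(x_0,y_0)\in v^{-1}(0)$ satisfying the hypotheses.

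Next I would extract a lower bound on $|\partial_y v(x_0,y_0)|$. A direct computation gives $\partial_y v_0(x,f(x))=-(4\pi c_2/\sqrt{N})\sin(2\pi x/N)\sin^2(\pi f(x))$, which under the hypothesis on $y_0$ is comparable in modulus to $N^{-1/2}|\sin(2\pi x/N)|$ by Lemma \ref{prop:coeff}. The mean value theorem together with Proposition \ref{prop:V} and the pointwise control from the previous step transfers this bound to $\partial_y v(x_0,y_0)$. In regime (i), $|\sin(2\pi x_0/N)|\gtrsim N^{-1}$ because $x_0$ is forced to stay at least $c_0/2$ from $N/2$; in regime (ii), $|\sin(2\pi x_0/N)|\gtrsim N^{-1}|x_0-tN|$, and the cutoff $|x_0-tN|\geq \eta^{(1-\epsilon)/3}$ is precisely what prevents the $O(\eta^{1-\epsilon}/N^{3/2})$ error from swamping the main term. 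The implicit function theorem then produces the local graph $h_1$ and the stated pointwise estimates.

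For the derivative bound, differentiating $v_0(x,f(x))\equiv 0$ gives $\partial_x v_0(x,f(x))+f'(x)\,\partial_y v_0(x,f(x))=0$, hence
\[
h_1'(x)-f'(x)=-\,\frac{\partial_x v(x,h_1(x))+f'(x)\,\partial_y v(x,h_1(x))}{\partial_y v(x,h_1(x))}.
\]
The numerator is estimated by adding and subtracting the analogous combination with $v_0$ evaluated at $(x,f(x))$: Proposition \ref{prop:V} controls the $v$-vs-$v_0$ discrepancy by $C\eta^{1-\epsilon}/N^{3/2}$, while the shift from $h_1(x)$ back to $f(x)$ is controlled by the pointwise bound from the previous step and the Lipschitz bound on $\nabla v_0$. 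Dividing by the denominator lower bound from the previous step yields the estimates in both regimes.

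The main obstacle is the careful bookkeeping near the endpoints $x\in\{0,N\}$, where $|\sin(2\pi x/N)|$ degenerates linearly and appears in inverse powers in both the pointwise and derivative estimates. The cutoff $\eta^{(1-\epsilon)/3}$ is chosen to keep the main lower bound on $|\partial_y v|$ strictly dominant over the $O(\eta^{1-\epsilon}/N^{3/2})$ perturbation. Treatment of the genuinely boundary-adjacent region $|x_0-tN|\leq \eta^{(1-\epsilon)/3}$ is deferred to the odd-$k$ analog of Proposition \ref{prop:leftBdry} in Section \ref{sec:bdry}; away from that strip, the argument above gives everything claimed.
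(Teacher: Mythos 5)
Your proposal is correct and follows essentially the same route as the paper: the paper's proof also dispatches the midline region by noting that Theorem \ref{prop:n=0} and Proposition \ref{prop:V} force $v\neq 0$ in an order-one neighborhood of $x=\frac{N}{2}$, and then declares the rest identical to the proof of Proposition \ref{prop:horizontal}, whose factored lower bound for $v_0$, implicit-function-theorem graph construction, and quotient formula for $h_1'-f'$ you have reproduced accurately. Deferring the strip $|x_0-tN|\leq\eta^{\frac{1}{3}(1-\epsilon)}$ to the odd-$k$ boundary analysis is also exactly what the paper does via Proposition \ref{prop:leftBdryOdd}.
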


\quad This result states that, over most of $\Omega$, the nodal set of $v$ behaves much like that of $v_0$ as constructed in Section \ref{sec:Hadamard}. Because $v_0^{-1}(0)$ features no internal crossing, the proof of Proposition \ref{prop:Vodd1} follows from the analysis of Section \ref{sec:awayEven}.

\begin{proof}[Proof of Proposition \ref{prop:Vodd1}.] By Theorem \ref{prop:n=0}, there is a neighborhood of size $\sim 1$ about the midline $x=\frac{N}{2}$ such that $v_0(x,y)$ is nonzero. This allows us to forgo repeating the analysis from Section \ref{sec:centerEven} for this case. By taking $\eta$ small enough, (\ref{eq:Voddeq}) implies that $v(x,y)$ is nonzero in a rectangle of side length $\sim 1$ centered at $\left(\frac{N}{2},\frac{1}{2}\right)$. Outside of this region, the proof of this proposition is identical to that of Proposition \ref{prop:horizontal}. 
\end{proof}

\quad We then focus on the nodal set near the horizontal boundary components. The following proposition applies to points near the bottom boundary, and an analogous statement holds for points near the top boundary. 

\begin{prop}\label{prop:Vodd2}
    Under the same conditions as in Theorem \ref{thm:VoddPt}, there exists a constant $C_0>0$, dependent only on $\Lambda_\phi$, such that for each $k\geq 5$ odd, there exists $\eta_0(k,\epsilon)>0$ such that for all $0<\eta\leq\eta_0(k,\epsilon)$, the nodal set of $v$ has the following property: If $(x_0, y_0)\in v^{-1}(0)$ is a point in $\Omega_\phi(\eta,N)$ with $\big|y_0\big|\leq\frac{1}{2}\min\{f(0), 1-f(0)\}$ and $\eta^{\frac{1}{3}(1-\epsilon)}<x_0<N-\eta^{\frac{1}{3}(1-\epsilon)}$, then there exists an open neighborhood $U_2$ containing $y_0$ and a function $h_2(y)$ such that $v^{-1}(0)\cap U_2=\{(h_2(y),y)\}$ and
\begin{equation}
    \left|h_2(y)-f_v^{-1}(y)\right|\leq C_0\eta^{1-\epsilon}, \quad \quad \left|h_2'(y)-\left(f_v^{-1}\right)'(y)\right|\leq C_0\eta^{1-\epsilon}|y|. \nonumber
\end{equation}
for all $y\in U_2$.
\end{prop}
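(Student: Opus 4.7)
The plan is to mirror the argument of Proposition \ref{prop:horizontal}, but with the roles of $x$ and $y$ swapped so as to apply the implicit function theorem in the $x$-variable, and with a boundary refinement analogous to the one used in case (b) of Proposition \ref{prop:vertical} in order to pick up the extra factor of $|y|$ appearing in the derivative estimate. Since for odd $k$ the nodal curve meets the bottom boundary $y=0$ orthogonally at some interior point $x_* = f^{-1}(0)$ (Theorem \ref{prop:n=0} item (ii)), the tangent to the nodal curve is nearly vertical for small $|y|$, so the correct local parametrization is $x = h_2(y)$.

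First I would prove a lower bound on $|\partial_x v(x_0, y_0)|$ in the region $|y_0| \leq \tfrac{1}{2}\min\{f(0), 1-f(0)\}$. From the explicit form of $v_0$ one has $v_0(x, 0) \equiv 0$, hence $\partial_x v_0(x, y) = y\, G_0(x, y)$ for a smooth function $G_0$, and Lemma \ref{prop:coeff} together with the fact that $f^{-1}(0)$ is interior (away from $\frac{N}{2}$ and the vertical sides) shows $|G_0(x, y)| \geq c/N^{3/2}$ in a neighborhood of $(x_*, 0)$. Writing $v = v_1(x)\sin(\pi y) + v_2(x)\sin(2\pi y) + E(x, y)$ and using $E(x, 0) = 0$ together with the line-integral upgrade $|E(x,y)| + |\partial_x E(x,y)| \leq C\eta|y|/N^{3/2}$, one obtains $\partial_x v(x, y) = y \, G(x, y)$ with $|G - G_0| \leq C\eta^{1-\epsilon}/N^{3/2}$. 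Combined with an analogous lower bound on $v(x,y)/\sin(\pi y)$ that pins $(x_0,y_0)$ near the graph of $f^{-1}$, this gives $|\partial_x v(x_0, y_0)| \geq c|y_0|/N^{3/2}$ for $\eta$ small, and the implicit function theorem yields the local graph $x = h_2(y)$.

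To bound $|h_2(y) - f^{-1}(y)|$, I would compare $v(h_2(y), y) = 0$ and $v_0(f^{-1}(y), y) = 0$ via the mean value theorem in the $x$-variable, dividing out the common $\sin(\pi y)$ factor to avoid vanishing near $y=0$, and then estimating the difference using the refined bound $|v - v_0| \leq C\eta^{1-\epsilon}|y|/N^{3/2}$ against the lower bound on $|\partial_x v_0|/|y|$. For the derivative, I would use $h_2'(y) = -\partial_y v / \partial_x v$ and $(f^{-1})'(y) = -\partial_y v_0 / \partial_x v_0$, combine over a common denominator, and rewrite the numerator as a cross product $\partial_y v \, \partial_x v_0 - \partial_y v_0 \, \partial_x v$. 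The key point is that each term in this expression collects one factor of $|y|$: the gradient error $|\nabla(v - v_0)|$ inherits $|y|$ from the boundary line integral (using $E(x,0) = 0$ and $\partial_y^2 E(x,0) = 0$, the latter following from the Dirichlet condition together with $(\Delta + \mu)v = 0$), while $\partial_x v_0$ carries its own $|y|$. Cancelling one $|y|$ against the denominator of size $\sim y^2/N^3$ leaves the advertised $C_0\eta^{1-\epsilon}|y|$ bound after also absorbing the already-controlled $|h_2(y) - f^{-1}(y)|$.

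The main obstacle I expect is tracking the correct $|y|$-dependence throughout: the generic bounds from Proposition \ref{prop:V} are of order $\eta^{1-\epsilon}/N^{3/2}$ uniformly, and these would overwhelm the leading behavior $\partial_x v_0 \sim |y|/N^{3/2}$ as soon as $|y| \lesssim \eta^{1-\epsilon}$. Upgrading the error estimates via the Dirichlet condition on the horizontal boundary — exactly the line-integral trick already employed in Proposition \ref{prop:vertical} case (b) and Proposition \ref{prop:bottomBdry} — is therefore essential both for applying the implicit function theorem arbitrarily close to $y=0$ and for extracting the extra factor of $|y|$ in the derivative estimate.
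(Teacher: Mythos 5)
Your setup --- the lower bound $|\partial_x v(x_0,y_0)|\geq c N^{-3/2}|y_0|$ obtained from the vanishing of $E$ and $\partial_x E$ on $y=0$, the localization $|x_0-f^{-1}(y_0)|\leq C\eta^{1-\epsilon}$, and the implicit function theorem giving the graph $x=h_2(y)$ --- matches the paper's proof. The gap is in the derivative estimate. Writing $h_2'-(f^{-1})'$ as a cross product over the common denominator $\partial_x v\,\partial_x v_0\sim y^2/N^3$, you need the numerator to be $O(\eta^{1-\epsilon}|y|^3/N^3)$; splitting it as $(\partial_y v-\partial_y v_0)\partial_x v_0+\partial_y v_0\,(\partial_x v_0-\partial_x v)$ and using $|\partial_x v_0|\sim|y|/N^{3/2}$, the first piece forces $|\partial_y v(x_0,y_0)-\partial_y v_0(f^{-1}(y_0),y_0)|\leq C\eta^{1-\epsilon}|y_0|^2/N^{3/2}$ --- \emph{two} powers of $|y_0|$. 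Your claim that ``$|\nabla(v-v_0)|$ inherits $|y|$'' supplies at most one power, so the count does not close; worse, the claim is false for the $y$-derivative: $\partial_y E(x,0)$ does not vanish (only $E(x,0)=0$ and $\partial_y^2E(x,0)=0$ do), and the mode differences $v_n(x)-\tfrac{2c_n}{\sqrt N}\sin(n\pi x/N)$ enter $\partial_y v$ multiplied by $n\pi\cos(n\pi y)=O(1)$, so $|\partial_y(v-v_0)|$ is genuinely of size $\eta^{1-\epsilon}/N^{3/2}$ near $y=0$ with no gain of $|y|$.

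The missing idea is the identity (\ref{eq:nextSection}): on the nodal set one uses $v(x_0,y_0)=0$ to eliminate $v_1(x_0)$ and write $\partial_y v(x_0,y_0)=-\pi\cot(\pi y_0)E(x_0,y_0)+\partial_y E(x_0,y_0)+\pi v_2(x_0)\bigl(2\cos(2\pi y_0)-\tfrac{\sin(2\pi y_0)}{\sin(\pi y_0)}\cos(\pi y_0)\bigr)$. Taylor expanding $E$ and $\partial_yE$ at $y=0$, the non-decaying term $\partial_yE(x_0,0)$ appears in the first two pieces with the combined prefactor $1-\pi y_0\cot(\pi y_0)=O(y_0^2)$; this cancellation between the two error contributions, not a line-integral bound on $\partial_yE$ alone, is what produces the second power of $|y_0|$. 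The third piece carries the explicit factor $2\cos(2\pi y_0)-\tfrac{\sin(2\pi y_0)}{\sin(\pi y_0)}\cos(\pi y_0)=O(y_0^2)$ and, after replacing $v_2(x_0)$ by its limiting value, reproduces $\partial_yv_0(f^{-1}(y_0),y_0)$ exactly, since the residual expression (\ref{eq:FinishIt}) vanishes identically by Definition \ref{def:f}. Without this on-nodal-set rewriting, your cross-product argument yields at best $|h_2'-(f^{-1})'|\leq C\eta^{1-\epsilon}$, which loses the factor of $|y|$ needed to conclude orthogonality at the horizontal boundary.
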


\quad Because $\left(f_v^{-1}\right)'(0)=\left(f_v^{-1}\right)'(1)=0$, this result implies that the nodal set intersects the top and bottom boundaries orthogonally. 

\begin{proof}[Proof of Proposition \ref{prop:Vodd2}.] Initially, let $0.1<x<N-0.1$. Because we restrict $|y|<\frac{1}{2}\min\{f_v(0),1-f_v(0)\}$, the function $f_v$ is invertible and $f_v^{-1}(y)$ satisfies
\begin{equation}\label{eq:MVTforInvF}
    \left|\left(f_v^{-1}\right)'(y)\right|=\left|\frac{\p_yv_0(f_v^{-1}(y),y)}{\p_xv_0(f_v^{-1}(y),y)}\right|=\left|\frac{\p_y v_0(x,f_v(x))}{\p_x v_0(x,f_v(x))}\right|\leq C\left|\sin(\pi y)\right|.
\end{equation}
Largely following the proof of Proposition \ref{prop:vertical}, we begin by reestablishing (\ref{eq:LowBdforV}) as a lower bound for the eigenfunction in this region,
\begin{align}
    \left|v(x,y)\right|\geq cN^{-\frac{1}{2}}|y|\left|\sin\left(2\pi x/N\right)\right|\left|\cos(\pi y)-\cos(\pi f_v(x))\right|-C\eta^{1-\epsilon}N^{-\frac{3}{2}}|y|. \nonumber
\end{align}
We are bounded away from $x=\frac{N}{2}$, meaning that if $(x_0,y_0)\in v^{-1}(0)$, then $\left|\cos(\pi y_0)-\cos(\pi f_v(x_0))\right|\leq C\eta^{1-\epsilon}$. By (\ref{eq:MVTforInvF}), this implies that $\left|f_v^{-1}(y_0)-x_0\right|\leq C\eta^{1-\epsilon}$. Using this, we can show that $\left|\p_xv(x_0,y_0)\right|\geq cN^{-\frac{3}{2}}|y_0|$ in an identical fashion to Proposition \ref{prop:vertical}. There then exists a neighborhood of $y_0$ such that the level set $v^{-1}(0)$ can be written as the graph $\big(h_2(y),y\big)$ with
\begin{equation}
    \left|h_2'(y)-\left(f_v^{-1}\right)'(y)\right|=\left|\frac{\p_yv(h_2(y),y)}{\p_xv(h_2(y),y)}+\left(f_v^{-1}\right)'(y)\right|. \nonumber
\end{equation}
Because $\left|\p_xv(x_0,y_0)\right|\geq cN^{-\frac{3}{2}}|y_0|$, it suffices to prove
\begin{equation}\label{eq:Final4}
    \left|\p_yv(x_0,y_0)+\left(f_v^{-1}\right)'(y_0) \p_xv(x_0,y_0)\right|\leq C\eta^{1-\epsilon}N^{-\frac{3}{2}}|y_0|^2
\end{equation}
to establish the regularity estimate. Fortunately, because $\p_xE(x,0)=\p_xE(x,1)=0$ and $\left|\p_{xy}^2E\right|\leq C\eta N^{-\frac{3}{2}}$, Proposition \ref{prop:V} implies
\begin{equation}
    \left|\p_xv(x_0,y_0)-\p_xv_0(x_0,y_0)\right|\leq C\eta^{1-\epsilon}N^{-\frac{3}{2}}|y_0| \nonumber
\end{equation}
and because $x_0$ is close to $f_v^{-1}(y_0)$,
\begin{equation}
    \left|\p_xv_0(x_0,y_0)-\p_xv_0(f_v^{-1}(y_0),y_0)\right|\leq C\eta^{1-\epsilon}N^{-\frac{3}{2}}|y_0|. \nonumber
\end{equation}
In combination with the fact that $\left|\left(f_v^{-1}\right)'(y_0)\right|\leq C|y_0|$, this reduces (\ref{eq:Final4}) to proving the following:
\begin{equation}\label{eq:Final5}
    \left|\p_yv(x_0,y_0)-\p_y v_0\left(f_v^{-1}(y_0),y_0\right)\right|\leq C\eta^{1-\epsilon}N^{-\frac{3}{2}}|y_0|^2. 
\end{equation}
To establish this estimate, we use the expression in (\ref{eq:nextSection}) to describe $\p_yv(x_0,y_0)$. The sum of the first two terms can be bounded above appropriately as in the proof of Proposition \ref{prop:bottomBdry}, so we focus on the remaining difference
\begin{equation}
    \pi v_2(x_0)\left(2\cos(2\pi y_0)-\frac{\sin(2\pi y_0)}{\sin(\pi y_0)}\cos(\pi y_0)\right)-\p_yv_0\left(f_v^{-1}(y_0),y_0\right). \nonumber
\end{equation}
Because $\left|f_v^{-1}(y_0)-x_0\right|\leq C\eta^{1-\epsilon}$, Proposition \ref{prop:V} implies $\left|v_2(x_0)-\frac{2c_2}{\sqrt{N}}\sin\left(\frac{2\pi }{N}f_v^{-1}(0)\right)\right|\leq C\eta^{1-\epsilon}N^{-\frac{3}{2}}$. The function of $y_0$ in parenthesis can be bounded 
\begin{equation}
    \left|2\cos(2\pi y_0)-\frac{\sin(2\pi y_0)}{\sin(\pi y_0)}\cos(\pi y_0)\right|\leq C|y_0|^2 \nonumber
\end{equation}
meaning that the difference $\p_yv(x_0,y_0)-\p_yv_0\left(f_v^{-1}(y_0),y_0\right)$ is equal to 
\begin{equation}\label{eq:FinishIt}
    \frac{2\pi}{\sqrt{N}}c_2\sin\left(\frac{2\pi}{N}f_v^{-1}(y_0)\right)\left(2\cos(2\pi y_0)-\frac{\sin(2\pi y_0)}{\sin(\pi y_0)}\cos(\pi y_0)\right)-\p_yv_0(f_v^{-1}(y_0),y_0).
\end{equation}
up to acceptable error per (\ref{eq:Final5}). By (\ref{eq:fv}), the expression in (\ref{eq:FinishIt}) is identically zero, meaning that (\ref{eq:Final4}) holds for $0.1<x<N-0.1$. To extend the proposition to include $\eta^{\frac{1}{3}(1-\epsilon)}<\left|x-tN\right|\leq 0.1$ for $t\in\{0,1\}$, we reference the corner argument at the end of Section \ref{sec:bdry}.
\end{proof}

\quad Finally, we describe the nodal set near the left and right boundaries, making use of the same isometry $F^{-1}$ featured in Section \ref{sec:bdry}. In the rotated coordinates $(\tilde{x},\tilde{y})=F^{-1}(x,y)$ and $\tilde{x}=-\eta\tilde{\phi}(\tilde{y})$ describes the left boundary. 

\begin{prop}\label{prop:leftBdryOdd}
     Fix $(x_0,y_0)\in v^{-1}(0)$ such that $x_0<2\eta^{\frac{1}{3}(1-\epsilon)}$ and apply the transform $F^{-1}$ as described. Under the same conditions as in Theorem \ref{thm:VoddPt}, there exists a constant $C_0>0$, dependent only on $\Lambda_\phi$, such that for each $k\geq 5$ odd, there exists $\eta_0(k,\epsilon)>0$ such that for all $0<\eta\leq\eta_0(k,\epsilon)$, the nodal set of $v$ can be written locally as a graph $\tilde{y}=h_3(\tilde{x})$ with
    \begin{equation}
        \left|h_3(\tilde{x})-f_v(0)\right|\leq C_0\eta^{\frac{2}{3}(1-\epsilon)}, \quad \quad \left|h_3'(\tilde{x})\right|\leq C_0\eta^{\frac{2}{3}(1-\epsilon)}\left|\tilde{x}+\eta\tilde{\phi}(y_0)\right| \nonumber
    \end{equation}
    where $f_v(0)=\frac{1}{\pi}\arccos\big(-\frac{kc_1}{4c_2}\big)$.
\end{prop}

\quad Because $\tilde{\phi}'(y_0)=0$, Propositions \ref{prop:Vodd2} and \ref{prop:leftBdryOdd} establish orthogonality of the nodal set at the left boundary. A similar result holds at the right boundary. The proof of Proposition \ref{prop:leftBdryOdd} is identical to that of Proposition \ref{prop:leftBdry}. Just as in the case when $k$ is even, Propositions \ref{prop:Vodd1}, \ref{prop:Vodd2}, and \ref{prop:leftBdryOdd} are enough to prove the statements in Theorems \ref{thm:VoddPt} and \ref{them:VoddDer}.

\section{The Eigenfunction along the Lower Branch}\label{sec:Lower}

\quad In this section, we study the nodal set of the eigenfunction $w$ corresponding to the lower branch eigenvalue $\gamma$ for $\eta$ small and $k\geq 8$. When $\eta=0$, Theorem \ref{prop:n=0} tells us that the nodal set of $w_0$ features $(k-1)$ curves that lie outside a neighborhood of each $x_j=\frac{jN}{2k}$ for odd $j=\{1,\dots, 2k-1\}$ and separate the rectangle $R(N)$ into exactly $k$ nodal domains. To study the nodal set for $\eta>0$, we make use of the same partial Fourier series presented in Section \ref{sec:Prop1.1}. Over $R\subset \Omega$, let
\begin{equation}\label{eq:Wexp}
    w(x,y)=w_1(x)\sin(\pi y)+w_2(x)\sin(2\pi y)+\sum_{j\geq 3}w_j(x)\sin(j\pi y), \quad \quad w_j(x)=2\int_0^1w(x,y)\sin(j\pi y)dy.
\end{equation}
We again combine the higher modes into an error term
\begin{equation}
    E_w(x,y)\eqdef\sum_{j\geq 3}w_j(x)\sin(j\pi y) \nonumber
\end{equation}
and provide a detailed estimate of the first two modes. We find that the representation of $w$ in (\ref{eq:Wexp}) closely resembles the expression for $w_0$ as constructed in Section \ref{sec:Hadamard}, as made evident in the following statement.

\begin{prop}\label{prop:W}
Let the conditions in Theorem \ref{thm:WPt} hold, and let $\textbf{c}=(c_1,c_2)$ be as constructed in Proposition (\ref{prop:coeff}). There exists a constant $C_0>0$, dependent only on $\Lambda_\phi$, such that for all $k\geq 8$, there exists $\eta_0(k,\epsilon)>0$ such that for all $0<\eta\leq \eta_0(k,\epsilon)$, the following holds:
\begin{enumerate}
    \item The first Fourier mode $w_1(x)$ satisfies the estimates
    \begin{equation}
        \left|w_1^{(\ell)}(x)+\frac{2c_2}{\sqrt{N}}\frac{d^\ell}{dx^\ell}\left(\sin\left(\frac{k\pi}{N}x\right)\right)\right|\leq C_0\eta^{1-\epsilon}/N^{\frac{3}{2}} \quad \textrm{for} \quad 0\leq \ell\leq 3. \nonumber
    \end{equation}
    \item The second Fourier mode $w_2(x)$ satisfies the estimates
    \begin{equation}
        \left|w_2^{(\ell)}(x)-\frac{2c_1}{\sqrt{N}}\frac{d^\ell}{dx^\ell}\left(\sin\left(\frac{2\pi}{N}x\right)\right)\right|\leq C_0\eta^{1-\epsilon}/N^{\frac{3}{2}} \quad \textrm{for} \quad 0\leq \ell\leq 3. \nonumber
    \end{equation}
    \item The error $E_w(x,y)$ satisfies the estimates
    \begin{equation}
        \left|E_w(x,y)\right|+\left|\nabla E_w(x,y)\right|\leq C_0\eta^{1-\epsilon}/N^{\frac{3}{2}} \quad \textrm{for} \quad (x,y)\in\Omega. \nonumber
    \end{equation}
\end{enumerate}
\end{prop}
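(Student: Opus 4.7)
The proof of Proposition \ref{prop:W} proceeds in parallel with that of Proposition \ref{prop:V}, with the two branches of $\textbf{D}$ playing switched roles. First, I would decompose $w$ via its partial Fourier series (\ref{eq:Wexp}) and note that each coefficient $w_j(x)$ satisfies the ODE (\ref{eq:ODE}) with $\gamma$ in place of $\mu$. Writing $\tilde{\mu}_n = \sqrt{\gamma - n^2\pi^2}$ for $n = 1, 2$, this gives oscillatory expressions $w_n(x) = w_n(0)\cos(\tilde{\mu}_n x) + \tilde{A}_n\sin(\tilde{\mu}_n x)$ analogous to (\ref{eq:v1})--(\ref{eq:v2}), while the higher modes decay exponentially as in (\ref{eq:v_k}). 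The Dirichlet condition at $x = N$ then provides the analog of (\ref{eq:hurray!}) relating $w_n(0)$ to $\tilde{A}_n$.

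Next, I would apply the Lyapunov-Schmidt scheme from the proof of Lemma \ref{lem:AD10} to the lower branch. The operator $L$ and matrix $\textbf{M}$ constructed there depend only on the diffeomorphism $h$, not on the branch being tracked, so we retain $\|\textbf{M} - \eta(\textbf{D} + \dot{\gamma}I)\| \leq C\eta^2 N^2$. By Proposition \ref{prop:Hadamard}, the kernel of $\textbf{D} + \dot{\gamma}I$ is spanned by $\textbf{c}^\perp = (-c_2, c_1)^T$, and the condition $\Lambda_\phi \neq 0$ from Definition \ref{def:phi} supplies the spectral gap $|\dot{\mu} - \dot{\gamma}| \geq c/N^2$ needed to isolate this eigenvector. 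Repeating the argument with $\textbf{c}^\perp$ in place of $\textbf{c}$ yields $\|\textbf{b} - \textbf{c}^\perp\|_{\ell^2} \leq C\eta N^4$, and hence
\begin{equation*}
    \bigl|\tilde{A}_1 + \tfrac{2c_2}{\sqrt{N}}\bigr| + \bigl|\tilde{A}_2 - \tfrac{2c_1}{\sqrt{N}}\bigr| \leq C_0\eta^{1-\epsilon}/N^{3/2}
\end{equation*}
after trading a small power $\eta^\epsilon$ for the polynomial $N$-dependence, exactly as in Lemma \ref{lem:AD10}. Differentiating the closed-form expressions for $w_1, w_2$ and combining with $|w_n(0)| \leq C\eta|\tilde{A}_n|/N$ then establishes items (i) and (ii), with signs matching the identity $w_0 = -c_2\psi_{k1} + c_1\psi_{22}$ from Section \ref{sec:Hadamard}.

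For item (iii), I would adapt the bootstrap from Lemma \ref{lem:AllJ}. Setting $B_w^2 = \sum_{j \geq 3} w_j(0)^2$, the generalized maximum principle in the strips $\Omega \cap \{x \leq 0.1\}$ and $\Omega \cap \{x \geq N - 0.1\}$ controls $|E_w|$ in terms of its boundary values and $B_w$; the boundary values are in turn controlled by items (i) and (ii). Green's identity on $\Omega_0 = \Omega \cap \{x \leq 0\}$ together with integration by parts in $y$ produces analogs of (\ref{eq:small j bound}) and (\ref{eq:large j bound}) for the Fourier coefficients $w_j(0)$; splitting the sum at $j \sim \eta^{-1/2}$ and bootstrapping yields $B_w^2 \leq C\eta^{2(1-\epsilon)}/N^3$. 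Finally, elliptic estimates as in \cite{GV} upgrade the pointwise bound on $E_w$ to the stated gradient bound.

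The main obstacle is in the integration-by-parts step: because $\tilde{A}_1$ limits to $-2c_2/\sqrt{N}$, which is of size $1/\sqrt{N}$ rather than the $1/N^{3/2}$ seen for the upper branch, the integrals $J_1, J_2$ in the analog of (\ref{eq:JJJJ}) are \emph{a priori} larger. Retaining the target $N^{-3/2}$ decay thus relies delicately on the $j^{-1}$ gain from integration by parts, combined with the vanishing of $\phi$ at the endpoints guaranteed by Definition \ref{def:phi}, and forces $\eta_0$ to depend on $k$ just as in Remark \ref{rem:eps}.
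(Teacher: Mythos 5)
Your overall strategy is the same as the paper's: the proof of Proposition \ref{prop:W} is carried out by repeating the analysis of Section \ref{sec:Prop1.1} along the lower branch, with $\textbf{c}^\perp$ replacing $\textbf{c}$ in the Lyapunov--Schmidt step, and your identification of the extra $N$-loss coming from $\tilde{A}_1\to -2c_2/\sqrt{N}\sim N^{-1/2}$ (versus $A_1\sim N^{-3/2}$ for the upper branch) is exactly the issue the paper flags; like the paper, you absorb it by trading a factor $\eta^{\epsilon}$ and shrinking $\eta_0(k,\epsilon)$.

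However, there is one genuine gap. Your argument takes for granted that the analogs of (\ref{eq:close}) and (\ref{eq:hurray!}) hold for the modified eigenvalues $\tilde{\mu}_1,\tilde{\mu}_2$ built from $\gamma$. For the upper branch these follow from domain monotonicity, which pins $\mu$ between $\lambda_{22}$ and the eigenvalue of $[-\eta,N]\times[0,1]$ at the \emph{corresponding level of the spectrum}. For the lower branch this pinning can fail: on the comparison rectangle $[-\eta,N]\times[0,1]$ the branch descending from $\lambda_{k1}$ crosses the branch descending from $\lambda_{12}$ at $\eta_1=\sqrt{N^2+1}-N\sim 1/(2N)$, after which the eigenvalue at the corresponding level no longer tracks the $(k,1)$-mode. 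Beyond that point the lower bound on $\gamma$ degrades, $\tilde{\mu}_2N$ need not stay within $C\eta/N$ of $2\pi$, and the oscillatory representations of $w_1,w_2$ together with the bound $|w_n(0)|\leq C\eta|\tilde{A}_n|/N$ lose their justification (this is what the remark following Theorem \ref{thm:n>0} means by ``other eigenfunction modes becoming dominant''). The paper's proof consists almost entirely of this one observation: one must additionally impose $\eta_0(k,\epsilon)<\eta_1$, which is a second, branch-specific reason the threshold must shrink with $k$. Your proposal attributes the $k$-dependence only to the Lyapunov--Schmidt spectral gap and the integration-by-parts loss, so this constraint is missing and should be added before the rest of your argument goes through.
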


\quad This result is comparable to Proposition \ref{prop:V}, describing how the Fourier modes in (\ref{eq:Wexp}) behave. As in Sections \ref{sec:centerEven} - \ref{sec:UpperOdd}, we can extend the estimates in Proposition \ref{prop:W} to estimates on the nodal set of $w$ in $\Omega$. A result analogous to Proposition \ref{prop:VJ} for $w_j(0)$ also holds but is not necessary for our proofs because the nodal set of $w_0$ does not feature a crossing. 

\begin{proof}[Proof of Proposition \ref{prop:W}.] This proof follows the analysis of Section \ref{sec:Prop1.1}, but we highlight one observation. By domain monotonicity for Dirichlet eigenvalues, $\gamma\leq \lambda_{k,1}$ because $[0,N]\times[0,1]\subset\Omega$. Similarly, $\gamma$ is bounded below by the eigenvalue of $[-\eta,N]\times[0,1]$ at the corresponding level in the spectrum. However, there is a crossing that occurs between the branches stemming from $\lambda_{k,1}$ and $\lambda_{1,2}$ (\ref{eq:eigenpairs}) as $\eta$ increases.
\begin{figure}[H]
    \centering
    \includegraphics[scale=0.32]{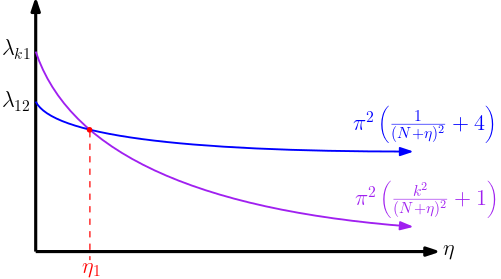} \hspace{1.5cm}
    \includegraphics[scale=0.32]{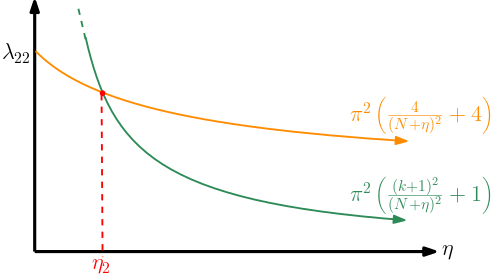}
    \caption{Eigenvalues of the Rectangle $[-\eta, N]\times[0,1]$.}
\end{figure}
This crossing occurs at $\eta_1=\sqrt{N^2+1}-N$, so we take $\eta_0<\eta_1$ to avoid this crossing. The branch stemming from $\lambda_{2,2}$ also features a crossing from a higher eigenvalue, but it does so for a value $\eta_2$ that can be bounded below by a constant. In our analysis along the $\gamma$ branch, we thus have another justification as to why $\eta_0(k,\epsilon)$ must be limited by the value $k$. Items $(i)$ and $(ii)$ of Proposition \ref{prop:W} result from repeating the arguments in Section \ref{sec:Prop1.1}, although we are left with an error estimate $\big|E_w\big|\leq C\eta/N^{\frac{1}{2}}$. To get a matching denominator, we sacrifice slightly in $\eta$. 
\end{proof}

\quad Note that when $k$ is even, $w_0\left(\frac{N}{2},y\right)=0$, so the function $f_w(x)$ featured in (\ref{eq:fw}) only describes the nodal set away from the midline. Meanwhile if $k$ is odd, Theorem \ref{prop:n=0} implies that the nodal set of $w_0$ does not intersect a neighborhood of $x=\frac{N}{2}$. First we study the nodal set of $w$ for all $k\geq 8$ outside of this neighborhood. Then we focus on the case when $k$ is even and $x$ is close to $\frac{N}{2}$.

\begin{prop}\label{prop:verticalW}
    Under the same conditions as in Theorem \ref{thm:WPt}, there exists a constant $C_0>0$, dependent only on $\Lambda_\phi$, such that for each $k\geq 8$, there exists $\eta_0(k,\epsilon)>0$ such that for all $0<\eta\leq\eta_0(k,\epsilon)$, the nodal set of $w$ has the following properties: 
    \begin{enumerate}
        \item If $(x_0,y_0)\in w^{-1}(0)$ is a point in $\Omega_\phi(\eta, N)$ with $0.1\leq y_0\leq0.9$ and $x_0\in\left(\frac{jN}{2k},\frac{N}{k}+\frac{jN}{2k}\right)$ for any odd $j\in\{1,\dots, 2k-3\}$ with $\frac{N}{2}\notin\left(\frac{jN}{2k},\frac{N}{k}+\frac{jN}{2k}\right)$, then there exists an open neighborhood $U_1$ containing $x_0$ and a function $h_1(x)$ such that $w^{-1}(0)\cap U_1=\{(x, h_1(x))\}$ and
    \begin{equation}
        \left|h_1(x)-f_w(x)\right|+\left|h_1'(x)-f_w'(x)\right|\leq C_0\eta^{1-\epsilon} \nonumber
    \end{equation}
    for all $x\in U_1$.
    \item Let $t\in\{0,1\}$. If instead $(x_0,y_0)\in w^{-1}(0)$ is a point in $\Omega_\phi(\eta, N)$ with $|y_0-t|<0.1$ and $x_0\in\left(\frac{jN}{2k},\frac{N}{k}+\frac{jN}{2k}\right)$ for any odd $j\in\{1,\dots, 2k-3\}$ with $\frac{N}{2}\notin\left(\frac{jN}{2k},\frac{N}{k}+\frac{jN}{2k}\right)$, then there exists an open neighborhood $U_2$ containing $y_0$ and a function $h_2(y)$ such that $w^{-1}(0)\cap U_2=\{(h_2(y),y)\}$ and
    \begin{equation}
        \left|h_2(y)-f_w^{-1}(y)\right|\leq C_0\eta^{1-\epsilon}, \quad \left|h_2'(y)-\left(f_w^{-1}\right)'(y)\right|\leq C_0\eta^{1-\epsilon}|y-t|. \nonumber
    \end{equation}
    for all $y\in U_2$. 
    \end{enumerate}
    The eigenfunction does not vanish for $x\leq\frac{N}{2k}$ or $x\geq N-\frac{N}{2k}$.
\end{prop}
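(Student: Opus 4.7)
The proof of Proposition \ref{prop:verticalW} runs in parallel with the arguments in Sections \ref{sec:awayEven} and \ref{sec:bdry}, with $v$ and $v_0$ replaced by $w$ and $w_0$. The starting point is Proposition \ref{prop:W}, which delivers
\begin{equation}
|w(x,y) - w_0(x,y)| + |\nabla w(x,y) - \nabla w_0(x,y)| \leq C\eta^{1-\epsilon}/N^{3/2}. \nonumber
\end{equation}
Because $|c_1| \sim 1/N$ and $|c_2| \sim 1$, the leading term of $w_0 = -c_2\psi_{k1} + c_1\psi_{22}$ is $-c_2\psi_{k1}$, and the nodal set of $w_0$ is captured by the graph $y = f_w(x)$ of Definition \ref{def:f}, which exists only in narrow vertical strips about the zeros $x^{*} = (j+1)N/(2k)$ of $\sin(k\pi x/N)$ (for odd $j$, so $j+1$ is even).

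For item $(i)$, fix $(x_0, y_0) \in w^{-1}(0)$ with $y_0 \in [0.1, 0.9]$ in an admissible interval $(jN/(2k), N/k + jN/(2k))$. The approximation bound $|y_0 - f_w(x_0)| \leq C\eta^{1-\epsilon}$ follows from substituting $w = w_0 + (w-w_0)$ into $w(x_0, y_0) = 0$, linearizing in $y$ about $f_w(x_0)$, and dividing through by the lower bound on $|\partial_y w|$ derived below. The key calculation is that on the zero set of $w_0$, using the identity $\cos(\pi y) = \tfrac{c_2}{2c_1}\tfrac{\sin(k\pi x/N)}{\sin(2\pi x/N)}$ together with $\cos(2\pi y) = 2\cos^2(\pi y) - 1$, the first variation collapses to
\begin{equation}
\partial_y w_0\bigl(x, f_w(x)\bigr) = -\frac{4\pi c_1 \sin(2\pi x/N) \sin^2\bigl(\pi f_w(x)\bigr)}{\sqrt{N}}, \nonumber
\end{equation}
which is uniformly bounded below in magnitude on the interval provided $y_0$ is bounded away from $\{0,1\}$; for $j = 1$ or $j = 2k-3$ the bound degrades to $\sim 1/N^{5/2}$ rather than the interior $\sim 1/N^{3/2}$, since $|\sin(2\pi x^{*}/N)| \sim 1/N$ in those boundary-adjacent intervals. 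Taking $\eta_0(k,\epsilon)$ small enough to absorb the $C\eta^{1-\epsilon}/N^{3/2}$ gradient error, the implicit function theorem then yields $h_1(x)$, and the regularity estimate $|h_1'(x) - f_w'(x)|$ is controlled by comparing the numerator of $(\partial_x w + f_w'\,\partial_y w)/\partial_y w$ to $\tfrac{d}{dx}w_0(x, f_w(x)) \equiv 0$ via the mean value theorem, exactly as in the proof of Proposition \ref{prop:horizontal}.

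Item $(ii)$, for $|y_0 - t| < 0.1$ with $t \in \{0,1\}$, follows the template of Proposition \ref{prop:bottomBdry}. Here the lower bound $|\partial_x w(x_0, y_0)| \gtrsim (k\pi/N) |c_2 \cos(k\pi x_0/N)\sin(\pi y_0)|/\sqrt{N} \gtrsim |y_0 - t|/\sqrt{N}$ holds near $x^{*}$ because $|\cos(k\pi x^{*}/N)| = 1$, and the implicit function theorem produces $h_2(y)$ with the pointwise bound. For the refined derivative estimate carrying the extra factor $|y - t|$, I exploit $E_w(x, t) = 0$ and $\partial_y^2 E_w(x, t) = 0$ — the latter coming from $(\Delta + \gamma)E_w = 0$ and the vanishing of $E_w$ and $\partial_x^2 E_w$ along $y = t$ — and Taylor expand $E_w$ and $\partial_y E_w$ about $y = t$ just as in the proof of Proposition \ref{prop:bottomBdry}. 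The non-vanishing statement for $x \leq N/(2k)$ (and symmetrically $x \geq N - N/(2k)$) is the cleanest piece: in this strip $\sin(k\pi x/N) \geq 0$ has not yet reached its first zero, while $|\sin(2\pi x/N)| \leq \sin(\pi/k) \sim 1/N$, so the term $|c_2 \sin(k\pi x/N)\sin(\pi y)|$ dominates $|c_1 \sin(2\pi x/N)\sin(2\pi y)|$ by a factor $\sim N$ on the interior of the strip, yielding $|w_0(x,y)| \gtrsim x\sin(\pi y)/\sqrt{N}$; the $C\eta^{1-\epsilon}/N^{3/2}$ error from Proposition \ref{prop:W} is too small to destroy this lower bound, and the tiny corner regions are ruled out by the eigenvalue monotonicity argument at the end of Section \ref{sec:bdry}.

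The principal obstacle lies in item $(i)$: establishing the uniform lower bound on $|\partial_y w|$ requires the nodal-set identity above (in which the two modes of $\partial_y w_0$ nearly cancel, leaving only the factor $c_1 \sin(2\pi x/N)\sin^2(\pi y)$) and then absorbing the degradation to $\sim 1/N^{5/2}$ in the intervals adjacent to the lateral boundary of $\Omega$ into the $k$-dependent threshold $\eta_0(k,\epsilon)$. This is the source of the $k$-dependent smallness condition in the proposition.
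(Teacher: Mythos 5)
Your proposal is correct and follows essentially the same route as the paper, which proves Proposition \ref{prop:verticalW} by running the arguments of Propositions \ref{prop:horizontal} and \ref{prop:Vodd2} with $w_0=-c_2\psi_{k1}+c_1\psi_{22}$ in place of $v_0$ and invoking the thin-region/corner eigenvalue-monotonicity argument from Section \ref{sec:bdry} near the lateral boundaries. Your explicit identity $\p_y w_0(x,f_w(x))=-4\pi c_1\sin(2\pi x/N)\sin^2(\pi f_w(x))/\sqrt{N}$ correctly captures the extra factor of $c_1\sim 1/N$ relative to the $v$ case (and the same degradation also occurs in the intervals adjacent to $x=N/2$ when $k$ is odd, not only for $j=1,\,2k-3$), all of which is absorbed into the $k$-dependent threshold $\eta_0(k,\epsilon)$ exactly as you indicate.
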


\quad By Theorem \ref{prop:n=0}, Proposition \ref{prop:verticalW} completely describes the nodal set of $w$ for odd $k$. However, if $k$ is even, then additional analysis is needed for the region $\left(\frac{N}{2}-\frac{N}{2k}, \frac{N}{2}+\frac{N}{2k}\right)$ containing $\frac{N}{2}$. 

\begin{prop}\label{prop:midW}
    Under the same conditions as in Theorem \ref{thm:WPt}, there exists a constant $C_0>0$, dependent only on $\Lambda_\phi$, such that for each $k\geq 8$ even, there exists $\eta_0(k,\epsilon)>0$ such that for all $0<\eta\leq\eta_0(k,\epsilon)$, the nodal set of $w$ has the following property:
    \begin{enumerate}
        \item If $(x_0,y_0)\in w^{-1}(0)$ is a point in $\Omega_\phi(\eta, N)$ with $x_0\in\left(\frac{N}{2}-\frac{N}{2k}, \frac{N}{2}+\frac{N}{2k}\right)$, then there exists an open neighborhood $U_3$ containing $y_0$ and a function $h_3(y)$ such that $w^{-1}(0)\cap U_3=\{(h_3(y),y)\}$ and
    \begin{equation}
        \left|h_3(y)-\frac{N}{2}\right|\leq C_0\eta^{1-\epsilon}, \quad \quad \left|h_3'(y)\right|\leq C_0\eta^{1-\epsilon} \nonumber
    \end{equation}
    for all $y\in U_3$.
    \item If $|y_0-t|<0.1$, then we can improve the derivative estimate to 
    \begin{equation}
        \left|h_3'(y)\right|\leq C_0\eta^{1-\epsilon}\left|y-t\right| \nonumber
    \end{equation}
    for $t\in\{0,1\}$.
    \end{enumerate}
\end{prop}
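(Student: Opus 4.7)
The plan is to mirror the analysis of Proposition \ref{prop:vertical}, adapted to the lower eigenvalue branch. The key observation is that when $k$ is even, both $\sin(k\pi x/N)$ and $\sin(2\pi x/N)$ vanish at $x=N/2$, so by Proposition \ref{prop:W}(i)--(ii) we have $|w_1(N/2)|, |w_2(N/2)| \leq C_0\eta^{1-\epsilon}/N^{3/2}$, hence $|w(N/2,y)| \leq C_0\eta^{1-\epsilon}/N^{3/2}$ for all $y$. Meanwhile $\partial_x w_0(N/2,y)$ is dominated by $-\tfrac{2c_2 k\pi}{N^{3/2}}\cos(k\pi/2)\sin(\pi y)$, since the $c_1$-term is smaller by a factor of $1/N$; because $c_2 k \sim N$ and $|\cos(k\pi/2)|=1$, Proposition \ref{prop:W} gives $|\partial_x w_0(N/2,y)| \geq cN^{-1/2}|\sin(\pi y)|$. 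A Taylor expansion in $x$, using $|w_1'|,|w_2'|\leq CN^{-1/2}$, extends this lower bound to the full strip $|x-N/2| \leq N/(2k)$.

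From here the proof proceeds in two steps. First, for $(x_0,y_0)\in w^{-1}(0)$ in the strip, the identity $w(x_0,y_0) = w(N/2,y_0) + (x_0-N/2)\partial_x w(\xi, y_0)$ combined with the bounds above forces $|x_0-N/2| \leq C_0\eta^{1-\epsilon}/|\sin(\pi y_0)|$, which immediately gives $|x_0-N/2|\leq C_0\eta^{1-\epsilon}$ when $y_0$ is bounded away from $0,1$; near the horizontal boundaries I would replicate the line-integral argument used in the proof of Proposition \ref{prop:vertical} (region (b)) to show $|E_w(N/2,y)|\leq C_0\eta^{1-\epsilon}|y|/N^{3/2}$, which restores the uniform bound. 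The implicit function theorem then produces the local graph $x=h_3(y)$. For the derivative bound, observe that Taylor expansion in $x$ yields $|w_n(x_0)| \leq C\eta^{1-\epsilon}/N^{1/2}$ for $n=1,2$ (using $|w_n(N/2)|\leq C\eta^{1-\epsilon}/N^{3/2}$ and $|x_0-N/2|\leq C\eta^{1-\epsilon}$), so the expansion $\partial_y w = \pi w_1(x)\cos(\pi y) + 2\pi w_2(x)\cos(2\pi y) + \partial_y E_w$ gives $|\partial_y w(x_0,y_0)| \leq C\eta^{1-\epsilon}/N^{1/2}$, and division by the lower bound on $|\partial_x w|$ produces $|h_3'(y)|\leq C_0\eta^{1-\epsilon}$.

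For the improved derivative bound near $y=t\in\{0,1\}$, the key cancellation is that $w(x,t)=0$ and $\partial_y^2 w(x,t) = -\partial_x^2 w(x,t)-\gamma w(x,t)=0$ (the second equality following from the PDE and the vanishing of $w$ along the Dirichlet boundary). Taylor expansion then yields
\[
w(x_0,y_0) = (y_0-t)\partial_y w(x_0,t) + \tfrac{(y_0-t)^3}{6}\partial_y^3 w(x_0,\xi_1), \qquad \partial_y w(x_0,y_0) = \partial_y w(x_0,t) + \tfrac{(y_0-t)^2}{2}\partial_y^3 w(x_0,\xi_2).
\]
Setting $w(x_0,y_0)=0$ and eliminating $\partial_y w(x_0,t)$ gives $|\partial_y w(x_0,y_0)| \leq C(y_0-t)^2 \sup |\partial_y^3 w(x_0,\cdot)|$. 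Using the bounds on $w_1(x_0),w_2(x_0)$ together with $|\partial_y^3 E_w|\leq C\eta^{1-\epsilon}/N^{3/2}$ (the requisite interior estimate parallels item (iii) of Proposition \ref{prop:V}) leads to $|\partial_y w(x_0,y_0)| \leq C\eta^{1-\epsilon}N^{-1/2}(y_0-t)^2$, and dividing by $|\partial_x w(x_0,y_0)|\geq cN^{-1/2}|y_0-t|$ yields the sharpened bound.

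The main obstacle will be verifying the third-order error bound $|\partial_y^3 E_w| \leq C\eta^{1-\epsilon}/N^{3/2}$ in a neighborhood of the top and bottom boundaries (but away from the corners, where $\overline{y}$ is comfortably in the interior by Remark \ref{rem:heightBdd}). Since Proposition \ref{prop:W} as stated only records first-order derivative control on $E_w$, this step requires invoking the elliptic regularity argument underlying item (iii) of Proposition \ref{prop:V}, adapted to $w$; once this is in place, the remainder of the argument is a direct transcription of the techniques in Sections \ref{sec:awayEven} and \ref{sec:bdry}.
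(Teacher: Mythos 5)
Your overall strategy matches the paper's: the paper establishes this proposition by combining the bound $\bigl|\tfrac{c_2}{2c_1}\tfrac{\sin(k\pi x/N)}{\sin(2\pi x/N)}\bigr|\geq 2$ on the strip with the analogue of (\ref{eq:key}) and then rerunning the proofs of Propositions \ref{prop:vertical} and \ref{prop:bottomBdry} for $w$, which is in substance what you do. Your treatment of the improved derivative bound differs cosmetically from the paper's (you Taylor expand $w$ itself in $y$ using $w(x,t)=\p_y^2w(x,t)=0$, whereas the paper works from the identity (\ref{eq:nextSection}) and expands the error $E$), but both routes need the same input, namely third-order control on $E_w$ near $y=t$. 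On that point your flagged ``main obstacle'' is actually harmless here: the strip $\left|x-\tfrac{N}{2}\right|<\tfrac{N}{2k}$ sits inside $\left[\tfrac{N}{4},\tfrac{3N}{4}\right]$, so the relevant bound comes from the exponential decay of the modes $w_j(x)$, $j\geq 3$, away from the vertical boundaries (the first display in item $(iii)$ of Proposition \ref{prop:V}), not from the $y\in[\omega,1-\omega]$ interior estimate; you get $|\nabla^\ell E_w|\leq C\eta e^{-cN}$ for $\ell\leq 3$ on the whole strip, corners included.

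There is one step that fails as stated: the claim that a Taylor expansion extends the lower bound $|\p_xw(x,y)|\geq cN^{-1/2}|\sin(\pi y)|$ ``to the full strip $|x-N/2|\leq N/(2k)$.'' Since $k$ is even, $\p_xw_1(x)\approx-\tfrac{2c_2k\pi}{N^{3/2}}\cos(k\pi x/N)$ vanishes to leading order at the strip endpoints $x=\tfrac{N}{2}\pm\tfrac{N}{2k}$ (there $k\pi x/N=\tfrac{k\pi}{2}\pm\tfrac{\pi}{2}$ and $\cos$ is zero), so no pointwise lower bound of this form can hold uniformly on the strip, and your mean-value identity $w(x_0,y_0)=w(N/2,y_0)+(x_0-N/2)\p_xw(\xi,y_0)$ cannot be closed by appealing to it, since $\xi$ may land near a strip edge. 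The localization survives because what the mean value theorem actually produces is the difference quotient $\tfrac{w_1(x_0)-w_1(N/2)}{x_0-N/2}$, and $|\sin(k\pi x_0/N)|\geq \tfrac{2k}{N}\left|x_0-\tfrac{N}{2}\right|$ throughout the strip, so the averaged derivative is bounded below by $cN^{-1/2}$ even where the pointwise one is not; equivalently, you can bypass the derivative entirely and lower-bound $|w_0|$ directly on the strip via the factorization behind (\ref{eq:key}) together with $\bigl|\tfrac{c_2}{2c_1}\tfrac{\sin(k\pi x/N)}{\sin(2\pi x/N)}\bigr|\geq 2$, which is the paper's route. Once the nodal point is known to satisfy $\left|x_0-\tfrac{N}{2}\right|\leq C\eta^{1-\epsilon}$, the pointwise lower bound on $\p_xw$ does hold there, so the remainder of your argument (implicit function theorem, the $\p_yw$ upper bounds, and the division yielding $|h_3'|$) goes through.
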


\quad Proposition \ref{prop:verticalW} follows from the proofs of Propositions \ref{prop:horizontal} and \ref{prop:Vodd2}. In the regions close to the left and right boundaries, we reference the corner argument from Section \ref{sec:bdry}. Meanwhile, Lemma \ref{prop:coeff} implies that
\begin{equation}
    \bigg|\frac{c_2}{2c_1}\frac{\sin(k\pi x/N)}{\sin(2\pi x/N)}\bigg|\geq 2 \nonumber
\end{equation}
for all $k\geq 8$ even and $x\in\left(\frac{N}{2}-\frac{N}{2k}, \frac{N}{2}+\frac{N}{2k}\right)$. In combination with (\ref{eq:key}), this allows us to establish Proposition \ref{prop:midW} by following the proofs of Propositions \ref{prop:vertical} and \ref{prop:bottomBdry}. Theorems \ref{thm:WPt} and \ref{thm:WDer} follow from Propositions \ref{prop:verticalW} and \ref{prop:midW}.


\bibliographystyle{plain}
\bibliography{NodalBib}
 
\end{document}